\newtheorem{theorem}{Theorem}[section]
\newtheorem{lemma}[theorem]{Lemma}
\newtheorem{proposition}[theorem]{Proposition}
\theoremstyle{definition}
\newtheorem{definition}[theorem]{Definition}
\theoremstyle{remark}
\newtheorem{remark}[theorem]{Remark}
\numberwithin{equation}{section}
\newcommand{\rZr}{\prescript{\r}{}Z}
\newcommand{\rr}{\prescript{\r}{}}
\newcommand{\bb}{\prescript{\b}{}}
\newcommand{\C}{\mathbb{C}}
\newcommand{\1}{\mathbf{1}}
\newcommand{\D}{\mathbb{D}}
\newcommand{\E}{\mathbb{E}}
\newcommand{\N}{\mathbb{N}}
\newcommand{\Q}{\mathbb{Q}}
\newcommand{\Z}{\mathbb{Z}}
\newcommand{\R}{\mathbb{R}}
\renewcommand{\P}{\mathbb{P}}
\newcommand{\ol}{\overline}
\newcommand{\el}{l} 
\newcommand{\wt}{\widetilde}
\newcommand{\eps}{\varepsilon}
\def\P{\mathbb{P}}
\def\E{\mathbb{E}}
\DeclareMathOperator{\Cov}{Cov}
\DeclareMathOperator{\SLE}{SLE}
\DeclareMathOperator{\Var}{Var}
\def\fh{\mathfrak{h}}
\def\cZ{\mathcal{Z}}
\def\scZ{\mathscr{Z}}
\def\scL{\mathscr{L}}
\def\scR{\mathscr{R}}
\def\scX{\mathscr{X}}
\def\scW{\mathscr{W}}
\def\cW{\mathcal{W}}
\def\cT{\mathcal{T}}
\def\cS{\mathcal{S}}
\def\cR{\mathcal{R}}
\def\cP{\mathcal{P}}
\def\cO{\mathcal{O}}
\def\cM{\mathcal{M}}
\def\cL{\mathcal{L}}
\def\cK{\mathcal{K}}
\def\cJ{\mathcal{J}}
\def\cG{\mathcal{G}}
\def\cF{\mathcal{F}}
\def\cC{\mathcal{C}}
\def\cB{\mathcal{B}}
\def\cA{\mathcal{A}}
\def\b{\mathrm{b}}
\def\r{\mathrm{r}}
\def\g{\mathrm{g}}
\def\Zb{Z^{\mathrm{b}}}
\def\Zr{Z^{\mathrm{r}}}
\def\Zg{Z^{\mathrm{g}}}
\newcommand{\Ar}{A_{\mathrm{r}}}
\newcommand{\Ab}{A_{\mathrm{b}}}
\newcommand{\Ag}{A_{\mathrm{g}}}
\newcommand{\Geo}{\mathrm{Geom}(\frac12)}
\newcommand\ta{\mathfrak{t}}
\newcommand\fraketa{\mathfrak{y}}
\definecolor{gold}{rgb}{0.7,0.7,0}
\definecolor{dpurple}{rgb}{0.5,0,0.5}
	\newcommand{\bZb}{\prescript{\mathrm{b}}{}Z}
	\newcommand{\bRb}{\prescript{\mathrm{b}}{}R}
	\newcommand{\bLb}{\prescript{\mathrm{b}}{}L}
		\newcommand{\dsigma}{\overline{\sigma}}
\newcommand{\xin}[1]{{#1}}
\newcommand{\nocolor}[1]{{\color{black} #1}}
\begin{document}

\title[Schnyder woods, SLE$_{16}$ and LQG]{Schnyder woods, SLE$_{16}$, and Liouville quantum gravity}


\author{Yiting Li}
\address{Korea Advanced Institute of Science and Technology}
\curraddr{}
\email{yitingli@kaist.ac.kr}
\thanks{}

\author{Xin Sun}
\address{University of Pennsylvania}
\curraddr{}
\email{xinsun@sas.upenn.edu}
\thanks{Xin Sun was partially supported by Simons Society of Fellows, and by NSF Award DMS-1811092 and the Career award 2046514.}

\author{Samuel S. Watson}
\address{Brown University}
\curraddr{}
\email{sswatson@brown.edu}
\thanks{}

\subjclass[2010]{Primary 60B99, 60D05}
\keywords{Schnyder wood, Schramm-Loewner evolution, Liouville quantum gravity}

\date{}

\dedicatory{}

\begin{abstract}
		In 1990, Schnyder used a 3-spanning-tree decomposition of a simple
		triangulation, now known as the \textit{Schnyder wood}, to give a
		fundamental grid-embedding algorithm for planar maps. 
		\xin{In the framework of mating of trees, a
			uniformly sampled Schnyder-wood-decorated triangulation can produce a triple of random walks. We show that these three walks converge in the scaling limit to three Brownian motions  produced in the mating-of-trees framework by \nocolor{Liouville quantum gravity (LQG)}
			with parameter $1$, decorated with a triple of SLE$_{16}$'s curves. These three  SLE$_{16}$'s curves are coupled such that the angle
			difference between them is $2\pi/3$ in imaginary geometry.}
		Our convergence result provides a description of the continuum limit of Schnyder's
		embedding algorithm via LQG and \nocolor{SLE}.
\end{abstract}

\maketitle

	\section{Introduction}\label{sec:intro}
	
	A \textit{planar map} is an embedding of a connected  planar graph in the plane, considered modulo orientation-preserving homeomorphism.  A planar map is said to be \textit{simple} if it does not have any edges from a vertex to itself or multiple edges between the same pair of vertices.  We say that a planar map is a \textit{triangulation} if every face is bounded by three edges.  A \textit{spanning tree} of a graph $G$ is a connected, cycle-free subgraph of $G$ including all vertices of $G$. In this paper we will study \textbf{wooded triangulations}, which are simple triangulations equipped with a certain 3-spanning-tree decomposition known as a \textbf{Schnyder wood}.
	
	The Schnyder wood, also referred to as a \textit{graph realizer} in
	the literature, was introduced by Walter Schnyder
	\cite{schnyder1989planar} to prove a characterization of graph
	planarity. He later used Schnyder woods to describe an algorithm for
	embedding an order-$n$ planar graph in such a way that its edges are
	straight lines and its vertices lie on an $(n-2)\times (n-2)$ grid
	\cite{Schnyder}. Schnyder's celebrated construction has continued to
	play an important role in graph theory and enumerative combinatorics
	\cite{bernardi2007catalan,felsner2008schnyder,miracle2016sampling}.
	
  In the present article, we are primarily interested in
		the $n\to\infty$ behavior of wooded triangulations and their
		Schnyder embeddings. We discover an encoding of the wooded
		triangulation via a triple of random walks, where the coordinates of
		vertices under the Schnyder embedding are natural observables of the
		random walks. As we will explain in
		Sections~\ref{subsec:rpm}--\ref{subsec:1tree}, each of these three
		random walks fits into the mating-of-trees framework developed in
		\cite{burger,mating}. By simply applying the invariance principle
		for a single random walk, we may obtain a scaling limit result
		(Theorem~\ref{thm:3pair}) for wooded triangulation in the
		peanosphere sense. However, to understand the large-scale behavior
		of the Schnyder embedding, we need to study the interaction between
		the three spanning trees. This relies on another crucial observation
		we make in this paper: the coupling of the three trees is analogous
		to the coupling of a certain triple of continuum trees formed by
		imaginary geometry flow lines. The technical bulk of this paper
		(Section~\ref{sec:peano}) is to develop this insight and ultimately
		establish a scaling limit result (Theorem~\ref{thm:embedding}) for
		the image of a typical wooded triangulation under the Schnyder
		embedding.
	
	\subsection{ Wooded triangulations}\label{subsec:Schnyder}
	\begin{figure} 
		\centering
		\subfigure[]{\includegraphics[width=27mm]{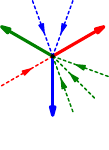}}
		\subfigure[]{\includegraphics[width=6cm]{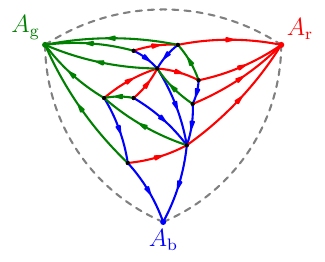}}
		\subfigure[]{\includegraphics[width=5cm]{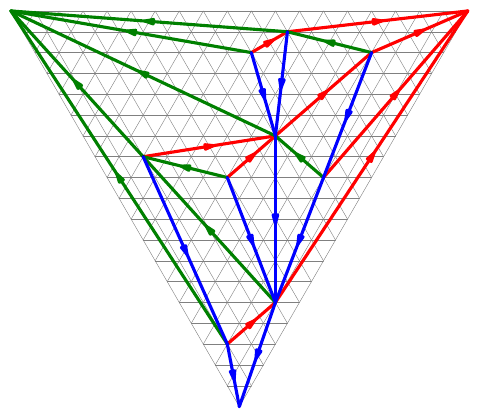}}
		\caption{ \label{fig:swgrid}
			(a) Schnyder's coloring rule: all incoming
			edges of a given color appear between the outgoing edges of
			the other two colors. We draw the incoming arrows as dashed to indicate that the number of incoming edges of a
			given color may be zero. 
			(b) The edges of a planar triangulation can be decomposed into three spanning trees satisfying Schnyder's coloring rule, and (c) Schnyder's algorithm uses this triple of trees to output a grid embedding of the triangulation. 
		}
	\end{figure}

	Consider a simple plane triangulation $M$ with unbounded face $\triangle \Ab \Ag \Ar$ where the \emph{outer vertices} $\Ab$, \nocolor{$\Ag$, and $\Ar$}---which we think of as blue, \nocolor{green, and red}---are arranged in \nocolor{clockwise} order. \nocolor{Here simple means with no multiple edges and self-loops.} We denote by $\overline{\Ab\Ag}$ the edge connecting $\Ab$ and $\Ag$, and similarly for $\overline{\Ag\Ar}$ and $\overline{\Ar\Ab}$.  Vertices, edges, and faces of $M$ other than $A_\b,A_\g,A_\r,\overline{\Ab\Ag},\overline{\Ag\Ar},\overline{\Ar\Ab}$ and $\triangle \Ab\Ag\Ar$ are called \textit{inner} vertices, edges, and faces respectively. We define the \textbf{size} of $M$ to be the number of interior vertices. Euler's formula implies that a simple plane triangulation of size $n$ has $n+3$ vertices, $3n+3$ edges, and $2n+2$ faces.
	
	An \textit{orientation} on $M$ is a choice of direction for every
	inner edge of $M$, and a \textit{3-orientation} on $M$ is an
	orientation for which every inner vertex has out-degree $3$.   \nocolor{Each simple  triangulation admits at least one 3-orientation~\cite{Schnyder}.}	A
	coloring of the  \nocolor{inner} edges of a 3-orientation with the colors blue, green,
	and red is said to satisfy \textit{Schnyder's rule} (see
	Figure~\ref{fig:swgrid}(a)) if (i) the three edges exiting each
	interior vertex are colored in the clockwise cyclic order
	blue-green-red, and (ii) each blue edge $e$ which enters an interior
	vertex $v$ does so between $v$'s red and green outgoing
	edges, and similarly for the other incoming edges. In other words, incoming
	red edges at a given vertex (if there are any) must appear between green and blue outgoing edges, and incoming
	green edges appear between blue and red outgoing edges. As demonstrated in 
\nocolor{As shown in~\cite{Schnyder}, given 3-orientation $\cO$ on a simple triangulation $M$, there is a unique way of coloring the inner edges such that Schnyder's rule is satisfied.}  We describe this coloring as an algorithm which we  call COLOR.
	\begin{enumerate}
		\item Color $\Ab,\Ag,\Ar$ blue, green, and red.
		\item \nocolor{For each} inner edge $e$,  construct a directed path
		$\cP=[e_1,e_2,\cdots e_\ell]$ inductively as follows. Set $e_1=e$.  For $k \geq 1$: if the head of $e_k$ is an outer vertex, set $\ell = k$ and stop. Otherwise, let $e_{k+1}$ be the second outgoing edge encountered when clockwise (or equivalently, counterclockwise) rotating $e_k$ about its head. \nocolor{(Here the head vertex  and tail vertex of an orientated edge are such that the orientation goes from the tail to the head.)} This procedure always yields a finite path \nocolor{without cycle. To see this, note that if $\cP$ has a cycle   $\gamma$ of length $m$, then the planar map $M'$ consisting of the faces of $M$ enclosed by $\gamma$ (together with a single unbounded face) would have $m+v$ vertices  where $v$ is the number of vertices surrounded by $\gamma$. Moreover, $M'$ has  $E=2m+3v$ edges since each vertex on $\gamma$ contributes two outgoing edges, and each vertex not on $\gamma$ contribute three. By Euler's formula $M$ would have $F=2v+m-2$ faces. Since $M'$ is a triangulation except one face,  we have $m+3(F-1)=2E$, a contradiction.}  
		\item Assign to $e$ the color of the outer vertex at which $\cP$ terminates.
	\end{enumerate} 
\nocolor{We now summarize a few properties of COLOR that are essentially from~\cite{Schnyder}; also see the notes~\cite{LS-Schnyder}:}
	\begin{enumerate}
		\item Edges on a path $\cP$ have the same color.
		\item Given an inner vertex $v$ with outgoing edges $e_1,e_2,e_3$, the three paths starting from $e_1,e_2,e_3$ are all simple paths, \nocolor{namely without cycles}. 
		
		\item Given an inner vertex $v$ with outgoing edges $e_1,e_2,e_3$, the
		three paths starting from $e_1,e_2,e_3$ do not intersect except at
		$v$. This may be proved similarly to \#1. 
		\item Since the three paths emanating from $v$ are simple and
		non-intersecting, the three paths must terminate at 
		distinct outer vertices. Therefore, their colors are distinct and
		appear in the same cyclic order as the exterior vertices.
		
		\item As a consequence of \#1 and \#4, 
		Schnyder's coloring rule is satisfied. \nocolor{Indeed, if $v$  is an inner vertex and $e$ is an oriented edge in between the green and red outgoing edges from $v$, then the blue outgoing edge from $v$ must be the next edge on the path  starting from $e$. Therefore $e$ itself is  blue. }
		
		\item The set of all blue edges forms a spanning tree of $M \backslash \{A_\g,A_\r\}$, and
		similarly for red and green. These three  trees form
		a partition of the set of $M$'s interior edges. 
	\end{enumerate}
	
	\begin{definition}
		Given a simple plane triangulation  $M$ \nocolor{equipped with a 3-orientation}, we call a coloring of the interior
		edges satisfying Schnyder's rule a  {\bf Schnyder wood} on $M$.   We denote by $\cS_n$ the set of pairs $(M,\cO)$ where $M$ is a triangulation  of size $n$ \xin{equipped with a 3-orientation} and $\cO$ is a Schnyder wood on $M$. We will refer to elements of $\bigcup_{n\ge1}\cS_n$ as \textit{Schnyder-wood-decorated triangulations}, or  {\bf wooded triangulations} for short. 
	\end{definition}
	The COLOR algorithm exhibits a natural bijection between Schnyder woods on $M$ and 3-orientations on $M$. Accordingly, we will treat Schnyder woods and 3-orientations as interchangeable. For a  wooded triangulation $S$, denote by $T_c(S)$ the tree of color $c$ on $S$, for $c \in \{\mathrm{b,r,g}\}$.  We call the unique blue oriented path from a vertex $v$ the {\bf blue flow
		line} from $v$, and similarly for red and green. 
	
	\subsection{SLE, Liouville quantum gravity, and random planar maps}\label{subsec:rpm}
	The Schramm Loewner evolution with a parameter \nocolor{$\kappa >0$} (abbreviated to $\SLE_\kappa$) is a well-known family of random planar curves discovered by Schramm \cite{Schramm}. These curves enjoy conformal symmetries and a natural Markov property which establish SLE as a canonical family of non-self-crossing planar curves. 
	\nocolor{For a  large class of  2D statistical physics models such as percolation and Ising model,  it is proved or conjectured that their scaling limit at criticality are described by  $\SLE_\kappa$ curve  with various  $\kappa$; see e.g.\  \cite{percolation,lawler2011conformal,SAW,schramm2009contour,FK,chelkak2014convergence}.}
	
	Liouville quantum gravity with parameter $\gamma\in (0,2)$ (abbreviated to $\gamma$-LQG) is a family of random planar geometries formally corresponding to $e^{\gamma h} \, dx\otimes dy$, where $h$ is a 2D random generalized function called \nocolor{the} Gaussian free field (GFF) and $\gamma$ indexes the roughness of the geometry. Rooted in theoretical physics, LQG is closely related to conformal field theory and string theory \cite{Polyakov}. It is also an important tool for studying planar fractals through the Knizhnik-Polyakov-Zamolodchikov relation (see \cite{KPZ} and references therein).  Most importantly for our purposes, LQG describes the scaling limit of decorated random planar maps. Let's discuss this perspective in the context of a classical example called \nocolor{the} uniform-spanning-tree-decorated random planar map.

	\xin{For each $\gamma\in (0,2)$, there exists a canonical random surface of spherical topology whose geometry is given by $\gamma$-LQG. The surface is called the \emph{unit-area $\gamma$-LQG sphere}.  Its area measure is of the form $\mu_\fh= e^{\sqrt{2}\fh}\,dx\,dy$ where $\fh$ is a particular variant of Gaussian free field. Due to the roughness of $\fh$, the rigorous construction of $\mu_\fh$ requires a regularization and normalization procedure called the Gaussian multiplicative chaos (GMC). We refer to the book~\cite{berestycki-powell-notes} for the background on LQG, GFF and GMC.   More details for  $\fh$ and $\mu_\fh$ in will be provided in  Section~\ref{subsec:lqg}. We now explain how the  unit-area $\sqrt2$-LQG sphere is related to the scaling limit of the uniform-spanning-tree-decorated random planar map.}

	Let $\cM_n$ be the set of pairs  $(M,T)$ such that $M$ is an $n$-edge planar map and $T$ is a spanning tree on $M$. Let $(M_n,T_n)$ be a uniform sample from $\cM_n$ and conformally embed $M_n$ in $\C$, for example via circle packing or Riemann uniformization (\nocolor{see e.g.~\cite[Section 2]{ghs-survey}}). Define an atomic measure on $\C$ by associating a unit mass with each vertex in this embedding of $M_n$. It is conjectured that this measure, suitably renormalized, converges as $n\to\infty$ to a unit-mass random area measure $\mu_\fh$ on $\C$, \xin{which is the aforementioned area measure for the  unit-area $\sqrt2$-LQG sphere.}  
	Let $\eta_n$ be the Peano curve which snakes between $T_n$ and its dual. It is conjectured that $\eta_n$ converges to an $\SLE_8$ that is independent of $\mu_{\mathfrak{h}}$. (Convergence to $\SLE_8$ of the uniform spanning tree on the square lattice is proved in \cite{lawler2011conformal}).
	
	One can replace the spanning tree in the above discussion with other structures, such as percolation, Ising model, or random cluster models. Under certain conformal embeddings, \nocolor{random planar maps decorated with models} are believed to converge to unit area $\gamma$-LQG sphere\nocolor{s} decorated with $\SLE_{\gamma^2}$ and $\SLE_{16/\gamma^2}$ curves, for \nocolor{some $\gamma\in(0,2)$}.  As metric spaces, they are believed to converge to a random metric space that formally has $e^{\gamma \fh}dx\otimes dy$ as its metric tensor.  \nocolor{The rigorous construction of the random metric was recently achieved by~\cite{DDDF-tightness,gm-uniqueness} using a regularization and normalization procedure.}
	Both of the two types of convergence remain unproved except for the percolation-decorated random planar map, in which case $M_n$ is uniformly distributed. LeGall \cite{le2013uniqueness} and Miermont \cite{miermont2013brownian} independently proved that the metric scaling limit is a random metric space called the Brownian map. Miller and Sheffield \cite{BM1,BM2,BM3} rigorously established an identification of the Brownian map and $\sqrt{8/3}$-LQG.  \nocolor{Recently in~\cite{hs-cardy-embedding} Holden and the second named author of this paper proved that uniform triangulation under a certain discrete conformal embedding  converges to $\sqrt{8/3}$-LQG. Gwynne, Miller and Sheffield~\cite{gms-invariance,gms-crt,gms-poisson-voronoi} showed that certain random planar map models obtained from coarse-graining the continuum LQG converge to the LQG under another discrete conformal embedding called the Tutte embedding. Here we remark that the Schnyder embedding considered in this paper is \emph{not} a discrete conformal embedding.}

 	\nocolor{Duplantier, Miller, and Sheffield \cite{mating,burger}  developed a powerful approach to LQG  which is particularly suited to build connections between random planar map and LQG.}   The starting point is a bijection due to Mullin \cite{mullin1967enumeration} and Bernardi \cite{Bernardi} between the set $\mathcal{M}_n$  and the set of lattice walks on $\Z^2_{\ge 0}$ with $2n$ steps and starting and ending at the origin. \nocolor{Here a lattice walk means a possible trajectory of a random walk on $\Z^2$.} The walk corresponding to $(M,T)$ is obtained by keeping track of the graph distances in $T$ and its dual $\tilde{T}$ from the tip of the exploration curve $\eta_n$ to two specified roots. This bijection \xin{is an example of a family of bijections that are now known as \textit{mating-of-trees}  bijections}, because the exploration curve can be thought of as stitching the two trees $T$ and $\tilde{T}$ together. 
	
	This mating-of-trees story can be carried out in the continuum as well, with LQG playing the role of the planar map and $\SLE$ the role of the Peano curve.  
	 Suppose $(\mu_\fh,\eta')$ is the conjectured scaling limit of \nocolor{a certain decorated} random planar map under conformal embedding for some $\gamma\in (0,2)$, as described above. 
Namely, $\mu_\fh$ is area measure of the unit-area $\gamma$-LQG sphere, and \nocolor{$\eta'$} is a variant of SLE$_\kappa$ with $\kappa=16/\gamma^2$ \nocolor{which is independent of $\fh$}.  \nocolor{As explained in~\cite{mating}, the curve $\eta'$ can be parametrized} so that $\eta'$ is a continuous space-filling curve from $[0,1]$ to $\C\cup\{\infty\}$, $\eta'(0)=\eta'(1)=\infty$ and $\mu_\fh(\eta'([s,t]))=t-s$ for $0<s<t<1$. By \cite{Zipper,mating}, one can define lengths $\scL_t$  and $\scR_t$ for the left and right boundaries of $\eta'{[0,t]}$ with respect to $\mu_\fh$. The following \textbf{mating-of-trees theorem} is proved in \cite{mating,mating2,gwynne2015brownian}.
	\begin{theorem}\label{thm:mating}
		In the above setting, the law of $(\scZ_t)_{t\in[0,1]}=(\scL_t,\scR_t)_{t\in[0,1]}$ can be sampled as follows. First sample a two-dimensional Brownian motion $\scZ=(\scL,\scR)$ with
		\begin{equation}\label{eq:covariance}
		\Var[\scL_t]=\Var[\scR_t]=t \quad \textrm{and} \quad \Cov[\scL_t,\scR_t] = -\cos\left(\tfrac{\pi \gamma^2}{4}\right) t.
		\end{equation}
		Then condition $\scZ|_{[0,1]}$ on the event that both $\scL$ and $\scR$ stay positive in $(0,1)$ and $\scL_1=\scR_1=0$. Moreover, $\scZ$ determines $(\mu_\fh,\eta')$ a.s.
	\end{theorem}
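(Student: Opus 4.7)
The plan is to move to an infinite-volume setting, carry out the calculation there, then transfer back to the sphere. The sphere setup with its global conditioning (unit area, $\scL_1=\scR_1=0$, positivity on $(0,1)$) is unwieldy, so I would first prove the analogous statement on a $\gamma$-quantum cone decorated by an independent whole-plane space-filling $\SLE_\kappa$ with $\kappa=16/\gamma^2$, parametrized by $\mu_\fh$-area so that $\eta\colon \R\to\C$ satisfies $\mu_\fh(\eta([s,t]))=t-s$. On a quantum cone the law is scale-invariant and has the "quantum typical point" translation structure, so the boundary length process $\scZ_t=(\scL_t,\scR_t)$ of $\eta([0,t])$ should be stationary with independent, scale-invariant increments. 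Local absolute continuity between the cone and sphere in any compact $\mu_\fh$-length window will then let me pull the identification back to the sphere, where the additional conditioning shows up exactly as "stay positive and end at $0$."

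To build $(\mu_\fh,\eta)$ and to make sense of $\scL_t,\scR_t$, I would use the imaginary geometry / conformal welding framework. Space-filling $\SLE_\kappa$ for $\kappa\ge 8$ is the Peano curve between two GFF flow lines of angles differing by $\pi$ (for $\kappa\in(4,8)$ one inserts bubbles); crucially, its left and right frontiers at each time $t$ are flow lines in an independent GFF that is conformally welded to $\fh$. Sheffield's quantum zipper then tells me that the $\mu_\fh$-boundary length along an $\SLE_\kappa$ strand inside a matched quantum wedge is well-defined and is additive under welding. Using the welding description of $\eta$ as slicing the cone into beads, the left/right boundary lengths $\scL_t,\scR_t$ are literally the boundary lengths of successive quantum wedges, so the increments $\scZ_{t+s}-\scZ_t$ are stationary and independent of the past by the Markov and conformal removability properties of the wedges.

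Stationarity plus independence plus the scaling relation $\scZ_{ct}\stackrel{d}{=}\sqrt{c}\scZ_t$ (inherited from $\gamma$-LQG scaling) force $\scZ$ to be a 2D Brownian motion. The variance of each coordinate can be fixed to $1$ by a choice of normalization. The covariance $-\cos(\pi\gamma^2/4)$ comes from the imaginary geometry angle computation: the two frontiers bounding $\eta([0,t])$ are flow lines whose "quantum angle" differs by exactly $\pi$, and a known calculation for coupled GFF flow lines inside a matched wedge (equivalently, the boundary-length version of the Rohde--Schramm/DMS angle formula) yields this covariance for $\kappa=16/\gamma^2$. Transferring back to the unit-area sphere via absolute continuity, the joint law of $\scZ|_{[0,1]}$ is a 2D Brownian motion with covariance \eqref{eq:covariance}, conditioned (as a Doob $h$-transform / regular conditional distribution) on staying in the positive quadrant with $\scZ_1=0$.

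The step I expect to be the main obstacle is the final a.s. measurability claim: that $\scZ$ determines the pair $(\mu_\fh,\eta)$. Showing uniqueness of the conformal welding / mating produces a pair $(\mu_\fh,\eta)$ from $\scZ$ is one thing, but proving that no information is lost requires a rigidity argument. The approach is to decorate $\eta$ by dyadic "cells" $\eta([(k-1)2^{-n},k2^{-n}])$, express each cell's $\fh$-conformal radius, neighbor structure, and boundary lengths as measurable functions of $\scZ$, and then pass to the $n\to\infty$ limit using the conformal removability of $\gamma$-LQG boundaries (for $\gamma<2$) to promote this cell-level reconstruction to a global identification of $(\mu_\fh,\eta)$. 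This removability-plus-welding-uniqueness input is the technical heart of \cite{mating,mating2,gwynne2015brownian} and is what I would need to invoke in black-box form rather than reprove.
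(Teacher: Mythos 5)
The paper does not prove Theorem~\ref{thm:mating} at all: it is imported from \cite{mating,mating2,gwynne2015brownian}, so there is no in-paper argument to compare yours against. Your outline does track the actual strategy of those references --- prove the statement first for a $\gamma$-quantum cone decorated by an independent whole-plane space-filling $\SLE_{16/\gamma^2}$ (this is the paper's Theorem~\ref{thm:mating2}), obtain stationarity and independence of the boundary-length increments from the quantum-wedge cutting/welding structure, use scaling to conclude Brownianity, recover the sphere statement by conditioning, and derive the measurability claim from conformal removability.

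That said, two steps in your sketch are substantially harder than you make them sound, and both matter precisely in the regime this paper needs ($\gamma=1$, $\kappa'=16$). First, the identification of the covariance $-\cos(\pi\gamma^2/4)=-\cos(4\pi/\kappa')$ is not a ``known calculation for coupled GFF flow lines'': \cite{mating} establishes that $\scZ$ is a correlated Brownian motion for all $\kappa'>4$ but identifies the correlation only for $\kappa'\le 8$, and extending that identification to $\kappa'>8$ is the entire content of \cite{gwynne2015brownian} --- which is exactly why the paper includes it in the citation. Second, the passage from the cone to the unit-area sphere is not local absolute continuity on a compact window: the sphere arises from a genuinely singular conditioning of the cone, made rigorous by the limiting procedure of \cite[Section 3]{mating2}, as the paper itself remarks immediately after the theorem statement. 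Neither point undermines your architecture, but if you were actually writing the proof these two steps could not be dispatched as routine; they are the places where the cited references do real work.
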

	
	The singular conditioning referred to in this theorem statement can be made rigorous by a limiting procedure \cite[Section 3]{mating2}.
		In light of the Mullin-Bernardi bijection and Theorem \ref{thm:mating}, \nocolor{the convergence of the 2D lattice walks to the 2D Brownian motion \xin{(namely the invariance principle)} can be viewed as a convergence of the spanning-tree-decorated random planar map $(M_n,T_n)$  to $\sqrt2$-LQG decorated with an independent $\SLE_8$. We say that this convergence is  in the \emph{peanosphere} sense.} 
		The same type of convergence has been established for several models; see \cite{burger,burger1,burger2,burger3,bipolar,bipolarII,active,Euclidean}.  In many cases, the topology of convergence can be further improved by using special properties of the model of interest \cite{burger1,burger3,bipolarII}. \nocolor{In particular, results in~\cite{BHS19}  are crucial to the aforementioned convergence in~\cite{hs-cardy-embedding}  under conformal embedding. See~\cite{ghs-survey} for a survey on the mating of trees theory and its application to random planar maps.} 
	\nocolor{We will provide more background on SLE, GFF, and LQG in Section~\ref{sec:lqg}. But we emphasize that the mating of trees framework allows us to work mostly  on the Brownian motion side.  Thus most of the paper does not require this  background.}

	\subsection{A Schnyder wood mating-of-trees encoding}\label{subsec:bijection}
	We consider a uniform sample $(M_n,\cO_n)$ from $\cS_n$, which we call a \textbf{uniform wooded triangulation} of size $n$. We view $(M_n,\cO_n)$ as a decorated random planar map. Under the marginal law of $M_n$, the probability of each triangulation is proportional to the number of Schnyder woods it admits. Conditioned on $M_n$, the law of $S_n$ is uniform on the set of Schnyder woods on $M_n$.  We are interested in the scaling limit of $(M_n,\cO_n)$ as $n\to\infty$.
	
	Our starting point is a bijection similar to the Mullin-Bernardi bijection for spanning-tree-decorated planar maps in Section~\ref{subsec:rpm}. 
	Suppose $S\in \cS_n$. We define a path $\cP_S$ (see
	Figure~\ref{fig:schnyderwood}(b)) which starts in the outer face,
	enters an inner face through $\overline{\Ar\Ag}$, crosses all edges
	incident to $\Ag$, enters the outer face through $\overline{\Ag\Ab}$,
	enters an inner face through $\overline{\Ag\Ab}$, explores
	$T_{\mathrm{b}}(S)$ clockwise, enters the outer face through
	$\overline{\Ab\Ar}$, enters an inner face through $\overline{\Ab\Ar}$,
	crosses the edges incident to $\Ar$, enters the outer face through
	$\overline{\Ar\Ag}$, and returns at the starting point. The path
	$\cP_S$ crosses $\overline{\Ar\Ag}$, $\overline{\Ag \Ab}$ and
	$\overline{\Ab \Ar}$ and each red and green edge twice and traverses
	each blue edge twice. \nocolor{\xin{Namely}, we can view the path 	$\cP_S$ as an ordering of the edge set of $S$ where each inner edge is visited twice. }
	
	Define $\varphi(S) = Z^\b$ to be a walk on $\mathbb{Z}^2$ as follows. The walk
	starts at $(0,0)$. When $\cP_S$ traverses a blue edge for the second
	time, $Z^\b$ takes a $(1,-1)$-step. When $\cP_S$ crosses a red edge
	for the second time, $Z^\b$ takes a $(-1,0)$-step. When $\cP_S$
	crosses a green edge for the second time, $Z^\b$ takes a
	$(0,1)$-step. See Figure~\ref{fig:wordwalkmap} for an example of a pair $(S,\varphi(S))$.  
	\begin{figure} 
		\centering
		\subfigure[]{\includegraphics[width=6cm]{figures/woodonly}}
		\subfigure[]{\includegraphics[width=6cm]{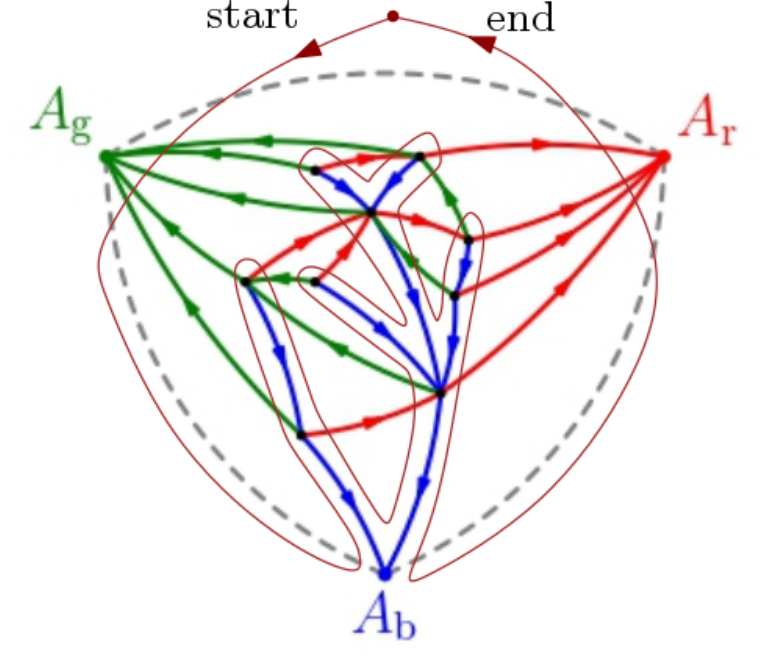}}
		\caption{ \label{fig:schnyderwood}
			(a) a triangulation equipped with a Schnyder wood, and (b) the path $\cP_S$, which traces out the blue
			tree clockwise.
		}
	\end{figure}
	\begin{definition}\label{def:walk}
		Define $\mathcal{W}_n$ to be the set of walks on $\mathbb{Z}^2$ satisfying the
		following conditions.
		\begin{enumerate}
			\item The walk starts and ends at $(0,0)$ and stays in the closed first
			quadrant.
			\item The walk has $3n$ steps, of three types: $(0,1)$, $(-1,0)$ and
			$(1,-1)$. 
			\item No $(1,-1)$-step is immediately preceded by a
			$(-1,0)$-step. 
		\end{enumerate}
	\end{definition}
	\begin{theorem}\label{thm:bijection from wood to walk}
		We have $\varphi(S) \in \cW_n$ for all $S\in \cS_n$, and $\varphi:\cS_n \to \cW_n$ is a bijection.  
	\end{theorem}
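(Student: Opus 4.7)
The plan is to establish the two claims separately: first verify $\varphi(S) \in \cW_n$ for every $S \in \cS_n$, and then construct an explicit inverse $\psi: \cW_n \to \cS_n$ to show bijectivity.

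For $\varphi(S) \in \cW_n$, the three conditions in Definition~\ref{def:walk} are checked as follows. The walk has $3n$ steps because each of $M$'s $3n$ inner edges is traversed or crossed exactly twice by $\cP_S$, with a walk step produced on each second encounter. The walk returns to $(0,0)$ because each of $T_\b, T_\r, T_\g$ is a spanning tree on a connected subgraph of $M$ with $n+1$ vertices and therefore has exactly $n$ edges; hence the walk takes $n$ steps of each of the three types, and $n(1,-1)+n(-1,0)+n(0,1)=(0,0)$. Showing that the walk stays in the closed first quadrant requires a more careful exploration argument: one interprets the coordinates $(X_t,Y_t)$ at each intermediate time in terms of the partially explored wooded map (for example, as signed counts of edges on the current exploration frontier) and verifies that this interpretation forces non-negativity. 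Finally, the local step constraint (no $(1,-1)$ immediately following a $(-1,0)$) is a direct consequence of Schnyder's rule: a $(-1,0)$ step corresponds to $\cP_S$ crossing a red edge $e$ for the second time while clockwise-encircling the head vertex $v$ of $e$, and the cyclic color order at $v$ prescribed by Schnyder's rule forbids the next edge encountered clockwise from being $v$'s outgoing blue edge, which is what would produce the forbidden $(1,-1)$ step.

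For the inverse, given $W \in \cW_n$ I build $\psi(W)$ by processing the walk from left to right while maintaining a partial wooded map together with a current exploration frontier. At each step, the direction of $W$ dictates a local gluing operation---attaching a new colored and oriented edge of the type prescribed by the step, and closing off any face that is thereby completed---and the defining conditions of $\cW_n$ guarantee that there is always a unique legal gluing available. Because $W$ has length $3n$ and returns to the origin, the construction terminates by producing a full wooded triangulation of size $n$.

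The main obstacle is verifying that this inverse construction really lands in $\cS_n$: that the resulting object is a simple triangulation, that the orientation produced is a 3-orientation, and that Schnyder's rule holds at every interior vertex. Simplicity and the 3-orientation property follow by induction from the local gluing rule together with the first-quadrant condition, which prevents degenerate attachments. The subtlety is Schnyder's rule: it is precisely the third condition in Definition~\ref{def:walk} (forbidding a $(1,-1)$ right after a $(-1,0)$) that rules out the one local configuration in which the cyclic color order at a vertex could be violated. Once $\psi$ is shown to map into $\cS_n$, the identities $\psi\circ\varphi=\mathrm{id}_{\cS_n}$ and $\varphi\circ\psi=\mathrm{id}_{\cW_n}$ follow by a straightforward induction on walk length, since at every step the two procedures perform mutually inverse operations on the paired (walk, partial map) data.
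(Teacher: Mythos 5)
Your forward direction is essentially sound in outline, and your argument for condition (3) via the clockwise cyclic order at a vertex is correct; but the quadrant condition is not a side remark to be deferred --- it is exactly the statement that the sub-words $w_{\g\b}$ and $w_{\b\r}$ are parenthesis matchings (the ordinate of the walk is the height of the Dyck path of $w_{\g\b}$, i.e.\ the contour function of $T_\b$, and the abscissa is the height of the Dyck path of $w_{\b\r}$, recording first versus second crossings of red edges). This is what the paper isolates and proves as Proposition~\ref{prop:paren_matching}; you gesture at the interpretation (``signed counts of edges on the current exploration frontier'') but do not establish it.

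The genuine gap is the inverse. You never specify the ``local gluing operation,'' and the assertion that the conditions defining $\cW_n$ ``guarantee that there is always a unique legal gluing available'' is precisely what has to be proved. Worse, the natural edge-assignment rule (Definition~\ref{def:graph_from_word}) attaches the edge carried by a symbol $w_i$ to a vertex determined by a $\g\b$ or $\b\r$ parenthesis match that may lie arbitrarily far in the \emph{future} of the word --- for instance, the head of the blue edge at $w_i=\b$ is the b-symbol closing the innermost enclosing $\g\b$ match, and the head of a red edge is likewise found by scanning forward --- so there is no step-by-step rule that completes an edge at the moment its symbol is read; one must either carry unbounded frontier data with dangling half-edges whose eventual closure must itself be justified, or define the map globally as the paper does. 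Once $\psi$ is defined, the claims you dispose of ``by induction'' --- planarity, simplicity, the fact that every face is a triangle, and Schnyder's rule --- constitute the entire substance of the argument: the paper proves planarity of the blue--red submap by realizing it as a plane tree decorated with a parenthesis matching of red arrows (Lemma~\ref{lem:Mbr_planar}), determines the face structure and shows via Euler's formula that blue edges are in bijection with inner faces (Lemma~\ref{lem:face_structure}), and only then verifies that the green edges triangulate each face and that $\psi$ and $\varphi$ invert one another. None of this is supplied or replaced by your sketch.
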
 
	By symmetry, we may also define $Z^\r$ and $Z^\g$ similarly to $Z^\b$, based on clockwise explorations of $T_\r$ and $T_\g$, respectively. Theorem~\ref{thm:bijection from wood to walk} implies that the $Z^\r$ and $Z^\g$ encodings are also bijective. \nocolor{We will prove Theorem~\ref{thm:bijection from wood to walk} in Section~\ref{subsec:tree-walk}.}
	
	\subsection{SLE$_{16}$-decorated 1-LQG as the scaling limit}\label{subsec:1tree}
	\xin{We now describe a conjectural scaling limit of the uniform-wooded triangulation in terms of LQG surface decorated by SLE curves. Our Theorem~\ref{thm:3pair} is a verification of this conjecture in the mating of trees framework.  Since there are three spanning trees, we need three space-filling SLE curves that are coupled together. Imaginary geometry is a framework that produces such couplings, which is developed in~\cite{IGI,miller2013imaginary}. Fix $\kappa>4$, in this framework, there is a notion of angle between two different space-filling SLE$_\kappa$. We will provide more details of imaginary geometry and the coupling between different SLE curves in  Section~\ref{subsec:lqg}. On the other hand, as mentioned before, we mainly focus on the Brownian motion side of the mating-of-trees story, hence details of this coupling from the SLE side is not necessary to understand our result.  
	
To state our conjecture and theorem, we consider three space-filling $\SLE_{16}$ curves coupled in imaginary geometry, where the angle difference between each other is $2\pi/3$.  We denote the three space-filling  curves by $(\eta^\b,\eta^\r,\eta^g)$ and let $\fh$ be the  field corresponding to a unit-area $1$-LQG sphere which is  independent of  $(\eta^\b,\eta^\r,\eta^\g)$.  The marginal law  $(\fh,\eta^\bullet)$ for $\bullet =\b,\r,\g$ is the same as $(\fh,\eta')$  in Theorem~\ref{thm:mating}. with $\gamma=1$.  The precise description of the joint law of $(\eta^\b,\eta^\r,\eta^g)$  will be given in Section~\ref{subsec:lqg}. } Let $S_n$ be a uniform wooded triangulation of size $n$.   We conjecture that, under a conformal embedding and a suitable normalization, the volume measure of $S_n$ converges to $\mu_{\fh}$, and that the clockwise exploration curves of the three trees in $S_n$ jointly converge to $\eta^\b,\eta^\r,$ and $\eta^\g$.
\xin{Our following theorem verifies the mating-of-trees variant of this conjecture.}

	
	\begin{theorem}\label{thm:3pair}
		Suppose $(\Zb, \Zr, \Zg)$ is the triple of random walks encoding the three trees in a uniformly sampled wooded triangulation of size $n$. For $\mathrm{c}\in\{\b,\r,\g\}$, let $\scZ^c$ be the Brownian excursion associated with  $(\eta^{c},\mu_\fh)$ \xin{as in Theorem~\ref{thm:mating}.  Namely  $(\scZ^c,\mu_\fh,\eta^{c})$ has the same law as $(\scZ,\mu_\fh,\eta')$ in Theorem~\ref{thm:main1}.} Write $Z^{\mathrm{c}}=(L^{\mathrm{c}},R^{\mathrm{c}})$. Then
		\begin{equation}\label{eq:3pair}
		\left(\tfrac{1}{\sqrt{4n}}L^{\mathrm c}_{\lfloor 3nt\rfloor}, 
		\tfrac{1}{\sqrt{2n}}R^{\mathrm c}_{\lfloor 3nt\rfloor}\right)_{t\in [0,1]} 
		\stackrel{\text{in law}}{\longrightarrow} (
		\mathscr{Z}^{\mathrm{c}})_{t\in [0,1]} \quad \textrm{for}\;\; \mathrm c\in \{\b,\r,\g\}.
		\end{equation}
		Furthermore, the three convergence statements in \eqref{eq:3pair} hold jointly. 
	\end{theorem}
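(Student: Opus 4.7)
The plan is to use Theorem~\ref{thm:bijection from wood to walk} to reduce the scaling-limit problem to a statement about uniform random walks in $\cW_n$, and then apply an invariance principle for random walks in cones conditioned to return to the apex. By the bijection $\varphi$, if $S_n$ is uniform on $\cS_n$ then $Z^\b = \varphi(S_n)$ is uniform on $\cW_n$. By the cyclic color symmetry of the Schnyder wood setup (relabeling the colors is a bijection of $\cS_n$), $Z^\r$ and $Z^\g$ are also marginally uniform on $\cW_n$, and all three walks are deterministic functions of $S_n$.

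The first main step is to represent the uniform measure on $\cW_n$ as the law of an appropriate random walk with i.i.d.\ increments (possibly enlarged to a two-step Markov chain on $\Z^2$ to absorb the ``no $(1,-1)$ after $(-1,0)$'' constraint), conditioned on staying in the closed first quadrant and returning to the origin at time $3n$. The relevant step distribution $\nu$ can be identified by a kernel-method / generating-function analysis of $|\cW_n|$, which should also yield the polynomial-decay rate of the conditioning event required by the invariance principle. One then applies a Denisov--Wachtel / Duraj--Wachtel style invariance principle for centered random walks in cones conditioned to stay in the cone and end at the apex: under the scaling in \eqref{eq:3pair}, the rescaled $Z^\b$ converges in $C([0,1],\R^2)$ to a 2D Brownian excursion in the first quadrant whose covariance is read off from $\nu$. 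A short check should show this matches \eqref{eq:covariance} at $\gamma = 1$, i.e.\ $\Cov(\scL_t,\scR_t) = -\tfrac{\sqrt2}{2}\,t$; the asymmetric normalizations $\sqrt{4n}$ versus $\sqrt{2n}$ reflect the asymmetry between $\Var(\Delta L)$ and $\Var(\Delta R)$ under $\nu$.

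For the joint convergence of $(Z^\b, Z^\r, Z^\g)$, the plan is to encode the wooded triangulation via a single richer process (e.g.\ a Markov chain on a larger state space) that simultaneously tracks the clockwise explorations of all three color trees, apply the invariance principle to this larger object, and then read off the three marginal processes. An alternative route is to combine the marginal convergences with a coupling that matches the combinatorial $\Z/3$-symmetry permuting the colors of $\cS_n$ with the $2\pi/3$ imaginary-geometry rotation that relates $(\eta^\b,\eta^\r,\eta^\g)$, thereby forcing any subsequential joint limit to coincide with $(\scZ^\b,\scZ^\r,\scZ^\g)$.

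The principal obstacle is Step~3: the three marginal limits do not by themselves pin down the correlations between $\scZ^\b,\scZ^\r,\scZ^\g$, and those correlations encode non-trivial continuum structure (namely that $\eta^\b,\eta^\r,\eta^\g$ are angle-$0, 2\pi/3, 4\pi/3$ flow lines of the \emph{same} GFF against $\mu_\fh$). Matching this to the joint law of the discrete triple $(Z^\b, Z^\r, Z^\g)$ requires a careful bijective or coupling argument at the combinatorial level that remains stable under the scaling limit; this is where the bulk of the work is expected to lie.
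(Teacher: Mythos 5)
Your Steps 1--2 (marginal convergence) essentially match the paper's Section~\ref{sec:pair}: the paper absorbs the ``no $(1,-1)$ after $(-1,0)$'' constraint by grouping each maximal run of $(-1,0)$-steps with the following step (Lemma~\ref{lem:fin-walk}), producing an i.i.d.\ walk conditioned to stay in the quadrant and return to the origin, to which the Duraj--Wachtel invariance principle applies; the covariance and the asymmetric normalizations come out exactly as you predict. Your two-step-Markov-chain alternative would also be workable, though less clean. So the marginal part of your proposal is sound.

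The genuine gap is Step~3, which you correctly identify as the crux but for which neither of your proposed routes works. Route A (a richer Markov chain tracking all three explorations simultaneously) cannot succeed because the limiting triple $(\scZ^\b,\scZ^\r,\scZ^\g)$ is not a Gaussian process in any enlarged state space: the three Brownian excursions are related by singular, a.s.\ mutually determining time changes (the time $t_q$ at which $\eta^\r$ visits $\eta^\b(q)$ is a complicated functional of the pair), and no multidimensional Donsker-type theorem produces such a coupling. Route B (letting the $\Z/3$ color symmetry force the subsequential limit) also fails: symmetry alone does not single out a coupling of three excursions --- it is consistent with many joint laws --- and in the paper the symmetry is used only at the very end, to identify the imaginary-geometry angle as $2\pi/3$ once the coupling of a \emph{pair} has already been pinned down. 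The missing idea is the excursion decomposition of the blue exploration walk relative to the red flow line and dual red flow line from the root (Lemmas~\ref{lem:leftrightexcursions} and~\ref{lem:dualleftrightexcursions}), its convergence to the continuum Poisson-point-process decomposition of Section~\ref{subsec:excursion} built on the flow-line/Peano-curve coupling of \cite{bipolarII} (Proposition~\ref{prop:triple}, whose uniqueness step uses Lemma~\ref{lem:atomic}), and the argument of Section~\ref{sec:3tree} showing that any subsequential limit of the red walk must agree with the mating-of-trees Brownian motion of the angle-$\theta$ Peano curve by comparing flow-line lengths along the dense set $\{t_q\}_{q\in\Q}$ and using $\scR\overset{d}{=}\wt\scR$ to fix the constant $c=1+\sqrt2$. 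Finally, all of this is done in the infinite-volume (UIWT) setting and transferred to the finite-volume statement via Proposition~\ref{prop:transfer}; working directly with the conditioned walks, as you propose, would add substantial technical difficulty.
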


	\begin{figure}
		\centering
		\includegraphics[width=0.65\textwidth]{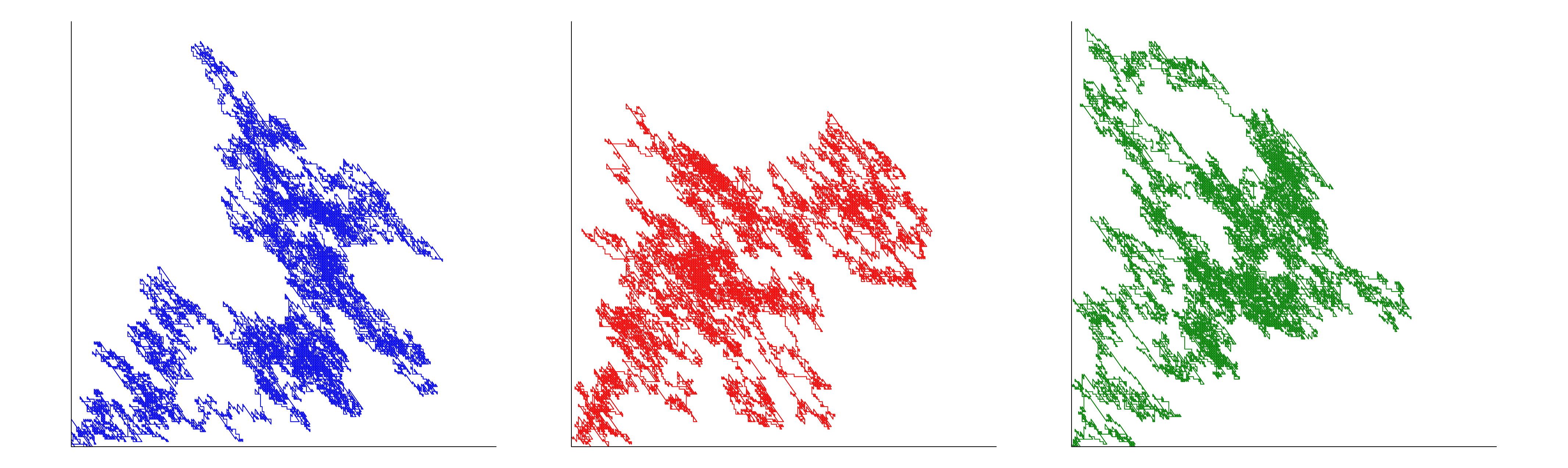}
		\caption{The random walks $Z^\b, Z^\r,$ and $Z^\g$ under the scaling in Theorem~\ref{thm:3pair}. 
			\label{fig:three_excursions}}
	\end{figure}
	
	This theorem is a natural extension of the idea that $M_n$ and \textit{one} of its trees converge in the peanosphere sense. The one-tree version boils down to a classical statement about random walk convergence. The three-tree version requires a more detailed analysis, because the triple $(\mathscr{Z}^{\mathrm{b}}, \mathscr{Z}^{\mathrm{r}}, \mathscr{Z}^{\mathrm{\g}})$ is \xin{more complicated} than a six-dimensional Brownian motion.
	
	\subsection{Schnyder's embedding and its continuum limit}\label{subsec:embed}
	It is well-known that every planar graph admits a straight-line planar embedding \cite{Fary1948,tamassia2013handbook}.
	A longstanding problem in computational geometry was to find a \textit{straight-line drawing algorithm} such that (i) every vertex has integer coordinates, (ii) every edge is drawn as a straight line, and (iii) the embedded graph occupies a region with $O(n)$ height and $O(n)$ width. This was achieved independently by de Fraysseix, Pach, and Pollack \cite{de1990draw} and by Schnyder \cite{Schnyder} via different methods. Schnyder's algorithm is an elegant application of the Schnyder wood: 
	\begin{enumerate}
		\item Given an arbitrary planar graph $G$, there exists a simple maximal planar supergraph $M$ of $G$ (in other words, a simple triangulation $M$ of which $G$ is a subgraph---note that $M$ is not unique). Such a triangulation $M$ can be identified in linear (that is, $O(n)$) time and with $O(n)$ faces, so the problem is reduced to the case of simple triangulations. 
		\item The triangulation $M$ admits at least one Schnyder wood structure, and one can be found in linear time. So we may assume $M$ is equipped with a Schnyder wood.  
		\item Each vertex $v$ in $M$, the blue, red, green flow lines from $v$ partition $M$ into three regions. Let $x(v)$, $y(v)$, and $z(v)$ be the number of faces in these three regions, as shown in Figure~\ref{fig:schnyderembedding}(a), and define $s(v) = (x(v),y(v),z(v))$. It is possible, in linear time, to compute the values of $s(v)$ for \textit{all} vertices $v$ {\cite{Schnyder}}. 
		\item Since the total number of inner faces is some constant $F$, the range of the map $s(v) = (x(v),y(v),z(v))$ is contained in the intersection of $\Z^3$, the plane $x+y+z = F$, and the closed first octant. This intersection is an equilateral-triangle-shaped portion of the triangular lattice $T$ (see Figure~\ref{fig:schnyderembedding}(b)). It is proved combinatorially in \cite{Schnyder} that if we map every edge $(u,v)$ in $M$ to the line segment between $s(u)$ and $s(v)$, then we obtain a proper embedding (that is, no such line segments intersect except at common endpoints). Note that height and width of $T$ are $O(n)$. 
	\end{enumerate}
	
	\begin{figure}[h]
		\centering
		\subfigure[]{\includegraphics[width=5cm]{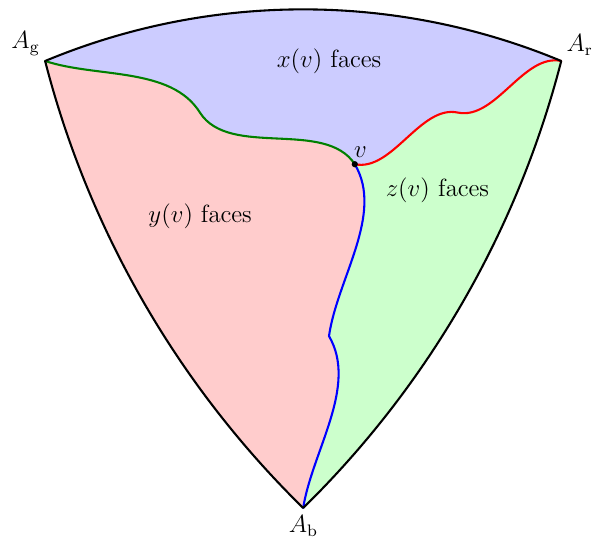}} \quad 
		\subfigure[]{\includegraphics[width=5cm]{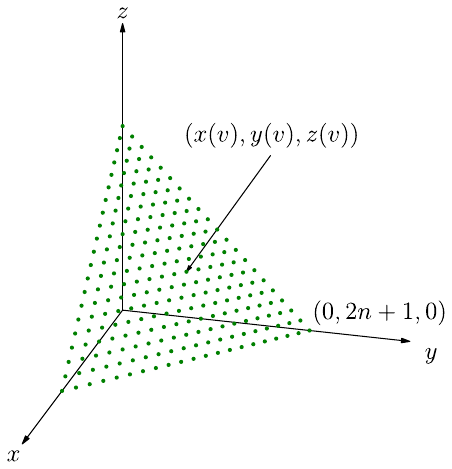}}
		\caption{Schnyder's embedding: we send each vertex $v$ in a simple plane triangulation to the triple of integers describing the number of faces in each region into which the flow lines from $v$ partition the planar map. }
		\label{fig:schnyderembedding}
	\end{figure}
	
	Schnyder's method and de Fraysseix, Pach, and Pollack's method provided the foundational ideas upon which subsequent straight-line grid embedding schemes have been built. For more discussion, we refer the reader to \cite{di2013drawing, tamassia2013handbook}.
	
	In Schnyder's algorithm, the coordinates of a vertex $v$ are determined by how flow lines from $v$ partition the faces of $M_n$. Since these ingredients have continuum analogues, Theorem~\ref{thm:3pair} suggests that Liouville quantum gravity coupled with imaginary geometry can be used to describe the large-scale random behavior of Schnyder's embedding. Consider the coupling of $(\mu_\fh,\eta^b,\eta^r,\eta^g)$ as in Theorem~\ref{thm:3pair}. Given $v\in \C$, run three flow lines from $u$, which are the right boundaries of $\eta^\b,\eta^\r,$ and $\eta^\g$ at the respective times when they first hit $u$. Map $u$ to the point in the plane $x+y+z = 1$ whose coordinates $x(v)$, $y(v)$, and $z(v)$ are given by the $\mu_\fh$-measure of the three regions into which these flow lines partition $\C$. This map is the continuum analogue of the discrete map depicted in Figure~\ref{fig:schnyderembedding}.
	
	\begin{theorem}\label{thm:embedding}
		Consider a uniform wooded triangulation of size $n$, and let $v^n_{1},\cdots, v_k^n$ be an i.i.d.\ list of uniformly chosen elements of the vertex set. Then the list of embedded locations of these vertices, namely $\{(2n)^{-1}s(v_i^n)\}_{1\le i\le k}$, converges in law as $n\to\infty$. 
		
		The limiting law is that of $\{x(v_i),y(v_i),z(v_i)\}_{1\le i\le k}$, where $v_1,\cdots,v_k$  are $k$ points uniformly and independently sampled from an instance of $\mu_\fh$.
	\end{theorem}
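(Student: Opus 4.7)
My approach is to express Schnyder's discrete coordinates $s(v)=(x(v),y(v),z(v))$ directly in terms of the three walks $(Z^\b,Z^\r,Z^\g)$ from Theorem~\ref{thm:bijection from wood to walk}, then pass to the continuum using Theorem~\ref{thm:3pair} together with the peanosphere description of $(\mu_\fh,\eta^\b,\eta^\r,\eta^\g)$. The guiding intuition is that the three flow lines from $v$ which define $s(v)$ on the discrete side are precisely the boundaries produced by the three space-filling explorations $\cP_S^\b,\cP_S^\r,\cP_S^\g$; consequently, each of $x(v),y(v),z(v)$ can be read off from the hitting time of $v$ in the corresponding walk.

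\textbf{Main steps.} For each inner vertex $v$ and each color $c\in\{\b,\r,\g\}$, let $\sigma^c(v)\in\{0,\ldots,3n\}$ denote the step in $Z^c$ associated with $v$'s outgoing $c$-tree edge (equivalently, the time at which $\cP_S^c$ completes the DFS of the $T_c$-subtree rooted at $v$). The first step is a combinatorial lemma: there is an explicit affine identity
\[
\bigl(x(v),y(v),z(v)\bigr) \;=\; \Phi\bigl(\sigma^\b(v),\sigma^\r(v),\sigma^\g(v)\bigr) \;+\; o(n),
\]
with error negligible after rescaling by $(2n)^{-1}$; here $\Phi\colon\R^3\to\R^3$ is an explicit affine map independent of $v$ and $n$. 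The intuition is that the clockwise DFS of $T_c$ fills in exactly the Schnyder region lying on a prescribed side of the color-$c$ flow line from $v$ just before reaching $v$. The second step observes that a uniformly sampled inner vertex $v_i^n$ corresponds under the bijection $\varphi$ to a uniformly chosen $(1,-1)$-step in each walk $Z^c$; since these $n$ steps are equidistributed among the $3n$ walk steps in the scaling-limit sense, the rescaled times $\sigma^c(v_i^n)/(3n)$ admit a well-defined joint limit. The third step combines the discrete identity with the joint functional convergence of Theorem~\ref{thm:3pair} (via Skorokhod coupling) to conclude that $\bigl((2n)^{-1}s(v_i^n)\bigr)_{1\le i\le k}$ converges in law.

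\textbf{Identification and main obstacle.} The last step is to identify the limit with $(x(v_i),y(v_i),z(v_i))$ in the continuum sense, i.e.\ the triple of $\mu_\fh$-measures of the three regions into which the imaginary-geometry flow lines from $v_i$ partition $\C$. This is the continuum counterpart of step~1: by the imaginary-geometry description of the left and right boundaries of $\eta^c([0,t])$ as flow lines at angles $\theta_c\pm\pi/2$, the $\mu_\fh$-area swept by $\eta^c$ before first hitting $v_i$ equals the $\mu_\fh$-measure of a specific Schnyder region. The main obstacle is the combinatorial identity in step~1 together with its continuum counterpart. On the discrete side, one must carefully track how the clockwise DFS of $T_c$ interacts with the other two trees and with the three flow lines from $v$, building on Schnyder's original argument that the flow lines yield a well-defined three-region partition. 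On the continuum side, the identification requires the imaginary-geometry coupling of $(\eta^\b,\eta^\r,\eta^\g)$ to $\mu_\fh$ and a matching of the $(2n)^{-1}$ normalization with the $\mu_\fh$-area parametrization of $\eta^c$.
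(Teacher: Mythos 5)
Your reduction to Theorem~\ref{thm:3pair} is the right instinct, but the combinatorial identity at the heart of your argument (step~1, and its continuum counterpart in your final paragraph) is false, and this is not a bookkeeping issue. The quantity $\sigma^\b(v)/3n$ converges to the $\mu_\fh$-area of the region swept by $\eta^\b$ before it first hits $v$, and that region is bounded by the \emph{left and right} boundaries of $\eta^\b$ at the hitting time, i.e.\ by the flow lines of angles $\pm\tfrac{\pi}{2}$ from $v$. Only the right boundary (angle $-\tfrac{\pi}{2}$) is one of the three Schnyder flow lines; the left boundary (angle $+\tfrac{\pi}{2}$) is a dual flow line which passes through the interior of one of the three Schnyder regions. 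Concretely, the six boundary flow lines of the three Peano curves at their hitting times of $v$ have angles $\tfrac{\pi}{6},\tfrac{\pi}{2},\tfrac{5\pi}{6},\tfrac{7\pi}{6},\tfrac{3\pi}{2},\tfrac{11\pi}{6}$ and cut the sphere into six sectors of areas $a_1,\dots,a_6$; each hitting time is a sum of three consecutive sectors (e.g.\ $a_1+a_2+a_3$, $a_3+a_4+a_5$, $a_5+a_6+a_1$), while the Schnyder coordinates are $a_1+a_2$, $a_3+a_4$, $a_5+a_6$. A short computation shows that the latter functionals do not lie in the linear span of the former together with $\sum_i a_i=1$, so no affine map $\Phi$ as in your step~1 can exist: the three hitting times simply do not determine $s(v)$, even asymptotically.

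The missing ingredient is cross-color information: one needs the area of the portion of $\eta^{\b}([q,1])$ lying to the right of the \emph{red} flow line started from $\eta^\b(q)$. This is exactly the quantity $A^n(q)$ whose convergence to its continuum analogue $A(q)$, jointly with the walks, is established in Section~\ref{sec:finite} (building on the excursion decomposition of Propositions~\ref{prop:triple} and~\ref{prop:onepocket} and the infinite-to-finite-volume transfer of Proposition~\ref{prop:transfer}); up to the asymptotic $2\!:\!3$ face-to-edge ratio (which reconciles the $(2n)^{-1}$ normalization of face counts with the $(3n)^{-1}$ normalization of discrete quantum area) and negligible boundary contributions, $A^n(q)$ \emph{is} one Schnyder coordinate of the vertex visited at time $q$. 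The paper's proof of Theorem~\ref{thm:embedding} consists of this observation plus the joint convergence of the three walks; your proposal replaces this technical core with an identity that does not hold.
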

	\begin{figure}[h]
		\centering
		\includegraphics[width=0.60\textwidth]{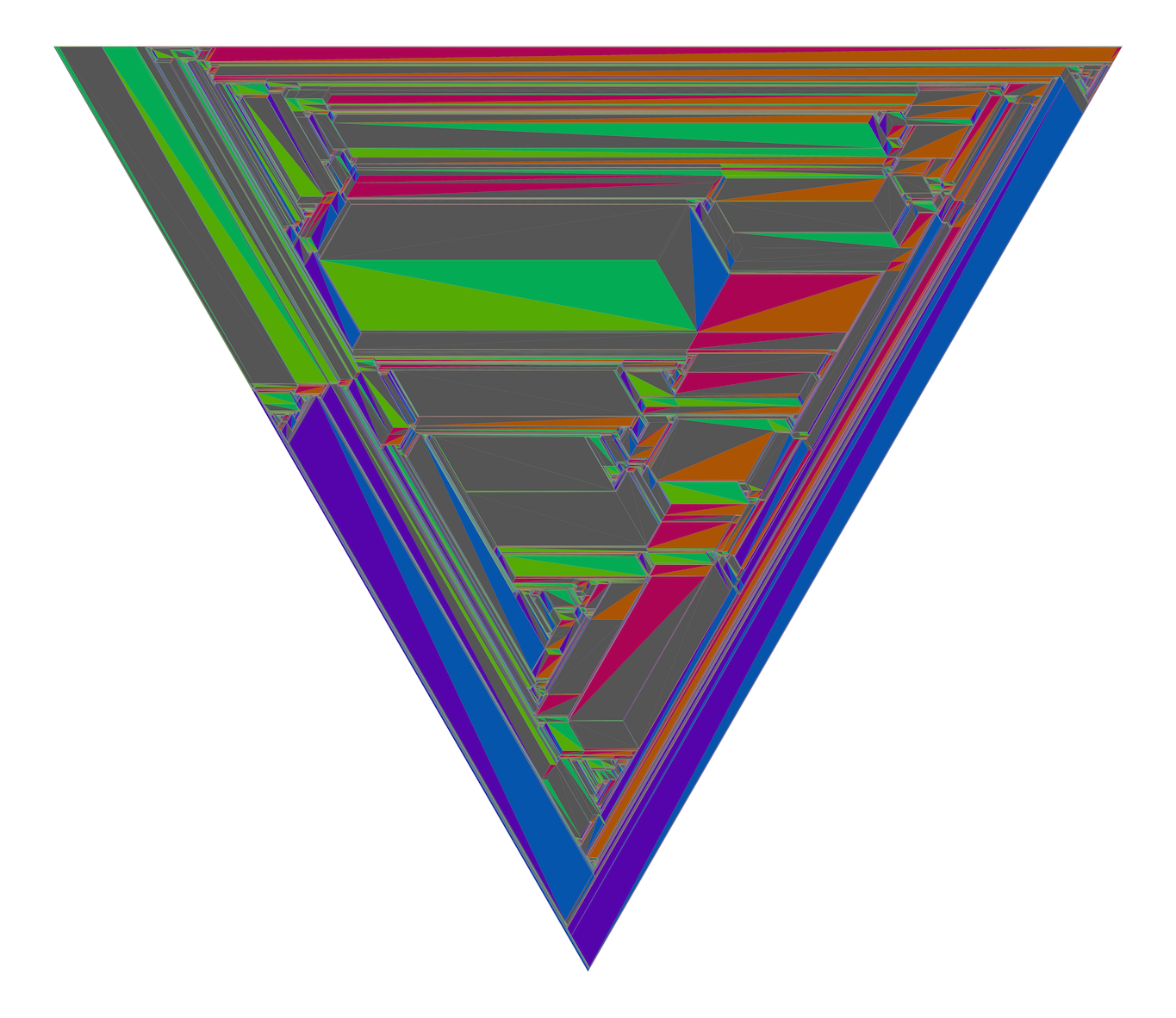}
		\caption{A uniformly sampled 1{,}000{,}000-vertex Schnyder-wood-decorated triangulation, 
			Schnyder-embedded in an equilateral triangle. Each face is
			colored with the average (in RGB color space) of the colors of its three bounding
			edges. \label{fig:sw_example}}
	\end{figure} 
	We will prove Theorem~\ref{thm:embedding} as a corollary to our proof of Theorem~\ref{thm:3pair}.
	\begin{remark} 
		In the discrete setting, there are exactly three flow lines from every vertex. In the continuum, this flow line uniqueness holds almost surely for any fixed point, but  \textit{not} for all points simultaneously. That is, there almost surely exist multiple flow lines of the same angle starting from the same point. This singular behavior is manifested in some noteworthy features of the image of a large  uniform wooded triangulation under the Schnyder embedding (Figure~\ref{fig:sw_example}). For example, there are macroscopic triangles occupying the full area of the overall triangle. These macroscopic triangles come in pairs which form parallelograms, whose sides  are parallel to the sides of the overall triangle. 
		In Theorem~\ref{thm:embedding}, we focus on typical points. A more thorough discussion of the continuum limit of the Schnyder embedding and its relation to imaginary geometry singular points will appear in \cite{sun2017scaling}.
	\end{remark}
	
	\subsection{Relation to other models and  works}\label{subsec:other}
	\subsubsection*{Bipolar orientations} 
	A bipolar orientation on a planar map is an acyclic orientation with a unique source and sink. A mating-of-trees bijection for bipolar-oriented maps was found in \cite{bipolar}, and the authors used it to prove that bipolar-oriented planar maps converge to $\SLE_{12}$ decorated $\sqrt{4/3}$-LQG, in the peanosphere sense. A bipolar orientation also induces a bipolar orientation on the dual map. In \cite{bipolar}, the authors conjectured that the bipolar-decorated map and its dual jointly converge to  $\sqrt{4/3}$-LQG decorated with two $\SLE_{12}$ curves  \nocolor{coupled in the same imaginary geometry}. 	In \cite{bipolarII}, this conjecture was proved in the triangulation case. \nocolor{The convergence is of the same type as in our Theorem~\ref{thm:3pair}. The angle between the  two  $\SLE_{12}$ curves is $\pi/2$.}
	This was the first time that a pair of imaginary-geometry-coupled Peano curves was proved to be the scaling limit of a natural discrete model. The present article provides the first example for a triple of Peano curves. Our work will make use of some results on the coupling of multiple imaginary geometry trees from \cite{bipolarII}. We will review these results in Section~\ref{subsec:singleflow}.
	
	Given $S\in \cS_n$, let $M'=T_\b(S)\cup T_\g(S)\cup \ol{A_gA_b}$, and reverse the orientation of each blue edge. It is proved in \cite[Proposition 7.1]{baxter-counting}
	that (i) this operation gives a bipolar oriented map on $n+2$ vertices with the property that the right side of every bounded face is of length two, and (ii) this procedure gives a bijection between $\cS_n$ and the set of bipolar oriented maps with that property. Applying the mating-of-trees bijection in \cite{bipolar}, it can be proved that the resulting walk coincides with the walk $\cZ^n$ in Lemma~\ref{lem:fin-walk}. Furthermore, both blue and green flow lines in $S$ can be thought of as flow lines in the bipolar orientation on the dual map of $M'$ in the sense of \cite{bipolar}. Therefore, Theorem~\ref{thm:3pair} can be formulated as a result for bipolar orientations. However, the bipolar orientation perspective clouds some combinatorial elements that are useful for the probabilistic analysis. Therefore, rather than making use of this bijection, we will carry out a self-contained development of the requisite combinatorics directly in the Schnyder woods setting.  \nocolor{See~\cite{ghs-exp}  for an application of this bipolar orientation encoding, where the authors showed that the metric exponent for uniform wooded triangulation is the same as the corresponding exponent for LQG with $\gamma=1$. }
	
	\subsubsection*{Non-intersecting Dyck paths}
	In \cite{bernardi2007catalan}, the authors give a bijection between the set of wooded triangulations and the set of pairs of non-crossing Dyck paths. This bijection implies that the number of wooded triangulations of size $n$ is equal to 
	\begin{equation} \label{eq:catalan}
	C_{n+2}C_n-C_{n+1}^2 = \frac{6(2n)!(2n+2)!}{n!(n+1)!(n+2)!(n+3)!}
	\end{equation}
	where $C_n$ is the $n$-th Catalan number
	\cite[Section 3]{bernardi2007catalan}. Our bijection is similar to the one in \cite{bernardi2007catalan} in that they are both based on a contour exploration of the blue tree. In fact, their bijection is related to ours via a shear transformation that maps the first quadrant to $\{(x,y) \in \R^2 \, : \, x \geq y \geq 0\}$. Thus Theorem~\ref{thm:3pair} implies that the three pairs of non-intersecting Dyck paths in \cite{bernardi2007catalan}---coming from clockwise exploring the three trees--- converge jointly to a shear transform of $(\scZ^\b,\scZ^\r,\scZ^\g)$ in Theorem~\ref{thm:3pair}.
	
	The non-crossing Dyck paths are closely related to the lattice structure of the set of Schnyder woods on a triangulation, while our bijection is designed to naturally encode geometric information about the wooded triangulation. 
	
	\subsubsection*{Twenty-vertex model}
	
	Note that a Schnyder wood on a regular triangular lattice has the property that each vertex has in-degree and out-degree 3. In other words, Schnyder woods are equivalent to Eulerian orientations in this case. Similarly, bipolar orientations on the \textit{square} lattice have in-degree and out-degree 2 at each vertex. In the terminology of the statistical physics literature, these are the \textit{twenty-vertex} and \textit{six-vertex} models, respectively (note that $20 = \binom{6}{3}$ and $6 = \binom{4}{2}$). The twenty-vertex model was  first studied by Baxter \cite{baxter1969}, following the suggestion of Lieb. Baxter \cite{baxter1969} showed that the residual entropy of the twenty-vertex model is $\tfrac{3\sqrt3}{2}$, generalizing Lieb's famous result for the six-vertex model.
	
	Since the twenty-vertex model is a special case of the Schnyder wood, we may define flow lines from each vertex using the COLOR algorithm. Furthermore, we may consider the dual orientation on the dual lattice---this orientation sums to zero around each hexagonal face and can therefore be integrated to give a height function associated with the model. It is an easy exercise to check that the winding change of a flow line in the twenty-vertex model can be measured  by the height difference along the flow line. This is analogous to the Temperley bijection for the dimer model, where the dimer height function measures the winding of the branches of the spanning tree generated by the dimer model. A similar flow-line height function relation for the six-vertex model has been found by \cite{bipolarIII}. In light of the correspondence between the dimer model and imaginary geometry with $\kappa=2$, we conjecture that the twenty-vertex model is similarly related to imaginary geometry with $\kappa=1$. To our knowledge, this perspective on the twenty-vertex model is new. We will not elaborate on it in this paper, but we plan to do some numerical study in a future work. See \cite{bipolarIII} for results on the numerical study of the six-vertex model in this direction.
	
\nocolor{	
 \subsubsection*{Bernardi and Fusy decomposition.} Bernardi and Fusy~\cite{Bernardi-Fusy} provides a Schnyder decomposition for   $d$-angulations of girth $d$, generalizing the bipolar orientation ($d=4$) and Schnyder woods $(d=3)$. It is interesting to understand the relation between LQG and their model for $d\ge 5$, in particular, the corresponding LQG parameter $\gamma$.
}
	
	\subsection{Outline}
	
	In Section~\ref{sec:bijection} we show that our relation between wooded triangulations and lattice walks is a bijection, and we demonstrate how this bijection operates locally. We use this construction to define an infinite-volume version of the uniform wooded triangulation, for ease of analysis. In Section~\ref{sec:pair} we prove convergence of the planar map and \textit{one} tree, and we address some technically important relationships between various lattice walk variants associated with the same wooded triangulation. In Section~\ref{sec:lqg} we review the requisite LQG and imaginary geometry material, and we present a general-purpose excursion decomposition of a two-dimensional Brownian motion that plays a key role in connecting $Z^\b$, $Z^\r$ and $Z^\g$. Finally, in Sections~\ref{subsec:1D}--\ref{sec:3tree} we prove the infinite volume version of Theorem~\ref{thm:3pair}, namely Theorem~\ref{thm:main1}. In Section~\ref{sec:finite}, we transfer Theorem~\ref{thm:main1} to the finite volume setting and conclude the proofs of Theorems~\ref{thm:3pair} and~\ref{thm:embedding}.

	\section{Schnyder woods and 2D random walks} \label{sec:bijection}
	We will prove Theorem~\ref{thm:bijection from wood to walk} in Section~\ref{subsec:tree-walk} and record some geometric observations in Section~\ref{subsec:dual}. In Section~\ref{subsec:infinite}, we construct the infinite volume limit of the uniform wooded triangulation. 
	
	\subsection{From a wooded triangulation to a lattice walk}\label{subsec:tree-walk}
	
	We first recall some basics on \textbf{rooted plane trees}.  For more background, see \cite{LeGall-Survey}. A rooted plane tree is a planar map with one face and a specified directed edge called the \textit{root edge}. The head of the root edge is called the \textit{root} of the tree. We will use the term \textit{tree} as an abbreviation for \textit{rooted plane tree} throughout.
	
	\begin{figure}[h] \centering
		\includegraphics[width=12cm]{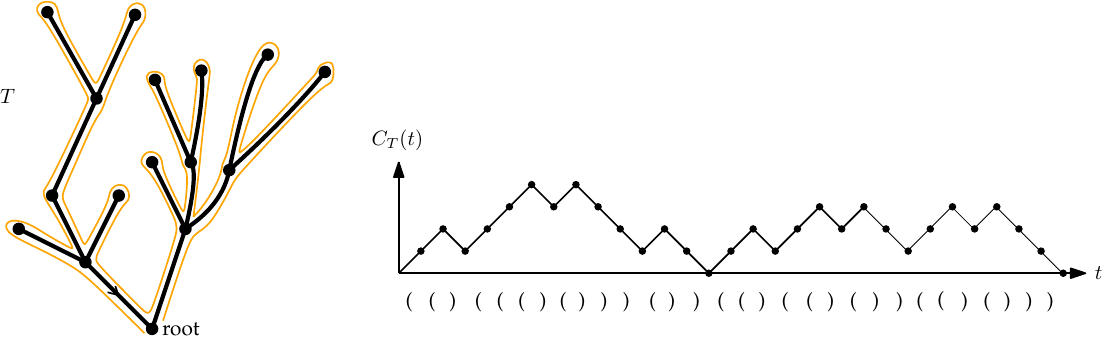}
		\caption{\label{fig:rootedplanetree} A rooted plane tree whose root
			edge is indicated with an arrow. The contour function $C_T$ (shown
			here linearly interpolated) tracks the graph distance to the root
			for a clockwise traversal of the tree. The contour function is a
			Dyck path, and if we associate to each $+1$ step an open
			parenthesis and with each $-1$ step a closed parenthesis, then we
			see that Dyck paths of length $2n$ are in bijection with
			parenthesis matchings of length $2n$. }
	\end{figure}
	
	Let \nocolor{$n$} be a positive integer, and suppose $T$ is a tree with \nocolor{$n+1$} vertices and \nocolor{$n$} edges. \nocolor{Without loss of generality, we draw $T$ on the plane such that (i) its root is below all other vertices and (ii) the root edge is to the left hand side of other edges attached to the root.}  Consider a clockwise exploration of $T$ starting from the \nocolor{ left side of the} root edge (see Figure~\ref{fig:rootedplanetree}), and define a function $C_T$ from $\{0,1,\cdots,2\nocolor{n}\}$ to $\Z$ such that $C_T(0) = 0$ and for $0 \leq t \leq 2\nocolor{n}-1$, $C_{T}(t+1) - C_T(t) = +1$ if the $(t+1)$st step of the exploration traverses its edge in the away-from-the-root direction and $-1$ otherwise. We call $C_T$ the \textbf{contour function} associated with $T$.
	
	The contour function is an example of a \textit{Dyck path} of length $2\nocolor{n}$: a nonnegative walk from $\{0,1,\ldots,2\nocolor{n}\}$ to $\Z$ with steps in $\{-1,+1\}$ which starts and ends at 0. We can linearly interpolate the graph of $C_T$ and glue the result along maximal horizontal segments lying under the graph to recover $T$ from $C_T$. Thus $T\mapsto C_T$ is a bijection from the set of rooted plane trees with \nocolor{$n$} edges to the set of Dyck paths of length $2\nocolor{n}$. 
	
	A \textit{parenthesis matching} is a word in two symbols \texttt{(} and \texttt{)} that reduces to the empty word under the relation $\texttt{()}=\emptyset$.  The gluing action mapping $C_T$ to $T$ is equivalent to parenthesis matching the steps of $C_T$, with upward steps as open parentheses and down steps as close parentheses (see Figure~\ref{fig:rootedplanetree}). Thus the set of Dyck paths of length $2n$ is also in natural bijection with the set of parenthesis matchings of length $2n$. 
	
	Suppose $S=(M,\cO)$ is a wooded triangulation, and denote by $T_{\b}(S)$ its blue tree. Among \nocolor{the} edges of $T_{\b}(S)$, we let the one immediately clockwise from $\overline{\Ag\Ab}$ \nocolor{around $\Ab$} be the root edge of $T_{\mathrm{b}}(S)$, and similarly for the red and green trees. \nocolor{Let the heads of these root edges be the outer vertices. So their orientations are consistent with the corresponding 3-orientation and} $T_{\mathrm{b}}(S)$, $T_{\mathrm{r}}(S)$, and $T_{\mathrm{g}}(S)$ are rooted plane trees.
	
	Given a lattice walk $Z$ whose increments are in $\{(1,-1),(-1,0),(0,1)\}$, we define the associated word $w = w_1w_2\cdots w_{3n}$ in the letters $\{\mathrm{b,r,g}\}$ by mapping the sequence of increments of $Z$ to its corresponding sequence of colors: for $1\leq k\leq 3n$ we define $w_k$ to be $\mathrm{b}$ if $Z_{k}-Z_{k-1} = (1,-1)$, to be r if $Z_{k}-Z_{k-1} = (-1,0)$, and to be g if $Z_{k}-Z_{k-1} = (0,1)$ (see Figure~\ref{fig:wordwalkmap}). We will often elide the distinction between a walk and its associated word, so we can say $w \in \cW_n$ if $w$ is the word associated to $Z$ and $Z \in \cW_n$. Also, we will refer to the two components of an ordered pair as its \textit{abscissa} and \textit{ordinate}, respectively.
	\begin{figure}[h]
		\centering 
		\includegraphics[width=0.9\textwidth]{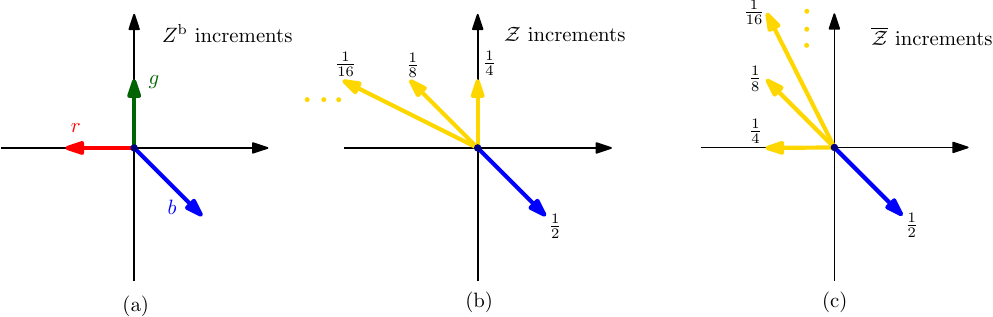}
		\caption{The set of increments of the three types of lattice walk we consider: 
			(a) $Z^\b$:  $(1,-1)$, $(0,1)$, and $(-1,0)$,
			(b) $\cZ$ (see Lemma~\ref{lem:fin-walk} and~\ref{lem:forward}): $\{(1,-1)\} \cup
			(-\Z_{\geq 0} \times \{1\})$, with probability measure indicated by
			the labels, and (c) $\overline{\cZ}$ (see Definition~\ref{def:reverse}): $\{(1,-1)\} \cup
			( \{-1\} \times \Z_{\geq 0})$.
		}
		\label{fig:steps} 
	\end{figure}
	Given a word $w$, let $w_{\g\b}$ be the sub-word obtained by dropping all  r symbols from $w$ and  $w_{\b\r}$ be the sub-word obtained by dropping all g symbols from $w$. Then $w\in \cW_n$ implies that both $w_{\g\b}$  and $w_{\b\r}$ are parenthesis matchings: 
	
	\begin{figure}[h]
		\centering 
		\includegraphics[height=4cm]{figures/woodonly}
		\includegraphics[height=4cm]{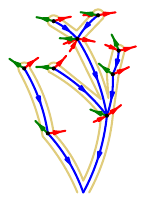}
		\caption{We cut each red and green edge into incoming and outgoing arrows at each vertex and discard the green incoming arrows. Then associating the second traversing of each blue edge with the outgoing red arrow incident to its tail, we see that $w_{\b\r}$ encodes the matching of red incoming and outgoing arrows along the contour of the blue tree. Similarly, associating each outgoing green arrow with the first traversing of the outgoing blue edge from the same vertex, we see that $w_{\b\r}$ describes the contour function of the blue tree.} 
		\label{fig:decorated} 
	\end{figure}
	
	\begin{proposition} \label{prop:paren_matching} 
		For $S\in \cS_n$, we have $w \colonequals \nocolor{\varphi}(S)\in \cW_n$. Moreover, $w_{\g\b}$ is the parenthesis matching corresponding to the contour function of $T_{\b}$, and $w_{\mathrm{br}}$ is the parenthesis matching describing $\cP_S$'s crossings of the red edges (to wit: each first crossing corresponds to an open parenthesis, and each second crossing to a close parenthesis). 
	\end{proposition}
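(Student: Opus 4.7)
My plan is to prove Proposition~\ref{prop:paren_matching} by a careful local analysis of $\cP_S$ at each interior vertex during phase~2, combined with a key structural lemma about visit order in the blue tree contour. Throughout, I will use Schnyder's rule, which imposes the clockwise cyclic order around each interior vertex $v$: outgoing blue, a (possibly empty) fan of incoming reds, outgoing green, a fan of incoming blues (the $T_\b$-children of $v$), outgoing red, and a fan of incoming greens. During phase~2 at $v$, this decomposes $\cP_S$'s visit into a \emph{first sweep} (crossing positions~2 and~3) right after the first traversal of $v$'s outgoing blue edge, visits to each child subtree in cyclic order, and a \emph{last sweep} (crossing positions~5 and~6) right before the second traversal.

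The first step is to determine, for each non-blue edge, which of its two crossings by $\cP_S$ comes first chronologically. This is immediate for edges incident to $\Ag$ (first crossing in phase~1, second at the tail's position~3 in phase~2) and to $\Ar$ (first at the tail's position~5 in phase~2, second in phase~3). The remaining interior-to-interior case is handled by the following structural lemma, which I expect to be the main obstacle of the proof: for any green edge $u\to p$ with both endpoints interior, $p$'s blue subtree is visited strictly before $u$'s in the blue tree contour; for any red edge $u\to q$ with both endpoints interior, $u$'s blue subtree is visited strictly before $q$'s. I would prove this by combining two ingredients: the non-intersecting flow lines property (observation~3 of the introduction), which implies that the green- or red-parent of a vertex is never a blue-ancestor or blue-descendant of that vertex, so $u$ and $p$ (resp.\ $u$ and $q$) lie in different blue subtrees of their lowest common blue-ancestor $w$; and a planarity argument at $w$ which uses Schnyder's cyclic order—outgoing green preceding the children and outgoing red following them—to pin down the visit order of $w$'s children. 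The asymmetry here reflects the asymmetric positions of the outgoing green and outgoing red edges in the cyclic order.

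Once the lemma is established, every g symbol is located at its tail vertex's first sweep, and every r symbol either at its head vertex's first sweep (interior head) or during phase~3. Listing the walk symbols at each interior vertex $u$'s visit in $\cP_S$ order then gives: first-traversal of outgoing blue (no symbol); r symbols for the second crossings of $u$'s incoming reds at position~2; the g symbol for $u$'s outgoing green at position~3; recursively, the walk symbols from each child subtree; no symbols during the last sweep (only first crossings at positions~5 and~6); and finally the b symbol for the second traversal of outgoing blue. Condition~(3) of $\cW_n$ is immediate: a b symbol is always preceded in the walk either by the b symbol ending $u$'s last child's subtree or by the g symbol of $u$ itself (if $u$ is a $T_\b$-leaf), never by an r symbol.

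The two parenthesis matching statements follow from this same local analysis. For $w_{\g\b}$, pairing each g symbol at $u$ with the first traversal of $u$'s outgoing blue—adjacent in $u$'s first sweep with no intermediate g or b events—and each b with the second traversal yields a symbol-by-symbol match with the contour of $T_\b$, because the recursive structure ``g at $u$, descendants' g's and b's, b at $u$'' mirrors the contour's ``open, descendants' contour, close.'' For $w_{\b\r}$, pairing each b at $u$ with the first crossing of $u$'s outgoing red—adjacent in $u$'s last sweep with no intermediate b or r events—and each r with itself (a second crossing of a red edge) gives the desired symbol-by-symbol match. Conditions~(1) and~(2) of $\cW_n$ then follow from these two Dyck paths being non-negative and balanced: $R_t = \#g(t) - \#b(t) \ge 0$ and $L_t = \#b(t) - \#r(t) \ge 0$, with both equal to zero at $t=3n$.
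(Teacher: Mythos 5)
Your proposal is correct and follows essentially the same route as the paper: your first-sweep/last-sweep decomposition at each interior vertex is the paper's arrow (dart) decomposition of the red and green edges, and your structural lemma on blue-subtree visit order is precisely the paper's (asserted, not proved) claim that the tail of each green (resp.\ red) edge lies to the right (resp.\ left) of $\cP_S$ at the second crossing, after which both arguments conclude by the same planarity and adjacency observations. The one caveat is that ruling out the blue-\emph{descendant} case in your lemma does not follow from the non-crossing of the flow lines emanating from $u$ alone (that only excludes blue ancestors); it requires the flow-line partition at the green/red parent together with a Jordan-curve or Euler-formula count — but the paper elides this point as well.
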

	\begin{proof} 
From Schnyder's rule and the COLOR algorithm, $\cP_S$  crosses each green (resp.\ red) edge $e$  twice, and \nocolor{ the tail (resp. head) of $e$ is on the right} of $\cP_S$  at the second crossing. 
		
		\nocolor{ Obviously $S$ has $n$ edges in each color, so $\varphi(S)$ has $n$ steps in each type and must end at the origin. For each blue edge, the Schnyder's rule for its tail implies that the $(1,-1)$-step corresponding to this blue edge must be preceded by the $(0,1)$-step corresponding to the outgoing green edge at the tail of this blue edge. So the $\varphi(S)$ cannot go below the upper half plane. For each red edge, if its tail is $v$, then $\cP_S$ traverses the outgoing blue edge at $v$ for the second time before $\cP_S$ crosses this red edge for the second time. So the $(-1,0)$-step corresponding to this red edge must be after the $(1,-1)$-step corresponding to the outgoing blue edge at $v$, therefore $\varphi(S)$ never goes to the left half plane. In summary, $\varphi(S)\in\cW_n$.}	
		
		We remove the outer edges and cut all of the green and red edges into outgoing and incoming \textit{arrows} (a.k.a \textit{darts}). Then every g (resp.\ r) symbol in $w$ corresponds to a green outgoing (resp.\ red incoming) arrow. Finally, we remove incoming green arrows (see Figure~\ref{fig:decorated}). 
		
		The outgoing and incoming red arrows in this arrow-decorated tree form a valid parenthesis matching, by planarity of $S$. By Schnyder's rule, each b in $w$ corresponds to the blue edge whose second traversal immediately follows $\cP_S$'s crossing of some outgoing red arrow. Using this identification between outgoing red arrows and b's in $w$, we see that the sequence of b's and r's in $w_{\b\r}$ admits the same parenthesis matching as the sequence of outgoing and incoming red arrows.
		\nocolor{ So $w_{\mathrm{br}}$ is the parenthesis matching describing $\cP_S$'s crossings of the red edges.}
		 Similarly, identifying the outgoing green arrow and outgoing blue edge from each vertex $v$, Schnyder's rule implies that only incoming red arrows may appear between $\cP_S$'s first traversal of $v$'s outgoing blue edge and its crossing of $v$'s outgoing green arrow. Thus $w_{\g\b}$ admits the same parenthesis matching as the contour function of the blue tree. 
	\end{proof} 
	
	\begin{figure} [h]
		\centering 
		\includegraphics[width=\textwidth]{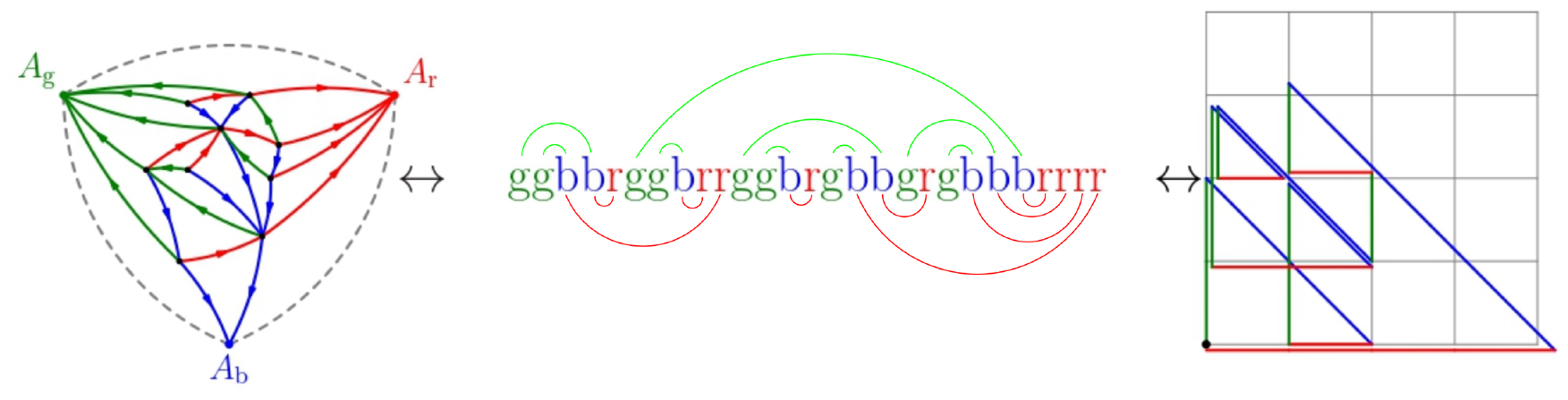}
		\caption{With each wooded triangulation we associate a word and a lattice walk. The values of the lattice walk are perturbed slightly to make it easier to visually follow the path. \nocolor{ The green (resp. red) curves above (resp. below) the word denotes the $\g\b$ (resp. $\b\r$) matches.}
		} 
		\label{fig:wordwalkmap} 
	\end{figure}
	Recall that a \textit{combinatorial map} is a graph together with a (clockwise) cyclic order of the edges incident to each vertex. From this data, we may define combinatorial \textit{faces} and information about how these faces are connected along edges and at vertices. Gluing together polygons according to these rules, we get a surface $X$ together with an embedding of the combinatorial map in $X$. This embedding is unique up to deformation of $X$ \cite{lando2013graphs}. 
	
	For any word $w$ in the symbols b, r, and g, we say that $(w_j,w_k)$ is a {\bf gb match} if $w_j = \g$, $w_k = \b$, and sub-word obtained by dropping the r's from $w_{j+1}\cdots w_{k-1}$ reduces to the empty word under the relation $\g\b = \emptyset$. We also say $w_k$ (resp. $w_j$) is the gb match of $w_j$ (resp. $w_k$). We define the term  {\bf br match} similarly. \nocolor{By the constructions of the gb match and the br match, if a letter in $w$ has a gb (resp. br) match, then the gb (resp. br) match must be unique. }
	
	The following definition provides a recipe to recover a wooded triangulation from its word. 
	
	\begin{definition} \label{def:graph_from_word} 
		Given $w \in\cW_n$, we define a graph $\psi(w)$ with colored oriented edges as follows. The vertex set  is the union of  the set of $\b$'s in $w$ and the symbols $\{A_\b,A_\r,A_\g\}$. A vertex $v$ is called an outer vertex  if $v\in \{A_\b,A_\r,A_\g\}$ and an inner vertex otherwise.
		We define $\ol{A_\b A_\r}, \ol{A_\r A_\g},$ and $\ol{A_\g A_\b}$ to be the three outer edges of $\psi(w)$. For each $1\le i\le 3n$, we define an inner edge associated with  $w_i$ as follows (see Figure~\ref{fig:edgepaths}): 
		\begin{enumerate}
			\item For $w_i=\b$, if there exists $(j,k)$ such that $j<i<k$  and  $(w_j,w_k)$ is a $\g\b$ match, find the least such $k$ and construct a blue edge from $w_i$ to $w_{k}$.  Otherwise construct a blue edge  from $w_i$ to  $A_\b$. 
			
			\item For  $w_i=\r$, find $w_i$'s $\b\r$ match  $(w_{i'},w_i)$.  If there exists $j>i$ with $w_{j}=\g$, find the smallest such $j$ and identify $w_j$'s  $\g\b$ match $(w_j,w_k)$. Construct a red edge from $w_{i'}$ to $w_k$. Otherwise construct a red edge from $w_{i'}$ to $A_\r$.
			
			\item For $w_i=\g$, find $w_i$'s $\g\b$ match $(w_{i},w_{i'})$. If there exists $(j,k)$ such that $j<i<k$ and $(w_j,w_k)$ is a $\b\r$ match, find the greatest such $j$ and construct a green edge from $w_{i'}$ to $w_j$. Otherwise, construct a green edge from $w_{i'}$  to $A_\g$.
		\end{enumerate}
\nocolor{	Here we identify symbols in $w$ with inner edges of $\psi(w)$  and identify b symbols with inner vertices of $\psi(w)$ (the tail of a blue edge identified with a b symbol is the vertex corresponding to that b symbol).
}

		By the edge assigning rule, each inner vertex has exactly one outgoing edge of each color. We now upgrade $\psi(w)$ to a combinatorial map by defining a clockwise cyclic order around each vertex. For an inner vertex, \nocolor{ the clockwise cycling order is the following and obeys  Schnyder's rule}: the unique blue outgoing edge, the incoming red edges, the unique outgoing green edge, the incoming blue edges, the unique outgoing red edge, incoming green edges. We also have to specify the order for incoming edges of each color: the incoming blue edges are in order of the appearance of their corresponding $\b$-symbol in $w$; same rules apply for incoming  red edges; the  incoming green edges are in the reverse order of appearance of their corresponding $\g$-symbol in $w$.
		
		For the edges attached to $A_\b$, we define the clockwise cyclic order by $\ol{A_\b A_\g}$, followed by incoming blue edges  in order of the appearance  in $w$, followed by $\ol{A_\b A_\r}$. For the edges attached to $A_\r$, we define the clockwise cyclic order by $\ol{A_\r A_\b}$, followed by incoming red edges  in order of the appearance  in $w$, followed by $\ol{A_\r A_\g}$. For the edges attached to $A_\g$, we define the clockwise cyclic order by $\ol{A_\g A_\r}$, followed by incoming green edges  in the reverse order of the appearance  in $w$, followed by $\ol{A_\g A_\b}$.
	\end{definition}
	
	\begin{figure} 
		\centering
		\subfigure[]{\includegraphics[width=0.3\textwidth]{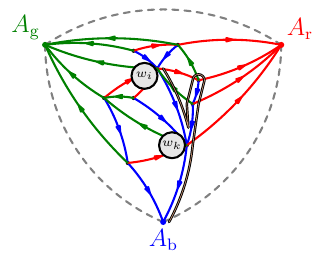}}
		\subfigure[]{\includegraphics[width=0.3\textwidth]{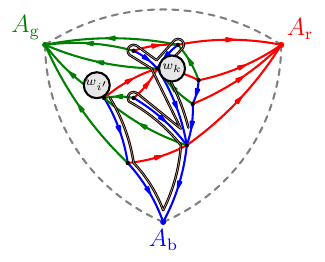}}
		\subfigure[]{\includegraphics[width=0.3\textwidth]{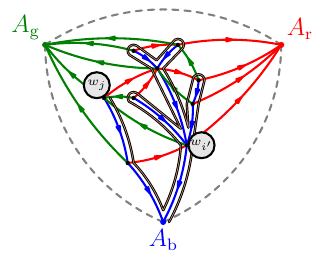}}
		{\includegraphics[width=\textwidth]{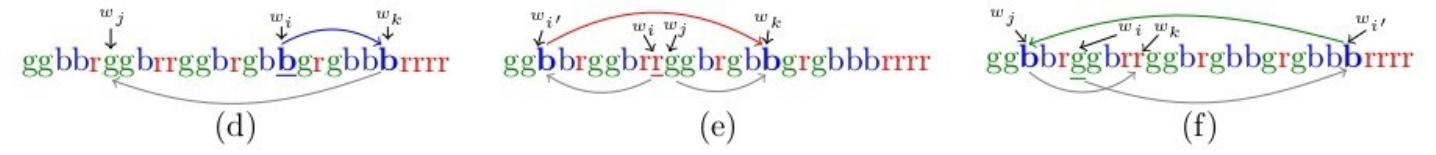}}
		\caption{The relevant portions of the path $\cP(S)$ for the construction of (a) blue edges, (b) red edges, and (c) green edges. And an illustration of  how to construct inner edges from its word in Definition~\ref{def:graph_from_word}: (d) blue edges, (e) red edges, and (f) green edges}
		\label{fig:edgepaths}
	\end{figure}
	
	We will use the following two lemmas to conclude the proof of Theorem~\ref{thm:bijection from wood to walk}. Given a word $w\in \cW_n$, define $\mathcal{M}_{\b\r}(w)$ to be the submap of $\psi(w)$ whose edge set consists of all of the blue edges, all of the red edges, and $\ol{A_\b A_\r}$, and whose vertex set consists of all the vertices of $\psi(w)$ except $A_\g$. 
	
	\begin{lemma} \label{lem:Mbr_planar} 
		$\mathcal{M}_{\b\r}(w)$ is a planar map, for all $w\in \cW_n$. 
	\end{lemma}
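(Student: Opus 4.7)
The plan is to construct an explicit planar drawing of $\cM_{\b\r}(w)$ and then verify that its combinatorial map data agrees with Definition~\ref{def:graph_from_word}. I would proceed in two stages: first embed the blue edges and $A_\b$ as a rooted plane tree, and then attach $A_\r$, the outer edge $\ol{A_\b A_\r}$, and the red edges inside the resulting unbounded face.

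For the first stage, by Proposition~\ref{prop:paren_matching} the sub-word $w_{\g\b}$ is a proper parenthesis matching, so via the standard bijection between Dyck paths and rooted plane trees it encodes a plane tree $T_\b$ with $n$ edges and root $A_\b$. Each b-symbol $w_k$ corresponds bijectively to a non-root vertex of $T_\b$, and the outgoing blue edge from $w_k$ is the edge to its parent. A direct check shows this coincides with item~(1) of Definition~\ref{def:graph_from_word}, and the cyclic order of incoming blue edges around each vertex also matches. Embedding $T_\b$ in the plane yields a map with a single unbounded face whose contour can be indexed by positions in $w$; in particular, each r-symbol determines a distinguished ``corner'' of $T_\b$ along this contour.

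For the second stage, I would place $A_\r$ in the unbounded face together with the outer edge $\ol{A_\b A_\r}$, and then add the red edges one at a time. For each r-symbol $w_i$ with br-match $(w_{i'}, w_i)$, the tail of the corresponding red edge is the vertex $w_{i'}$, while its head is the vertex $w_k$ prescribed by item~(2) of Definition~\ref{def:graph_from_word}, or else $A_\r$. Since $w_{\b\r}$ is also a proper parenthesis matching, the br-matches induce a non-crossing chord diagram along the blue-tree contour, so these red edges can be drawn as pairwise disjoint arcs through the unbounded face. I would then check that the resulting cyclic orderings around each inner vertex and around $A_\r$ agree with those specified in Definition~\ref{def:graph_from_word}.

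The main obstacle is showing that the head-vertex rule in item~(2) is geometrically the right one: concretely, that the vertex $w_k$ (the gb-match of the first g following $w_i$), or $A_\r$ when no such g exists, is exactly the endpoint to which the red edge must be drawn for both the non-crossing property and the prescribed cyclic orderings to hold simultaneously. This reduces to a combinatorial analysis of how the g- and b-symbols interleave between positions $i$ and $k$, using the no-rb-pattern constraint of Definition~\ref{def:walk} in an essential way. Once this combinatorial verification is in place, the constructed drawing is planar and realizes the combinatorial map of Definition~\ref{def:graph_from_word}, giving the conclusion.
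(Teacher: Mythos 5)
Your proposal takes essentially the same route as the paper's proof: embed the blue edges as a rooted plane tree via the Dyck word $w_{\g\b}$, and then use the fact that $w_{\b\r}$ is a balanced parenthesis word to attach the red edges in a non-crossing fashion along the contour of $\ol T_\b$. The one step you defer as the ``main obstacle'' is precisely what the paper's proof supplies: it cuts each red edge into an outgoing arrow (identified with the $\b$-symbol opening the $\b\r$ match) and an incoming arrow (identified with the $\r$-symbol itself), and checks that the clockwise contour of $\ol T_\b$ meets these $2n$ arrows in exactly the order of $w_{\b\r}$; the crux, where the no-$\r\b$ constraint enters just as you anticipate, is that the $\r$-symbols whose first subsequent $\g$ is $w_j$ are exactly the maximal run of $\r$'s immediately preceding $w_j$, so they are the incoming arrows at the vertex given by $w_j$'s $\g\b$ match and sit at consecutive contour corners of that vertex. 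With that observation in hand, linking arrows according to the parenthesis matching of $w_{\b\r}$ is planar and reproduces the head-vertex rule of item~(2) of Definition~\ref{def:graph_from_word}, which is the verification you postponed.
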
 
	
	\begin{proof} 
		Let $\cM_\b$ be the subgraph of $\psi(w)$ consisting of all its blue edges.  Recall the definition of blue edges in Definition~\ref{def:graph_from_word}. We  can embed $\cM_\b$ on the plane so that $w_{\g\b}$ is its Dyck word and $A_b$ is \nocolor{the head of} its root.  Moreover $\ol T_\b:=T_\b\cup\{\ol{A_\b A_\r}\}$ is a spanning tree of $\cM_{\b\r}$. We further embed $\ol{A_\b A_\r}$ so that it is the last edge in the clockwise exploration of  $\ol T_\b$.  Now we cut each red edge of $\cM_{\b\r}$ into an incoming and outgoing arrow so that $\cM_{\b\r}$ is transformed to $\ol T_\b$ with $2n$ total red arrows attached at its vertices. Now we can embed the red arrows in the plane uniquely so that the edge ordering around each vertex is consistent with the ordering rule in Definition~\ref{def:graph_from_word}.
		
		For each inner vertex $v$ we find the symbol $w_i=\b$ corresponding to $v$. Let $w_j=\g$ and $w_k=\r$ be the gb match and br match of $w_i$  respectively. Then by Definition~\ref{def:graph_from_word}, the edges corresponding to $w_i,w_j,w_k$ are the unique blue, green, red outgoing edges from $v$. We identify the red outgoing arrow from $w_k$ with the b symbol $w_i$.  Let $\ell=\max\{m<j: w_m\neq\r  \}$. Then there exists an incoming red arrow at $v$ if and only if $w_{j-1}=\r$ and the $\r$ symbols corresponding to these incoming arrows, appearing in clockwise order, are $w_{\ell+1},\cdots,w_{j-1}$. Let $\ell_0=\max\{m: w_m\neq \r  \}$. Then the $\r$ symbols corresponding to the incoming red arrows at $A_\r$,  appearing in clockwise order, are $w_{\ell_0+1},\ldots,w_{3n}$. Therefore if we clockwise-explore $\ol T_\b$, the $2n$ red arrows encountered appear in the same order as in $w_{\b\r}$, where each b (resp., r) symbol corresponds to  an outgoing (resp., incoming) arrow. Since $w_{\b\r}$ is a parenthesis matching, we may link red arrows in a planar way to recover $\cM_{\b\r}$. 
	\end{proof}
	Since $\cM_{\b\r}(w)$ is planar, we may embed it in the plane so that the face right of $\ol{A_\b A_\r}$ is the unbounded face.  
	We now describe the face structure of $\cM_{\b\r}$. Let $Z^\b=(L^\b,R^\b)$ be the walk corresponding to $w$. For each $w_{i}=\b$, let $v$ be its corresponding inner vertex and $w_j=\r$ be the br match of $w_i$. Then the blue and red outgoing edges from $v$ are $w_i$ and $w_j$.  Let
	\begin{equation}\label{eq:green}
	\cG_i=\left\{k>i: w_{k}=\g \;\textrm{ and }\; L^\b_{k}=L^\b_i=\min_{i  \le j \le k} L^\b_j \right\}.
	\end{equation}
	By Definition~\ref{def:graph_from_word}, there exist incoming green edges attached to $v$ if and only if $\cG_i\neq \emptyset$. In this case, write elements in $\cG_i$ as $k_1<\cdots <k_m$. Then $i<k_1<\cdots<k_m<j$ and the green edges attached to $v$  in counterclockwise order between $w_i$ and $w_j$ are $w_{k_1}\cdots,w_{k_m}$.
	Let $v_0$ (resp., $v_{m+1}$) be the head of $w_i$ (resp., $w_j$).  For $1\le \ell\le m$, let $v_\ell$ be the  tail of $w_{k_\ell}$. Let $\cF(e)$ be the face of $\cM_{\b\r}$ on the left of $e$ where $e$ is the blue edge corresponding to $w_i$. The following lemma describes the structure of $\cF(e)$. 
	\begin{lemma} \label{lem:face_structure}
		The vertices on $\cF(e)$ are $v,v_0,\cdots,v_{m+1}$ in counterclockwise order.  
		
		Furthermore, $\cF$ is a bijection between blue edges and inner faces of $\cM_{\b\r}$.
	\end{lemma}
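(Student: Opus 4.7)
The plan is to prove both statements by directly tracing the boundary of $\cF(e)$ in the planar embedding of $\cM_{\b\r}$ constructed in Lemma~\ref{lem:Mbr_planar}. I would start at $v$, depart along $e$ to $v_0$, and at each subsequent vertex $u$ on the boundary take the next edge clockwise from the arrival edge in the cyclic order at $u$, using the convention of Definition~\ref{def:graph_from_word} with the green edges omitted (since we are in $\cM_{\b\r}$). I would aim to show that this counterclockwise traversal visits the vertices $v_0, v_1, \ldots, v_{m+1}$ in order and then returns to $v$ via the outgoing red at $v$ traversed in reverse.

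The heart of the argument is to identify the transitions $v_\ell \to v_{\ell+1}$ for $0 \le \ell \le m$ via case analysis tied to the word $w$. Two observations drive this: first, the set $\cG_i$ consists exactly of the g-positions $k > i$ at which $L^\b$ returns to the value $L^\b_i$ after each excursion strictly above it; these correspond bijectively to the moments in the contour of $T_\b$ when a subtree initiated since $w_i$ has just been fully explored and a new one at the same level as $v$ is about to open. Second, the outgoing red arrows placed in the embedding of $\cM_{\b\r}$ are br-matched so that red edges precisely shortcut certain rightward contour moves. Using these, I would verify by a case split on the symbols $w_{i+1}, w_{i+2}, \ldots$ that each edge $v_\ell \to v_{\ell+1}$ is either a reversed blue edge (when $v_{\ell+1}$ is the clockwise-next child of $v_\ell$ in $T_\b$, corresponding to $w_{k_{\ell+1}} = \g$ opening a new sibling subtree) or a forward red edge (when the transition is effectuated by the outgoing red of $v_\ell$, its br-match head being $v_{\ell+1}$). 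The closing edge $v_{m+1} \to v$ is forced to be the outgoing red of $v$ in reverse by the defining property of the br-match $(w_i, w_{j_r})$ together with the cyclic order at $v_{m+1}$. Boundary cases in which $v_0$ or $v_{m+1}$ equals $A_\b$ or $A_\r$ are handled using the outer-vertex cyclic orders from Definition~\ref{def:graph_from_word}.

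For the bijection claim, I would combine the boundary structure with an Euler-formula count. The graph $\cM_{\b\r}$ has $V = n+2$ vertices and $E = 2n+1$ edges (namely $n$ blue, $n$ red, and the outer edge $\ol{A_\b A_\r}$), hence $F = n+1$ total faces; discarding the unbounded face right of $\ol{A_\b A_\r}$ leaves exactly $n$ inner faces, matching the number of blue edges. Since $\cF$ is a well-defined map from blue edges to inner faces, it suffices to prove injectivity. This follows from the first part: by the classification above, the counterclockwise boundary of $\cF(e)$ contains $e$ as its unique forward-oriented blue edge — the other boundary edges are either reversed blue edges, forward or reversed red edges, or the outer edge $\ol{A_\b A_\r}$, none of which is a forward blue edge. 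Hence $e$ is recoverable from $\cF(e)$, and $\cF$ is injective, hence bijective.

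The main obstacle will be the local case analysis at each intermediate vertex $v_\ell$, which requires carefully combining the gb- and br-matching structure with the definition of $\cG_i$ and the cyclic orderings from Definition~\ref{def:graph_from_word}; in particular, distinguishing when the clockwise-next edge at $v_\ell$ is an incoming blue versus the outgoing red (hence whether the transition edge is a reversed blue or a forward red) depends on finer features of $w$ between positions $i$ and the outgoing-red position of $v$. Once this local dictionary between the embedding and the word is firmly in hand, both the face description and the bijection should fall out cleanly.
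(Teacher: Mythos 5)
Your proposal is correct and follows essentially the same route as the paper: a case analysis on the word symbols following $w_i$ (and each $w_{k_\ell}$) showing that each boundary transition $v_\ell\to v_{\ell+1}$ is a reversed blue or forward red edge, injectivity of $\cF$ from the uniqueness of the forward-oriented blue edge on each face boundary, and the Euler-formula count $V=n+2$, $E=2n+1$, giving $n$ inner faces to match the $n$ blue edges. The local case analysis you flag as the main obstacle is exactly what the paper carries out (splitting on $w_{i+1}\in\{\b,\r,\g\}$ and inducting), so no new idea is needed beyond completing that bookkeeping.
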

	\begin{figure} [h]
		\centering
		\subfigure[]{\includegraphics[width=0.3\textwidth]{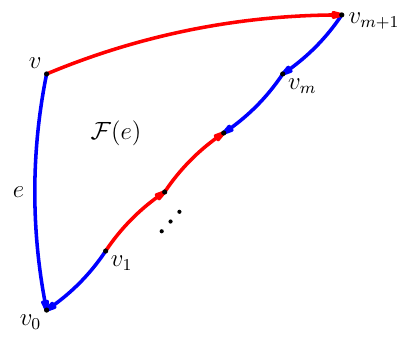}}
		\subfigure[]{\includegraphics[width=0.3\textwidth]{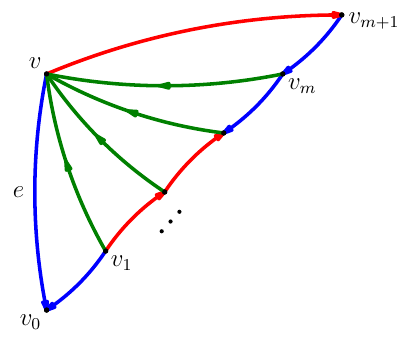}}
		\caption{(a) Each face of $\mathcal{M}_{\b\r}$, traversed counterclockwise, consists of a blue forward-oriented edge $e$, followed by a sequence of reverse blue or forward red edges, followed by a reverse red edge back to $e$'s tail. (b) The green edges triangulate each face of $\mathcal{M}_{\b\r}$. 
			\label{fig:face}} 
	\end{figure}
	\begin{proof} 
		First note that $v$ and $v_{m+1}$ are on $\cF(e)$. We split the proof into cases. 
		
		If $w_{i+1}=\r$, then $m=0$ and $v,v_0$, and $v_1$ form a triangle where $\ol{v_1v_0}$ is a blue edge. If $w_{i+1}=\b$, then  the match of $w_{i+1}$ in $w_{\b\r}$ is $w_{j-1}$ if $m=0$ and $w_{k_1-1}$ if $m>0$. In both cases  $\ol{v_0v_1}$ is a red edge.  If  $w_{i+1}=\g$, then $\ol{v_1v_0}$ is a blue edge. Moreover, regardless of the value of $w_{i+1}$, there are no blue and red edges incident to $v_0$ between $\ol{v_0v}$ and $\ol{v_0v_1}$. Therefore $v_0$ is on $\cF(e)$ and is counterclockwise after $v$. 
		
		If $m>0$, the same argument with $w_{k_1+1}$ in place of $w_{i+1}$ implies that  $v_1$ is on $\cF(e)$ and counterclockwise after $v_0$. Moreover, if $w_{k_1=1}=\b$, then $\ol{v_1v_2}$ is a red edge. Otherwise $\ol{v_2v_1}$ is a blue edge. By induction, the  general statement holds for $v_i$ and $v_{i+1}$ for all $0\le i\le m$. This proves the first statement in Lemma~\ref{lem:face_structure}. Moreover, it also yields that $\cF$ is an injection  \nocolor{from the blue edges to the} inner faces of $\cM_{\b\r}$.
		
		Note that $\mathcal{M}_{\b\r}$ has $V = n+2$ vertices and $E = 2n+1$ edges. Therefore, Euler's formula implies that $\mathcal{M}_{\b\r}$ has $2 -V + E = n+1$ faces and $n$ inner faces. Since $\cM_{\b\r}$ also has $n$ blue edges and $\cF$ is an injection,  it follows that  $\cF$ is a bijection.
	\end{proof}
	\begin{figure}[h]
		\centering 
		\subfigure[]{\includegraphics[width=0.3\textwidth]{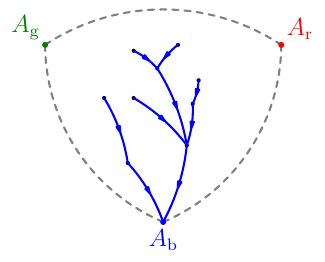}}
		\subfigure[]{\includegraphics[width=0.3\textwidth]{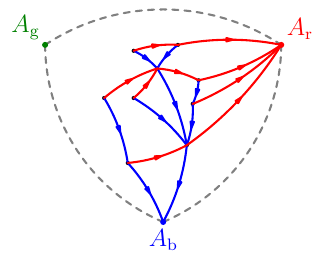}}
		\subfigure[]{\includegraphics[width=0.3\textwidth]{figures/woodonly}}
		\caption{Construction from $w$ to the wooded triangulation $\psi(w)$: (a) draw the planar tree $\mathcal{M}_{\b}$; (b) draw  $\mathcal{M}_{\b\r}$ in a planar way as in Proposition~\ref{lem:Mbr_planar}; (c) finally, the green edges triangulate each face in Figure~\ref{fig:blueredgreen}(b)  in the manner of Figure~\ref{fig:face}(b).} 
		\label{fig:blueredgreen} 
	\end{figure}
	By Lemma~\ref{lem:face_structure}, each inner face  in $\cM_{\b\r}$ is of the form in Figure~\ref{fig:face}(a). Namely, there is a unique blue (resp., red) edge so that the face is left (resp., right) of that edge.   Let  $\ol\cM_{\b\r}:=\cM_{\b\r}\cup\{\ol{A_\g A_\b}, \ol{A_\g A_\r}  \}$. We can embed 
	$\ol \cM_{\b\r}$ so that $\ol{A_\b A_\r}, \ol{A_\g A_\b}, \ol {A_\g A_\r}$ form the unbounded face of $\ol \cM_{\b\r}$. 
	Let $f_{\b\r}$ be the inner face of $\ol\cM_{\b\r}$ containing $A_\g$.  To understand the structure of $f_{\b\r}$, let $w'\in \cW_{n+1}$ be the concatenation of g,b, $w$, and $\r$. If we identify  $\ol{A_\g A_\b}$ (resp., $\ol{A_\g A_\r}$) as  a blue (resp., red) directed edge, then $\ol\cM_{\b\r}$ is isomorphic to $\cM_{\b\r}(w')$ and $f_{\b\r}$ becomes an inner face of $\cM_{\b\r}(w')$, thus by Lemma~\ref{lem:face_structure} also has the form in Figure~\ref{fig:face}(a).
	
	\begin{proof}[Proof of Theorem~\ref{thm:bijection from wood to walk}]
		By Proposition~\ref{prop:paren_matching}, we have  $\varphi(\cS_n)\subset\cW_n$. For all $w\in \cW_n$, Definition~\ref{def:graph_from_word} constructs a combinatorial map $\psi(w)$.  Now we show that $\psi(w)\subset \cS_n$ and $\varphi(\psi(w))=w$, which will yield the surjectivity of $\varphi$.
		
		According to the structure of inner faces in $\ol\cM_{\b\r}$ above and Lemma~\ref{lem:face_structure}, we can recover $\psi(w)$ by triangulating each inner face $f$ of $\ol\cM_{\b\r}$  by green edges. When $f$ is an inner face of $\cM_{\b\r}$, then vertices of $f$ are of the form $v,v_0,v_1,\cdots,v_{m+1}$ as in Lemma~\ref{lem:face_structure}. In this case we add  green edges from $\{v_i\}_{1\le i\le m}$ to $v$  provided $m>0$. If $f=f_{\b\r}$, we add green edges to $A_g$ in the same way. This shows that $\psi(w)$ is a triangulation.
		Moreover, by the cyclic order in Definition~\ref{def:graph_from_word}, $\psi(w)$ is a size-$n$ \nocolor{wooded triangulation}. Knowing that $\psi(w)\in \cS_n$, it is clear that the edge-symbol correspondence in Definition~\ref{def:graph_from_word} is identical to the one defined by clockwise exploring the blue tree of $\psi(w)$ as in Section~\ref{subsec:bijection}.
		
We are left to show that $\psi(\varphi(S))=S$ for all $S\in \cS_n$.  \nocolor{Let $w=\varphi(S)$. First of all, as mentioned in the proof of Lemma \ref{lem:Mbr_planar} , the Dyck word of $\cM_{\b}(w)$ is $w_{\g\b}$. So we can identify $\cM_{\b}(w)$ with $T_{\b}(S)$ since the Dyck word of $T_{\b}(S)$ is also $w_{\g\b}$. Under this identification, if $w_k$ is a b symbol, then the inner vertex of $\psi(w)$ corresponding to $w_k$ identifies with the tail of a blue edge in $S$ such that $w_k$ is given by $\cP_S$'s second traversal of this blue edge. Second, for each inner vertex of $\psi(w)$ and $S$, consider the incoming (resp., outgoing) red edges as incoming (resp., outgoing) arrows attched at this vertex, as in the proof of Lemma \ref{lem:Mbr_planar}. By Schnyder’s rule, the number of outgoing arrows at each inner vertex is one. Suppose $w_i$ is a b symbol and $w_j$ is the gb match of $w_i$. By Definition \ref{def:graph_from_word}, $w_i$ corresponds to a blue edge in $\psi(w)$ as long as its tail, say, $v$. As mentioned in the proof of Lemma \ref{lem:Mbr_planar}, the number of incoming red arrows attached to $v$ is the number of consecutive r symbols right before $w_j$ in $w$. On the other hand, by the definition of $\varphi$, $w_i$ comes from   $\cP_S$'s second traversal of a blue edge, say, $e$. Moreover, $\cP_S$'s crossing of the incoming red edge at the tail of $e$ corresponds to the consecutive r symbols right before $w_j$ in $w$. Therefore the number of incoming red arrows at the tail of $e$ equals the number of consecutive r symbols right before $w_j$ in $w$. So the number of red (incoming and outgoing) arrows at each inner vertex of $S$ is the same as that for the corresponding inner vertex of $\psi(w)$. By Lemma \ref{lem:Mbr_planar},  $T_\b(S)\cup T_\b(S)$ coincides with $\cM_{\b\r}(w)$. Finally,} the way to obtain $\psi(\varphi(S))$ from $\ol\cM_{\b\r}$ by adding green edges as above is also the way to obtain $S$ from \nocolor{$T_\b(S)\cup T_\r(S)\cup\{\ol{\Ab\Ar}, \ol{\Ar\Ag},\ol{\Ag\Ab}\}$}. This yields $\psi(\varphi(S))=S$.
	\end{proof}
	\subsection{Dual map, dual tree and counterclockwise exploration}\label{subsec:dual}
	
	\begin{figure}
		\centering
		\subfigure[]{\includegraphics[height=5cm]{figures/woodonly}}
		\subfigure[]{\includegraphics[height=5cm]{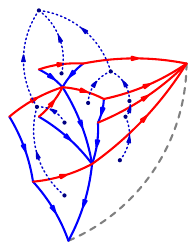}}
		\subfigure[]{\includegraphics[width=5cm]{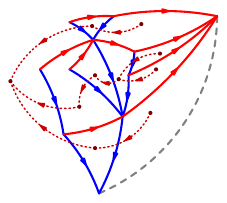}}
		\caption{A Schnyder wood along with its dual blue tree and dual red tree.\label{fig:dualstuff}}
	\end{figure}
	
	In light of Theorem~\ref{thm:bijection from wood to walk}, we may apply the above constructions of $T_\b,\ol T_\b$ and $\cM_{\b\r}$ to obtain these planar maps for any $S \in \cS_n$. Fix such an $S$, and note that $\ol T_\b$ is a spanning tree of $\cM_{\b\r}$. We define a spanning  tree on the \textit{dual map} of $\cM_{\b\r}$ (that is, the map of faces of $\cM_{\b\r}$) rooted at the outer face by \emph{counterclockwise} rotating each red edge. In other words, we form a directed edge from $F_1$ to $F_2$ if they are the faces  on the right and left sides (respectively)  of some directed red edge in $\cM_{\b\r}$. We call this tree the \textbf{dual blue tree} of $S$.
	
	We can define $T_\r$ and $\ol T_\r$ similarly to $T_\b$ and $\ol T_\b$ with $\r$ in place of $\b$. Then $\ol T_\r$ is also a spanning tree of $\cM_{\b\r}$.  By \emph{clockwise} rotating each blue edge, we obtain the \textbf{dual red tree} of $S$.  For any inner face $f$ of $\cM_{\b\r}$,  the branch on the dual blue tree from $f$ towards the dual root is called the \textbf{dual blue flow line}. We define  the \textbf{dual red flow line} similarly with red in place of blue.
	
	Recall the map $\cF$ in Lemma~\ref{lem:face_structure}.  We extend $\cF$ to the set of all inner edges as follows:  let $\cF(e)$ be the face of $\cM_{\b\r}$ on the \emph{left} of $e$ if $e$ is a blue or red edge;  let $\cF(e)$ be the face \emph{containing} $e$ if $e$ is a green edge.   We call $\cF$ the \textbf{face identification map}.  We also define $\wt\cF(e)$ by replacing left with right in the definition of $\cF(e)$.
	
	Recall the exploration path $\cP_S$ in the definition of
	$Z^{\mathrm{b}}$.    By
	reversing the direction of $\cP_S$ and swapping the roles of red and green
	edges, we can define a lattice walk $\prescript{\mathrm{b}}{}{Z}$ corresponding to the counterclockwise exploration of $T_\b$. More precisely, let  $\wt{\cP}_S$ be the time reversal of $\cP_S$. Then  $\prescript{\mathrm{b}}{}{Z}$ takes a
	$(1,-1)$-step if $\wt \cP_S$ traverses a blue edge for the second time, a $(0,1)$-step (resp., $(-1,0)$-step) if $\wt\cP_S$ crosses a red (resp., green) edge for the second time. We can similarly define $\prescript{\r}{}{Z}$ and $\prescript{\g}{}{Z}$ for the counterclockwise explorations of the red and green tree,  respectively. Note that $\prescript{\mathrm{b}}{}{Z}$ is \emph{not} the time reversal of $Z^\b$ in general.  We introduce these counterclockwise walks because if we switch from clockwise exploration of $T_\b$ to the counterclockwise exploration and  swap the roles of red and green, then $(Z^\b,\cM_{\b\r}, \ol T_\b, \textrm{ dual blue  tree})$ becomes $(\rr Z,\cM_{\b\r}, \ol T_\r, \textrm{ dual red tree})$. This symmetry will be important in Section~\ref{sec:peano}. (Also see Proposition~\ref{prop:dual} below.)
	
	Write  $Z^\b=(L^\b,R^\b)$ and $\prescript{\r}{}{Z}=(\prescript{\r}{}{R},\prescript{\r}{}{L})$. The letters $R$ and $L$ are arranged in this way because $T_\b$ (resp., $T_\r$) is right (resp., left) of $\cP_S$ (resp., $\wt \cP_S$), while by Proposition~\ref{prop:paren_matching},   $R^\b$  (resp., $\prescript{\r}{}{L}$) is the contour function (modulo flat steps) of $T_\b$ (resp., $T_\r$).

	To complete the mating-of-trees picture of our bijection $\varphi$, we now show that
	$L^\b$ and  $\prescript{\r}{}{R}$ are contour functions (modulo  flat steps) of the dual blue and dual red trees respectively.
	\begin{proposition}\label{prop:dual}
		In the above setting, for all $1\le i\le 3n$, $L^\b_i$
		equals the number of edges on the dual blue flow line from
		$\cF(w_i)$ to the dual root.
		
		The similar result holds for $\prescript{\r}{}{R}$, 
		$\wt \cF$, and dual red flow lines.
	\end{proposition}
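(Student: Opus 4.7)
The plan is to show that $L^\b$ is the contour function of the dual blue tree (modulo flat steps at $\g$-positions), with the current vertex of the contour exploration at time $i$ being exactly $\cF(w_i)$. Proposition~\ref{prop:paren_matching} tells us $w_{\b\r}$ is a parenthesis matching, so it encodes a plane tree $T^\ast$ with $n+1$ vertices (one root plus one per $\b$-symbol) and $n$ edges (one per br-matched pair). The height of this contour after step $i$ equals $L^\b_i$ by definition, so the proposition reduces to (i) identifying $T^\ast$ with the dual blue tree as rooted plane trees, via $w_j\mapsto\cF(w_j)$ for $\b$-symbols and root $\mapsto$ outer face, and (ii) showing that the current vertex of this contour at time $i$ is $\cF(w_i)$.

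For (i), the key observation is that a br-match $(w_j,w_k)$ identifies, via the proof of Proposition~\ref{prop:paren_matching}, the blue edge $w_j$ with the outgoing red arrow $w_k$ based at the same tail vertex. By Lemma~\ref{lem:face_structure}, $w_k$ is precisely the ``closing'' reverse red edge on the counterclockwise boundary of $\cF(w_j)$; hence $\cF(w_j)$ is the right face of $w_k$, and the dual blue edge obtained by counterclockwise rotation of $w_k$ goes from $\cF(w_j)$ up to its parent face. Immediate enclosure in $w_{\b\r}$ then corresponds to the parent relation in the dual blue tree because the forward red edges on the boundary of $\cF(w_{j'})$ are exactly the br-matches of blue symbols nested directly inside $(w_{j'},w_{k'})$, their right faces being the children of $\cF(w_{j'})$.

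For (ii), I would proceed by induction on $i$. At $i=0$ both sides are the outer face. For the inductive step: if $w_i=\b$, part (i) gives that $\cF(w_i)$ is the child of $\cF(w_{i-1})$ corresponding to the next forward red edge in the counterclockwise boundary traversal, matching an up-step of the contour. If $w_i=\r$ with br-match at position $j$, the balanced subword $w_{j+1}\cdots w_{i-1}$ has been fully processed (by the strong inductive hypothesis), so the current vertex at time $i-1$ is $\cF(w_j)$; moving along the dual edge given by $w_i$ arrives at its left face, which under the extension of $\cF$ to red edges is exactly $\cF(w_i)$, matching a down-step. If $w_i=\g$, then by the construction of green edges in Definition~\ref{def:graph_from_word}, the green edge $w_i$ lies inside the same face of $\cM_{\b\r}$ as the current position, so $\cF(w_i)=\cF(w_{i-1})$ and the contour takes a flat step.

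The hardest part is the order-compatibility inside step (i): that the counterclockwise cyclic order of the children of $\cF(w_{j'})$ in the dual blue tree agrees with the left-to-right order of br-matches nested immediately inside $(w_{j'},w_{k'})$. This is where the \emph{plane} (not merely abstract) tree isomorphism must be verified, and it requires carefully combining the face-boundary structure of Lemma~\ref{lem:face_structure} with the cyclic edge-ordering rules of Definition~\ref{def:graph_from_word}. Once this is established, the analogous statement for $\prescript{\r}{}R$, $\wt\cF$ and the dual red tree follows from the color-swap and time-reversal symmetry described in the paragraph immediately preceding the proposition.
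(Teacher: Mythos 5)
Your argument is correct and its essential content coincides with the paper's proof: the paper verifies directly, by cases on $w_k\in\{\b,\r,\g\}$, that the increment $L^\b_k-L^\b_{k-1}$ matches the increment of the distance from $\cF(w_k)$ to the dual root along the dual blue tree, which is exactly your step (ii) induction. The one substantive difference is that your step (i) proves more than the proposition needs: the statement only concerns the \emph{number of edges} on the dual blue flow line, i.e.\ the graph distance from $\cF(w_i)$ to the dual root, so the full rooted \emph{plane} tree isomorphism between the tree encoded by $w_{\b\r}$ and the dual blue tree --- in particular the cyclic-order compatibility of children that you single out as the hardest point --- can be dispensed with entirely. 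All that is required is the local fact (from Lemma~\ref{lem:face_structure} and the definition of the dual blue tree by counterclockwise rotation of red edges) that a $\r$-step moves $\cF(\cdot)$ one dual edge toward the root, a $\b$-step moves it one dual edge away, and a $\g$-step leaves it fixed; this is what the paper uses, and it is also the engine of your induction. So your route is a valid, slightly heavier packaging of the same argument, and the extra order-compatibility verification can simply be omitted.
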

	\begin{proof}
		It suffices to show that for all $1\le k\le 3n$,
		$L^\b_{k}-L^\b_{k-1}$ equals the difference between the
		number of edges on the dual blue flow lines starting from
		$\cF(w_{k+1})$ and $\cF(w_k)$. When $w_k=\r$ (resp.,
		$w_k=\b$), the face $\cF(w_{k})$ is one step forward (resp.,
		backward) along the dual blue flow line from
		$\cF(w_{k-1})$. Moreover, $\cF(w_k)=\cF(w_{k-1})$ when
		$w_k=\g$. Therefore the claim holds for each possible value
		of $w_i$. The result for $\prescript{\r}{}{R}$ follows by
		the symmetry of blue and red in $\cM_{\b\r}$.
	\end{proof}
	
	\subsection{Uniform infinite wooded triangulation}\label{subsec:infinite}
	
	Thus far we have considered uniform samples from the set of all wooded
	triangulations of size $n$. We refer to this as the \textit{finite
		volume} setting. In this section we introduce an \textit{infinite
		volume} setting by defining an object which serves as the
	$n\to\infty$ limit of the uniform wooded triangulation of size $n$,
	rooted at a uniformly selected edge.
	
	Let $\{w_k \, : \, 1 \leq k \leq 3n\}$ be a Markov chain on the state space
	$\{\mathrm{b},\mathrm{r},\mathrm{g}\}$ with $w_1=\mathrm{g}$ and
	transition matrix given by
	\begin{equation}\label{eq:matrix} 
	P= 
	\kbordermatrix{
		& \mathrm{b} & \mathrm{r} & \mathrm{g} \\
		\mathrm{b} & \frac{1}{2} &\frac{1}{4} & \frac{1}{4} \\
		\mathrm{r} & 0 & \frac{1}{2} & \frac{1}{2} \\
		\mathrm{g} &\frac{1}{2} &\frac{1}{4} & \frac{1}{4} \\
	}
	\end{equation}
	
	Define $f:\{\b,\r,\g\} \to \Z^2$ by
	\begin{equation} \label{eq:f}
	f(\mathrm{b})=(1,-1), \quad f(\mathrm{r})=(-1,0), \quad 
	f(\mathrm{g})=(0,1).
	\end{equation}
	Define the lattice walk $Z^{\mathrm{b},n}$
	inductively by $Z^{\mathrm{b},n}_{0} = (0,0)$ and
	$Z^{\mathrm{b},n}_{k} -Z^{\mathrm{b},n}_{k-1}=f(w_k)$ for
	$1 \leq k \leq 3n$. The following proposition tells us that the law of
	$Z^{\mathrm{\b,n}}$ may be mildly conditioned to obtain the uniform
	measure on $\cW_n$.
	\begin{proposition} \label{prop:sampler} The conditional law of $Z^{\mathrm{b},n}$
		given $\{Z^{\mathrm{b},n} \in \mathcal{W}_n\}$ is the uniform measure on
		$\mathcal{W}_n$. Moreover,
		\begin{equation}\label{eq:exponent}
		\P[Z^{\mathrm{b},n}\in \mathcal{W}_n]=\left(\tfrac{48}{\pi}+o_n(1)\right) n^{-5}, 
		\end{equation}
		where $o_n(1)$ denotes a quantity tending to zero as $n\to\infty$. 
	\end{proposition}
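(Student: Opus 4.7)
The plan has two parts: a combinatorial identity showing that $\P[Z^{\b,n}=w]$ takes a single value for each $w\in\cW_n$, yielding the conditional uniformity, and a Stirling estimate extracting the asymptotic from the known count of $\cS_n$.

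For uniformity, the first step is to observe that every $w \in \cW_n$ automatically has $w_1 = \g$ and $w_{3n} = \r$. Indeed, from $(0,0)$ the only increment that keeps the walk in the closed first quadrant is $(0,1)=\g$, and $(0,0)$ can only be reached from a first-quadrant point via $(-1,0)=\r$. The first observation makes conditioning on $w_1 = \g$ automatic, so $\P[Z^{\b,n}=w] = \prod_{k=1}^{3n-1}P(w_k,w_{k+1})$. I would then exploit two features of $P$: the rows $\b$ and $\g$ are identical, and the forbidden entry $P(\r,\b)=0$ is never invoked since $N_{\r\b}=0$ for $w\in\cW_n$ by Definition~\ref{def:walk}. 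These let me write each transition weight uniformly as
\begin{equation*}
P(w_k,w_{k+1}) = (1/2)^{2 - \one[w_{k+1}=\b] - \one[w_k=\r]}.
\end{equation*}
Summing the exponents, using $w_1=\g$, $w_{3n}=\r$ and the equality $n_{\b}=n_{\r}=n_{\g}=n$ (which follows from the walk returning to $(0,0)$ in $3n$ steps of the three given types), the total exponent collapses to
\begin{equation*}
2(3n-1) - n_{\b} - (n_{\r} - 1) = 4n - 1.
\end{equation*}
Hence $\P[Z^{\b,n}=w] = (1/2)^{4n-1}$ for every $w\in\cW_n$, which is exactly the uniformity claim.

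For the asymptotic, I would multiply this constant probability by $|\cW_n|$ and use Theorem~\ref{thm:bijection from wood to walk} together with the explicit formula $|\cS_n| = 6(2n)!(2n+2)!/[n!(n+1)!(n+2)!(n+3)!]$ recalled in \eqref{eq:catalan}, giving
\begin{equation*}
\P[Z^{\b,n}\in\cW_n] = |\cS_n|\cdot 2 \cdot 16^{-n} = \frac{12\,(2n)!(2n+2)!}{n!(n+1)!(n+2)!(n+3)!}\cdot 16^{-n}.
\end{equation*}
A routine Stirling expansion gives $(2n)!(2n+2)! \sim 16\pi\, n^3\cdot 16^n\, n^{4n} e^{-4n}$ and $n!(n+1)!(n+2)!(n+3)! \sim 4\pi^2\, n^{4n+8}\, e^{-4n}$, so $|\cS_n|\sim (24/\pi)\,16^n\, n^{-5}$ and the claimed constant $48/\pi$ pops out.

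Neither step poses a conceptual obstacle; the only mild care is in the Stirling computation, which is cleanest when applied to the factorial formula above rather than to the Catalan difference $C_{n+2}C_n - C_{n+1}^2$, because the latter involves a leading-order cancellation that forces one to track subleading terms of $C_n$ by hand.
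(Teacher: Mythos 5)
Your proposal is correct and follows essentially the same route as the paper: both show $\P[Z^{\b,n}=w]=2\cdot 16^{-n}$ for every $w\in\cW_n$ by an exact bookkeeping of transition weights (the paper counts the four types of weight-$\tfrac12$ transitions, you package the same count into a single exponent $2-\one[w_{k+1}=\b]-\one[w_k=\r]$, which agrees on all transitions actually occurring in $\cW_n$), and both then combine the bijection with the formula \eqref{eq:catalan} and Stirling's approximation to get $|\cS_n|\sim\tfrac{24}{\pi}16^n n^{-5}$ and hence the constant $\tfrac{48}{\pi}$.
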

	\begin{proof}[Proof of Proposition~\ref{prop:sampler}]
		We claim that for all walks $w \in \mathcal{W}_n$, we have
		$\P[Z^{\mathrm{b},n} = w] = 2 \cdot 16^{-n}$. To see this, write
		the transition matrix as
		\[
		P=\frac{1}{4}\left[ \begin{array}{ccc}
		2 & 1 & 1 \\
		0 & 2 & 2 \\
		2 & 1 & 1
		\end{array} \right].
		\]
		For a word $w=w_1\cdots w_{3n}\in \cW_n$, 
		\[
		\P[\mathrm{word}(Z^{\mathrm{b},n}) = w] = \left( \frac{1}{4} \right)^{3n-1} 2^{\#(\mathrm{g} \to \mathrm{b})}
		2^{\#(\mathrm{b} \to \mathrm{b})}2^{\#(\mathrm{r} \to \mathrm{g})}2^{\#(\mathrm{r} \to \mathrm{r})}, 
		\]
		where $\#(\mathrm{r} \to \mathrm{b})$ denotes the number of integers
		$1 \leq i < 3n$ for which
		$(w_i,w_{i+1}) = (\mathrm{r},\mathrm{b})$, and similarly for the
		other expressions. \nocolor{Here we used the bijection between $\cS_n$ and $\cW_n$.} The first two factors multiply to give $2^{n}$,
		since there are $n$ occurrences of $\mathrm{b}$ in $w$. Similarly, the
		last two factors multiply to give $2^{n-1}$, since there are $n$
		occurrences of $\mathrm{r}$ in $w$, with one at the end. Multiplying
		these probabilities gives the desired result.
		
		For the second part of the proposition statement, we use
		the formula \eqref{eq:catalan} for the number of wooded triangulations
		of size $n$. Stirling's formula implies that
		the right-hand side of \eqref{eq:catalan}
		is    asymptotic to $\tfrac{24}{\pi} n^{-5} 16^n$. Applying the first part
		of the present proposition and Theorem \ref{thm:bijection from
			wood to walk} concludes the proof.
	\end{proof}
	\begin{remark} 
		Consider a 2D  \nocolor{Brownian motion} whose covariance is $-\cos(4\pi/\kappa)$
		as in Theorem~\ref{thm:mating}. The probability that it stays in the
		first quadrant over the interval $[0,2n]$ and returns to the origin is of order
		$n^{-\frac{\kappa}4 -1}$; \nocolor{see e.g.~\cite{duraj2015invariance}.}  \nocolor{The Brownian motion is the limit of the lattice walk, therefore,}  the exponent $\alpha = 5$ in
		Proposition~\ref{prop:sampler} and \eqref{eq:catalan} is related to
		$\kappa = 16$ via the equation $\alpha = \frac{\kappa}4+1$.  This
		also explains why for uniform spanning tree decorated map we have 
		$\kappa=8$ and $\alpha=3$ \cite{burger}; and for a bipolar-oriented
		map we have 
		$\kappa=12$ and $\alpha=4$ \cite{bipolar}.  
	\end{remark}
	
	From now on we \nocolor{slightly abuse the notation and redefine  $Z^{\b,n}$ to be} the conditioned \nocolor{walk} in
	Proposition~\ref{prop:sampler}.  Note that the stationary distribution
	for $P$ is the uniform measure on $\{\b,\r,\g \}$. Now we define a
	bi-infinite random word $w$ so that $\{w_i\}_{i\ge 0}$ has the law of
	the Markov chain $P$ starting from the stationary distribution. And we
	extend $w$ to $\Z_{<0}$ so that $\{w_i\}_{i\in \Z}$ is a stationary
	sequence.  We set $Z^{\mathrm{b},\infty}_{0}=(0,0)$ and
	$Z^{\mathrm{b},\infty}_{k} - Z^{\mathrm{b},\infty}_{k-1}=f(w_k)$ for
	all $k\in \Z$, where $f$ is defined as in \eqref{eq:f}.
	
	\begin{proposition} \label{prop:markov_local_limit} In the above setting, let $U$ be a
		uniform element of $\{0,1,\ldots,3n-1\}$, independent of
		$Z^{\mathrm{b},n}$. Define  the  re-centered walk
		$\widetilde{Z}^{\mathrm{b},n}_k :=
		Z^{\mathrm{b},n}_{(k+U)}-Z^{\mathrm{b},n}_{U}$ for
		$-U\leq k\leq 3n-U$. Then $\widetilde{Z}^{\mathrm{b},n}$ converges locally to
		$Z^{\mathrm{b},\infty}$, in the sense that for any fixed $T$, the
		law of $\left.\widetilde{Z}^{\mathrm{b},n}\right|_{[-T,T]}$
		converges as $n\to\infty$ to the law of 
		$Z^{\mathrm{b},\infty}|_{[-T,T]}$.
	\end{proposition}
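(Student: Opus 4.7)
The plan is to reduce the local convergence of walks to a convergence-in-distribution statement about the word $(w_{U-T}, \ldots, w_{U+T})$, and then establish that statement via a local limit theorem for the Markov chain $(w_k)$ conditioned on the global event $\{Z^{\b,n} \in \cW_n\}$.

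First I would note that, for any fixed $T$, the restriction $\widetilde{Z}^{\b,n}|_{[-T,T]}$ is a deterministic function of the word $(w_{U-T}, \ldots, w_{U+T})$ (given the normalization $\widetilde{Z}^{\b,n}_0 = (0,0)$), and likewise $Z^{\b,\infty}|_{[-T,T]}$ is determined by the corresponding block of the stationary chain. Hence it suffices to show that, with $U$ uniform on $\{0, \ldots, 3n-1\}$ and independent of $Z^{\b,n}$, the law of $(w_{U-T}, \ldots, w_{U+T})$ under the conditioning $\{Z^{\b,n} \in \cW_n\}$ converges to the law of a length-$(2T+1)$ block of the stationary Markov chain $P$.

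Next I would fix a word $u = u_{-T} \cdots u_T$ and an index $i$ with $\eps n \leq i \leq (3-\eps)n$, and analyze $\P[(w_{i-T}, \ldots, w_{i+T}) = u \mid Z^{\b,n} \in \cW_n]$ by the Markov property. Splitting the event $\{Z^{\b,n} \in \cW_n\}$ into a past piece on $[1, i-T-1]$, the window piece on $[i-T, i+T]$, and a future piece on $[i+T+1, 3n]$ (and summing over the walk position $x$ and the letter at time $i-T-1$), this conditional probability becomes a weighted sum in which two competing factors appear: (a) the transition probability of the Markov chain $P$ producing the word $u$ starting from the letter at time $i-T-1$, which by exponential ergodicity of $P$ effectively couples to the stationary weight $\pi_{\text{stat}}(u) := \tfrac13 \prod_{j=-T}^{T-1} P(u_j, u_{j+1})$, and (b) a ratio of ``quadrant survival'' probabilities for the future piece, comparing starting from position $x$ versus from position $x + \Delta$, where $\Delta \in \Z^2$ depends only on the fixed word $u$.

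The main obstacle will be showing that this survival-probability ratio tends to $1$ as $n \to \infty$ for typical starting positions (which are of order $\sqrt{n}$). Concretely, writing $N = 3n - i - T$, one needs
\[
\frac{\P_{x+\Delta, w = u_T}[\,Z^{\b}\text{ stays in }\Z_{\geq 0}^2,\ \text{ends at }(0,0)\text{ after }N\text{ steps}\,]}{\P_{x, w = v}[\,Z^{\b}\text{ stays in }\Z_{\geq 0}^2,\ \text{ends at }(0,0)\text{ after }N+2T+1\text{ steps}\,]}
\]
to converge to the ratio of stationary weights of $u_T$ and $v$, uniformly over $x$ in the bulk of the conditioned walk. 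This is a Denisov--Wachtel-type local limit statement for the negatively-correlated $2$-D walk with step set $\{(1,-1), (-1,0), (0,1)\}$ in the Weyl chamber $\Z_{\geq 0}^2$, and can be obtained either by invoking that general theory or by a more direct argument exploiting the combinatorial structure of $\cW_n$ (for instance via the non-crossing Dyck path bijection of \cite{bernardi2007catalan}). Once this uniform-in-$i$ estimate is in place, averaging over $U$ and observing that the contribution from $i \notin [\eps n, (3-\eps)n]$ is $O(\eps)$ yields convergence of all finite-dimensional distributions, which is the claimed local convergence.
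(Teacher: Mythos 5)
Your overall strategy is genuinely different from the paper's, and its central step is left unproven in a way that matters. The paper disposes of this proposition in two lines: since $\P[Z^{\b,n}\in\cW_n]$ decays only polynomially (Proposition~\ref{prop:sampler}), while by Cram\'er's theorem for Markov chains \cite{donsker1975asymptotic} the probability that the empirical distribution of length-$(2T+1)$ windows of the \emph{unconditioned} chain deviates from the stationary one decays exponentially, the conditioning cannot affect the law of the window at a uniformly chosen index; averaging over $U$ is exactly what turns the empirical-distribution statement into the claimed annealed local convergence (see \cite[Section 4.2]{burger}). You instead aim at the stronger, quenched statement that the conditional law of the window at each fixed bulk index $i$ converges, uniformly in $i$, which forces you to control the ratio of quadrant-survival-and-return probabilities from nearby starting points at scale $\sqrt n$.

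That ratio estimate is the genuine gap. The Denisov--Wachtel / Duraj--Wachtel theory you invoke is stated for walks with i.i.d.\ increments in a cone, but $Z^{\b,n}$ has Markov-dependent increments (no $(1,-1)$-step may follow a $(-1,0)$-step); to reduce to the i.i.d.\ setting one must pass to the grouped-step walk of Lemma~\ref{lem:fin-walk}, which changes the time parametrization and the meaning of your fixed window $u$, and one must then prove a \emph{bridge} local limit theorem (survival, endpoint pinned at the origin, uniformity over starting points $x$ of order $\sqrt n$ and over bulk times), including the lattice/parity issues for this step set. None of this is supplied, and "a more direct argument exploiting the combinatorial structure of $\cW_n$" is not sketched. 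So as written the proposal reduces the proposition to a harder unproved statement. If your goal is only the proposition as stated, note that $U$ being uniform and independent of the walk is exactly what lets you avoid all of this: you never need the per-index local limit theorem, only the concentration of the empirical pattern distribution, which survives conditioning on any event of subexponentially small probability.
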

	
	\begin{proof} 
		Since $\P[Z^{\b,n}\in \mathcal{W}_n]$ has subexponential decay by
		Proposition~\ref{prop:sampler}, the result follows by Cram\'er's
		theorem for Markov chains \cite{donsker1975asymptotic,donsker1975asymptotic2}. We refer the
		reader to Section 4.2 of \cite{burger} for more
		details.
	\end{proof}
	It is straightforward to verify that the bi-infinite word $w$ satisfies the following:
	\begin{enumerate}
		\item the sub-word $w_{\g\b}$ obtained by dropping all $\r$'s in  $w$ is a parenthesis matching a.s.
		\item the sub-word $w_{\b\r}$ obtained by dropping all $\g$'s in $w$ is a parenthesis matching a.s.
		\item every $r$ in $w$ is followed by an r or g but not a $\b$.
	\end{enumerate}
	
	Knowing the three properties, we can construct a random infinite graph
	$S^\infty$ with colored directed edges as in
	Definition~\ref{def:graph_from_word}. Namely, we identify the b
	symbols in $w$ as vertices in $S^\infty$. Then using the edge
	identification rule (1)-(3) in Definition~\ref{def:graph_from_word},
	we identify b, g, and r symbols in $w$ with blue, red and green
	directed edges on $S^\infty$.  We call the edge $e_0$ corresponding to
	$w_0$ the root of $S^\infty$.  Note that we don't introduce
	$A_\b,A_\r, A_\g$ here because in the edge identification rule in
	Definition~\ref{def:graph_from_word}, the possibility when
	$A_\b,A_\r, A_\g$ are attached to a colored edge a.s. never
	occur. Moreover, we don't define the ordering of edges around vertices
	at this moment since we don't want to involve the technicality of
	infinite combinatorial maps. Therefore $S^\infty$ is not a map
	yet. However, Proposition~\ref{prop:infinite} below will imply that
	$S^\infty$ naturally carries an infinite planar map structure and a
	Schnyder wood structure (that is, a 3-orientation).

	We adapt the notion of Benjamini-Schramm convergence
	\cite{benjamini2011recurrence} to the setting of rooted graphs with
	colored directed edges: let $(S^n)_{n\ge 0}$ be a sequence of random
	rooted graph with colored and directed edges.  We say that
	$S^n \to S^0$ as $n\to\infty$ in the Benjamini-Schramm sense if for
	every colored and oriented rooted graph $T$ and every $r>0$, the
	probability that the radius-$r$ neighborhood of the root in $S^n$ is
	isomorphic to $T$ converges as $n\to\infty$ to the probability that
	the radius-$r$ neighborhood of the root in $S^0$ is isomorphic to $T$.
	
	\begin{proposition} 
		\label{prop:infinite} Let $S^n$ be a uniform wooded triangulation of
		size $n$ and $e_n$ be a uniformly and independently sampled inner
		edge. Then $(S^n,e_n)$ converges in the Benjamini-Schramm sense to
		$(S^\infty,e_0)$.
	\end{proposition}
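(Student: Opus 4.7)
The plan is to reduce Benjamini--Schramm convergence of $(S^n,e_n)$ to the local convergence of the encoding words supplied by Proposition~\ref{prop:markov_local_limit}. First, under the bijection $\varphi$ of Theorem~\ref{thm:bijection from wood to walk}, the $3n$ symbols of $w=\varphi(S^n)$ are in one-to-one correspondence with the inner edges of $S^n$, so choosing a uniform inner edge $e_n$ is the same as choosing $U$ uniformly in $\{1,\ldots,3n\}$ and taking $e_n$ to be the edge identified with $w_U$. With this coupling, the re-centered word $\widetilde w=(w_{U+k})_k$ is precisely the object of Proposition~\ref{prop:markov_local_limit}, so $\widetilde w\to w^\infty$ locally in law, where $w^\infty$ is the bi-infinite stationary word encoding $S^\infty$ with $e_0$ corresponding to $w^\infty_0$.

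The combinatorial heart of the proof is the claim that for every fixed $r\ge 1$ there is an almost surely finite random $T_r=T_r(w^\infty)$ such that the colored rooted oriented graph $B_r(S^\infty,e_0)$ is a deterministic function of $w^\infty|_{[-T_r,T_r]}$. To see this, I would use that every edge of $S^\infty$ is produced by the matching rules of Definition~\ref{def:graph_from_word}, each of which asks for the nearest gb or br match to a given symbol; these matches are a.s.\ finite because $w^\infty_{\g\b}$ and $w^\infty_{\b\r}$ are valid parenthesis matchings a.s. Starting from $e_0$ and discovering vertices and edges in order of graph distance, the ball $B_r(S^\infty,e_0)$ contains only finitely many edges, each produced by finitely many matching operations, so the union of symbol positions invoked is a.s.\ finite.

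Next I would check that the same deterministic rule extracts $B_r(S^n,e_n)$ from $\widetilde w|_{[-M,M]}$ whenever $T_r\le M$ and $M<U<3n-M$. The point is that inside a window strictly contained in the interior of $w$ no edge is ever attached to one of $A_\b,A_\r,A_\g$, so the finite-volume rules of Definition~\ref{def:graph_from_word} and the infinite-volume rules agree verbatim on this window. Hence on the event $\{T_r\le M\}\cap\{\widetilde w|_{[-M,M]}=w^\infty|_{[-M,M]}\}$ we have $B_r(S^n,e_n)\cong B_r(S^\infty,e_0)$.

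To conclude, fix a finite rooted colored oriented graph $\cG$ and $\eps>0$. Choose $M$ large enough that $\P[T_r>M]<\eps$. Since the event $\{B_r(S^\infty,e_0)\cong\cG,\,T_r\le M\}$ is measurable with respect to $w^\infty|_{[-M,M]}$, Proposition~\ref{prop:markov_local_limit} yields
\begin{equation*}
\P\bigl[B_r(S^n,e_n)\cong\cG,\,T_r(\widetilde w)\le M\bigr]\longrightarrow \P\bigl[B_r(S^\infty,e_0)\cong\cG,\,T_r(w^\infty)\le M\bigr],
\end{equation*}
and letting $\eps\to 0$ (and $M\to\infty$ accordingly) gives the desired convergence. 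The main obstacle is the combinatorial step identifying the finite determining window $T_r$: one has to iterate the br/gb matching discovery through every vertex of $B_r$ and control how far each match travels in the word, using the a.s.\ properness of the bi-infinite parenthesis matchings to guarantee termination. Once this is in place, the probabilistic content is routine given Proposition~\ref{prop:markov_local_limit}.
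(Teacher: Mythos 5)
Your probabilistic frame is correct and matches the paper's: reduce to the local convergence of the re-centered word (Proposition~\ref{prop:markov_local_limit}), identify a finite determining window, and pass to the limit. But the combinatorial route you take is different from the paper's, and it is exactly at the step you yourself call ``the main obstacle'' that the proof is not actually supplied. The paper does not discover the ball from the inside out; it builds an exhaustion $G_1\subset G_2\subset\cdots$ of $S^\infty$ from the nested $\b\r$ matches $(w_{j_m},w_{k_m})$ enclosing the root, couples each $G_m^n$ with $G_m$ via Proposition~\ref{prop:markov_local_limit}, and then proves that the graph distance from $e_0$ to $S^\infty\setminus G_m$ tends to infinity by locating a $\g\b$ match enclosing $(w_{j_1},w_{k_1})$ and invoking Schnyder's rule to show $G_1$ lies in the interior of some $G_{m'}$. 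That enclosure argument is the paper's substitute for your claim that $B_r(S^\infty,e_0)$ is determined by a finite window $[-T_r,T_r]$.

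Your version of that claim is plausible but has a genuine gap. Knowing that each edge of $B_r$ is \emph{produced} by finitely many matching operations is not enough: to identify $B_r$ as a rooted graph you must certify that you have found \emph{all} edges incident to every vertex at distance $<r$, and the incoming edges at a vertex are created by rules applied at \emph{other} symbols. You therefore need to invert the edge rules of Definition~\ref{def:graph_from_word} and show that the full in-neighborhood of the vertex $w_k$ localizes: the incoming blue edges lie inside the interval of $w_k$'s $\g\b$ match, the incoming red edges come from the maximal run of $\r$'s preceding the $\g$ whose match is $w_k$, and the incoming green edges lie between $w_k$ and its $\b\r$ match (cf.\ \eqref{eq:green} and Lemma~\ref{lem:distribution_green_edges_in_corner}). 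This also yields the a.s.\ local finiteness of $S^\infty$, which you assert but do not prove when you say the ball ``contains only finitely many edges.'' Finally, your justification for the finite-volume transfer --- that ``inside a window strictly contained in the interior of $w$ no edge is ever attached to one of $A_\b,A_\r,A_\g$'' --- is not true as stated (a symbol deep in the word can still fail to have an enclosing match anywhere in $w^n$); the correct statement is that on the event $T_r\le M$ every match invoked resolves inside $[-M,M]$, so the ``otherwise'' clauses producing edges to the outer vertices are never triggered. With these points filled in, your argument would go through and would constitute a legitimate alternative to the paper's exhaustion-by-enclosure proof.
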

	
	\begin{figure} 
		\centering 
		\includegraphics[width=0.35\textwidth]{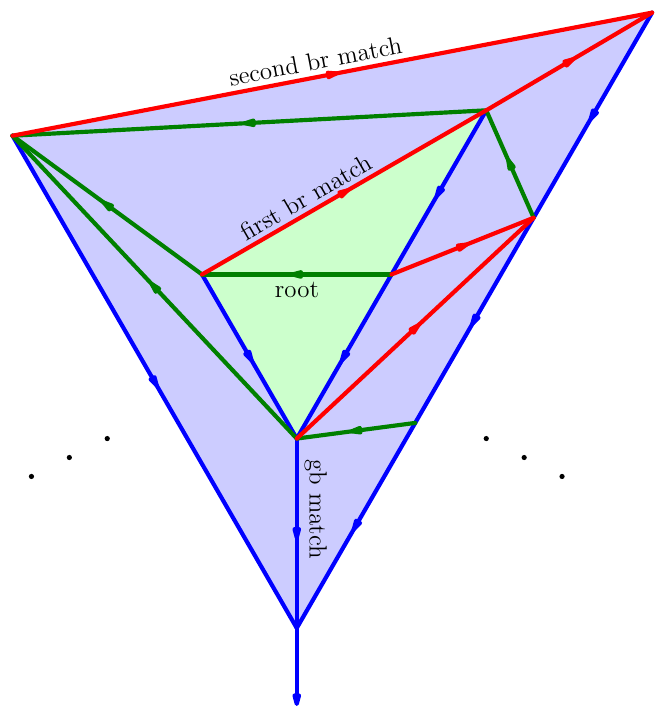}
		\caption{To prove Proposition~\ref{prop:infinite},  we use an
			alternating sequence of enclosing br and gb matches to identify
			a sequence of subgraphs $G_1,G_2\cdots$. The  green region is $G_1$ and  the blue region is $G_{m'}$. \label{fig:pockets}}
	\end{figure}
	
	\begin{proof}
		Let $w^n$ be the word corresponding to $S^n$ and $\wt w^n$ be $w_n$
		recentered at $U_n\in \{1,\cdots,3n\}$ where $U_n$ is the index
		corresponding to $e_n$ (that is, $w^n_{U_n}=e_n$).  Let
		$(\wt w^n_{j_1(n)}, \wt w^n_{k_1(n)} )$ be the br match with minimal
		$k_1(n)$ such that $j_1(n)<0<k_1(n)$. Let $(w_{j_1}, w_{k_1} )$
		denote the br match in $w$ with minimal $k_1$ so that $j_1<0<k_1$.
		Then by Proposition~\ref{prop:markov_local_limit}, we can couple
		$w,\wt w^n$ so that $\wt w^n([j_1(n),k_1(n)])=w([j_1,k_1])$ with
		probability $1-o_n(1)$. Let $G^n_1$ (resp., $G_1$) be the graph
		consisting of edges in $\wt w^n([j_1(n),k_1(n)])$ (resp.,
		$w([j_1,k_1])$). Then $G^n_1$ and $G_1$ can be coupled so as to be
		equal with probability $1-o_n(1)$.  This also means that $G_1$
		is a.s. planar.
		
		We let $(w_{j_m}, w_{k_m} )$ be the br match in $w$ so that
		$j_m<0<k_m$ and $k_m$ is the $m$-th smallest such index. By
		examining the word $w_{\b\r}$, it is straightforward to verify that
		$k_m<\infty$ a.s. for all $m$. We define $G^n_m$ and $G_m$
		similarly. Then $G^n_m$ and $G_m$ can be coupled so as to be equal
		with high probability. Therefore, $G_m$ is a.s. planar.
		
		Now we claim that the graph distance $r_m$ from  $e_0$ to $S^\infty \setminus G_m$   tends to $\infty$ a.s.
		Note that $S^\infty=\bigcup_{m=1}^{\infty} G_m$. This will conclude the proof.
		Since $r_m$ is a non-decreasing integer sequence, it suffices to show that for all $m$ we can find an $m'>m$ such that $r_{m'}>r_m$. We only explain this for $m=1$ since the general case is the same. 
		By examining $w_{\g\r}$, it is easy to see that there are infinitely many gb matches of the form $(w_{i},w_{l})$ so that $i<0<l$. In particular, we can find one such that  $i<j_1<k_1<l$. Let $m'$ be such 
		that $j_{m'}< i<l<k_{m'}$. We abuse notation and also define $i,l,m'$
		for $\wt w^n$ as for $w$. Then by the coupling result above, $i,l,m'$  are well defined with probability $1-o_n(1)$. Moreover, 
		by Schnyder's rule and the COLOR algorithm,  on the event that $i,l,m'$ are well defined for $\wt w^n$,  the graph $G^n_1$ is contained in the interior of $G^n_{m'}$ in the sense that the graph distance from $G^n_1$ to $S^n\setminus G^n_{m'}$ is positive.  (See Figure~\ref{fig:pockets} for an illustration.)  In particular, $r_{m'}>r_1$. \qedhere
	\end{proof}
	\begin{definition} \label{defn:UIWT} 
		We call $(S^\infty,e_0)$ the rooted  {\bf uniform infinite  wooded triangulation},
		abbreviated to {\bf UIWT}. 
	\end{definition}
 		\nocolor{Let $w=\{w_i\}_{i\in \Z}$ be the stationary sequence associated to the Markov chain 
			$P$.  From proof of Proposition~\ref{prop:infinite}, a sample of  $(S^\infty,e_0)$ can be obtained from $w$ as follows. We let $(w_{j_m}, w_{k_m} )$ be the br match in $w$ so that $j_m<0<k_m$ and $k_m$ is the $m$-th smallest such index. Let  $G_m$ be the graph consisting of edges in $w([j_m,k_m])$ as in Figure~\ref{fig:pockets}. Then $S^\infty=\bigcup_{m=1}^{\infty} G_m$ is a sample of UIWT. Let  $Z^{\mathrm{b},\infty}_{0}=(0,0)$ and
			$Z^{\mathrm{b},\infty}_{k} - Z^{\mathrm{b},\infty}_{k-1}=f(w_k)$ with $f$ in \eqref{eq:f}. Then $w$, $Z^{\b,\infty}$, and $(S^\infty,e_0)$ determine each other
			a.s.}
	We call $Z^{\b,\infty}$ is the random walk encoding of
	$S^\infty$ associated with the clockwise exploration of the blue
	tree. Results in Section~\ref{subsec:tree-walk} and \ref{subsec:dual}
	have natural extensions to $(S^\infty,e_0)$.  We can similarly define
	$Z^{\mathrm{r},\infty}, Z^{\mathrm{g},\infty},
	\prescript{\b}{}{Z}^{\infty}, \prescript{\r}{}{Z}^{\infty},
	\prescript{\g}{}{Z}^{\infty}$, and we obtain joint convergence for all
	of them:  
	\begin{proposition} \label{prop:triple_joint_converge_in_law}
		Let $(S^n,e_n)$ be defined in Proposition~\ref{prop:infinite}. Recall
		Section~\ref{subsec:dual}. We  have six walks $Z^{\mathrm{b},n},
		Z^{\mathrm{r},n}, Z^{\mathrm{g},n}$, $\prescript{\b}{}{Z}^{n},
		\prescript{\r}{}{Z}^{n},
		\prescript{^\g}{}{Z}^{n}$ associated with different explorations of
		$S^n$.  The tuple
		$(S^n,e_n,Z^{\mathrm{b},n}, Z^{\mathrm{r},n}, Z^{\mathrm{g},n}, \prescript{\b}{}{Z}^{n}$,$\prescript{\r}{}{Z}^{n}$,
		$\prescript{\g}{}{Z}^{n})$
		jointly converges   to
		$(S^\infty,e_0,$ $Z^{\mathrm{b},\infty}, Z^{\mathrm{r},\infty}, $ $
		Z^{\mathrm{g},\infty}, \prescript{\b}{}{Z}^{\infty}, \prescript{\r}{}{Z}^{\infty}, \prescript{\g}{}{Z}^{\infty})$ in law. 
		Here the convergence of walks is in the re-centered sense as in Proposition~\ref{prop:markov_local_limit}.
	\end{proposition}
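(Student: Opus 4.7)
The plan is to bootstrap the Benjamini--Schramm convergence of Proposition~\ref{prop:infinite} to the full tuple, by observing that each of the six walks, re-centered at the root edge, is determined by a neighborhood of the root edge in $S^n$ whose radius (as a function of the window size) is tight in $n$.

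First, since the Benjamini--Schramm topology on rooted colored-oriented graphs is Polish, Proposition~\ref{prop:infinite} combined with Skorohod's representation theorem allows me to couple the entire sequence on a single probability space so that, almost surely, for every fixed $r>0$ the radius-$r$ neighborhood of $e_n$ in $S^n$ is isomorphic, for all $n$ sufficiently large, to the radius-$r$ neighborhood of $e_0$ in $S^\infty$.

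Next, I fix $T>0$ and consider any one of the six walks. Its window $[-T,T]$ (re-centered at the step corresponding to the root edge) is a deterministic function of the colored-oriented subgraph of $S^n$ consisting of those inner edges visited by the relevant contour exploration in the $2T$-step window. Because each elementary step of each of the six explorations moves between two edges sharing a triangular face of $S^n$, this subgraph is contained in the graph-distance ball $B_R(e_n)$ for some random radius $R=R(T,\omega)$, and within this ball the walk is reconstructed via the local combinatorics of Sections~\ref{subsec:tree-walk}--\ref{subsec:dual} (that is, via the bijection $\psi$ of Definition~\ref{def:graph_from_word} applied to the appropriate tree and direction of exploration). The crucial point is that $R$ is tight in $n$: for $Z^{\b,n}$ this is immediate from Proposition~\ref{prop:markov_local_limit}, and for each of the remaining five walks the analogous tightness follows by running the Markov-chain sampler of Propositions~\ref{prop:sampler}--\ref{prop:markov_local_limit} for the corresponding exploration. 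By the color symmetry of Schnyder's rule and the clockwise/counterclockwise symmetry recorded in Section~\ref{subsec:dual}, each of $Z^{\r,n}, Z^{\g,n}, \prescript{\b}{}{Z}^n, \prescript{\r}{}{Z}^n, \prescript{\g}{}{Z}^n$ is expressible through a bi-infinite Markov chain whose stationary version encodes $S^\infty$ in exactly the same way as $Z^{\b,n}$, and the same Cram\'er-type conditioning estimate applies verbatim.

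Combining these ingredients, on the Skorohod coupling above, for every fixed $T>0$ each of the six walks eventually coincides on $[-T,T]$ with its infinite-volume counterpart almost surely; taking $T\to\infty$ along integers yields the claimed joint convergence in law of the entire eight-tuple. The principal obstacle is the tightness of $R(T,\omega)$ for the five non-blue walks, which is not a direct corollary of Proposition~\ref{prop:markov_local_limit}; in each case one needs the parallel Markov-chain description and the corresponding Cram\'er estimate. These derivations are routine given the symmetries above, but they comprise the bulk of the verification and should be carried out once in detail with the remaining four cases dispatched by symmetry.
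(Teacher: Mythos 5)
Your overall strategy is the one the paper intends: Proposition~\ref{prop:triple_joint_converge_in_law} is stated in the paper with no separate proof precisely because, after Propositions~\ref{prop:markov_local_limit} and~\ref{prop:infinite} (which the paper notes hold jointly), each of the six walks restricted to a window $[-T,T]$ around the root is a deterministic local function of the rooted colored-oriented graph $(S^n,e_n)$, so Benjamini--Schramm convergence transfers immediately to the whole tuple. Your Skorohod coupling plus locality argument is exactly this.

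However, the issue you single out as ``the principal obstacle'' --- tightness in $n$ of a random radius $R(T,\omega)$, to be established via separate Markov-chain samplers and Cram\'er estimates for the five non-blue walks --- is a misdiagnosis, and following it would send you down an unnecessary (and conceptually circular-looking) detour. The radius needed to read off a $2T$-step window of any of the six walks is bounded by a \emph{deterministic} constant times $T$: consecutive symbols $w_i, w_{i+1}$ of any of the exploration words correspond to edges whose associated faces $\cF(w_i),\cF(w_{i+1})$ are equal or adjacent in $\cM_{\b\r}$ (this is the content of the proof of Proposition~\ref{prop:dual}, and the analogous statement holds for the other explorations by the color and clockwise/counterclockwise symmetries of Section~\ref{subsec:dual}), and each inner face of $\cM_{\b\r}$ has diameter at most $2$ in $M$ by Lemma~\ref{lem:face_structure} (all its vertices are adjacent to the apex $v$ after green triangulation). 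Hence the edges appearing in a $2T$-step window all lie in the graph-distance ball of radius $CT$ about $e_n$ for an absolute constant $C$, with no randomness and no conditioning estimate required; the only place Cram\'er's theorem enters is where it already did, in Proposition~\ref{prop:markov_local_limit} feeding into Proposition~\ref{prop:infinite}. Once this deterministic locality is recorded, your argument closes: on the coupling, for each fixed $T$ the radius-$CT$ balls agree for large $n$, so all six windows agree with their infinite-volume counterparts (which are by definition the corresponding explorations of $S^\infty$ re-centered at $e_0$), and letting $T\to\infty$ gives the joint convergence. So the proof is correct in outline, but the part you flag as the bulk of the verification is not needed, and you should replace it with the deterministic distance bound above.
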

\nocolor{	\begin{proof}
  As $(S^n,e_n,Z^{\mathrm{b},n})$ converges to $(S^\infty,e_0,Z^{\mathrm{b},\infty})$, the path $\cP_{S^n}$ also converges to a bi-infinite path $\cP_{S^\infty}$. 
  Since $\prescript{\b}{}{Z}^{n}$ is defined in terms of $\cP_{S^n}$ in a local manner, we see that  $(S^n,e_n,Z^{\mathrm{b},n},\prescript{\b}{}{Z}^{n})$ jointly converge to $(S^\infty,e_0,$ $Z^{\mathrm{b},\infty},  \prescript{\b}{}{Z}^{\infty} )$ in law. By the clockwise/counterclockwise symmetry,   $(S^\infty,e_0)$ and $ \prescript{\b}{}{Z}^{\infty} $ determine each other a.s. By the symmetry of the three colors, the same joint convergence holds for the red and green colors. Since  $(S^\infty,e_0)$ determines  $ Z^{\mathrm{r},\infty}$, $Z^{\mathrm{b},\infty}$, $\prescript{\r}{}{Z}^{\infty}$, and $ \prescript{\g}{}{Z}^{\infty}$, we have the  joint convergence of everything as desired.
	\end{proof}
}

	Now for a UIWT, we can define the map $\cM_{\b\r}$, which is the union
	of blue and red edges. Flow lines, dual flow lines and the face
	identification mappings $\cF$ and $\wt \cF$ can also be defined for a UIWT. And the results in Section~\ref{subsec:tree-walk}
	and~\ref{subsec:dual} have straightforward extensions to UIWT. In particular, we will
	use the following corollary of Proposition~\ref{prop:dual} in Section~\ref{sec:3tree}.
	\begin{proposition}\label{prop:dual2}
		In the above setting, write
		${Z}^{\b,\infty}=(L^{\b,\infty},R^{\b,\infty})$.  For any
		$k_1<k_2$, let $F$ be the face in $\cM_{\b\r}$ where the dual blue
		flow lines from $\cF(w_{k_1})$ and $\cF(w_{k_2})$ merge, which exists almost surely. Then the difference
		between the number of dual edges from $\cF(w_{k_2})$ and $\cF(w_{k_1})$ to
		$F$ equals $L^{\b,\infty}_{k_2}-L^{\b,\infty}_{k_1}$.  	The similar result holds for  $\prescript{\r}{}{Z}^{\infty}$, $\wt \cF$, and dual red flow lines.
	\end{proposition}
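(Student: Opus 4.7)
The plan is to extend the step-by-step argument from the proof of Proposition~\ref{prop:dual} to the infinite setting. That proof's key local identity—namely, that $\cF(w_j)$ is the parent of $\cF(w_{j-1})$ in the dual blue tree when $w_j=\r$, a child when $w_j=\b$, and equal to it when $w_j=\g$—is a purely local property of the wooded triangulation around the inner edge corresponding to $w_j$ (inherited from the combinatorial rules in Definition~\ref{def:graph_from_word} defining the dual blue tree via the br match). By the joint Benjamini-Schramm convergence of Proposition~\ref{prop:triple_joint_converge_in_law}, this parent/child/self identification transfers verbatim to the (rootless) dual blue tree of $S^\infty$.

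\textbf{Step 1: existence of $F$.} Fix $k_1<k_2$ and let $j^*\in[k_1,k_2]$ be the first index at which $L^{\b,\infty}$ attains its minimum on $[k_1,k_2]$, with value $L^*:=L^{\b,\infty}_{j^*}$. I claim $\cF(w_{j^*})$ is a common ancestor of $\cF(w_{k_1})$ and $\cF(w_{k_2})$ in the dual blue tree, so the least common ancestor $F$ (equivalently, the merge face of the two flow lines) exists. I prove this by a forward induction on $j$ from $j^*$ to $k_2$: assuming $\cF(w_{j-1})$ lies in the closed subtree rooted at $\cF(w_{j^*})$, the only way $\cF(w_j)$ could leave this subtree is via an $\r$-step applied while at $\cF(w_{j^*})$ itself, which would force $L^{\b,\infty}_j=L^*-1$, contradicting $L^{\b,\infty}_j\geq L^*$. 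A symmetric backward induction on $[k_1,j^*]$ places $\cF(w_{k_1})$ in the same subtree.

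\textbf{Step 2: telescoping.} Using the local identification, each increment $L^{\b,\infty}_j-L^{\b,\infty}_{j-1}$ equals $D(\cF(w_j))-D(\cF(w_{j-1}))$, where $D$ denotes depth in the dual blue tree relative to any fixed reference face (taking the values $-1$, $+1$, $0$ when $w_j$ is $\r$, $\b$, $\g$ respectively). Summing from $k_1+1$ to $k_2$,
\[
L^{\b,\infty}_{k_2}-L^{\b,\infty}_{k_1}
= \sum_{j=k_1+1}^{k_2}\bigl(D(\cF(w_j))-D(\cF(w_{j-1}))\bigr)
= D(\cF(w_{k_2}))-D(\cF(w_{k_1})).
\]
Choosing $F$ itself as the reference face and using that both $\cF(w_{k_i})$ lie in the subtree rooted at $F$, we have $D(\cF(w_{k_i}))=d_i$, the number of dual edges from $\cF(w_{k_i})$ to $F$, so the telescoped identity becomes $L^{\b,\infty}_{k_2}-L^{\b,\infty}_{k_1}=d_2-d_1$, as claimed.

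The parallel statement for $\prescript{\r}{}{Z}^{\infty}$, $\wt\cF$, and dual red flow lines is proved identically, after swapping the roles of red and blue (and clockwise with counterclockwise) as described at the end of Section~\ref{subsec:dual}. The main obstacle is the careful formulation of the step-by-step identification in the infinite setting: in Proposition~\ref{prop:dual} the dual blue tree was rooted at the outer face, so ``forward/backward along the flow line'' had an unambiguous meaning, whereas in $S^\infty$ we must extract the parent relation from purely local combinatorial data. Invoking Proposition~\ref{prop:triple_joint_converge_in_law} makes this routine, but it is the only substantive point; once it is in place, Steps~1 and~2 are elementary.
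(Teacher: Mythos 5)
Your argument is correct and follows the same route the paper intends: Proposition~\ref{prop:dual2} is stated there without a separate proof, as a direct corollary of the local parent/child/self identification established in the proof of Proposition~\ref{prop:dual}, which is exactly the telescoping you carry out. Your Step~1 (the running-minimum argument producing a common ancestor, hence the merge face $F$) is a correct and welcome filling-in of the ``exists almost surely'' claim that the paper leaves implicit.
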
 
	We conclude this section by studying the number of incoming green
	edges at a vertex. Write $\Geo$ as a geometric random
	variable with success probability $\frac12$ supported on $\Z_{>0}$.
	\begin{lemma} \label{lem:distribution_green_edges_in_corner} Let $T$
		be a stopping time for $w$ with the property that $w_T = \b$ almost
		surely. Then the number $G$ of incoming green edges incident to the
		vertex corresponding to $w_T$ is distributed as $\Geo-1$.
		
		Furthermore, $G$ is measurable with respect to the
		sequence of symbols between $w_T$ and its $\b\r$ match.  
	\end{lemma}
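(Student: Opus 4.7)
The plan is to reduce the statement to a combinatorial count that can be read off from the sub-word of $w$ between $w_T$ and its br match, and then to compute the distribution of this count directly from the transition matrix~\eqref{eq:matrix} using generating functions.

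First, I would translate ``incoming green edges at $v$'' into a statement about $w$. Definition~\ref{def:graph_from_word} together with the description~\eqref{eq:green} of $\cG_i$ from the proof of Lemma~\ref{lem:face_structure} (whose extension to the UIWT is immediate, since $w_{\b\r}$ is a.s.\ a bi-infinite parenthesis matching) identifies the incoming green edges at the vertex $v$ corresponding to $w_T$ with the set
\begin{equation*}
\cG_T = \bigl\{k \in (T,T^*) : w_k = \g \text{ and } L^{\b,\infty}_k = L^{\b,\infty}_T\bigr\},
\end{equation*}
where $T^*$ is the br match of $T$ and $Z^{\b,\infty}=(L^{\b,\infty},R^{\b,\infty})$. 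Since the increments of $L^{\b,\infty}$ at b, r, g are $+1,-1,0$, respectively, and $T^*$ is precisely the first time $L^{\b,\infty}$ dips below $L^{\b,\infty}_T$, the set $\cG_T$ consists exactly of the g-symbols at ``br-depth $0$'' inside the br-excursion opened at $T$. The measurability claim is immediate from this description.

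Next I would compute the distribution of $G \colonequals |\cG_T|$ by exploiting the key feature of~\eqref{eq:matrix} that the rows labeled b and g are \emph{identical}: conditionally on the last symbol being b or g (resp.\ r), the next symbol has the same distribution. Applying the strong Markov property of $w$ at $T$ and at the br matches of any intermediate b, I would introduce two generating functions $A(x) = \E\bigl[x^G \bigm| \text{last letter}\in\{\b,\g\},\,\text{depth }0\bigr]$ and $B(x) = \E\bigl[x^G \bigm| \text{last letter}=\r,\,\text{depth }0\bigr]$, and condition on the next symbol to obtain
\begin{equation*}
A(x) = \tfrac14 + \tfrac{x}{4}A(x) + \tfrac12 B(x),\qquad B(x) = \tfrac12 + \tfrac{x}{2}A(x).
\end{equation*}
Solving this linear system yields $A(x) = 1/(2-x) = \sum_{n\ge 0} 2^{-(n+1)} x^n$, so that $\P[G = n] = 2^{-(n+1)}$ for $n \ge 0$, which is the law of $\Geo-1$. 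Since $w_T=\b$ a.s., we are in the regime described by $A$, giving the lemma.

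The main delicacy lies in justifying the $\tfrac12 B(x)$ term in the recursion for $A(x)$: if the symbol following a depth-$0$ b or g is another b, the walk $L^{\b,\infty}$ performs an excursion strictly above the base level, contributing nothing to $G$ before returning to depth $0$. The closing step of any such excursion must be an r (it pops the pushed b in the br parenthesis matching), so the Markov state upon re-entry to depth $0$ is invariably r—this is exactly what allows the entire excursion, regardless of its length, to be collapsed into the single contribution $\tfrac12 B(x)$. Making this step rigorous amounts to applying strong Markov at the almost surely finite br match of the intermediate b, together with the stationarity of $w$.
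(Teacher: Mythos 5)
Your proof is correct and follows essentially the same route as the paper's: both identify $G=|\cG_T|$ via \eqref{eq:green} and Lemma~\ref{lem:face_structure} and then invoke the strong Markov property of $w$ together with the transition matrix \eqref{eq:matrix}. Your generating-function computation simply fills in the details that the paper's one-line argument leaves implicit (in particular, that from every depth-zero state the next event is, with probability $\tfrac12$ each, either a counted green symbol or termination of the br-excursion).
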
 
	\begin{proof} 
		Note that Lemma~\ref{lem:face_structure} holds for the UIWT, by
		Proposition~\ref{prop:infinite}.  In the language of
		\eqref{eq:green}, $G=|\cG_T|$. Now the lemma follows from the strong
		Markov property of $Z^{\b,\infty}$ and the fact that the transition
		probability from b to r and g to r are both $\frac12$.
	\end{proof}

	\section{Convergence of one tree}\label{sec:pair}
	
	In this section we prove marginal convergence of the triple of random
	walks featured in Theorem~\ref{thm:3pair}.  Throughout this section, we let $S^n$ be a uniform wooded triangulation of size $n$ \xin{and} $S^\infty$ be a UIWT. Let $Z^{\b,n},\prescript{\b}{}{Z}^{n}, Z^{\b,\infty},\prescript{\b}{}{Z}^\infty$ be defined as in Section~\ref{subsec:infinite}. Let $w^n$ and $w$ be the words corresponding to $S^n$ and $S$. 
	
	The following observation is an immediate consequence of
	Proposition~\ref{prop:sampler}
	\begin{lemma}\label{lem:fin-walk}
		Let \nocolor{$\ta^n(0)=1$.  For $1\le k< 2n$,  let
		$\ta^n(k) =\min\{ i>\ta^n(k-1) : w^n_i \neq \mathrm{r}\}$.  Define
		$\cZ^{\mathrm b,n}_k\colonequals Z^{\b,n}_{\ta(k)}-Z^{\b,n}_{\ta(0)}$ for $0\le k<2n$. Let   $\cZ^{\mathrm b,n}_{2n}=(0,0)$.}
		\nocolor{Then $(\cZ^n_k)_{0\le k\le 2n}$ is distributed as a $(2n)$-step random walk with   i.i.d.  increments of step distribution 
		\[
		\tfrac{1}{2} \delta_{(1,-1)} + \sum_{i=0}^\infty 2^{-i-2}
		\delta_{(-i,1)}\quad \quad \textrm{(see Figure~\ref{fig:steps} (b) for an
		illustration of the steps)}
		\]
	  conditioned on starting  and ending at the origin  and
		staying in $\{(i,j)\in \Z^2:i\ge 0; j\ge -1   \}$.  We call $\cZ^{\mathrm b,n}$} the {\bf grouped-step
			walk of $Z^{\b,n}$}. 
	\end{lemma}
	\begin{proof}
	\nocolor{Recall that $Z^{b,n}$  starts and ends at $(0,0)$ and stays in the closed first
		quadrant, so it must start with $(0,1)$ and end with $(-1,0)$. Then by the construction we see that $\cZ^{\mathrm b,n}$ must  start and end at $(0,0)$ and stay in  $\{(i,j)\in \Z^2:i\ge 0; j\ge -1   \}$. According to \eqref{eq:matrix} and Lemma \ref{prop:sampler}, the increments of $\cZ^{\mathrm b,n}$ are i.i.d. with the distribution given in the lemma.	
	}
	\end{proof}
	We also would like to define the grouped-step walk for
	$Z^{\b,\infty}$. However, care must be taken about where we start
	grouping red steps. The following lemma is straightforward to check. 
	\begin{lemma}\label{lem:forward}
		Let $T$ be a forward stopping time (that is, a stopping time for
		the filtration $\sigma(\{w_i\}_{i\le k})$) of $w$ so that
		$w_T=\b$ or $w_T = \g$ a.s. Let $\ta(0)=T$ and for all $k\ge 1$, let
		$\ta(k)=\min\{i>\ta(k-1): w_i\neq r \}$. Let
		$\cZ_k=Z^{\b,\infty}_{\ta(k)}-Z^{\b,\infty}_T$ for all $k\ge
		0$. Then $\cZ$ is distributed as \nocolor{a random walk with i.i.d.  increments distributed as
			$\tfrac{1}{2} \delta_{(1,-1)} + \sum_{i=0}^\infty 2^{-i-2}
			\delta_{(-i,1)}$}. We call $\cZ$ the {\bf forward grouped-step walk of $Z^{\b,\infty}$ viewed
			from $T$}, \nocolor{and denote its law by $\P^\infty$.}
	\end{lemma}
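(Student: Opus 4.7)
The plan is to apply the strong Markov property of the chain $w$ at $T$ and at each of the subsequent times $\ta(k)$, and verify by direct calculation that the increment distribution obtained by grouping the consecutive $\r$-steps with the following non-$\r$ step matches the prescribed mixture.

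The first observation I would record is that in the transition matrix $P$ from \eqref{eq:matrix}, the $\b$-row and the $\g$-row are \emph{identical}: from either state the next symbol is $\b$ with probability $\tfrac12$, $\r$ with probability $\tfrac14$, or $\g$ with probability $\tfrac14$. Moreover, from $\r$ the chain moves only to $\r$ or $\g$ (each with probability $\tfrac12$), so the chain never visits $\b$ directly out of $\r$. Consequently, if $w_{\ta(k)}\in\{\b,\g\}$ then the time $\ta(k+1)$ is a.s.\ finite (the number of intervening $\r$'s is stochastically dominated by a geometric variable), and $w_{\ta(k+1)}$ again lies in $\{\b,\g\}$. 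By induction on $k$, starting from the hypothesis $w_T\in\{\b,\g\}$ a.s., every $w_{\ta(k)}$ belongs to $\{\b,\g\}$ and every $\ta(k)$ is a forward stopping time.

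Next I would compute the law of a single grouped increment. Conditioning on $w_{\ta(k)}\in\{\b,\g\}$ (which makes no difference since the two rows agree), let $N\ge 0$ be the number of consecutive $\r$'s strictly between $\ta(k)$ and $\ta(k+1)$. Then
\begin{itemize}
\item $N=0$ and $w_{\ta(k+1)}=\b$ occurs with probability $\tfrac12$, giving increment $f(\b)=(1,-1)$;
\item $N=0$ and $w_{\ta(k+1)}=\g$ occurs with probability $\tfrac14$, giving increment $(0,1)$;
\item for $i\ge 1$, the event $\{N=i,\ w_{\ta(k+1)}=\g\}$ requires first an $\r$ (probability $\tfrac14$), then $i-1$ further $\r$'s (probability $2^{-(i-1)}$), then a $\g$ (probability $\tfrac12$), for a total of $2^{-i-2}$, and the contributed increment is $i\cdot f(\r)+f(\g)=(-i,1)$.
\end{itemize}
Collecting these cases gives exactly $\tfrac12\delta_{(1,-1)}+\sum_{i=0}^{\infty}2^{-i-2}\delta_{(-i,1)}$, which is the law of a single step under $\P^{\infty}$.

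Finally, I would upgrade this one-step calculation to the full statement by the strong Markov property applied successively at $\ta(k)$: since each $\ta(k)$ is a forward stopping time with $w_{\ta(k)}\in\{\b,\g\}$ a.s., and since the conditional law of $(w_{\ta(k)+1},w_{\ta(k)+2},\ldots)$ given $\mathcal{F}_{\ta(k)}$ depends only on $w_{\ta(k)}$ and moreover is the same whether $w_{\ta(k)}=\b$ or $w_{\ta(k)}=\g$, the increments $(\cZ_{k+1}-\cZ_k)_{k\ge 0}$ are i.i.d.\ with the distribution computed above. There is no real obstacle here; the only points that require care are (i) checking that $\ta(k)$ is a.s.\ finite (handled by geometric decay of runs of $\r$) and (ii) noticing the coincidence of the $\b$- and $\g$-rows of $P$, which is what makes the grouped walk truly stationary and independent of the initial choice $w_T\in\{\b,\g\}$.
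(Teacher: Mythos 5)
Your proof is correct and is precisely the intended argument: the paper omits the verification entirely (it introduces the lemma as ``straightforward to check''), and your computation of the grouped-increment law, together with the observations that $P(\r\to\b)=0$ and that the $\b$- and $\g$-rows of $P$ coincide, is exactly what is needed to justify it.
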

		\begin{remark}\label{rmk:step}
			The increments $\{(\eta^x_k,\eta^y_k)\}_{k\ge 0}$ of $\P^{\infty}$
		\nocolor	{satisfy}
			$\E [(\eta^x_k,\eta^y_k)] = (0,0)$, $\E(|\eta^x_k|^2)=2$, $\E(|\eta^y_k|^2)=1$, and
			$\mathrm{cov}(\eta^x_k,\eta^y_k)=-1$.
		\end{remark}
	\begin{remark}\label{rmk:argument}
		The reason we use a random time $T$ to re-center is because otherwise
		we would not get \nocolor{the distribution}  $\P^\infty$. However,  if we set
		$T=\min\{i\ge 3nt : w_i=b\}$, then \xin{the} law of $T-3nt$ does not
		depend on $t$ and has an exponential tail.  Therefore, when we
		consider scaling limit questions where all times are rescaled by
		$(3n)^{-1}$, we can effectively think of $(3n)^{-1}T$ as the
		deterministic constant $t$, even in the finite volume setting where
		we're conditioning on the polynomially unlikely event $w^n\in \cW_n$.
	\end{remark}

	\begin{lemma} \label{lem:mc_concentration} 
		Let $\{X_k\}_{k\ge 1}$ be a sequence of i.i.d. random variables where  $X_1$ distributed as $\Geo$ or $\Geo-1$ or $\ta(1)-\ta(0)$ in Lemma~\ref{lem:forward}. Let $N_t=\inf\{n: \sum_1^n X_i \le t \}$.
		Then
		\begin{equation}\label{eq:renewal}
		\lim_{n\to\infty}\frac {N_{tn}}{tn}=\frac{1}{\E[X_1]} \quad \textrm{a.s. for all $t$ simultaneously}.
		\end{equation}
		Moreover, there exist  absolute constants $c_1,c_2>0$ such that
		\begin{equation}\label{eq:concen}
		\P\left[ \nocolor{\max_{1\le k\le n}  \left|\sum_{i=1}^{k}X_i-k\E[X_1]\right|}>\sqrt{n}\log n\right] \le  c_1n^{-c_2\log n}.
		\end{equation}
	\end{lemma}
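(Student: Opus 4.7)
The plan is to reduce the lemma to two classical ingredients: Kolmogorov's strong law of large numbers for i.i.d.\ sums, and the Cram\'er--Chernoff moment generating function bound.

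First I would verify the exponential-moment hypothesis. For $X_1\sim\Geo$ or $X_1\sim\Geo-1$ this is immediate. For $X_1$ distributed as $\ta(1)-\ta(0)$ from Lemma~\ref{lem:forward}, a one-line inspection of the transition matrix \eqref{eq:matrix} gives, conditionally on $w_T\in\{\b,\g\}$, that the next symbol is $\r$ with probability $1/4$ (uniformly in the value of $w_T$) and each subsequent symbol stays $\r$ with probability $1/2$; hence $\P[\ta(1)-\ta(0)=1]=3/4$ and $\P[\ta(1)-\ta(0)=k]=2^{-k}$ for $k\ge 2$. In all three cases the moment generating function $M(\lambda):=\E[e^{\lambda X_1}]$ is finite on some interval $|\lambda|\le\lambda_0$, with $M(0)=1$, $M'(0)=\mu:=\E[X_1]$, and $M''(0)<\infty$.

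For \eqref{eq:renewal}, I would apply Kolmogorov's SLLN to obtain $S_n/n\to\mu$ almost surely, where $S_n:=\sum_{i=1}^n X_i$. The sandwich $S_{N_s}\le s<S_{N_s+1}$ then forces $N_s/s\to 1/\mu$ for every real $s\to\infty$ on a full-probability event that makes no reference to the parameter $t$. Specializing to $s=tn$ gives the ``simultaneously in $t$'' clause for free.

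Finally, \eqref{eq:concen} is a standard Chernoff computation. Writing $Y_i:=X_i-\mu$ and Taylor-expanding $\log M(\lambda)-\lambda\mu=\tfrac12\sigma^2\lambda^2+O(\lambda^3)$ near $0$ yields a quadratic bound $\E[e^{\lambda Y_1}]\le e^{C\lambda^2}$ on $|\lambda|\le\lambda_0/2$ for some $C>0$. With the choice $\lambda=(\log n)/(2C\sqrt{n})$ (which is admissible for all $n$ large), Markov's inequality gives
\begin{equation*}
\P\Bigl[\sum_{i=1}^n Y_i>\sqrt{n}\log n\Bigr]\le\exp\bigl(-\lambda\sqrt{n}\log n+Cn\lambda^2\bigr)=\exp\bigl(-(\log n)^2/(4C)\bigr)=n^{-(\log n)/(4C)},
\end{equation*}
and the symmetric estimate with $-\lambda$ controls the lower tail, yielding \eqref{eq:concen} with $c_2=1/(4C)$. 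I expect no conceptual obstacle here: the only model-specific step is reading the geometric tail of $\ta(1)-\ta(0)$ off the matrix \eqref{eq:matrix}, and everything else is a textbook Cram\'er--Chernoff bound combined with the elementary renewal SLLN.
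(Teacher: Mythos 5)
Your proposal is correct, and for the concentration bound \eqref{eq:concen} it takes a genuinely different (and arguably cleaner) route than the paper. The paper handles the three distributions non-uniformly: for $\Geo$ and $\Geo-1$ it cites Janson's tail bounds for sums of geometric variables, and for $\ta(1)-\ta(0)$ it constructs an explicit coupling with a $\Geo-1$ variable so that the difference is bounded by $2$, applies Azuma--Hoeffding to that difference, and finishes with a union bound. You instead observe that all three laws have a finite exponential moment and run a single Cram\'er--Chernoff computation with $\lambda=(\log n)/(2C\sqrt n)$; this is self-contained, treats all three cases at once, and (as you note in your final remark) generalizes to any zero-mean-centered increment with an exponential moment, at the cost of redoing a textbook estimate the paper prefers to outsource. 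For \eqref{eq:renewal} both arguments are the same in substance --- the paper simply invokes the renewal theorem for finite-mean increments, while you reprove it via the SLLN and the sandwich $S_{N_s}\le s<S_{N_s+1}$ (which also silently corrects the paper's definition of $N_t$, where the inequality should read $\ge t$). One small slip: the tail of $\ta(1)-\ta(0)$ is $\P[\ta(1)-\ta(0)=k]=2^{-k-1}$ for $k\ge2$ (the first symbol is $\r$ with probability $\tfrac14$ and each of the $k-1$ subsequent trials ends the run with probability $\tfrac12$), not $2^{-k}$ as you wrote --- your probabilities sum to $\tfrac54$. This does not affect your argument, since only the exponential decay of the tail enters the moment generating function bound.
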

	\begin{proof}
		Formula \eqref{eq:renewal} follows from the renewal theorem for
		random variables with finite means, which works for all the three
		distributions.  For \eqref{eq:concen}, when $X_1=\Geo$ or $\Geo-1$, by known concentration results for geometric random variables (see e.g.  		\cite[Theorem 2.1 and 3.1]{janson-tail}), we have  \begin{equation}\label{eq:concen1}
		\P\left[    \left|\xin{\sum_{i=1}^{k}X_i-k\E[X_1]}\right|>\sqrt{n}\log n\right] \le  c_1n^{-c_2\log n}\quad\xin{\textrm{for }1\le k\le n}.
		\end{equation}
		When $X_1\overset{d}{=}\ta(1)-\ta(0)$ in Lemma~\ref{lem:forward}, we have $\P[X_1=1]=\frac34$ and $\P[X_1=i]=2^{-i-1}$ for $i\ge 2$. Therefore  we can couple $Y=\Geo-1$ with $X_1$ such that $Y=X_1$ if $X_1\ge 2$,
		$$\P[Y=0|X_1=1]=\frac12,\quad \textrm{and}\quad \P[Y=1|X_1=1]=\frac14.$$
		Write $X_1=Y+(X_1-Y)$. Then $|X_1-Y|\le 2$, thus by Azuma-Hoeffding inequality $X_1-Y$  satisfies the concentration in \eqref{eq:concen}. \nocolor{Therefore~\eqref{eq:concen} holds for $X$ in all three cases. 
		By \xin{the union bound}   we get~\eqref{eq:concen} from \eqref{eq:concen1} after possibly increasing $c_1$ and decreasing $c_2$.} 
	\end{proof}
	\begin{remark}\label{rmk:concen}
		\nocolor{Sums of the form of $\sum_{i=1}^{n}X_i$  in Lemma~\ref{lem:mc_concentration}  frequently appear when dealing the scaling limits of  UIWT and its associated walks. Lemma~\ref{lem:mc_concentration}  allows us to replace $\sum_{i=1}^{n}X_i$ by $n\E[X_1]$.  Since the error $n^{-c\log n}$ in \eqref{eq:concen} decays much faster than $\P[w^n\in \cW_n]$, this replacement is still valid under  the
		finite volume conditioning. We will apply this observation several	times in Section~\ref{sec:peano}.}
	\end{remark}
	
	Write $Z^{\b,\infty}$ as $(L^{\b,\infty},R^{\b,\infty})$.  By taking $T=\inf\{i\ge0:w_i=b\}$ and \nocolor{applying}  \eqref{eq:error} to $Z^{\b,\infty}$ and $\cZ$, we have that   $\left(\tfrac{1}{\sqrt{4n}}L^{\mathrm{b},\infty}_{\lfloor 3nt
		\rfloor},  \tfrac{1}{\sqrt{2n}}R^{\b,\infty}_{\lfloor 3nt
		\rfloor}\right)_{t\ge 0}$ weakly converges to a Brownian motion $\scZ$ satisfying \eqref{eq:covariance}. By the stationarity of $w$ we have that $\left(\tfrac{1}{\sqrt{4n}}L^{\mathrm{b},\infty}_{\lfloor 3nt
		\rfloor},  \tfrac{1}{\sqrt{2n}}R^{\b,\infty}_{\lfloor 3nt
		\rfloor}\right)_{t\in \R}$ weakly converges  to a two-sided Brownian motion (which we still denote by $\scZ$) with the same variance and covariance. 
	We now explain that the scaling limit of the walk $^\b Z^\infty$ is
	the time reversal of the scaling limit of $Z^{\b,\infty}$, although
	this relation does not hold at the discrete level.

	\begin{proposition}\label{prop:reverse}
		Write $
		\bZb^\infty=(\bRb^\infty,\bLb^\infty)$. Then
		$$\left( \tfrac{1}{\sqrt{4n}}L^{\mathrm{b},\infty}_{\lfloor 3nt
			\rfloor},  \tfrac{1}{\sqrt{2n}}R^{\b,\infty}_{\lfloor 3nt
			\rfloor} \right)_{t\in\R} \quad \textrm{and} \quad
		\left(\tfrac{1}{\sqrt{4n}}\bRb^\infty_{\lfloor 3nt
			\rfloor},  \tfrac{1}{\sqrt{2n}}\bLb^\infty_{\lfloor 3nt
			\rfloor}\right)_{t\in \R}
		$$
		jointly converges in law to the process $\scZ$, defined above,
		and its time reversal.
	\end{proposition}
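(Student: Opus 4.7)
The plan is to combine marginal convergence of $\bZb^\infty$ with a geometric identification that forces the joint limit to be the time-reversal pair. For the marginal convergence I would apply the grouped-step walk analysis of Lemma~\ref{lem:forward} and Proposition~\ref{prop:1pair} directly to the counterclockwise exploration of $T_\b$: the ccw symbol sequence has the same transition structure as the cw one up to swapping $\r$ and $\g$, so the step distribution $\P^\infty$ and the invariance principle go through verbatim, with the two coordinates interchanged (which is already built into the notation $\bZb=(\bRb,\bLb)$). This gives convergence of the rescaled $\bZb^\infty$ to a two-sided Brownian motion $\scZ'$ with the same covariance as $\scZ$, and in particular joint tightness of the pair.

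To identify any subsequential joint limit $(\scZ,\scZ')$ as $(\scZ,\,t\mapsto\scZ_{-t})$, I would exploit the geometric interpretations of both walks as encodings of the same UIWT $S^\infty$. By Propositions~\ref{prop:paren_matching} and~\ref{prop:dual2}, extended to the UIWT, $R^{\b,\infty}_k$ records (up to flat steps) the contour height of $T_\b$ along the cw exploration, and $L^{\b,\infty}_k$ records the depth in the dual blue tree at the current face $\cF(w_k)$. The symmetric statements hold for $\bZb^\infty$ under the ccw exploration: $\bLb^\infty_k$ is the ccw contour of $T_\b$ (modulo flat steps), and $\bRb^\infty_k$ is the depth in the same dual blue tree measured from the current ccw face. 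Since the cw and ccw explorations visit the same corners of $S^\infty$ in opposite orders, both walks track identical geometric quantities at corresponding positions, and these positions are related by time reversal at the event level.

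Promoting this event-level identification to a walk-level time reversal requires an error analysis. The cw walk indexes events by the second cw crossing $\tau_2(e)$ of each edge $e$, whereas the ccw walk indexes them by $-\tau_1(e)$, so the cw-step-$k$ and ccw-step-$k$ positions in $S^\infty$ differ by a boundary-lifetime correction. Using Lemma~\ref{lem:mc_concentration} together with the tightness of the contour height and dual depth under rescaling, one shows that the resulting mismatch in the walk's coordinates is $o(\sqrt n)$ with high probability, and hence vanishes under the $\sqrt n$ rescaling; combined with step one, this pins down the subsequential limit as $(\scZ,\,t\mapsto\scZ_{-t})$.

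The main obstacle is precisely this last step: the event-level discrepancy between first and second crossings is itself of order $\sqrt n$ (the typical exploration-boundary size), so a naive bound would contribute at the Brownian scale. The resolution is to use the dual-tree interpretation of Proposition~\ref{prop:dual2}, which encodes the walk coordinates as genuinely geometric functionals of the current position in $S^\infty$ rather than as sums of event-based increments; consequently the indexing difference becomes a sub-Brownian fluctuation rather than a leading-order discrepancy, and the time-reversal relation between $\scZ$ and $\scZ'$ emerges in the limit.
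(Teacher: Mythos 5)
Your overall strategy---establish marginal convergence of $\bZb^\infty$ via the grouped-step walk, reduce to identifying subsequential limits, and handle the ordinates by observing that $R^{\b,\infty}$ and $\bLb^\infty$ are both (modulo flat steps, which only rescale time by $\tfrac32$) the contour function of the same portion of $T_\b$ traced in opposite directions---matches the paper's proof. The gap is in your treatment of the abscissas. You assert that $\bRb^\infty_k$ is ``the depth in the same dual blue tree'' as the one encoded by $L^{\b,\infty}$, citing Proposition~\ref{prop:dual2}. That proposition does not say this: it relates $L^\b$ to the dual blue tree of $\cM_{\b\r}$ and relates $\prescript{\r}{}{R}$ (the counterclockwise exploration of the \emph{red} tree) to the dual \emph{red} tree of $\cM_{\b\r}$; it says nothing about $\bRb$, the counterclockwise exploration of the \emph{blue} tree. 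Indeed $L^\b$ changes only on b and r symbols (it counts red edges crossed, i.e.\ steps in the dual tree of $\cM_{\b\r}$), whereas $\bRb$ changes only on b and g symbols (it counts green edges), so the two abscissas are genuinely different discrete functionals---the latter lives on $\cM_{\b\g}$, not $\cM_{\b\r}$. Your claimed ``resolution'' of the main obstacle rests on this false identification, so the argument does not close.

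What is actually needed, and what the paper does, is a probabilistic rather than deterministic identification: $L^{\b,\infty}_{3n}$ equals the signed count of br matches straddling the endpoints of $[1,3n]$ (running-infima events, hence $O(\sqrt n\log n)$ of them with high probability), while $\bRb^\infty$ at the corresponding time equals the signed count of g symbols attached to the innermost such straddling matches. By Lemma~\ref{lem:distribution_green_edges_in_corner} each straddling match carries an independent $\Geo-1$ number of green symbols (mean $1$), so Lemma~\ref{lem:mc_concentration} gives $L^{\b,\infty}_{3n}=\bRb^\infty+O(n^{1/4+o(1)})$ with high probability, which vanishes under the $\sqrt n$ rescaling. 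Without this concentration step---which is the real content of the abscissa comparison---the proof is incomplete.
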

	
	\begin{proof}
		To show that the
		scaling limits of $Z^{\mathrm{b},\infty}$ and
		$\bZb^\infty$ are related by time
		reversal, it suffices by tightness to show that any subsequential limit
		$(\mathscr{Z},\widehat{\mathscr{Z}} \:\:)$ of
		the two  processes has the property that $\mathscr{Z}$ and
		$\widehat{\mathscr{Z}}$ are time reversals of each other. In other
		words, we want to show that $\mathscr{Z}_{t} = \widehat{\mathscr{Z}}_{-t}$ a.s.
		for all $t$. Without loss of generality we take $t=1$. 
		
		The process $R^{\mathrm{b},\infty}$ over the interval
		$[0,3n]$, with flat steps excised, is equal to the contour
		function of the portion of the blue tree traced by $\cP_S$
		during that interval. Furthermore, by
		Lemma~\ref{lem:mc_concentration}, the asymptotic effect of
		including the flat steps is to time-scale the contour function
		by a factor of $\tfrac{3}{2}$. Similarly, $\bLb^\infty$ is
		asymptotically within $o_n(1)$ of the contour function of the
		same portion of the same tree (also time-scaled by a factor of
		$\tfrac{3}{2}$), but traced in reverse.  Therefore, the
		ordinates of $\mathscr{Z}_{1}$ and
		$\widehat{\mathscr{Z}}_{-1}$ are equal.
		
		It remains to show that the abscissas of $\mathscr{Z}_{1}$ and
		$\widehat{\mathscr{Z}}_{-1}$ are equal. Roughly speaking, the
		idea is to (i) observe that the former counts the discrepancy
		between unmatched b's and r's in the segment $w_0\cdots w_n$
		of the word $w$, and (ii) show that the latter approximately
		counts the same.  By definition, $L^{\mathrm{b},\infty}_n$ is
		equal to $|L \cap [1,3n]_{\Z}| - |R \cap [1,3n]_{\Z}|$, where
		$L$ is the set of integers $k \geq 1$ such that $(w_j,w_k)$ is
		a br match and $j < 0$, and $R$ is the set of integers
		$j \leq 3n$ such that $(w_j,w_k)$ is a br match and $k >
		3n$. (The absolute value bars denote cardinality.)
		
		For $j \geq 1$, suppose that $w_q$ is the $j$th least element
		of $L$ and define $G^L_j$ to be the set of all integers $k$
		such that $w_k = \g$ and there exists $p$ so that $(w_p, w_q)$
		is the innermost br match enclosing $w_k$ (in other words, it is
		the match with maximal $p$ among those satisfying 
		$p < k < q$).  Similarly, for $j \geq 1$, let $w_p$ be the
		$j$th largest element of $R$ and define $G^R_j$
		to be the set of all integers $k$ such that $w_k = \g$ and the
		br  match $(w_p,w_q)$ is the innermost one enclosing $w_k$. 
		Let $G^L$ and $G^R$ be disjoint unions of these sets, as
		follows: 
		\begin{equation} \label{eq:decomposition}
		G^L = G^L_1 \cup \cdots \cup G^L_{|L \, \cap [1,3n]_\Z|} \quad \text{
			and } \quad   G^R = G^R_1 \cup \cdots \cup G^R_{|R \, \cap [1,3n]_\Z|}
		\end{equation}
		By Definition
		\ref{def:graph_from_word}, 
		$\bRb^\infty_{-n}$ is equal to
		$|G^L| - |G^R|$ .
		By the measurability part of
		Lemma~\ref{lem:distribution_green_edges_in_corner},
		$(|G^L_j|)_{j=1}^\infty$ and $(|G^R_j|)_{j=1}^\infty$ are
		i.i.d. random variables with distribution $\Geo-1$.
		
		Since elements of $L$ and $R$ correspond to running infima of
		$L^{\b,\infty}$ and its time reversal, respectively, we have
		$|L \cap [1,3n]_\Z| \le \sqrt n \log n$ and
		$|R \cap [1,3n]_\Z| \le \sqrt n \log n$ with probability
		$1-o_n(1)$. Combined with \eqref{eq:decomposition} and
		Lemma~\ref{lem:mc_concentration}, we have
		$L^{\mathrm{b},\infty}_{3n}=\bRb^\infty_{3n}+O(n^{1/4})$ with
		probability $1-o_n(1)$. This concludes the proof.
	\end{proof}
	We conclude this section with the finite-volume version of
	Proposition~\ref{prop:reverse}. 
	
	\begin{proposition} \label{prop:dual_finite} 
		Write $Z^{\mathrm{b},n}=(L^{\mathrm{b},n},R^{\mathrm{b},n})$ and $\bZb^n=(\bRb^n,\bLb^n)$. Then
		$$\left( \tfrac{1}{\sqrt{4n}}L^{\mathrm{b},n}_{\lfloor 3nt
			\rfloor},  \tfrac{1}{\sqrt{2n}}R^{\b,n}_{\lfloor 3nt
			\rfloor} \right)_{t\in[0,1]} \quad \textrm{and} \quad
		\left(\tfrac{1}{\sqrt{4n}}\bRb^n_{\lfloor 3nt
			\rfloor},  \tfrac{1}{\sqrt{2n}}\bLb^n_{\lfloor 3nt
			\rfloor}\right)_{t\in [0,1]}
		$$
		jointly converges in law to $\scZ^\b$ defined in Theorem~\ref{thm:3pair} and its time reversal.
	\end{proposition}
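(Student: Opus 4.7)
The plan is to transfer Proposition~\ref{prop:reverse} from the infinite-volume to the conditioned finite-volume setting, using the conditioning structure of Proposition~\ref{prop:sampler} together with the concentration of Lemma~\ref{lem:mc_concentration}. The crucial quantitative point is that the conditioning event $\{Z^{\b,n}\in \cW_n\}$ has probability of order $n^{-5}$, whereas every concentration estimate we use fails only with super-polynomial probability $n^{-c\log n}$, so any high-probability statement about the unconditioned Markov chain $w$ passes to the uniform wooded triangulation. Proposition~\ref{prop:1pair} already gives the marginal convergence of $(L^{\b,n},R^{\b,n})$ to $\scZ^\b$, and the same grouped-step/invariance-principle argument can be applied to $\prescript{\b}{}{Z}^n$ to give tightness of the second marginal; hence it suffices to identify any joint subsequential limit $(\scZ^\b,\widehat{\scZ})$ as satisfying $\widehat{\scZ}_t=\scZ^\b_{1-t}$ almost surely for each fixed $t\in[0,1]$.

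To identify the limit I would establish two finite-volume identities paralleling the infinite-volume proof. For the ordinates, both $R^{\b,n}$ (after excising the r-flat steps) and $\prescript{\b}{}{L}^n$ (after excising the g-flat steps) are equal to the contour function of $T_\b(S^n)$, the first traversed forward and the second in reverse. Lemma~\ref{lem:mc_concentration} applied to the i.i.d.\ geometric run-lengths of r's and g's gives a uniform $3/2$ time rescaling in both directions, yielding $\prescript{\b}{}{L}^n_{\lfloor 3nt\rfloor}=R^{\b,n}_{\lfloor 3n(1-t)\rfloor}+o(\sqrt n)$ with probability $1-o_n(1)$, uniformly in $t$ after a union bound over a polynomial mesh. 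For the abscissas, set $s=\lfloor 3nt\rfloor$ and let $\cB_s$ denote the set of br matches $(w^n_j,w^n_k)$ with $j\leq s<k$, so that $L^{\b,n}_s=|\cB_s|$ by the step structure of $Z^{\b,n}$. Using Definition~\ref{def:graph_from_word} and Proposition~\ref{prop:dual} applied to the \emph{reverse} exploration, $\prescript{\b}{}{R}^n_{3n-s}$ can be written as a signed sum of $\Geo-1$ random variables indexed by $\cB_s$, exactly as $|G^L|-|G^R|$ was in the decomposition~\eqref{eq:decomposition}. By Lemma~\ref{lem:distribution_green_edges_in_corner} these variables are i.i.d.\ with mean $1$, and since $|\cB_s|\leq\sqrt n\log n$ with probability $1-o_n(1)$ by the already-established tightness of the abscissa, Lemma~\ref{lem:mc_concentration} produces $\prescript{\b}{}{R}^n_{\lfloor 3nt\rfloor}=L^{\b,n}_{\lfloor 3n(1-t)\rfloor}+O(n^{1/4}\log n)$ with probability $1-n^{-c\log n}$, which survives both the $n^{-5}$ conditioning cost and a polynomial union bound over~$t$.

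The main obstacle is the combinatorial bookkeeping behind the abscissa identity: one must verify that the $\Geo-1$ counts arising from the reverse walk are in signed bijection with the crossing matches in $\cB_s$, with the correct signs. In the infinite-volume proof the two signs corresponded to br matches straddling the origin from the left versus from the right, while in the finite-volume excursion each match in $\cB_s$ is fully interior and the role of \emph{enclosed by an outer nesting of br pairs} versus \emph{nesting outer br pairs} plays the analogous role. This is precisely the situation handled by Lemma~\ref{lem:face_structure} and Lemma~\ref{lem:distribution_green_edges_in_corner} applied to the map $\cM_{\b\r}(w^n)$ and its reverse; once the bijection is set up, all remaining estimates are the concentration arguments already invoked, and tightness plus finite-dimensional convergence assembles the joint functional limit as stated.
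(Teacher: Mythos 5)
Your proposal is correct and takes essentially the same route as the paper: the paper's proof of Proposition~\ref{prop:dual_finite} simply states that the infinite-volume argument of Proposition~\ref{prop:reverse} (contour-function identification of the ordinates, the $\Geo-1$ green-edge counts attached to the crossing br matches for the abscissas, and the concentration of Lemma~\ref{lem:mc_concentration}) goes through after conditioning on $\{w^n\in\cW_n\}$, because, as in Remark~\ref{rmk:concen}, the superpolynomially small failure probabilities survive division by the polynomial probability $\P[Z^{\b,n}\in\cW_n]\asymp n^{-5}$ from Proposition~\ref{prop:sampler}. Your write-up is a more detailed reconstruction of exactly that transfer.
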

	
	\begin{proof} 
		Let \nocolor{$\cZ^{\mathrm b,n}=(\cL^{\mathrm b, n}, \cR^{\mathrm b,n})$} be the grouped-step walk of $Z^{\mathrm b,n}$. By the
		invariance principle for random walks in cones (see, e.g.,
		\cite{duraj2015invariance}) and Remark~\ref{rmk:step},
		\nocolor{$\left(\tfrac{1}{\sqrt{4n}}\cL_{\lfloor2nt\rfloor}^{\mathrm b,n},
			\tfrac{1}{\sqrt{2n}}\cR_{\lfloor2nt\rfloor}^{\mathrm b,n}\right)$} converges to $\scZ^b$. 
		By the step distribution of \nocolor{$\cZ^{\mathrm b,n}$}, we have
		\begin{equation}\label{eq:error}
		|Z^\b_m-\cZ^{\mathrm b,n}_{k-1}|\le |\cZ^{\mathrm b,n}_k-\cZ^{\mathrm b,n}_{k-1}|\quad \textrm{for all }  1\le k\le 3n \textrm{ and } \ta^n(k-1) \le m< \ta^n(k).
		\end{equation}
		\nocolor{Now the convergence of $\left( \tfrac{1}{\sqrt{4n}}L^{\mathrm{b},n}_{\lfloor 3nt
			\rfloor},  \tfrac{1}{\sqrt{2n}}R^{\b,n}_{\lfloor 3nt
			\rfloor} \right)_{t\in[0,1]} $} follows from
		Lemma~\ref{lem:mc_concentration} by setting $X_1=\ta(1)-\ta(0)$.
	
	\nocolor{To show the joint convergence in Proposition~\ref{prop:dual_finite},}  
	we note that the proof of Proposition~\ref{prop:reverse} works here as well, except that we have to condition on the polynomially unlikely event $\{w^n\in\cW_n\}$. As explained in Remark~\ref{rmk:concen}, the
		estimate \eqref{eq:concen} allows us to ignore this conditioning and
		the argument in Proposition~\ref{prop:reverse} goes through.
	\end{proof}

	\section{Multiple Peano curves  and an excursion theory}\label{sec:lqg}
In this section we review the necessary background in the continuum.	We focus  on the case $\kappa\in (0,2)$ since eventually we only need $\kappa=1$. Throughout the section, we set 
	\begin{equation}\label{eq:parameter}
	\gamma=\sqrt{\kappa}\in (0,\sqrt2)\quad \textrm{and}\quad \kappa'=16/\kappa>8.
	\end{equation}

	\subsection{SLE, LQG and the peanosphere theory}\label{subsec:lqg}
\nocolor{We review  the peanosphere/mating-of-trees theory~\cite{mating}. \xin{We closely follow Sections 3 and 4 of  the recent survey \cite{ghs-survey}.} 
	We first recall the LQG area measure.
	Let  $\fh$ be a variant of Gaussian free field (GFF) on the plane. The $\gamma$-LQG area measure  $\mu_{\fh}$ is the random measure on the plane formally defined by $e^{\gamma\fh}\,dx\,dy$, which is made rigorous by a regularization and limiting procedure as   in \cite{KPZ}. In this section we let $\fh$ be a particular variant of GFF on the complex plane $\C$,  which is the field describing a LQG surface  called the $\gamma$-quantum cone. In this case, the measure $\mu_{\fh}$ is conjectured to be the scaling limit of the volume measure of the natural infinite volume random planar map model  in the $\gamma$-LQG universality class. For example, we conjecture that when $\gamma=1$, the measure  $\mu_{\fh}$ is the scaling limit of the vertex counting measure of the UIWT under a conformal embedding. We will not recall the precise definition of $\gamma$-quantum cone  and the construction of $\mu_\fh$, which can be found in~\cite[Section 3.4]{ghs-survey}. For the rest of the paper it suffices to know that $\mu_\fh$ is an almost surely infinite  random measure on $\C$ such that Theorem~\ref{thm:mating2} below holds. 
 	
 We now recall the space-filling SLE following \cite[Section 3.6.3]{ghs-survey}. Let $R\D=\{z\in \C: |z|<R\}$ with $R>0$. A chordal   $\SLE_{\kappa'}$ on $R\D$ from $-iR$ to $iR$ is a random non-self-crossing space-filling curve from $-i R$ to $iR$. We denote it  by  $\eta'_R$ and parametrize it such that $\eta'_R(0)=0$ and in each unit of time it traverses a region of Euclidean area 1. Then $\eta'_R$ converges  in law to a space-filling curve $\eta'$ on  $\C$ in the local uniform topology. 
The limiting curve $\eta'$ modulo monotone reparametrization is called a \emph{whole plane space-filling  $\SLE_{\kappa'}$}.   
 Consider an independent coupling of the $\gamma$-quantum cone field $\fh$ and  a whole-plane space-filling $\SLE_\kappa'$ curve $\eta'$, with $\kappa'=16/\kappa$ and $\gamma=\sqrt{\kappa}$ as in~\eqref{eq:parameter}.  
In Theorem~\ref{thm:mating2}, we will parametrize $\eta'$ using $\mu_\fh$ such that
  \begin{equation}\label{eq:eta-para}
\eta'(0)=0\quad \textrm{and}\quad  \mu_\fh(\eta'([s,t]))=t-s\quad \textrm{for } s<t.
  \end{equation}

 The curve $\eta'$ has the following properties which does not depend on its parametrization, although we use the one in~\eqref{eq:eta-para} for concreteness. 
  First, $\lim_{t\to \pm \infty} |\eta'(t)|=\infty$. Moreover, 
 for $t\in \R$, the intersection of $\eta'(-\infty, t]$ and $\eta'[t,\infty)$ can be viewed as the union of two curves from $\eta(t)$ to $\infty$, which we denote by $\eta_t^{\mathrm{L}}$ and $\eta_t^{\mathrm{R}}$, respectively.  
  We let  $\eta_t^{\mathrm{L}}$  be the curve such that  $\eta'(-\infty, t] $ is on the left side of  $\eta_t^{\mathrm{L}}$ and call $\eta_t^{\mathrm{L}}$  the left boundary of $\eta'(-\infty,t]$.   We call $\eta_t^{\mathrm{R}}$  the right boundary of $\eta'(-\infty,t]$.
  Under our assumption that $\kappa\in (0,\sqrt2)$,  the following holds almost surely: for each $t\in \R$ the curves $\eta_t^{\mathrm{L}}$  and $\eta_t^{\mathrm{R}}$ are simple and they  do not intersect except at their endpoints. 
  According to \cite{Zipper,mating}, under the independent coupling of $\fh$ and $\eta'$,   the field  $\fh$ induces a length measure called the \emph{$\gamma$-quantum length} on the left and right boundaries of $\eta'(-\infty, t]$ for all $t\in \R$.
Let $\scL_t$ (resp. $\scR_t$) be the net change of the quantum lengths of the left (resp. right) boundary of $\eta'(-\infty,t]$ relative to $\eta'(-\infty,0]$.} The following mating-of-trees theorem proved in \cite{mating,gwynne2015brownian} is the infinite volume version of Theorem~\ref{thm:mating}:
\begin{theorem}\label{thm:mating2}
	$\scZ=(\scL,\scR)_{t\in \R}$ is a two-sided Brownian motion such that
	\begin{equation}\label{eq:cov}
	\Var[\scL_1]=\Var[\scR_1]=1 \quad \textrm{and}\quad   \Cov [\scL_1,\scR_1]=-\cos\left(\frac{4\pi}{\kappa'}\right),
	\end{equation} 
	Moreover,  $\scZ$ determines $(\fh,\eta')$ up to rotations.\footnote{Our \eqref{eq:cov} differs from \cite[Theorem 9.1]{mating} by a constant. However, we can redefine the quantum length measure of $\SLE_\kappa$ via a rescaling to ensure~\eqref{eq:cov}.
		This convention is also implicitly made in \cite{mating2}.}
\end{theorem}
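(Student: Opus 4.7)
The statement combines two assertions about the mating-of-trees encoding of a $\gamma$-quantum cone by the whole-plane space-filling $\mathrm{SLE}_{\kappa'}$: (i) the boundary-length pair $\scZ=(\scL,\scR)$ is a two-sided correlated Brownian motion with the stated covariance, and (ii) the triple $(\fh,\eta',\scZ)$ is measurably equivalent to $\scZ$ up to the rotation ambiguity of the quantum cone. My plan is to follow the Duplantier–Miller–Sheffield/Gwynne–Holden–Sun strategy and argue each part separately, leveraging Theorem~\ref{thm:mating} as the finite-volume backbone.

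For (i), the first step is to establish stationarity and independence of increments of $\scZ$. Stationarity will follow from the defining scale/translation invariance of the $\gamma$-quantum cone combined with the $\mu_\fh$-parametrization of $\eta'$: shifting time by $s$ corresponds to re-centering the cone at $\eta'(s)$ and rescaling so that the marked point again has unit-neighborhood area, and the joint law is preserved. Independence of increments over disjoint time intervals will follow from the Markov property of the imaginary-geometry flow lines forming the left/right boundaries of $\eta'((-\infty,t])$, together with the conditional independence of the restrictions of $\fh$ to the complementary components given those boundaries, after accounting for the quantum-length-preserving welding along the interface. Continuity of $\scZ$ will come from continuity of quantum length under local perturbations of the field. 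These three facts together force $\scZ$ to be Brownian with some covariance matrix $\Sigma$. The variances in \eqref{eq:cov} will then be pinned down by comparing with the finite-volume limit: pass to the limit in Theorem~\ref{thm:mating} through the localization of a quantum-disk region inside the cone and match the normalization of the quantum length measure against the Brownian excursion variance. The off-diagonal covariance $-\cos(4\pi/\kappa')$ is the genuinely SLE-flavored input; it is dictated by the angle gap $\pi$ between the left and right imaginary-geometry flow-line boundaries of $\eta'$ through the formula relating boundary-length correlation to angle, which in turn can be derived from the $\mathrm{SLE}_\kappa(\rho)$ weights of the flow-line boundaries and a direct quantum-disk welding computation, or bootstrapped by matching onto the finite-volume covariance in \eqref{eq:covariance}.

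For (ii), the approach is conformal welding. The pair $(\scL,\scR)$ encodes the forested quantum-wedge structure of the left and right quantum boundary arcs swept by $\eta'$; by the quantum zipper of Sheffield \cite{Zipper}, welding these two trees along matched quantum lengths reproduces both the embedded curve $\eta'$ and, simultaneously, a quantum surface conformally equivalent to the original cone. One would make this rigorous by the approximation scheme of \cite{mating2}: truncate to a compact time interval, recover the corresponding subset of $(\fh,\eta')$ up to a conformal ambiguity via finite-volume welding uniqueness, and then take the limit to recover the whole-plane object, with the rotation ambiguity accounting for the unique undetermined conformal degree of freedom of the cone centered at a marked point.

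The main obstacle, as usual in this theory, is part (ii): the rigorous conformal welding argument requires quantitative control on the welding homeomorphism of independent quantum wedges and a uniqueness statement for the resulting surface. Verifying independence of $\scL$-increments from $\scR$-increments on disjoint intervals beyond the pair $(\scL,\scR)$ being Brownian is subtle because the interface between the two is itself determined by the field; this is handled by showing that the welding map depends only on the driving boundary-length data, invoking the removability of $\mathrm{SLE}_{\kappa'}$-type curves. By contrast, computing the covariance in (i) is conceptually more straightforward once stationarity and independence are in hand, since the numerical value can be read off from either the finite-volume Theorem~\ref{thm:mating} or a direct imaginary-geometry boundary-angle computation.
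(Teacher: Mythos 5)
This theorem is not proved in the paper at all: it is imported verbatim from \cite{mating} together with \cite{gwynne2015brownian}, and the authors use it as a black box (indeed the introduction states explicitly that the paper works entirely on the Brownian-motion side of the correspondence precisely so that the SLE/LQG input can be assumed). So there is no in-paper argument to compare against; what you have written is a plausible executive summary of the strategy in those references rather than a proof, and judged as a proof it has two substantive problems.

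First, your route to the covariance value is circular as stated. You propose to ``pin down'' the variances and the off-diagonal entry $-\cos(4\pi/\kappa')$ by matching against the finite-volume statement, Theorem~\ref{thm:mating} and \eqref{eq:covariance}. But the logical order in the literature is the reverse: the finite-volume (unit-area sphere) version is \emph{derived from} the infinite-volume quantum-cone version by a singular conditioning procedure (\cite{mating2}; the paper says as much at the end of Section~4.1), so you cannot use it as input. Moreover the diagonal entries equal to $1$ are a normalization convention on the quantum length measure (see the footnote to the theorem), not something to be proved. The genuinely hard content is the identification of the correlation coefficient: \cite{mating} establishes that $\scZ$ is a correlated Brownian motion and computes the correlation only for $\kappa'\in(4,8]$; for $\kappa'>8$ --- which is the case needed here, $\kappa'=16$ --- the correlation is the main theorem of \cite{gwynne2015brownian}, and it is not obtained by a ``direct quantum-disk welding computation'' or by reading off an angle; it requires a separate argument. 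Your sketch does not supply that argument. Second, for the stationarity/independent-increments step, note that stationary independent increments plus continuity give a Brownian motion \emph{with drift}; you must also rule out the drift (by the reflection/scaling symmetries of the cone), and the independence of increments requires the full Markovian structure of the quantum cone cut along the past boundary of $\eta'$, which is itself one of the main technical theorems of \cite{mating}. The welding/removability outline for the measurability statement is consistent with \cite{mating}, but again it is a pointer to a long argument rather than a proof. In short: the approach is the right one in outline, but the paper proves none of it, and the covariance identification as you propose it would not go through.
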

Intuitively, the unit area $1$-LQG sphere in Theorem~\ref{thm:mating}  is the $1$-quantum cone conditioned on having mass 1.  Various rigorous conditioning procedures are performed in \cite{mating,mating2} to construct this object.  Another construction appears in \cite{DKRV}.  See \cite{AHS} for their equivalence. The proof of Theorem~\ref{thm:mating} in \cite{mating2}
 is also via conditioning based on~Theorem~\ref{thm:mating2}. 
	
	\subsection{Imaginary geometry on LQG surfaces}\label{subsec:singleflow} 
\nocolor{	Let $\eta'$ be a whole-plane space-filling $\SLE_{\kappa'}$. We  first recall some distributional properties of  $\eta'$ as summarized in~\cite[Section 3.6.3]{ghs-survey}.
	For a fixed deterministic point $z\in \C$,  almost surely  there exists a unique time $t_z$
 such that $\eta'(t_z)=z$. The law of the left boundary  $\eta^{\mathrm L}_{t_z}$  and the right boundary  $\eta^{\mathrm R}_{t_z}$  of $\eta'(-\infty, t_z]$ have the same law, which  is a   variant of $\SLE_\kappa$ curve  called the \textit{whole plane $\SLE_\kappa(2-\kappa)$} from $z$ to $\infty$.  Conditioning on $\eta^{\mathrm L}_{t_z}$ and $\eta^{\mathrm R}_{t_z}$, the curves $\{\eta'(t_z+s)\}_{s\ge 0}$ and $\{\eta'(t_z-s)\}_{s\ge 0}$ evolve as two independent chordal $\SLE_{\kappa'}$ from $z$ to $\infty$ on the two connected components of $\C\setminus (\eta^{\mathrm L}_z \cup \eta^{\mathrm R}_z)$. The law of $\eta'$ has time-reversal symmetry.

	We now recall the basics of imaginary geometry \cite{IGI,miller2013imaginary}. For $\kappa\in(0,2)$, let $\chi=2/\sqrt{\kappa}-\sqrt{\kappa}/2$.
	Fix $\theta\in (-\pi,\pi]$ and $z\in \C$. Consider the ordinary differential equation 
		\begin{equation} \label{eq:ODE}
	\dot{\eta}=e^{\frac{i h(\eta)}{\chi}+\theta}, \quad \eta(0)=z.
	\end{equation}If  $h$ is  a smooth function, then the initial-value problem 
	has a unique solution. When $h$ is a variant of GFF, since $h$ is not pointwise defined, the equation~\eqref{eq:ODE} does not make literal sense. However, according to~\cite{IGI,miller2013imaginary} \xin{(in particular~\cite[Section 1.2.1]{miller2013imaginary})}, there exists a coupling of a whole-plane $\SLE_\kappa(2-\kappa)$ curve $\eta_z^\theta$ from $z$ to $\infty$ and a particular variant of GFF called the  whole-plane Gaussian free field modulo $2\pi\chi$, such that $(h,\eta_z^\theta)$ can be interpreted as satisfying~\eqref{eq:ODE}.  \xin{We do not recall the definition of this particular variant of GFF and the full construction of this coupling, but only summarize the properties that are relevant to mating of trees.} In this coupling $\eta_z^\theta$ is   determined by $h$ a.s. We call $\eta_z^\theta$ the flow line of angle $\theta$ emanating from $z$.  For each $\theta$, there exists a  whole-plane space-filling $\SLE_{\kappa'}$ curve $\eta'^\theta$ such that the following holds. For each fixed $z$, almost surely $\eta^{\mathrm L}_{t_z}$ (resp.,  $\eta^{\mathrm R}_{t_z}$) is the flow line of angle $\theta+\frac{\pi}{2}$ (resp., $\theta-\frac{\pi}{2}$) emanating from $z$. 
	This uniquely characterizes  a coupling of $(h,\eta'^\theta)$ where $\eta'^\theta$ is a.s. determined by $h$.
	If $(h,\wt\eta'^\theta)$ satisfies the same property, then we must have $\eta'^\theta=\wt \eta'^\theta$ almost surely. We call $\eta'^\theta$ the \emph{Peano curve for $h$ of angle $\theta$}.  
	
	The three curves in the Theorem~\ref{thm:3pair} are three Peano curves for the same $h$ with angle $0, \frac{2\pi}{3},\frac{4\pi}{3}$, respectively, where $\kappa=1$ and $\chi=\frac{3}2$. To prove Theorem~\ref{thm:3pair}, we need to understand the coupling between Peano curves of different angles. We first review the results from~\cite{bipolarII} on the coupling between one Peano curve and one  flow line of an arbitrary angle. Suppose  $\fh$ as in Theorem~\ref{thm:mating2} and $h$ is a whole-plane GFF modulo $2\pi\chi$, which is independent of $\fh$.  Suppose $\eta'$ is the Peano curve for $h$ of angle $0$ so that $(\eta',\fh)$ are coupled independently as in Theorem~\ref{thm:mating2}.  Parametrize $\eta'$ by $\mu_\fh$ according to~\eqref{eq:eta-para}. Fix $\theta\in (-\frac\pi 2, \frac\pi 2)$. Let $\eta^\theta$ be the flow line of $h$ of angle $\theta$.  Given $t\ge 0$, we say that $\eta'$ \emph{crosses} $\eta^\theta$ at time $t$ if and only if: (1) $\eta'(t)$ is on the trace of $\eta^\theta$; (2) for each $\eps>0$ there exists $s_1,s_2\in (t-\eps, t+\eps) $ such that $\eta'(s_1)$ and $\eta'(s_2)$ lie on opposite sides   of $\eta^\theta$. 
	As explained in  \cite[Remark 3.1]{bipolarII}, when $\theta$  is such that $\eta^\theta$ does not intersect $\eta^{-\frac\pi2}$ and $\eta^{\frac \pi2}$, then $\eta'$ crosses $\eta^\theta$ at time $t$ if and only if $\eta'(t)$ is on the trace of $\eta^\theta$. When $\kappa=1$,  this corresponds to $|\theta|\le \frac\pi 6$.	
	  The following proposition is extracted from \cite[Propositions 3.2---3.4]{bipolarII}. See Figure~\ref{fig:twosides} for an illustration.}

	\begin{figure}[h]
		\centering
		\subfigure[]{\includegraphics[width=2.5cm]{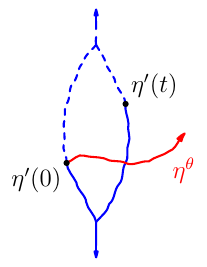}} \hspace{1cm} 
		\subfigure[]{\includegraphics[width=2.5cm]{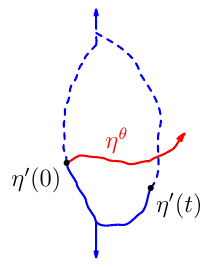}}
		\caption{In Proposition~\ref{prop:W}, the point $\eta'(t)$ may fall (a) left of the red flow line $\eta^\theta$ from $\eta'(0)$, or (b) right of it. \label{fig:twosides}}
	\end{figure}

	\begin{proposition}\label{prop:W}
	For $\theta\in (-\frac{\pi}2,\frac{\pi }2)$, 	let $\cA^\theta$ be \nocolor{the} set of times when $\eta'$ crosses $\eta^\theta$. 	Let $\scZ$ be the two-sided Brownian motion determined by $(\eta',\fh)$ as in  Theorem~\ref{thm:mating2}.
		For  $t>0$, let $\wt\tau_t=\sup\{s\le t: s\in \cA^\theta\}$ and $\scW_t=\scZ_t-\scZ_{\wt \tau_t}$.  We have
		\begin{enumerate}
			\item $\cA^\theta$ has the same distribution as the zero set of the standard Brownian motion. 
			\item For each $t>0$, $\scW|_{[0,t]}$ and $\scZ|_{[0,t]}$ almost surely determine each other. \nocolor{More precisely, they generate the same  $\sigma$-algebra modulo null events.} 
			\item Let  $\ell^\theta$ be the Brownian local time of $\cA^\theta$. There exists a \nocolor{deterministic} constant $c>0$ such that for all $t>0$, the $\ell^\theta$-local time accumulated on   $\{s\in [0,t]: \eta'(s)\in\eta^\theta\}$ a.s.\  equals $c$ times  the quantum length of $\eta^\theta\cap\eta'[0,t]$. \nocolor{(By~\cite[Lemma 3]{bipolarII},  $\eta^\theta\cap\eta'[0,t)$ is a segment of $\eta^\theta$.)}
		\end{enumerate} 
	\end{proposition} 
We will describe the  law of $\scW$ in Section~\ref{subsec:excursion}, for which we need the following lemma.
	\begin{lemma}\label{lem:p}
		Let $p(\theta)$ be the probability that $\eta'(1)$ lies on the right side of $\eta^\theta$, \nocolor{namely the region bounded by $\eta^\theta$ and the right frontier of $\eta'(-\infty,0]$}. Then $p$ is a homeomorphism between $(-\frac \pi 2,\frac\pi2)$ and $(0,1)$ and therefore has an inverse function $\theta(p)$. 		
		Moreover, $p(\theta)+p(-\theta)=1$.
	\end{lemma}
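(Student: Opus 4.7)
We plan to verify four properties of $p$: (i) monotonicity, (ii) continuity, (iii) the boundary limits $p(\theta)\to 0$ as $\theta\to -\pi/2$ and $p(\theta)\to 1$ as $\theta\to \pi/2$, and (iv) the symmetry $p(\theta)+p(-\theta)=1$. Together with a strict-monotonicity argument, (i)--(iii) force $p$ to be a homeomorphism onto $(0,1)$, and (iv) is exactly the stated symmetry.

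For (i), we invoke the fundamental ordering of flow lines from a common basepoint in imaginary geometry \cite{IGI,miller2013imaginary}: for $\theta_1<\theta_2$, the flow line $\eta^{\theta_1}$ lies a.s.\ to the right of $\eta^{\theta_2}$ wherever the two are disjoint. Hence $\{\eta'(1)\text{ is right of }\eta^{\theta_1}\}\subseteq \{\eta'(1)\text{ is right of }\eta^{\theta_2}\}$ up to a null event, giving $p(\theta_1)\le p(\theta_2)$. Strict monotonicity follows from the observation that the open region $U_{\theta_1,\theta_2}$ strictly between $\eta^{\theta_1}$ and $\eta^{\theta_2}$ has positive $\mu_\fh$-area with positive probability; since $\eta'$ is space-filling with parametrization by $\mu_\fh$, we get $\eta'(1)\in U_{\theta_1,\theta_2}$ with positive probability. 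For (iii), as $\theta\uparrow \pi/2$ the flow line $\eta^\theta$ converges to $\eta^{\pi/2}$, which is the left boundary of $\eta'((-\infty,0])$; since $\eta'(1)\in \eta'((0,\infty))$, it lies a.s.\ on the right side of this limit, so $p(\theta)\to 1$, and the symmetric argument handles $\theta\downarrow -\pi/2$.

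For (iv), we exploit reflection invariance of the whole-plane GFF modulo $2\pi\chi$: under complex conjugation $z\mapsto \bar z$, the flow line equation at angle $\theta$ transforms into the analogous equation at angle $-\theta$ for the reflected field, whose law coincides with that of the original (modulo $2\pi\chi$). Since $\eta'=\eta'^0$ is law-invariant under this reflection while the notions of left and right interchange, we obtain $p(\theta)=\P[\eta'(1)\text{ is left of }\eta^{-\theta}]=1-p(-\theta)$. The main obstacle is step (ii): we need $\P[\eta'(1)\text{ right of }\eta^{\theta'}]\to \P[\eta'(1)\text{ right of }\eta^\theta]$ as $\theta'\to \theta$. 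We plan to deduce this from the a.s.\ continuous dependence of $\eta^{\theta'}$ on $\theta'$ in a suitable local topology (a standard IG fact from \cite{IGI,miller2013imaginary}) combined with the a.s.\ statement $\eta'(1)\notin \eta^\theta$, which follows from Proposition~\ref{prop:W}(1): since $\cA^\theta$ is distributed as the zero set of a standard Brownian motion, $\P[1\in \cA^\theta]=0$, and in particular $\eta'(1)$ almost surely lies in the open complement of $\eta^\theta$, making the indicator of the event under discussion continuous at $\theta$ in law.
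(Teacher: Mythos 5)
Your proposal is correct and its skeleton matches the paper's: both arguments rest on the monotonicity of flow lines in the angle, identify the limits at $\theta=\pm\tfrac{\pi}{2}$ with the left/right boundaries of $\eta'((-\infty,0])$, and obtain $p(\theta)+p(-\theta)=1$ from the reflection symmetry that negates angles while exchanging left and right. The one place where you take a genuinely different route is continuity. The paper first observes that by scale invariance $p(\theta)=\P[\eta'(t)\text{ right of }\eta^\theta]$ for every $t>0$, and then applies Fubini to rewrite $p(\theta)$ as the expected $\mu_\fh$-area of the sub-region of $\eta'([0,1])$ lying right of $\eta^\theta$; continuity and the boundary limits then follow from the Monotone Convergence Theorem applied to the nested family of regions indexed by $\theta$. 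Your version instead works directly with the indicator of the event $\{\eta'(1)\text{ right of }\eta^\theta\}$ and invokes pathwise continuity of $\theta'\mapsto\eta^{\theta'}$ together with $\P[\eta'(1)\in\eta^\theta]=0$. The two reduce to closely related imaginary-geometry inputs, but the paper's area formulation is the weaker requirement: it only needs that the region trapped between $\eta^{\theta'}$ and $\eta^\theta$ has vanishing $\mu_\fh$-area as $\theta'\to\theta$, whereas pathwise continuity of flow lines in the angle is a stronger statement that is not cleanly quotable from \cite{IGI,miller2013imaginary}; if you keep your route you should either prove that continuity or downgrade it to the area statement. Note also that the Fubini identity is exactly what legitimizes your strict-monotonicity step (passing from positive $\mu_\fh$-area of the region between two flow lines to positive probability that $\eta'(1)$ lands there), so you are implicitly using the paper's reformulation anyway. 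Finally, your appeal to Proposition~\ref{prop:W}(1) for $\eta'(1)\notin\eta^\theta$ needs the small caveat that $\cA^\theta$ records crossing times rather than all visit times; for $\kappa'=16$ and $\theta$ in the relevant range these coincide, as remarked before Proposition~\ref{prop:W}, or one can argue directly that the trace of $\eta^\theta$ has zero $\mu_\fh$-area.
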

	\begin{proof}
		By scaling, $p(\theta)$ also \nocolor{equals} the probability that $\eta'(t)$ lies on the right side of $\eta^\theta$ for  \nocolor{any} $t>0$.  By Fubini's theorem, $p(\theta)$ \nocolor{equals}  the expected $\mu_\fh$-area of the sub-region of \nocolor{$\eta'[0,1]$} on the right side of $\eta^\theta$. Now the first statement  follows from the monotonicity of flow lines with respect to angle \cite{IGI} and the Monotone Convergence Theorem.  
		The second statement follows from symmetry \xin{since the law of the image of $(\eta',\eta^\theta,\mu_\fh)$ under the reflection $z\mapsto \bar z$ is  that of $(\eta',\eta^{-\theta},\mu_\fh)$.} 
	\end{proof}
	
	\subsection{A Poisson point process on half-plane Brownian excursions}\label{subsec:excursion}
	For \nocolor{$\alpha \in (-1,1)$, let $\scZ=(\scL,\scR)$ be a 
	two-dimensional Brownian motion with covariance matrix
	\begin{equation}\label{eq:covariance2}
\left(
\begin{array}{cc}
1 & \alpha \\ 
\alpha & 1
\end{array}
\right)
	\end{equation} In} 
	 this section we describe an excursion decomposition \xin{of $\scZ$}  which is the peanosphere counterpart of the  theory  in Section~\ref{subsec:singleflow}.  It is implicitly developed in \cite[Section 3]{bipolarII}, but we formulate it here via excursion theory. We will use this machinery to decompose $\cZ^\b$ into excursions away from the red flow line from the root. 
	This excursion description of the coupling of the blue Peano curve with red flow lines is a key tool in the proof of Theorem~\ref{thm:3pair}. 
	
	
	For topological spaces $A$ and $B$, we denote by $C(A,B)$ the set of continuous functions from $A$ to $B$. Given $\omega\in C(\R_+,\R)$, we define the infimum process $I(\omega)$ by  $I(\omega)_t= \inf\{\omega_s:s\in[0,t]\}$. For $\ell\ge 0$, we define the hitting and crossing times $\tau^-(\omega)$ and $\tau(\omega)$ of $-\ell$:
 \begin{align}
	\tau^-(\omega)_{\ell}=\inf\{t: I(\omega)_t=-\ell \}\label{eq:inf} \quad \textrm{and}\quad 
	\tau(\omega)_{\ell}=\inf\{t: I(\omega)_t<-\ell \}.
	\end{align}
	 Suppose \nocolor{$(B_t)_{t\ge 0}$} is a standard \xin{one dimensional} Brownian motion. By Levy's theorem,then  $B-I(B)$ has the same law as the reflected Brownian motion $|B|$. \nocolor{Let $e_\ell(t) $ be $
	 	B_{\tau^-(B)_{\ell}+t}-B_{\tau^-(B)_{\ell}}$ if $0\le  t\le  \tau(B)_{\ell}-\tau^-(B)_{\ell}$ and 0  otherwise. Then $(e_\ell(t) \, : \ell \geq 0)$ is a Poisson point process. We denote its intensity by $ds\otimes \mathbf{n}$ where $ds$ is the Lebesgue measure on $[0,\infty)$ and $\mathbf n$ is an infinite measure on  $C(\R_+,\R)$. 
	 	In this construction $(B_t)_{t\ge 0}$, $B-I(B)$, and  $(e_\ell: \ell \ge 0)$ determine each other.   We call  $I(B)$ the local time process of  $B-I(B)$.
	 	We call $\mathbf n$ Ito's excursion measure for $B-I(B)$.  \xin{See e.g. \cite[Chapter 13]{Revuz-Yor} for more background on the excursion theory.} }
	 
	
	Now \nocolor{let $\scZ^1=(\scL^1,\scR^1)$ and $\scZ^0=(\scL^0,\scR^0)$ be two independent copies of the 2D Brownian motion $\scZ$ with covariance matrix~\eqref{eq:covariance2}. Let}
	\begin{equation}\label{eq:e1}
	e^{1}_\ell(t)=\left\{
	\begin{array}{ll}
	\scZ^1_{\tau^-(\scL^1)_{\ell}+t}-\scZ^1_{\tau^-(\scL^1)_{\ell}},&\textrm{if}\; 0\le t\le  \tau(\scL^1)_{\ell}-\tau^-(\scL^1)_{\ell};\\
	0  & \textrm{if}\; t>\tau(\scL^1)_{\ell}-\tau^-(\scL^1)_{\ell}.
	\end{array}
	\right.
	\end{equation}
and
	\begin{equation}\label{eq:e0}
	e^{0}_\ell(t)=\left\{
	\begin{array}{ll}
	\scZ^0_{\tau^-(\scR^0)_{\ell}+t}-\scZ^0_{\tau^-(\scR^0)_{\ell}},&\textrm{if}\; 0\le t\le  \tau(\scR^0)_{\ell}-\tau^-(\scR^0)_{\ell};\\
	0  & \textrm{if}\; t>\tau(\scR^0)_{\ell}-\tau^-(\scR^0)_{\ell}.
	\end{array}
	\right.
	\end{equation}
	Then $(e^1_\ell(t) \, : \ell \geq 0)$ and $(e^0_\ell(t)\, : \, \ell \geq 0)$ are Poisson point processes with excursion spaces
	\begin{align*}
	E^1 &= \bigcup_{\zeta > 0}\big\{ e = (x,y) \in C([0,\zeta],\R^2) \, : \, e(0) = (0,0) \text{ and } x(\zeta) = 0 \text{ and } \left. x \right|_{(0,\zeta)} > 0\big\}, \text{ and} \\
	E^0 &= \bigcup_{\zeta > 0}\big\{ e = (x,y) \in C([0,\zeta],\R^2) \, : \, e(0) = (0,0) \text{ and }  y(\zeta) = 0 \text{ and } \left. y \right|_{(0,\zeta)} > 0\big\},
	\end{align*}
	respectively. \xin{In  words, $e^1$ (resp., $e^0$) is obtained from  $\scZ$ by keeping track of excursions in the right (resp., upper) half plane.}
As in the 1D case above,  $\scZ$ and  $e^1$ determine each other.  If we only keep the $x$-coordinate of excursions in  $(e^1_\ell(t) \, : \ell \geq 0)$, we get a Poisson point process with the same law as $(e_\ell: \ell \ge 0)$ above. The same holds for $e^0$.
\xin{In particular, the time set when the excursion occurs is distributed as the zero set of a 1D Brownian motion. Let $\mathbf n^1$	 and $\mathbf n^0$ be the excursion measures for $e^1$ and $e^0$, respectively. 	Then for all $t>0$,   $\mathbf
	n^1$ conditioned on $\zeta(e)>t$ equals the law of $\scZ$	conditioned on staying in the right half plane during $[0,t]$.  The same holds for   $\mathbf{n}^0$  with the upper half plane in place of the right half plane.
	This specifies the measures $\mathbf n^1$ and $\mathbf{n}^0$  up to a multiplicative constant.  We fix the constant by letting $\mathbf n^1(\zeta(e)>1)$ and $\mathbf n^0(\zeta(e)>1)$  equal to	the corresponding quantity for the 1D excursion measure. }

\nocolor{Now fix $p\in (0,1)$. For $\ell>0$, let $e^p{[0,\ell]}=e^1_{[0,p\ell]}\cup e^0_{[0,(1-p)\ell]}$.  \xin{In words, $e^p$ is obtained by taking the union of $e^1$ and $e^0$ with their intensity diluted by a factor of $p$ and $1-p$, respectively.}
Then $e^p$ is the Poisson point process on $[0,\infty) \times (E^1 \cup E^0)$ with intensity $ds\otimes \mathbf{n}^p$ where $\mathbf{n}^p=p \mathbf n^1+(1-p) \mathbf n^0$. Thus we obtain a coupling $(e^1,e^0,e^p,\scZ^1,\scZ^0)$, for which $(e^1,\scZ^1)$ and $(e^0,\scZ^0)$ are independent. Moreover, $(e^1,e^0)$ and $e^p$ determine each other.}

	
\nocolor{We now recall the construction of a \xin{two dimensional} Brownian motion $\scZ^p$ out of $e^p$ from~\cite{bipolarII}. In contrast to the construction of  $\scZ^1$ out of $e^1$, we rely on imaginary geometry to stitch the excursions of $e^p$ together to form a Brownian motion $\scZ^p$.}
	Given $e\in E^1\cup E^0$, let $f(e)$ be the $x$-coordinate process of $e$ if $e\in E^1$ and  the $y$-coordinate process of $e$ if $e\in E^0$. \nocolor{Then  $f(e^p):=(f(e^p_\ell)  \, : \ell \geq 0)$ matches in law with the excursion process for $B-I(B)$ above.} Therefore, we may define a reflected Brownian motion $\scX^p$ such that the excursion process of $\scX^p$ is $f(e^p)$ (here a function $f$ on an excursion space is understood to act pointwise on the corresponding Poisson point process).  For all $t>0$, let  
 \[
 \tilde{\tau}^p_t=\sup\{s:\nocolor{\scX^p_s}=0,s\le t\}, \quad \textrm{and}\quad  \tau^p_t=\inf\{s:\scX^p_s=0, s>t\}. 
 \]
	Let $(\ell^p_t\,: t \geq 0)$ be the local time process for $\scX^p$. \xin{Namely, $(\ell^p_t\,: t \geq 0)$ is the local time process of a 1D Brownian motion  $B$ such that $|B|=\scX^p$.} Then we can define a process $\scW^p$ by requiring that $\scW^p|_{[\tilde{\tau}^p_t,\tau^p_t]}$ evolves as $e^p_{\ell^p_t}$ for all $t \geq 0$; \nocolor{namely $\scW^p_{\tilde{\tau}^p_t+s}=e^p_{\ell^p_t}(s)$ for $s\in [0,\tau^p_t-\tilde{\tau}^p_t]$}. Clearly, $\scW^p$ is continuous on  $\bigcup_t\, (\tilde\tau^p_t,\tau^p_t)$, and $\tilde{\tau}_t^p$ is the last discontinuity time of $\scW^p$ before $t$, almost surely.  We now recall \cite[Proposition 3.2]{bipolarII}.
	\nocolor{\begin{proposition}[\cite{bipolarII}]
The law of  $\scW^p$ is the same as that of $\scW$ in Proposition~\ref{prop:W}  with $\theta=\theta(p)$, where $\theta(p)$ is as in  Lemma~\ref{lem:p}. 
	\end{proposition}}
\nocolor{By Proposition~\ref{prop:W} (2), $\scW^p$ determines a Brownian motion $\scZ^{p}$. Then $\scW^p$,  $\scZ^{p}$, and $e^p$  
  determine each other. We call $e^p$ the \emph{$p$-excursion} of $\scZ^p$.}  To summarize, we constructed a coupling of the following objects: $e^1$, $e^0$, $e^p$, $\scZ^1$, $\scZ^0$, $\scZ^p$, $ \scW^p$, $\scX^p$, and $\ell^p$. In this coupling,  $\scZ^1$ and $\scZ^0$ are independent, and the dependence relations, expressed in terms of $\sigma$-algebras, are as follows: 
\begin{align}\label{eq:relation}
& \sigma(e^1)=\sigma(\scZ^1),\sigma(e^0)=\sigma(\scZ^0);\\\nonumber
& \sigma(e^p)=\sigma(e^0,e^1)=\sigma(\scZ^p)=\sigma(\scW^p);\\\nonumber
&\sigma(\ell^p)\subset \sigma(|\scX^p|) \subset \sigma(\scZ^p).\nonumber
\end{align}

%

	The following lemma gives a more explicit connection between $\scZ^1,\scZ^0,$ and $\scW^p$. 
	
	\begin{lemma}\label{lem:constructW}
	We write $\scZ^1=(\scL^1,\scR^1)$ and  $\scZ^0=(\scL^0,\scR^0)$. For $t\ge 0$, let 
	$\el_t=\sup\{a: \nocolor{\tau^-(\scL^1)_{pa}+ \tau^-(\scR^0)_{(1-p)a}}<t \}$. For each fixed $t\ge 0$, almost surely  $\ell^p_t= \el_t$
	hence	
		\[
		\tilde\tau^p_t=\nocolor{\tau^-(\scL^1)_{p\el_t}+ \tau^-(\scR^0)_{(1-p)\el_t}} \quad \textrm{and}\quad \nocolor{\scW^p_{\tilde{\tau}^p_t+s}=e^p_{\el_t}(s) \quad \textrm{for } s\in [0,\tau^p_t-\tilde{\tau}^p_t]}.
		\]
		Moreover, \nocolor{for each  $t > 0$,  $e^p_{\ell_t}=e^1_{p\el_t}$  with probability $p$ and $e^p_{\ell_t} = e^0_{(1-p)\el_t}$ with probability $1-p$.}  
	\end{lemma}
	\begin{proof} 
		\nocolor{By definition, $\tilde\tau^p_t = \sum_{\ell < \ell^p_t} \zeta (e^p_\ell)=\sum_{\ell < p\ell^p_t} \zeta (e^1_\ell)+\sum_{\ell < (1-p)\ell^p_t} \zeta (e^0_\ell)$. Therefore $\tilde\tau^p_t =\tau^-(\scL^1)_{p\ell^p_t}+ \tau^-(\scR^0)_{(1-p)\ell^p_t}$. Since $\tilde\tau^p_t <t$, we have $\el_t\ge \ell^p_t $ a.s.}
		
		For a rational $\ell>\ell^p_t$, find another rational $\ell'\in (\ell^p_t,\ell)$. Then a.s. there is an excursion of  $e^p$ in $(\ell', \ell)$. Therefore $\nocolor{\tau^-(\scL^1)_{p\ell}+ \tau^-(\scR^0)_{(1-p)\ell}}>t$. 
		This means $\el_t\le \ell$ hence $\el_t\le \ell^p_t$ a.s.   
		
		
		Note that $e^p_{\ell^p_t}\in e^1_{p\ell^p_t}\cup e^0_{(1-p)\ell^p_t}$ a.s. By the definition of $e^p$, we have $e^p_{\ell^p_t}\in E^1$ with probability $p$. Therefore $e^p_{\ell_t}=e^1_{p\el_t}$  with probability $p$ and $e^p_{\ell_t} = e^0_{(1-p)\el_t}$ with probability $1-p$.  
	\end{proof}
	\begin{definition} \label{defn:delta} 
		For later reference, we define a random variable indicating the event described at the end of the statement of Lemma~\ref{lem:constructW}: $\delta(t) \colonequals 1$ if $e^p_{\ell_t} = e_{pl_t}^1$ and $\delta(t)\colonequals -1$ otherwise.  
	\end{definition}
	
	In Section~\ref{sec:3tree}, we will have a discrete analogue of the triple $(\scZ^1,\scZ^0,\scZ^p)$ for $\alpha=-\tfrac{\sqrt{2}}{2}$ and $p=\tfrac{\sqrt{2}}{1+\sqrt{2}}$ or $\tfrac{1}{1+\sqrt{2}}$  and establish weak convergence using a tightness plus uniqueness argument.
	\nocolor{The values of $p$ will be derived from the symmetry of the three trees in the Schnyder wood and the normalizing constants in the random walk scaling limit.} Lemma~\ref{lem:constructW} combined with the following characterization of the coupling $(\scZ^1,\scZ^0,\scZ^p)$ will help with the uniqueness part of the proof.  
	
	\begin{lemma}\label{lem:atomic}
\nocolor{Consider $(\scZ^1,\scZ^0,\scZ^p)$ as coupled above~\eqref{eq:relation}, where $\scZ^p$ is measurable with respect to $(\scZ^1,\scZ^0)$. Suppose in addition $\wt{\scZ}^p$ is coupled with $(\scZ^1,\scZ^0)$ such that} 
		\begin{enumerate}
			\item $\wt{\scZ}^p$ has the same law as $\scZ^0$.
			\item $\forall \, t>0$, $\wt \scZ^p_t-\wt \scZ^p_{\tilde{\tau}_t}=\scW^p_t$ a.s. where $\tilde{\tau}$ and $\scW^p$ are defined from $\scZ^0,\scZ^1$  as in Lemma~\ref{lem:constructW}.
			\item  \nocolor{Both $\scZ^p$ and $\wt \scZ^p$ are $\cF$-Brownian motions for   $\cF_t=\sigma((\scZ^p,\wt \scZ^p)|_{[0,t]})$.}
		\end{enumerate}
		Then $ \wt \scZ^p=\scZ^p$.
	\end{lemma}
	\begin{proof}
		Note that $\tilde{\tau}_t$ is c\'adl\'ag and $\scZ^p$ is continuous.  By considering rational $t$, we have that with probability 1,  $\scZ^p_t-\scZ^p_{\tilde{\tau}_t}=\scW^p_t$  for all $t$. Now Lemma~\ref{lem:atomic} is an immediate consequence of \cite[Lemma 3.17]{bipolarII}.	
	\end{proof}
	
	\section{Joint convergence to three Peano curves}\label{sec:peano}
	In Sections~\ref{subsec:1D}--\ref{sec:3tree}, we work in the infinite-volume setting. In the discrete, let $S^\infty$ be a UIWT \nocolor{as in Definition~\ref{defn:UIWT}.}
	Let $(Z^\b,Z^\r,Z^\g)$ be the triple of lattice walks encoding of $S$ with respect to the clockwise explorations of its three spanning trees.  	
	In the continuum, recall the notations of Section~\ref{subsec:lqg}. We
	assume $\kappa=\gamma=1$ and $\kappa'=16$. 
	\nocolor{Let $\fh$ be a field describing the $1$-LQG quantum cone. Let $h$ be a whole plane GFF modulo $2\pi\chi$ with $\chi=\frac{2}{\sqrt\kappa} - \frac{\sqrt\kappa}{2}$. Let $(\eta^\b,\eta^\r,\eta^\g)$ be the Peano curves for $h$ of angle $(0,\tfrac{2\pi}{3},\tfrac{4\pi}{3})$ as defined below~\eqref{eq:ODE}.}  For $\mathrm{c}\in\{\b,\r,\g\}$, let \nocolor{$\scZ^{\mathrm{c},\infty}$} be the Brownian motion corresponding to $(\mu_\fh,\eta^{\mathrm{c}})$ as descibed in Theorem~\ref{thm:mating2}. In Sections~\ref{subsec:1D}--\ref{sec:3tree} we will  prove the following Theorem~\ref{thm:main1}, which infinite-volume version of Theorem~\ref{thm:3pair}. In the statement of Theorem~\ref{thm:main1} and throughout Sections~\ref{subsec:1D}--\ref{sec:3tree}, we we drop the supersript $\infty$ to simplify the notion. \nocolor{Namely, we write  the UIWT $S^\infty$ as $S$, and write $(\scZ^{\mathrm{c},\infty})_{t\in \R}$ as $(\scZ^{\mathrm{c}})_{t\in\R}$. } 
	\begin{theorem}\label{thm:main1}
		In the above setting, write $\Zb=(L^\b,R^\b),\Zr=(L^\r,R^\r),\Zg=(L^\g,R^\g)$. Then 
		\[
		\left(\tfrac{1}{\sqrt{4n}}L^{\mathrm{c}}_{\lfloor 3nt\rfloor}, 
		\tfrac{1}{\sqrt{2n}}R^{\mathrm c}_{\lfloor 3nt\rfloor}\right)_{t\in \R} 
		\stackrel{\text{in law}}{\longrightarrow} (
		\mathscr{Z}^{\mathrm{c}})_{t\in \R} \quad \textrm{for}\;\; \mathrm c\in \{\b,\r,\g\}.
		\] 
		and the convergence holds jointly for $(\Zb, \Zr, \Zg)$.
	\end{theorem}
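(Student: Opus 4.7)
The plan is to deduce Theorem~\ref{thm:main1} from (1) tightness of the rescaled triple, which is immediate from Proposition~\ref{prop:1pair} and its symmetric counterparts for the red and green trees, and (2) the identification of any subsequential joint limit with $(\scZ^\b, \scZ^\r, \scZ^\g)$. The main step is joint convergence of a single pair, say $(Z^\b, Z^\r)$; the analogous convergence for $(Z^\b, Z^\g)$ will then follow by the $\b \leftrightarrow \g$ symmetry of Schnyder's coloring rule, and the full three-way joint convergence will be recovered through a uniqueness-in-law argument based on Lemma~\ref{lem:atomic}.

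To establish pairwise convergence of $(Z^\b, Z^\r)$, the idea is to realize this pair as a discrete analogue of the coupling $(\scZ^p, \scZ^1)$ constructed in Section~\ref{subsec:excursion}, with $\alpha = -\sqrt{2}/2$ (see Remark~\ref{rmk:step}) and $p = p(2\pi/3) = \sqrt{2}/(1+\sqrt{2})$ obtained from Lemma~\ref{lem:p} at $\kappa=1$. Concretely, I would single out the red flow line of $S^\infty$ passing through the root edge and partition the symbols of the word $w$ according to whether the blue exploration $\cP_S$ is currently on the left or right of this red flow line. This splits the trajectory of $Z^\b$ into two interlaced families of finite-length excursions playing the role of $pe^1$ and $(1-p)e^0$. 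Using Lemma~\ref{lem:distribution_green_edges_in_corner}, Proposition~\ref{prop:dual2}, and the concentration bound of Lemma~\ref{lem:mc_concentration}, I would show that (i) each rescaled individual excursion converges in law to the corresponding conditioned Brownian excursion of $\scZ$, and (ii) the rescaled empirical process of excursion start-times converges to Brownian local time. Combined with the discrete analogue of the assembly formula in Lemma~\ref{lem:constructW}, this yields joint convergence of the pair $(Z^\b, Z^\r)$ to a coupling $(\scZ^p, \wt{\scZ}^p)$ satisfying the hypotheses of Lemma~\ref{lem:atomic}, which forces $\wt{\scZ}^p = \scZ^p$ and hence identifies the limit with $(\scZ^\b, \scZ^\r)$ in law.

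The principal obstacle is step (i): verifying that the discrete step distribution of $Z^\b$, conditioned on not crossing the distinguished red flow line, approximates under diffusive rescaling that of a half-plane Brownian motion with the correct covariance. This reduces to a careful analysis of the Markov chain $w$ conditioned on non-crossing, for which I would exploit the explicit local structure of wooded triangulations (Lemma~\ref{lem:face_structure}) together with the time-reversal identity of Proposition~\ref{prop:reverse} to handle the fact that the conditioning is not purely forward-in-time. Once pairwise convergence of $(Z^\b, Z^\r)$ is established, the symmetric argument yields $(Z^\b, Z^\g) \to (\scZ^\b, \scZ^\g)$, and a second application of Lemma~\ref{lem:atomic}, this time to a green excursion decomposition built on top of the already-identified pair $(\scZ^\b, \scZ^\r)$, pins down the joint law of the triple and completes the proof.
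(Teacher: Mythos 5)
Your outline captures the right skeleton---excursion decomposition of the blue walk relative to a red flow line, convergence to the Poisson point process of half-plane excursions, Lemma~\ref{lem:atomic} for uniqueness, color symmetry---and this is indeed the architecture of the paper's argument via Proposition~\ref{prop:triple}. But there are two genuine gaps. First, you cannot obtain $p=\sqrt2/(1+\sqrt2)$ from Lemma~\ref{lem:p}: that lemma only asserts that $p(\cdot)$ is a homeomorphism and gives no explicit values (see Remark~\ref{rmk:specific_p_value}; no continuum derivation of this constant is known). In the paper the value is extracted from the discrete model, via the geometric overshoots $\Delta_i\sim\Geo$ of the right excursions (Remark~\ref{rmk:overshoot} and \eqref{eq:ratio}): the ratio of local-time increments from right versus left excursions is $\E[\Delta_1]/\sqrt2=\sqrt2$. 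The corresponding flow-line angle $\theta(p)$ is a priori unknown, and the identification $\theta=\pi/6$ (equivalently, Peano angle gap $2\pi/3$) is a \emph{conclusion} reached at the very end from the cyclic symmetry $(\scZ^\b,\scZ^\r)\overset{d}{=}(\scZ^\r,\scZ^\g)\overset{d}{=}(\scZ^\g,\scZ^\b)$; taking $p(2\pi/3)$ as an input is circular. Relatedly, your ``principal obstacle'' is not an obstacle: by the strong Markov property the left and right excursions are exact i.i.d.\ copies of the unconditioned grouped-step walk stopped at hitting times (Definition~\ref{defn:leftrightexcursions}); no conditioning on non-crossing ever occurs.

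Second, and more seriously, the passage from the excursion decomposition to joint convergence of the pair $(Z^\b,Z^\r)$ is missing. The decomposition relative to the red flow line through the root sees only a single branch of the red tree, whereas $Z^\r$ encodes the contour of the entire red tree and its dual; Lemma~\ref{lem:atomic} identifies the limit of the reassembled \emph{blue} walk with $\scZ^p$, not the limit of the red walk. The paper bridges this in Section~\ref{sec:3tree}: one needs the five-region area convergence (Proposition~\ref{prop:tqconverges}) to get the time changes $t^n_q\to t_q$ for all rational $q$, then Propositions~\ref{prop:dual2}, \ref{prop:W} and~\ref{lem:DQL} to show that any subsequential limit $\scR$ of the red boundary-length process satisfies $\scR_{t_q}=\tfrac{c}{1+\sqrt 2}\wt\scR_{t_q}$ on the dense set $\{t_q\}$, and finally equality in law to force $c=1+\sqrt2$ and $\scR=\wt\scR$. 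Both the dual flow lines and the reverse exploration enter essentially here, which is why the paper develops $\ol\cZ$ alongside $\cZ$. Your closing step is also off: the triple is not pinned down by a second application of Lemma~\ref{lem:atomic} to a green decomposition, but by the fact that $\scZ^\b,\scZ^\r,\scZ^\g$ a.s.\ determine one another through the underlying imaginary geometry, so joint convergence follows once two cyclically consecutive pairs converge.
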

	\nocolor{
	\begin{remark}[Abuse of notation in the infinite volume setting]\label{rmk:notation}
		Although $\scZ^{\mathrm{c}}$ was also used  in Theorem~\ref{thm:3pair} to denote the \xin{limiting} Brownian excursions from the finite volume setting, we believe this abuse of notation does not cause confusion since we will exclusively focus on the infinite volume setting in Sections~\ref{subsec:1D}--\ref{sec:3tree}. We will use  $Z^{\mathrm c,n}$ to denote the rescaled walks in Theorem~\ref{thm:main1}. \xin{Although}  this notation is \xin{used} in Section~\ref{sec:bijection} and~\ref{sec:pair} to denote the unscaled walk in the finite volume case, for the same reason we believe this abuse of notation adds simplicity without causing confusion. 
	\end{remark}} 
	
	In Section~\ref{sec:finite}, we use a conditioning argument to deduce Theorems~\ref{thm:3pair} and~\ref{thm:embedding} from Theorem~\ref{thm:main1}. \nocolor{We now give an overview for the proof of  Theorem~\ref{thm:main1}, which occupies Sections~\ref{subsec:1D}--\ref{sec:3tree}. From 
	Proposition~\ref{prop:reverse} we already have the marginal convergence in Theorem~\ref{thm:main1}, which in particular implies the tightness of the three walks. It suffices to show that in the subsequential limit the three Brownian motions are coupled as desired. From Section~\ref{subsec:excursion}, we know that  the second Brownian motion should  be encoded by the first one through the local time for the left/right excursions  of the first Peano curve, away  flow lines coming from the frontier of the second Peano curve. In Section~\ref{subsec:1D} we show that this local time encoding of quantum lengths for flow lines has an exact discrete analog for UIWT. In Section~\ref{sec:triple}, 
we focus on one flow line coupled with the first Peano curve, and show that the random walks and local time processes from UIWT \xin{converge} to their continuous counterpart as in Proposition~\ref{lem:atomic}. In Section~\ref{sec:3tree}  we put flow lines together to get the second Peano curve to show the joint convergence of the first two random walks. By symmetry we get the joint convergence of the three walks.
}

	\subsection{A decomposition of a one-dimensional  random
		walk}\label{subsec:1D}
	Let $w$ be the stationary bi-infinite word associated with the \nocolor{UIWT} $S$. For
	$i\in \Z$, let $\fraketa^\b(i)$ be the edge in $S$ corresponding
	to $w_i$. \nocolor{Then $(\fraketa^b(i))_{i\in \Z}$}  can be thought of as the Peano curve exploring
	the blue tree of $S$. For any forward stopping time $T$ such
	that $w_T=\b$ a.s., recall the grouped-step walk $\cZ$ defined
	in Lemma~\ref{lem:forward}.  We drop the dependence of $\cZ$ on
	$T$ since the law of $\cZ$ (namely $\P^\infty$) is independent
	of $T$ (Lemma~\ref{lem:forward}).
	\begin{figure}[h]
		\centering
		\subfigure[]{\includegraphics[width=0.4\textwidth]{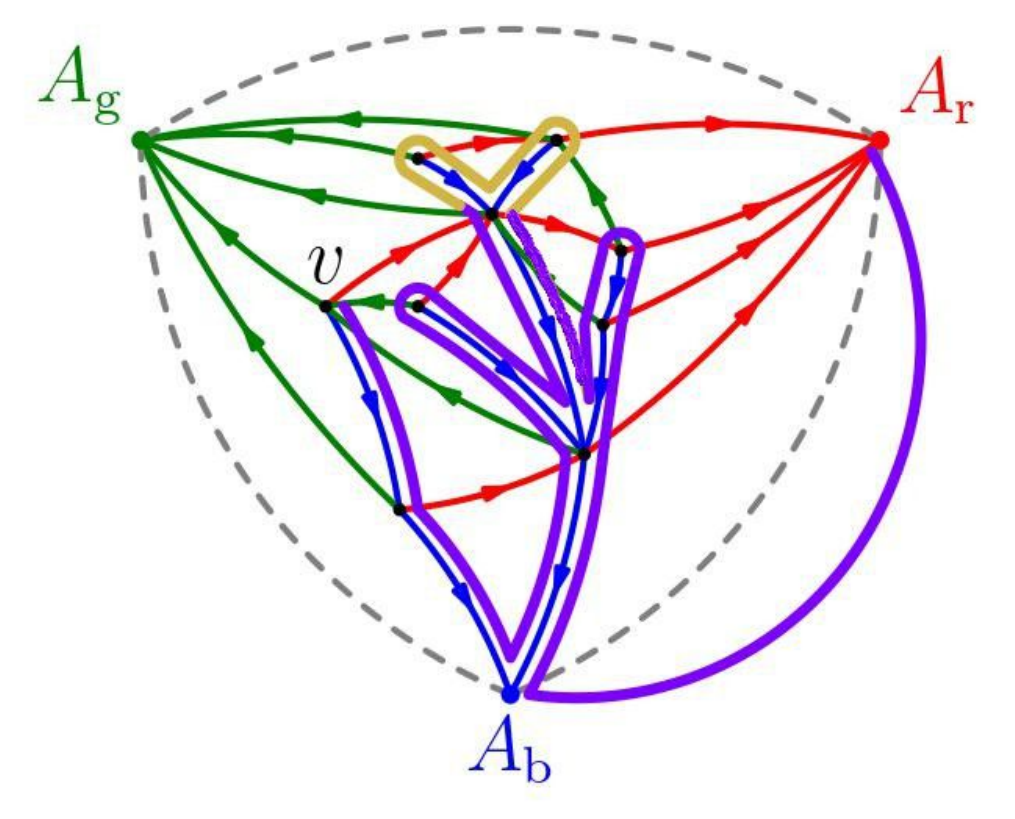}} \quad
		\subfigure[]{\includegraphics[width=0.3\textwidth]{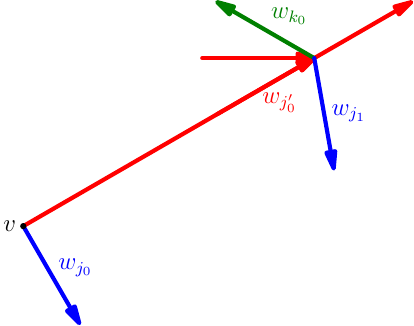}}
		\caption{(a) A decomposition of the path $\cP(S)$ into excursions
			right and left of the red flow line from a vertex $v$, shown in
			purple and gold, respectively. The corresponding
			decomposition of the word is as follows: $
			\color{gray}{\g\g\b}
			{\color{dpurple}\underline{\b\r\g\g\b\r\r}} \:
			{\color{gold}\underline{\g\g\b\r\g\b}} \:
			{\color{dpurple}\underline{\b\g\r}}\:{\color{gold}\underline{\g}}\:{\color{dpurple}\underline{\b\b\b\r\r\r}}\r
			$. (b) An illustration of the symbols involved in the excursion
			decomposition of the word: $(w_{j_0},w_{j_0'})$ is a br match, $w_{k_0}$ is the first $g$ after $w_{j_0'}$ and
			$(w_{k_0},w_{j_1})$ is a gb match.  
			\label{fig:flow_line_excursions}}
	\end{figure}
	
	We now develop wooded-triangulation analogues of the objects featured
	in Sections~\ref{subsec:singleflow}
	and~\ref{subsec:excursion}. Similar discrete analogues have been
	considered in the bipolar orientation setting \cite{bipolarII}. The
	forward and reverse explorations of a bipolar orientation are
	symmetric. However, they behave differently in the wooded
	triangulation setting considered below.  We start from the forward
	exploration. Given a blue edge $\fraketa^\b(i)$ and its tail $v$, let
	$P_i$ be the red flow line from $v$.  As a ray in a topological half
	plane bounded by the blue and green flow lines from $v$, the path
	$P_i$ admits a natural notion of left and right sides. 
	
	\begin{lemma} \label{lem:leftrightexcursions} In the above setting,
		let $T$ be a forward stopping time such that $w_T=\b$ a.s. and
		let $\cZ=(\cL,\cR)$ be the forward grouped-step walk viewed from $T$.
		Define a sequence $\sigma$ inductively by $\sigma_0 = 0$ and, for
		$i \geq 1$,
		\begin{align*}
		\sigma_{i} &= \inf \{j > \sigma_{i-1} \, : \, \cL_j <
		\cL_{\sigma_{i-1}}\} , \text{ if } i \text{ is
			odd,} \\
		\sigma_{i} &= \inf \{j > \sigma_{i-1} \, : \, \cR_j <
		\cR_{\sigma_{i-1}}\}, \text{ if } i \text{ is
			even,} 
		\end{align*}
		Then for all integers $k\in [\ta(\sigma_{i}),
		\ta(\sigma_{i+1}-1)]$, the edge $\fraketa^{\b}(k)$ is right of
		the red flow line $P_T$ if $i$ is even and left of $P_T$ if
		$i$ is odd.
	\end{lemma}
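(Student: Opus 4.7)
My plan is to induct on $i$, translating primal crossings of $P_T$ by $\fraketa^\b$ into combinatorial events in the dual blue tree of Section~\ref{subsec:dual}. Let $v$ be the tail of $\fraketa^\b(T)$ and set $F_0 := \cF(\fraketa^\b(T))$, the face of $\cM_{\b\r}$ incident to both the blue and red outgoing edges of $v$ (to the right of $v\to v_1$ and to the left of the blue outgoing edge). A preliminary observation is that the dualization of $P_T$ via counterclockwise rotation of its red edges yields exactly the dual blue flow line from $F_0$ to the dual root. Consequently, a face $\cF(w_k)$ lies on the ``right'' side of $P_T$ in the primal if and only if it lies in the subtree of the dual blue tree rooted at $F_0$, i.e., the dual blue flow line from $\cF(w_k)$ passes through $F_0$.

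By Proposition~\ref{prop:dual2}, this subtree-membership condition is equivalent to $\min_{T\le k'\le k}L^{\b,\infty}_{k'}=L^{\b,\infty}_T$, and pulling back through the time change $\ta$ to the grouped walk it becomes $\min_{0\le \ell\le j}\cL_\ell=0$, where $j$ is the grouped index associated with $k$. The base case $i=0$ then follows: for all $k\in[T,\ta(\sigma_1-1)]$ the running-minimum condition holds by the definition of $\sigma_1$, so $\fraketa^\b(k)$ stays right of $P_T$. At step $\sigma_1$ the grouped increment is $(-j,1)$ with $j\ge 1$; the $j$ consecutive red symbols in the ungrouped word represent second crossings of red edges, and exactly one of these red edges lies on $P_T$, producing the right-to-left crossing at the grouped time $\sigma_1$.

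For the inductive step I exploit the blue-red symmetry of $\cM_{\b\r}$ recorded in Section~\ref{subsec:dual} together with Proposition~\ref{prop:paren_matching}, which identifies $R^{\b,\infty}$ (modulo flat steps) with the contour function of the blue tree. After the first crossing, $\fraketa^\b$ is exploring a blue subtree strictly on the left of $P_T$, and it cannot return to the right half until this subtree has been fully unwound. The subtree-completion is marked precisely by $\cR$ dropping below $\cR_{\sigma_1}$, i.e., at $\sigma_2$; the return crossing itself is a \emph{first} crossing of a red edge on $P_T$ and therefore records no symbol, occurring during the continuous Peano motion immediately preceding the blue step at $\ta(\sigma_2)$. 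Iterating, at each subsequent $\sigma_i$ the transition alternates between a $\cL$-minimum (right-to-left, via a second crossing visible as $j\ge 1$ grouped reds) and a $\cR$-minimum (left-to-right, via a first crossing invisible in the walk), yielding the parity claim for all $i$.

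The main obstacle will be (i) justifying the dual-subtree characterization of ``right of $P_T$'' rigorously for all intermediate $k$, including ungrouped indices at which $w_k$ is red or green rather than blue, and (ii) establishing the asymmetric roles of first versus second red crossings in the two transition directions. Both reduce to local combinatorial checks at vertices on $P_T$ using Schnyder's rule and the explicit face structure of Lemma~\ref{lem:face_structure}; the asymmetry in (ii) traces back to the clockwise orientation of the blue tree exploration, which distinguishes the two sides of $P_T$ by which of the two transversals of a given red edge on $P_T$ is the ``second'' one.
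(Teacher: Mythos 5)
Your central claim --- that $\fraketa^\b(k)$ is right of $P_T$ if and only if $\cF(w_k)$ is a descendant of $F_0=\cF(\fraketa^\b(T))$ in the dual blue tree, equivalently $\min_{T\le k'\le k}L^{\b}_{k'}=L^{\b}_T$ --- is false, and the error is fatal to the parity claim for $i\ge 2$. Let $j_0'$ be the position of the br match of $w_T$; one has $\ta(\sigma_1-1)<j_0'<\ta(\sigma_1)$ and $L^\b_{j_0'}=L^\b_T-1$, so for every $k\ge j_0'$ the running minimum of $L^\b$ over $[T,k]$ is strictly below $L^\b_T$. Your criterion therefore classifies \emph{every} edge with $k\ge\ta(\sigma_1)$ as left of $P_T$, whereas the lemma (correctly) asserts that the edges explored during $[\ta(\sigma_2),\ta(\sigma_3-1)]$ are again right of $P_T$: they belong to the part of the right side attached to the second vertex of $P_T$, and the corresponding faces are descendants of $\cF(w_{\ta(\sigma_2)})$, not of $F_0$. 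Your preliminary observation fails for the same reason: the dual blue flow line from $F_0$ crosses the red edges at which $L^\b$ attains successive new running minima after $T$, which includes the entire fan of incoming red edges at the head of $\fraketa^\b(j_0')$ occurring at positions $j_0',j_0'+1,\dots,\ta(\sigma_1)-1$; only the first of these lies on $P_T$ (the next edge of $P_T$ is the br match of $w_{\ta(\sigma_2)}$, which occurs much later), so the dual flow line from $F_0$ is not the dualization of $P_T$.

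The inductive step does not repair this, because its key assertion --- that the exploration returns to the right of $P_T$ exactly when $\cR$ first drops below $\cR_{\sigma_1}$ --- is precisely the combinatorial content of the lemma and is left unproved ("cannot return until the subtree is fully unwound" is the conclusion, not an argument). The paper's proof supplies exactly this: starting from $j_0=T$ it iteratively takes the br match $(w_{j_i},w_{j_i'})$, the first $\g$ at position $k_i$ after $j_i'$, and the gb match $(w_{k_i},w_{j_{i+1}})$; Definition~\ref{def:graph_from_word} identifies the edges $\fraketa^\b(j_i')$ as the successive edges of $P_T$ and shows $\fraketa^\b$ is right of $P_T$ on $[j_i,j_i'-1]$ and left on $[j_i'+1,j_{i+1}-1]$, and the parenthesis-matching interpretation of $\cL$ and $\cR$ gives $j_i=\ta(\sigma_{2i})$ and $k_i=\ta(\sigma_{2i+1})$. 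If you want to keep a dual-tree formulation, you must restart it at each $\ta(\sigma_{2i})$ (so the relevant condition is the running minimum of $\cL$ from $\sigma_{2i}$, matching the definition of $\sigma_{2i+1}$) and separately verify the gb-match identification of the return times; at that point you are reproducing the paper's induction.
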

	
	See Figure~\ref{fig:timecorrespondence} for an illustration of the
	relationship between the excursion index $i$, the  time
	index $j$ in $\cZ$, and  index $k$ \nocolor{considered in Lemma~\ref{lem:leftrightexcursions}}. 
	\begin{figure}[h]
		\centering
		\fbox{\includegraphics[width=0.55\textwidth]{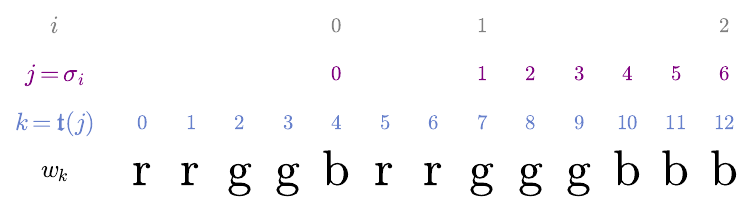}}
		\caption{Three indices involved in Lemma~\ref{lem:leftrightexcursions} \nocolor{illustrated for a segment of a word}.
			\label{fig:timecorrespondence}}
	\end{figure}
	\begin{proof} 
		Beginning with $j_0 \colonequals T$ (see Figure~\ref{fig:flow_line_excursions}(b)), we perform the
		following algorithm :
		\begin{enumerate} 
			\item Identify $w_{j_0}$'s br match $(w_{j_0},w_{j_0'})$;
			\item Let $k_0=\min\{k>j_0':w_k=g\}$;
			\item Find {$w_{k_0}$}'s gb match $({w_{k_0}},w_{j_1})$;
			\item Repeat, beginning  with $j_1$ in place of $j_0$ to obtain $\{(j_i,j'_i, k_i)\}_{i\ge 0}$ iteratively.
		\end{enumerate} 
		According to Definition \ref{def:graph_from_word}, the red edges
		$\fraketa^{\b}(j'_0),\fraketa^{\b}(j'_1)\cdots$ form the red flow line $P_T$. Moreover, $\fraketa^{\b}$
		is right (resp. left) of  $P_T$ during $ [j_i, j'_i-1]$
		(resp. $[j'_{i}+1, j_{i+1}-1]$) for all $i\geq 0$.  Recall the sequence $\{\ta(k)\}_{k\ge 0}$ in Lemma~\ref{lem:forward}. By the parenthesis
		matching rule,  we have $j_i=\ta(\sigma_{2i})$ and
		$k_i=\ta(\sigma_{2i+1})$ for all $i\geq 0$. Therefore $w_{\ta(\sigma_{2i})}=\b$ and thus 
		$\ta(\sigma_{2i}-1)=\ta(\sigma_{2i})-1$.
		Since
		$\ta(\sigma_{2i+1}-1) < j'_i < k_i$, we conclude the proof.
	\end{proof}
	
	\begin{figure} 
		\centering
		\subfigure[]{\includegraphics[width=0.37\textwidth]{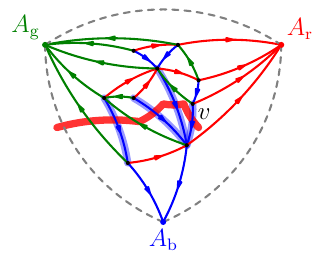}}
		\subfigure[]{\includegraphics[width=0.37\textwidth]{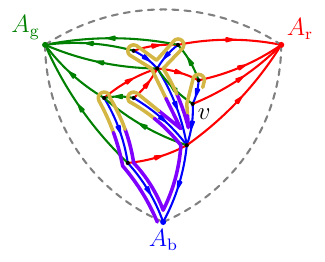}}
		\caption{ (a) The dual red flow line from $v$, which is obtained by
			starting from the face left of the outgoing blue edge from $v$. The corresponding sequence of blue edges is
			illustrated with the thick blue marker, and the path passing
			through all these edges is shown in red.
			(b) A  decomposition according to the left and right excursions from the dual red flow line.		\label{fig:dual_flow_line_excursions}}
	\end{figure}
	
	\begin{definition}  \label{defn:leftrightexcursions}
		For each $l \geq 0$, we call the walk 
		$(\cZ_{k} - \cZ_{\sigma_{2l}})_{\sigma_{2l} \leq k \leq
			\sigma_{2l+1}}$ a \textbf{right
			excursion} and the walk 
		$(\cZ_{k} - \cZ_{\sigma_{2l+1}})_{\sigma_{2l+1} \leq k \leq
			\sigma_{2l+2}}$ a \textbf{left
			excursion}.  
		
		Let $\nocolor{\cT^1(k)}=\sum_{0\le l<k} (\sigma_{2l+1}-\sigma_{2l})$ and
		$\nocolor{\cT^0(j)}=\sum_{0\le l<j} (\sigma_{2l+2}-\sigma_{2l+1})$.  Let $\nocolor{\cZ^1=(\cL^1,\cR^1)}$ be the concatenation of right
		excursions. More precisely, let $\cZ^1_0=(0,0)$. For $l\ge 0$ and $0\le i\le \sigma_{2l+1}-\sigma_{2l}$,  let 
		$\nocolor{\cZ^1_{i+\cT^1(l)}-\cZ^1_{\cT^1(l)}}=\cZ_{i+\sigma_{2l}}-\cZ_{\sigma_{2l}}$.  Let
		$\nocolor{\cZ^0=(\cL^0,\cR^0)}$ be the concatenation of left excursions defined
		in the same way.
		
		When
		$\sigma_{2l}\le n<\sigma_{2l+1}$ for some $l\ge 0$, let
		$(\cK_n,\cJ_n)=(l,l)$.  When $\sigma_{2l+1}\le n<\sigma_{2l+2}$ for
		some $l\ge 0$, let $(\cK_n,\cJ_n)=(l,l+1)$. We define the {\bf
			discrete local time} of $\cZ$ to be
		\[
		\mathfrak{l}_n= - \inf_{m\leq \cT^{1}(\cJ_n)} \cL^{1}_m - \inf_{m\leq {\cT^{0}(\cK_n)}} \cR^{0}_m. 
		\]
	\end{definition}
	By the strong Markov property, $\cZ^{1}$ and $\cZ^{0}$ are independent
	and equal in distribution to $\cZ$.  The process
	$(\cZ,\cZ^0,\cZ^1,\mathfrak{l})$ is a discrete analogue of
	$(\scZ^p,\scZ^0,\scZ^1,\ell^p)$ in
	Lemma~\ref{lem:constructW}. \nocolor{(In fact, we will show in Section~\ref{sec:triple} that the scaling limit of the former is the latter with $p=\frac{\sqrt{2}}{1+\sqrt2}$.)}
	Moreover, $\cJ_n-\cK_n$ is the indicator
	of the event that $\cZ$ reaches time $n$ during a left excursion,
	which is analogous to $\delta$ in Definition~\ref{defn:delta}.

	We have a similar story when considering the time reversal of $\fraketa^\b$ and  dual red flow lines in $\cM_{\b\r}$ (recall Section~\ref{subsec:infinite}).  Recall $Z^\b=(L^\b,R^\b)$.
	\begin{definition}\label{def:reverse}
		Let $T$ be a reverse stopping time of $w$, (that is, a stopping time
		with respect to the filtration $\cF_n=\sigma(\{w_i\}_{i\ge n})$)
		such that $w_T=\b$ a.s. Let $\overline{\ta}(0)=T-1$ and for all $k\ge 1$, let
		$\overline{\ta}(k)=\max\{i<\overline{\ta}(k-1): w_i\neq r \}$. Let
		$\ol\cL_k=R^\b_{\overline{\ta}(k)} - R^\b_{\overline{\ta}(0)}$ and
		$\ol\cR_k=L^\b_{\overline{\ta}(k)} - L^\b_{\overline{\ta}(0)}$ for all $k\ge 0$.  We call
		$\ol\cZ:=(\ol\cL,\ol\cR)$ the {\bf reverse grouped-step walk} 
		$Z^{\b}$  \bf{viewed from} $T$.
	\end{definition}
	Note the interchange of $L$ and $R$, owing to the fact that reversing
	$\fraketa^\b$ swaps the notion of left and right (see the precise
	convention below).  We drop the dependence of $\ol\cZ$ on $T$ because
	the following lemma implies that the distribution of $\ol\cZ$ is
	independent of $T$.
	\begin{lemma}\label{lem:reverse}
		Let $\ol\P^\infty$ be the law of a random walk with i.i.d.\ 
		increments distributed as
		$\tfrac{1}{2} \delta_{(1,-1)} + \sum_{i=0}^\infty 2^{-i-2}
		\delta_{(-1,i)}$ (see Figure~\ref{fig:steps}(c)). Then
		$\ol\cZ$ is distributed as $\ol\P^\infty$.
	\end{lemma}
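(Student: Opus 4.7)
The plan is to time-reverse the stationary Markov chain $w$ and decompose $\ol\cZ$ into i.i.d.\ chunks bracketed by successive non-r indices. Since the stationary measure of $P$ is uniform on $\{\b,\r,\g\}$, the reversed chain $\hat w_n := w_{T-n}$ is Markov with transition matrix $\hat P(y,x) = P(x,y)$. The crucial feature is the identity $\hat P(\b,\r) = P(\r,\b) = 0$: going backward in time, state $\b$ can never be preceded by state $\r$. Because $T$ is a reverse stopping time with $w_T = \b$ almost surely, the strong Markov property of $\hat w$ applied at each $\overline{\ta}(k)$ will decompose $\ol\cZ$ into its increments, which I will show are i.i.d.\ with law $\ol\P^\infty$.

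First I would argue that the skeleton $\zeta_k := w_{\overline{\ta}(k)}$ of non-r states visited is i.i.d.\ uniform on $\{\b,\g\}$. From $\zeta_k = \b$ this is immediate since $\hat P(\b,\cdot)$ puts mass $1/2$ on each of $\b$ and $\g$ and none on $\r$. From $\zeta_k = \g$, either the chain moves directly to a non-r state (probability $1/2$, split equally between $\b$ and $\g$ via $\hat P(\g,\b) = \hat P(\g,\g) = 1/4$), or it enters a run of $\r$'s and eventually exits to $\b$ or $\g$ with equal conditional probability $1/2$ (since $\hat P(\r,\b) = \hat P(\r,\g) = 1/4$). So $\zeta_{k+1}$ is uniform on $\{\b,\g\}$ independently of $\zeta_k$, and by induction the whole sequence is i.i.d.

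Second, I would compute the increment distribution conditional on $\zeta_k$. Unwinding the definitions of $\ol\cL, \ol\cR$ and the step map $f$,
\[
(\ol\cL_{k+1}-\ol\cL_k,\ \ol\cR_{k+1}-\ol\cR_k) \;=\; (\#\b-\#\g,\ -\#\b+\#\r),
\]
with counts taken over the range $(\overline{\ta}(k+1), \overline{\ta}(k)]$. On this range the only non-r symbol is $\zeta_k$, preceded (in forward time) by $K \ge 0$ consecutive $\r$'s. If $\zeta_k = \b$ then $K = 0$ a.s.\ (again since $\hat P(\b,\r) = 0$) and the increment is deterministically $(1,-1)$. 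If $\zeta_k = \g$ then the Markov transitions give $\P[K=0] = 1/2$ and $\P[K=j] = (1/2)^{j+1}$ for $j \ge 1$ (coming from $\hat P(\g,\r) = 1/2$, the self-loop $\hat P(\r,\r) = 1/2$, and exit probability $\hat P(\r,\b)+\hat P(\r,\g) = 1/2$), and the increment is $(-1,K)$. Mixing over $\zeta_k \sim \mathrm{Unif}\{\b,\g\}$ yields the marginal law $\tfrac{1}{2}\delta_{(1,-1)} + \sum_{j \ge 0} 2^{-j-2}\delta_{(-1,j)}$, which is exactly $\ol\P^\infty$.

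Independence of successive increments then follows from the strong Markov property at each $\overline{\ta}(k)$: given $\zeta_k$, the next chunk and $\zeta_{k+1}$ are jointly independent of the earlier history, while $\zeta_{k+1} \sim \mathrm{Unif}\{\b,\g\}$ independently of $\zeta_k$ decouples the chunk laws across $k$. Hence the increments of $\ol\cZ$ are i.i.d.\ with law $\ol\P^\infty$, and the construction makes manifest that the distribution does not depend on which reverse stopping time $T$ is chosen. I do not expect a serious obstacle; the main care point is to correctly track the coordinate swap in the definition of $\ol\cZ$ (where $\ol\cL$ reads off $R^\b$ and $\ol\cR$ reads off $L^\b$), which is precisely what converts the forward $(-i,1)$ excursions of $\P^\infty$ into the reverse $(-1,i)$ excursions of $\ol\P^\infty$.
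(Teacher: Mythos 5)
Your proof is correct and takes essentially the same route as the paper's, which observes that $w_{\ol\ta(0)}\neq\r$ a.s.\ (since no $\r$ can immediately precede a $\b$), checks the first increment case by case, and leaves the joint distribution to ``a similar argument.'' You have simply made that argument explicit, via the reversed transition matrix, the i.i.d.\ uniform skeleton of non-$\r$ symbols, and the strong Markov property at each $\ol\ta(k)$, filling in exactly the details the paper elides.
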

	\begin{proof} 
		We have $w_{\overline{\ta}(0)}\neq \r$ a.s. because $w_T=\b$. If $w_{\overline{\ta}(0)}=\b$ we
		have $\ol \cZ_1=(1,-1)$.  If $w_{\overline{\ta}(0)}=\g$, then $\ol \cL_1=-1$
		and $\ol\cR_1=\overline{\ta}(0)-\overline{\ta}(1)-1$. It is straightforward to check that the first
		increment of $\ol\cZ$ has law indicated in Figure~\ref{fig:steps}(c). A similar argument
		can be used to check the joint distribution of
		$\{\ol\cZ_k-\ol\cZ_{k-1}\}_{k\ge 1}$.
	\end{proof}
	The increments $\{(\eta^x_k,\eta^y_k)\}_{k\ge 0}$ of $\ol\P^{\infty}$
	satisfy $\E [(\eta^x_k,\eta^y_k)] = (0,0)$, $\E(|\eta^x_k|^2)=1$,
	$\E(|\eta^y_k|^2)=2$, and $\mathrm{cov}(\eta^x_k,\eta^y_k)=-1$
	(compare to Remark~\ref{rmk:step}).
	
	Given a blue edge $\fraketa^\b(i)$, its tail $v$ and the face
	$f=\cF(\fraketa^\b(i))$ (recall Lemma~\ref{lem:face_structure}), let
	$P'_i$ be the dual red flow line starting from $f$. Since $P'_i$ lives
	on the dual map of $\cM_{\b\r}$, we have to be  careful when
	talking about the left and right of $P'_i$.  Although particular
	choices do not matter in the scaling limit, for the sake of
	concreteness we fix the following convention.  Given $P'_i$,
	define the edge subset $\ol P_i$ of $S$ as follows: an edge $e$ is in
	$\ol P_i$ iff there is a blue edge $e'$ such that $\cF(e')$ is on
	$P'_i$ and $e$ is the next edge after $e'$ when clockwise rotating
	around the tail of $e$.  By Lemma~\ref{lem:face_structure}, $\ol P_i$
	is a simple path on $S$. As a ray in a topological half plane bounded
	by the blue and red flow lines from $v$, we have a natural notion of
	left and right for $\ol P_i$. For $k\le 0$, we say $\fraketa^\b(k)$ is
	on the {\bf right} of $P'_i$ if $\fraketa^\b(k)$ lies on $\ol P_i$ or
	to its right.  Otherwise we say $\fraketa^\b(k)$ is on the {\bf left}
	of $P'_i$.  We have the following analogue of
	Lemma~\ref{lem:leftrightexcursions}.
	\begin{figure}
		\centering 
		\subfigure[]{\includegraphics[height=3.5cm]{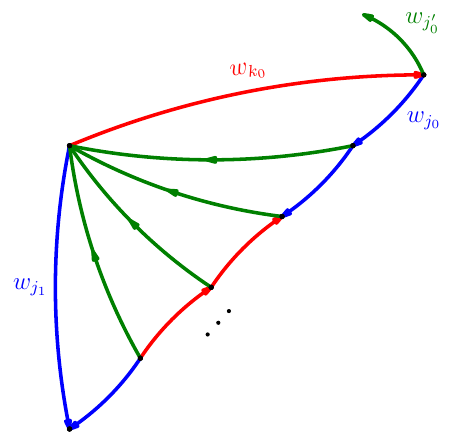}}
		\subfigure[]{\includegraphics[height=3.5cm]{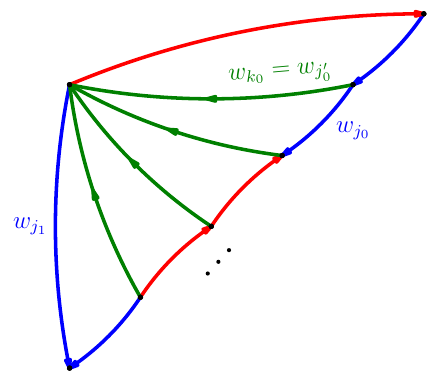}}
		\subfigure[]{\includegraphics[height=3.5cm]{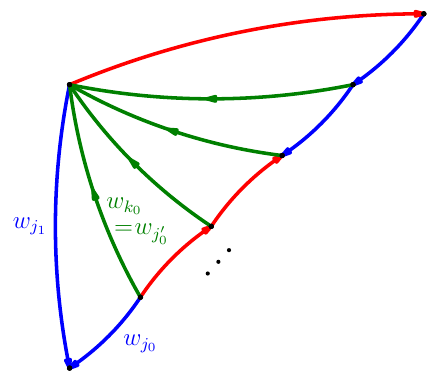}}
		\caption{An illustration of one step of the iterative procedure in the proof of 
			Lemma~\ref{lem:dualleftrightexcursions}. We have the case 
			$w_{j_0'-1}=\r$ in (a), the case $w_{j_0'-1}=\g$ in (b), and the
			case   $w_{j_0'-1}=\b$ in (c). In all three cases our proof of Lemma~\ref{lem:dualleftrightexcursions} works.
			\label{fig:twofacecases}}
	\end{figure}

	\begin{lemma} 
		\label{lem:dualleftrightexcursions} 
		In the above setting,
		let $T$ be a reverse stopping time such that $w_T = \b$ almost
		surely, and let $\ol \cZ$ be the reverse grouped-step walk viewed
		from $T$. Define a sequence $\dsigma$
		inductively by $\dsigma_0 = 0$ and, for $i \geq 1$, 
		\begin{align*}
		\dsigma_{i} &= \inf \{j > \dsigma_{i-1} \, : \, \overline{\cL}_j <
		\overline{\cL}_{\dsigma_{i-1}}\} \text{ if } i \text{ is
			odd,} \\
		\dsigma_{i} &= \inf \{j > \dsigma_{i-1} \, : \, \overline{\cR}_j <
		\overline{\cR}_{\dsigma_{i-1}}\} \text{ if } i \text{ is
			even}. 
		\end{align*}
		Then for each $k \in [\overline{\ta}(\sigma_{i+1})+1,\overline{\ta}(\sigma_{i})]$, the edge $\fraketa^{\b}(k)$ is right of the dual red flow line $P'_T$ if $i$ is even and  left of $P'_T$
		if $i$ is odd.
	\end{lemma}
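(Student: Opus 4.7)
The plan is to adapt the iterative word-based argument used for Lemma~\ref{lem:leftrightexcursions} to the reverse setting, with the blue tree replaced by the dual red flow line $P'_T$. There are two tasks: first, read off the sequence of blue edges whose face lies on $P'_T$ from the word $w$ going backward from $T$; second, match the word positions produced by this procedure with the running-minimum indices $\dsigma_i$ defined from $\ol\cZ$.

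I would begin by building the iteration. Starting from $j_0 = T$, the face $\cF(\fraketa^\b(j_0))$ is the initial face of $P'_T$. Since $P'_T$ advances by clockwise rotating a blue edge, the next face is $\wt\cF(\fraketa^\b(j_0)) = \cF(\fraketa^\b(j_1))$ for a unique $j_1 < j_0$. Using the face structure of $\cM_{\b\r}$ from Lemma~\ref{lem:face_structure}, one identifies $j_1$ by tracing the boundary of $\cF(\fraketa^\b(j_0))$ at $v$ and passing to the face on its right across $\fraketa^\b(j_0)$. Setting $w_{j_0'}$ to be the $\g\b$-match of $w_{j_0}$ and examining $w_{j_0'-1}$ yields the three subcases of Figure~\ref{fig:twofacecases}: if $w_{j_0'-1} = \r$, then $j_1$ is determined from the $\b\r$-match of that $\r$; if $w_{j_0'-1} = \g$, then $j_1$ is the $\g\b$-match of that $\g$; and if $w_{j_0'-1} = \b$, then $j_1 = j_0'-1$. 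Iterating produces indices $j_0 > j_1 > j_2 > \cdots$ together with intermediate ``turning'' positions $j_i'$ between them.

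Next, I would translate these word positions into the grouped-step walk $\ol\cZ$. By Lemma~\ref{lem:reverse}, a step of $\ol\cZ$ coming from $w_{\overline{\ta}(k)} = \b$ has form $(+1,-1)$, while one coming from $w_{\overline{\ta}(k)} = \g$ has form $(-1,m)$ with $m$ the number of grouped r's. Thus $\ol\cL$ decreases only on g-type steps and $\ol\cR$ decreases only on b-type steps. Using the parenthesis-matching rules of Definition~\ref{def:graph_from_word} together with Proposition~\ref{prop:dual2}—which endows $\ol\cL$ and $\ol\cR$ with a dual-flow-line interpretation—one verifies that the running minima of $\ol\cL$ along $\ol\cZ$ occur precisely at the indices $k$ with $\overline{\ta}(k) \in \{j_i'\}$, and those of $\ol\cR$ precisely at the indices $k$ with $\overline{\ta}(k) \in \{j_i\}$ for $i \geq 1$. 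Hence $\overline{\ta}(\dsigma_{2i}) = j_i$ and $\overline{\ta}(\dsigma_{2i+1}) = j_i'$, and between consecutive $\dsigma$-indices the reverse exploration of the blue tree has not crossed $P'_T$, so $\fraketa^\b(k)$ stays on one side of $P'_T$ determined by $i \bmod 2$ under the right/left convention fixed above the lemma.

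The main obstacle is the case analysis for the iterative step (the three subcases of Figure~\ref{fig:twofacecases}). For each subcase one must check both that the face-identification rule correctly locates $\cF(\fraketa^\b(j_{i+1}))$ across $\fraketa^\b(j_i)$ in $\cM_{\b\r}$ and that the corresponding step of $\ol\cZ$ at index $\dsigma_{i+1}$ is indeed a new running minimum of $\ol\cL$ or $\ol\cR$, as the parity dictates. The asymmetry between forward and reverse explorations—reflected in the different step distributions $\P^\infty$ and $\ol\P^\infty$—means this verification does not follow formally from Lemma~\ref{lem:leftrightexcursions}, but must be carried out directly using the edge-ordering rules of Definition~\ref{def:graph_from_word}; once the iteration is in hand the running-minimum identification and the parity assertion are mechanical.
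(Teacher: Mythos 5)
Your proposal follows essentially the same route as the paper's proof: iterate backward from $T$ through the word, using the $\g\b$ match of $w_{j_i}$ and the symbol(s) preceding it to locate the next blue edge of the dual red flow line, and then identify the resulting word positions with the alternating running minima of $\ol\cL$ and $\ol\cR$. The only discrepancy is a bookkeeping one in your claimed correspondence: since the step of $\ol\cZ$ at time $k$ is produced by the symbol at position $\overline{\ta}(k-1)$ (and a trailing run of $\r$'s below $j_i'$ is absorbed into a single grouped step), the correct identification is $\overline{\ta}(\dsigma_{2i}) = j_i - 1$ and $\overline{\ta}(\dsigma_{2i+1}) = k_i - 1$ with $k_i - 1 = \max\{k < j_i' : w_k \neq \r\}$, rather than $j_i$ and $j_i'$ as you wrote; this is exactly the kind of detail your deferred verification would catch.
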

	\begin{proof} 
		Beginning with $j_0 \colonequals T$ (see
		Figure~\ref{fig:dual_flow_line_excursions}(c)), we perform the following algorithm.
		\begin{enumerate} 
			\item Identify $w_{j_0}$'s gb match $(w_{j_0'},w_{j_0})$
			\item  Let $k_0=\max\{k<j'_0: w_k\neq r\}$+1.
			\item Let $j_1=\max\{j<k_0: w_j=\b,\;\textrm{and the
				br match $(w_j,w_\ell)$ satisfies}\; \ell \ge k_0 \}$. 
			\item Repeat, beginning  with $j_1$ in place of $j_0$ to obtain $\{(j_i, k_i)\}_{i\ge 0}$ iteratively.
		\end{enumerate} 
		By  Definition~\ref{def:graph_from_word} and Lemma~\ref{lem:face_structure}, edges of $\ol P_T$ are exactly $\{\fraketa^\b(k_i)\}_{i\ge 0}$ in order. Moreover, for $i\ge 0$, the edge $\fraketa^\b(k)$ is on the right of $P'_T$ (according to the convention above) if $k_i\le k<j_i$ and on the left of $P'_T$ if $j_{i+1} \le k<k_i $. 
		Note that $Z^\b_k-Z^\b_{k-1}=f(w_k)$. It can be checked by induction that $\bar \ta (\sigma_{2i+1}) =k_i-1$ and $\bar{\ta}(\sigma_{2i}) =j_i-1$ for all $i\ge 0$. 
		This concludes the proof.
	\end{proof}
	
	

	We can define $(\ol\cZ,\ol\cZ^0,\ol\cZ^1,\ol{\mathfrak l})$  in the same way as $(\cZ,\cZ^0, \cZ^1,{\mathfrak l})$.   The process $(\ol\cZ,\ol\cZ^0,\ol\cZ^1,\ol{\mathfrak l})$ is another
	discrete analogue of $(\scZ^p,\scZ^0,\scZ^1,\ell^p)$ in
	Lemma~\ref{lem:constructW}.
	Although we need to analyze both $\cZ$ and
	$\ol\cZ$, the arguments will be almost identical. And as explained in
	Remark~\ref{rmk:overshoot} below, $\cZ$ offers slightly more technical
	challenges. So in Section~\ref{sec:triple} and~\ref{sec:3tree} we
	treat $\cZ$ in detail and $\ol\cZ$ in brief.
	\begin{remark}\label{rmk:overshoot}
		The right  excursions of the flow line exhibit an \textit{overshoot}
		phenomenon. More precisely, let 
		$\Delta_i=\cL_{\sigma_{2i-2}}-\cL_{\sigma_{2i-1}}$ for $i\ge 1$. Then \nocolor{by the law $\P^\infty$ of $\cZ$ described} in Lemma~\ref{lem:forward}, $\{\Delta_i\}_{i\ge 1}$ are i.i.d. with distribution $\Geo$. Meanwhile, the left  excursions of the flow line and \textit{both} left
		and right  excursions of the dual flow line always hit $-1$ deterministically.
	\end{remark}
	
	Finally, we note that although the discussion above is in the
	setting where we view the UIWT $S$ from a blue edge
	$e=\eta^\b(T)$, where $T$ is a forward or reverse stopping time,
	the combinatorial results are the same if we instead view $S$
	from an arbitrary blue edge $e$ on the UIWT or on a finite
	wooded triangulation.
	
	\subsection{Joint convergence of a pair of trees and a flow
		line}\label{sec:triple}
	Suppose we are in the setting of Section~\ref{subsec:1D}. Throughout this subsection, let the forward stopping time $T$ in Lemma~\ref{lem:leftrightexcursions} be $\min\{i\ge 0:w_i=b\}$ 
	and the reverse stopping time $T$ in Lemma~\ref{lem:dualleftrightexcursions} be $\max\{i\le 0:w_i=b\}$.
	Recall  Definition~\ref{defn:leftrightexcursions}. Let $(\cL^n_t)_{t\ge 0}$ and
	$(\cR^n_t)_{t\ge 0}$ be the linear interpolation of
	$\left(\frac 1{\sqrt{4n}} \cL_{2nt}\right)_{t\in \frac{1}{2n}\N}$ and
	$\left(\frac 1{\sqrt{2n}} \cR_{2nt}\right)_{t\in \frac{1}{2n}\N}$,
	respectively, and set $\cZ^{n}_t=(\cL^n_t, \cR^n_t)$ for $t \ge 0$.
	We  define $\cZ^{0,n}_t = (\cL^{0,n}_t,\cR^{0,n}_t)$ and
	$\cZ^{1,n}_t = (\cL^{1,n}_t,\cR^{1,n}_t)$ in the same way with $\cZ^0$
	and $\cZ^1$ in place of $\cZ$.
	Let
	\begin{align*}
	&J^n_t=\cJ_{\lfloor2nt\rfloor}\quad \textrm{and}\quad K^n_t=\cK_{\lfloor2nt\rfloor},\\
	&T^{1,n}(j)=(2n)^{-1}\cT^1(j)\quad \textrm{and}\quad T^{0,n}(k)=(2n)^{-1}\cT^0(k).
	\end{align*}
	Let the rescaled local time  of $\cZ^{n}$ to be
	
	\begin{equation}\label{eq:local}
	\mathfrak{l}^{\,n}_t = - \inf_{s \leq T^{1,n}(J_t)} \cL^{1,n}_s - \inf_{s \leq {T^{0,n}(K_t)}} \cR^{0,n}_s \quad\quad \forall \; t\ge 0.
	\end{equation}
	
	We may also define the rescaled version of
	$(\ol\cZ,\ol \cZ^1,\ol \cZ^0,\ol{\mathfrak{l}})$, denoted by
	$(\ol\cZ^n,\ol \cZ^{1,n},\ol \cZ^{0,n},\ol{\mathfrak{l}}^n)$. Since we
	swap $L$ and $R$ in the definition of $\ol\cZ$, the scaling of the
	abscissa and ordinate is also swapped:
	$\ol\cZ^n_t=\left(\frac 1{\sqrt{2n}} \ol\cL_{2nt},\frac 1{\sqrt{4n}}
	\ol\cR_{2nt}\right)$. We also adapt the scaling for
	$\ol\cZ^{1,n},\ol\cZ^{0,n},\ol{\mathfrak{l}}^n$ accordingly.
	
	\begin{proposition}\label{prop:joint}
		In the above setting, the triple
		$ \left(\tfrac{1}{\sqrt{4n}}L^\b_{\lfloor 3nt\rfloor},
		\tfrac{1}{\sqrt{2n}}R^\b_{\lfloor 3nt\rfloor}\right)_{t\in \R},
		(\cZ^n_t)_{t\ge 0}, $ and $(\ol\cZ^n_t)_{t\ge 0}$ jointly converge
		in law. Write $\scZ^\b=(\scL^\b,\scR^\b)$ in
		Theorem~\ref{thm:main1}. Then the joint limit of the triple is
		$(\scZ^\b_t)_{t\in\R}$, $(\scZ^\b_t)_{t\ge 0}$ and
		$(\scR^\b_{-t},\scL^\b_{-t})_{t\ge 0}$.
	\end{proposition}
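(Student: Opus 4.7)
The approach is to express both $\cZ^n$ and $\ol\cZ^n$ as near-deterministic time-changes of the walk $Z^\b$, thereby reducing joint convergence to marginal convergence of $Z^\b$ together with concentration estimates for the time changes. The calculation that makes this possible is that both $\ta(k)-\ta(k-1)$ and $\overline{\ta}(k-1)-\overline{\ta}(k)$ have mean $3/2$ (as in the proof of Lemma~\ref{lem:mc_concentration}), so time $2n$ in the grouped-step walks corresponds to time $3n$ in $Z^\b$, matching the scalings in the statement.

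First, I would assemble the required tightness. Marginal tightness of $\bigl(\tfrac{1}{\sqrt{4n}}L^\b_{\lfloor 3nt\rfloor},\tfrac{1}{\sqrt{2n}}R^\b_{\lfloor 3nt\rfloor}\bigr)_{t\in\R}$ follows from the infinite-volume analogue of Proposition~\ref{prop:1pair} (via Proposition~\ref{prop:reverse}), and tightness of $\cZ^n$ and $\ol\cZ^n$ follows from the classical invariance principle applied to the i.i.d.\ walks with laws $\P^\infty$ and $\ol\P^\infty$ given by Lemmas~\ref{lem:forward} and~\ref{lem:reverse}, whose increments have finite covariance (Remark~\ref{rmk:step} and its analogue). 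Joint tightness in the product topology is then automatic, and by Skorokhod's representation I may work along a subsequence on a probability space where $\bigl(\tfrac{1}{\sqrt{4n}}L^\b_{\lfloor 3nt\rfloor},\tfrac{1}{\sqrt{2n}}R^\b_{\lfloor 3nt\rfloor}\bigr)_{t\in\R}\to(\scZ^\b_t)_{t\in\R}$ almost surely, locally uniformly.

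Second, I would identify the limit of $\cZ^n$ in this coupling. By construction, $\cZ_k=Z^{\b,\infty}_{\ta(k)}-Z^{\b,\infty}_T$, where $T$ has an exponential tail (Remark~\ref{rmk:argument}) and the increments $\ta(k)-\ta(k-1)$ are i.i.d.\ with mean $3/2$. Applying Lemma~\ref{lem:mc_concentration} and a union bound, for any fixed $M>0$,
\[
\sup_{0\le k\le 2nM}\bigl|\ta(k)-T-\tfrac{3}{2}k\bigr|\le \sqrt{2nM}\,\log(2nM)
\]
with probability at least $1-c_1n^{-c_2\log n}$. Combined with the modulus of continuity of $\scZ^\b$ on compacts, this forces
\[
\cZ^n_t=\Bigl(\tfrac{1}{\sqrt{4n}}L^\b_{\ta(\lfloor 2nt\rfloor)},\,\tfrac{1}{\sqrt{2n}}R^\b_{\ta(\lfloor 2nt\rfloor)}\Bigr)+o_n(1)\longrightarrow(\scZ^\b_t)_{t\ge 0}
\]
almost surely, locally uniformly. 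An analogous argument, using the reverse stopping time and $\overline{\ta}(k)\approx T-1-\tfrac{3}{2}k$, together with the coordinate swap in Definition~\ref{def:reverse} (which exchanges the roles of the $\sqrt{2n}$ and $\sqrt{4n}$ scalings), yields $\ol\cZ^n_t\to(\scR^\b_{-t},\scL^\b_{-t})_{t\ge 0}$.

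Third, since in the Skorokhod coupling all three limits are measurable (indeed continuous) functionals of the single limit process $(\scZ^\b_t)_{t\in\R}$, the joint limit along the subsequence is precisely $\bigl((\scZ^\b_t)_{t\in\R},(\scZ^\b_t)_{t\ge 0},(\scR^\b_{-t},\scL^\b_{-t})_{t\ge 0}\bigr)$. Because this identification is the same along every subsequence, the full sequence converges jointly. The main technical obstacle is ensuring that the composition of a random time change with the scaling limit of $Z^\b$ behaves continuously: this is precisely where the sub-polynomial concentration bound $n^{-c\log n}$ of Lemma~\ref{lem:mc_concentration} is essential, since it controls the time-change fluctuations on a scale much smaller than the diffusive scale $\sqrt{n}$, absorbing both the initial offset $T$ and the accumulated deviations of $\ta$ from its mean.
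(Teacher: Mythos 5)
Your proposal is correct and follows essentially the same route the paper intends: the paper gives no explicit proof, asserting only that Remark~\ref{rmk:argument} (treating the start time as deterministic) extends to the reverse grouped-step walk, and the implicit argument is exactly yours---both $\cZ^n$ and $\ol\cZ^n$ are time changes of $Z^\b$ with gaps of mean $\tfrac{3}{2}$ whose fluctuations are controlled by Lemma~\ref{lem:mc_concentration}, so all three processes are asymptotically functionals of the single limit $\scZ^\b$. The only cosmetic slip is attributing the two-sided convergence of the rescaled $Z^{\b,\infty}$ to Proposition~\ref{prop:reverse}; it is actually established by stationarity in the paragraph preceding that proposition.
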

\nocolor{\begin{proof}
By Proposition~\ref{prop:reverse},  	$ \left(\tfrac{1}{\sqrt{4n}}L^\b_{\lfloor 3nt\rfloor},
\tfrac{1}{\sqrt{2n}}R^\b_{\lfloor 3nt\rfloor}\right)_{t\in \R}$ converges to $(\scZ^\b_t)_{t\in\R}$.
By the proof of Proposition~\ref{prop:reverse} based on Lemma~\ref{lem:mc_concentration}, $ \left(\tfrac{1}{\sqrt{4n}}L^\b_{\lfloor 3nt\rfloor},
\tfrac{1}{\sqrt{2n}}R^\b_{\lfloor 3nt\rfloor}\right)_{t\in \R}$ and $
(\cZ^n_t)_{t\ge 0}$ jointly converge to  	$(\scZ^\b_t)_{t\in\R}$ and  $(\scZ^\b_t)_{t\ge 0}$.
The same concentration in Lemma~\ref{lem:mc_concentration} also applies to the reverse
grouped-step walks, which shows that  $ \left(\tfrac{1}{\sqrt{4n}}L^\b_{\lfloor 3nt\rfloor},
\tfrac{1}{\sqrt{2n}}R^\b_{\lfloor 3nt\rfloor}\right)_{t\in \R}$ and $(\ol\cZ^n_t)_{t\ge 0}$ jointly converge to $(\scZ^\b_t)_{t\in\R}$ and $(\scR^\b_{-t},\scL^\b_{-t})_{t\ge 0}$.
\end{proof}}

	The key to the proof of Theorem~\ref{thm:main1} is the following:
	\begin{proposition}\label{prop:triple} 
		With respect to the topology of uniform convergence on compact sets,
		$(\cZ^{1,n},\cZ^{0,n},\cZ^n,\mathfrak l^n)$ jointly converges in law
		to $(\mathscr{Z}^1,\mathscr{Z}^0,\mathscr{Z}^p,\ell^p)$ defined in
		Section~\ref{subsec:excursion}, with $\alpha=-\tfrac{\sqrt{2}}{2}$
		and $p=\tfrac{\sqrt{2}}{1+\sqrt{2}}$.  Furthermore, in any coupling such that the convergence is \nocolor{almost sure}, for any fixed
		$t \in \R$, we have that $J^n_t - K^n_t$ converges  to
		$\1_{\{\delta(t)=1\}}$ a.s., where $\delta(t)$ is the random variable defined
		in Definition~\ref{defn:delta}.
		The same is true for
		$(\ol\cZ^n,\ol \cZ^{1,n},\ol \cZ^{0,n},\ol{\mathfrak{l}}^n)$.   
	\end{proposition}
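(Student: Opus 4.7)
The plan is to establish marginal invariance principles for each of $\cZ^n$, $\cZ^{0,n}$, and $\cZ^{1,n}$ via Donsker's theorem, and then pin down the joint law of any subsequential limit using the discrete analogue of Lemma~\ref{lem:constructW} together with the uniqueness result in Lemma~\ref{lem:atomic}.

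First I would identify the marginal limits. By Lemma~\ref{lem:forward} and Remark~\ref{rmk:step}, $\cZ$ is a random walk with i.i.d.\ mean-zero increments whose covariance matrix has entries $2$, $1$, $-1$; under the rescaling $((4n)^{-1/2},(2n)^{-1/2})$ at time $2n$, the variances become $(1,1)$ and the correlation $-\sqrt{2}/2$, so Donsker's invariance principle (in a variant handling the geometric tail of the increments) gives $\cZ^n \to \scZ^p$ with $\alpha = -\sqrt{2}/2$. For $\cZ^{0,n}$ and $\cZ^{1,n}$, the strong Markov property applied at $\sigma_{2l+1}$ and $\sigma_{2l+2}$ respectively shows that $\cZ^0$ and $\cZ^1$ are independent random walks with i.i.d.\ blocks; combining this with the block-size control provided by Lemma~\ref{lem:mc_concentration} (right excursions end with a $\Geo$-distributed drop of $\cL$ by Remark~\ref{rmk:overshoot}, while left excursions end with a deterministic unit drop of $\cR$) yields invariance principles $\cZ^{0,n} \to \scZ^0$ and $\cZ^{1,n} \to \scZ^1$, with the two limits independent. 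The value $p = \sqrt{2}/(1+\sqrt{2})$ is then fixed by the relative rates at which right and left excursions contribute to the local time $\mathfrak l^n$ under the asymmetric spatial scaling.

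Joint tightness of $(\cZ^{1,n},\cZ^{0,n},\cZ^n,\mathfrak l^n)$ follows from coordinate-wise tightness, so it remains to identify the joint law of any subsequential limit $(\tilde\scZ^1,\tilde\scZ^0,\tilde\scZ^p,\tilde\ell^p)$. The key observation is that the defining formula \eqref{eq:local} for $\mathfrak l^n$, together with the reconstruction identity expressing $\cZ_n - \cZ_{\sigma_{i_n}}$ as the current partial excursion of $\cZ^0$ or $\cZ^1$, are exact discrete analogues of the formulas $\el_t = \sup\{a : \wt\tau^1_{pa}+\wt\tau^0_{(1-p)a} < t\}$ and $\scW^p_t|_{[\wt\tau^p_t,\tau^p_t]} = e^p_{\el_t}$ from Lemma~\ref{lem:constructW}. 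Passing these to the limit along the subsequence yields $\tilde\scZ^p_t - \tilde\scZ^p_{\tilde\tau_t} = \scW^p_t$, where $\scW^p$ is the continuum process assembled from $(\tilde\scZ^0,\tilde\scZ^1)$ as in that lemma. Combined with the observation that $\tilde\scZ^p$ is a Brownian motion in the joint filtration (from the marginal Donsker limit for $\cZ^n$ plus the Markov structure of $w$), Lemma~\ref{lem:atomic} forces $\tilde\scZ^p = \scZ^p$ almost surely, pinning down the joint law. Convergence $\tilde\ell^p = \ell^p$ then follows from continuity of the local-time functional in the excursion data.

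For the last statement, $J^n_t - K^n_t$ is the indicator of the event that time $\lfloor 2nt\rfloor$ lies inside a left excursion of $\cZ$; under the discrete-to-continuum correspondence established above this matches $\1_{\{\delta(t)=1\}}$. Fixing $t$, we have $\P(|\scX^p_t|=0) = 0$, so almost surely $t$ lies strictly inside a single continuum excursion of $\scZ^p$, and via a Skorokhod coupling of the joint convergence the discrete excursion containing $\lfloor 2nt\rfloor/(2n)$ converges to the continuum excursion containing $t$, giving almost sure convergence of the type indicator. The reverse case for $(\ol\cZ^n,\ol\cZ^{1,n},\ol\cZ^{0,n},\ol{\mathfrak l}^n)$ is completely analogous and in fact easier: by Remark~\ref{rmk:overshoot} both reverse excursion types terminate deterministically, removing the geometric overshoot, and by symmetry $p$ becomes $1/(1+\sqrt{2})$. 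The main obstacle throughout is the joint-law identification: passing the discrete identities defining $\mathfrak l^n$ to a sufficiently strong joint limit for Lemma~\ref{lem:atomic} to apply, which in particular requires tight control of the renewal-type time changes $\cT^{0,n}$ and $\cT^{1,n}$ via Lemma~\ref{lem:mc_concentration}.
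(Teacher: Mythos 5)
Your overall strategy --- tightness, subsequential limits, matching the discrete excursion identities to Lemma~\ref{lem:constructW}, and invoking Lemma~\ref{lem:atomic} to pin down the joint law --- is exactly the paper's, and you correctly locate the source of $p=\tfrac{\sqrt2}{1+\sqrt2}$ in the ratio between the rates at which the two running infima descend (the geometric overshoot $\E[\Geo]=2$ against the $\sqrt{4n}$ vs.\ $\sqrt{2n}$ scaling, as in \eqref{eq:ratio}). However, the application of Lemma~\ref{lem:atomic} is where your argument has a genuine gap. That lemma requires that \emph{both} processes --- the subsequential limit $\wt\scZ$ of $\cZ^n$ \emph{and} the process $\scZ^p$ assembled from $(\scZ^1,\scZ^0)$ by excursion theory --- be Brownian motions with respect to the filtration $\cF_t$ generated jointly by the pair. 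You assert this only for the subsequential limit, justified by ``the marginal Donsker limit plus the Markov structure of $w$,'' which at best gives the Markov property in that process's own filtration. The delicate step is the measurability inclusion $\cF_1\subset\Sigma_1\colonequals\sigma(\{(\cZ^{n}_t)_{t\le 1}:n\in\N\})$: one must show that $(\scZ^p_t)_{t\le1}$ is determined by the pre-time-$1$ walk increments, via the clock times $\overline{\tau}^{1,n}_1,\overline{\tau}^{0,n}_1$ and Proposition~\ref{prop:W}(2); and then a separate argument at the stopping time $\tau_1$ (using that the two limit processes agree on $[1,\tau_1]$ and that the post-$\tau^n_1$ walk increments determine $\scZ^p$ after $\tau_1$) is needed to conclude that $\scZ^p$ is also an $\cF$-Brownian motion. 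Without both halves of this verification, Lemma~\ref{lem:atomic} cannot be applied and the identification of the subsequential limit does not follow.

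Separately, your value $p=\tfrac{1}{1+\sqrt2}$ for the reverse quadruple contradicts the statement being proved (``the same is true'' means the same $p$). You correctly note that the reverse excursions carry no geometric overshoot, but you overlook that the spatial scaling of $\ol\cZ^n$ is also swapped, $\ol\cZ^n_t=\bigl(\tfrac{1}{\sqrt{2n}}\ol\cL_{2nt},\tfrac{1}{\sqrt{4n}}\ol\cR_{2nt}\bigr)$, precisely because $L$ and $R$ are interchanged in Definition~\ref{def:reverse}. The ratio of the two rescaled running infima is then $\tfrac{1}{\sqrt{2n}}:\tfrac{1}{\sqrt{4n}}=\sqrt2$, exactly as in the forward case where the factor $\E[\Delta_1]=2$ provides the compensation, so $p=\tfrac{\sqrt2}{1+\sqrt2}$ in both cases.
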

	
	We begin with two lemmas. Recall the notation $\tau^-(\cdot)_a$ from
	\eqref{eq:e1}.  The first lemma is a standard fact about Brownian
	motion whose proof we will skip. 
	\begin{lemma} \label{lem:inf_conv} Suppose that $X^n$ is a sequence of
		stochastic process converging a.s.\ to a 1D standard Brownian motion
		$X^0$ in local uniform topology. Then for all $a>0$, we have
		$\tau^-(X^n)_a$ converges to $\tau^-(X^0)_a$ a.s.
		Furthermore, if we define
		$\sigma^n = \sup \{s \leq 1\,: \, X^n_s = \inf_{u \leq s} X^n_u\}$,
		then $\sigma^n \to \sigma^0$ a.s. 
	\end{lemma}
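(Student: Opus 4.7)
The plan is to prove the two convergence statements separately, each by combining a.s.\ uniform convergence $X^n \to X^0$ with standard path-regularity properties of Brownian motion. Throughout, I fix a realization on the probability-one event where $X^n \to X^0$ uniformly on every compact interval and $X^0$ has the good Brownian path properties described below.

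For the first statement, write $\tau_0 \colonequals \tau^-(X^0)_a$. The key Brownian fact I invoke is the following strict-descent property at $\tau_0$: a.s., for every rational $\eps > 0$ we have
\[
\inf_{s \in [\tau_0,\, \tau_0 + \eps]} X^0_s < -a
\qquad \text{and} \qquad
\inf_{s \in [0,\, \tau_0 - \eps]} X^0_s > -a.
\]
The first inequality follows from the strong Markov property applied at $\tau_0$: $(X^0_{\tau_0 + t} + a)_{t\ge 0}$ is a standard Brownian motion, which a.s.\ goes strictly negative in every right neighborhood of $0$. The second inequality is immediate from the definition of $\tau_0$ and continuity of $X^0$. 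Given these, pick $s^\star \in (\tau_0, \tau_0+\eps)$ with $X^0_{s^\star} < -a$, and set $\delta \colonequals \min\bigl(-a - X^0_{s^\star},\ \inf_{s \leq \tau_0 - \eps} X^0_s + a\bigr) > 0$. Uniform convergence on $[0,\tau_0 + \eps]$ gives $\sup_{s \leq \tau_0+\eps}|X^n_s - X^0_s| < \delta/2$ for large $n$, which yields $X^n_{s^\star} < -a$ and $\inf_{s \leq \tau_0 - \eps} X^n_s > -a$; hence $\tau_0 - \eps \leq \tau^-(X^n)_a \leq s^\star < \tau_0 + \eps$. Letting $\eps \to 0$ along rationals concludes part one.

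For the second statement, observe that $\sigma^n$ is the largest time at which $X^n$ attains its running infimum on $[0,1]$, equivalently the largest minimizer of $X^n$ on $[0,1]$; similarly for $\sigma^0$. The relevant Brownian fact is that the global minimum of $X^0$ on $[0,1]$ is a.s.\ attained at a unique point $\sigma^0$, with the \emph{strict} separation property that for every $\eps > 0$ there exists $\delta > 0$ with
\[
\inf_{\substack{s \in [0,1] \\ |s - \sigma^0| \ge \eps}} X^0_s \ \geq\ X^0_{\sigma^0} + \delta.
\]
(Uniqueness follows from the fact that for any fixed $t_1 < t_2$ in $[0,1]$ one has $\P[X^0_{t_1} = \min_{[0,1]} X^0 = X^0_{t_2}] = 0$, by conditioning on $(X^0_{t_1}, X^0_{t_2})$ and applying the Brownian bridge argument; strict separation is then automatic from continuity and uniqueness.) Suppose for contradiction that $\sigma^{n_k} \to \sigma^\star \neq \sigma^0$ along some subsequence. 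By uniform convergence on $[0,1]$, $X^{n_k}_{\sigma^{n_k}} \to X^0_{\sigma^\star}$, while $X^{n_k}_{\sigma^{n_k}} \leq X^{n_k}_{\sigma^0} \to X^0_{\sigma^0}$. Thus $X^0_{\sigma^\star} \leq X^0_{\sigma^0}$, contradicting the strict separation at $\sigma^0$. Therefore $\sigma^n \to \sigma^0$.

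There is no serious obstacle: both parts reduce to a uniform-norm estimate plus a strict-inequality property of $X^0$, and both strict properties are classical consequences of the strong Markov property. The only minor subtlety is that for discrete-in-nature $X^n$ the running infimum $I(X^n)$ may jump past the level $-a$, so $\tau^-(X^n)_a$ could fail to be the exact first-passage time to $-a$; but the two-sided sandwich $\tau_0 - \eps \leq \tau^-(X^n)_a \leq \tau_0 + \eps$ above only uses that $I(X^n)$ reaches $-a$ somewhere in $[0, s^\star]$ and does not reach $-a$ on $[0, \tau_0 - \eps]$, which holds regardless of whether $I(X^n)$ is continuous.
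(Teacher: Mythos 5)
Your proof is correct. Note that the paper offers no argument at all here: it declares Lemma~5.8 ``a standard fact about Brownian motion whose proof we will skip,'' so there is no in-paper proof to compare against. Your two ingredients are exactly the standard ones: for the first passage times, the strong Markov property gives that $X^0$ goes strictly below $-a$ immediately after $\tau^-(X^0)_a$ (no ``ledge'' at the hitting level), and together with $\inf_{[0,\tau_0-\eps]}X^0>-a$ this pins $\tau^-(X^n)_a$ into $(\tau_0-\eps,\tau_0+\eps)$ for large $n$; for $\sigma^n$, you correctly identify $\sigma^n$ as the last global minimizer of $X^n$ on $[0,1]$ and use a.s.\ uniqueness of the Brownian minimizer plus the resulting strict separation to rule out any other subsequential limit. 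Both strict-inequality facts are classical, and the uniform-norm sandwich is carried out correctly.

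One small caveat in your closing remark: with the paper's literal definition $\tau^-(\omega)_a=\inf\{t: I(\omega)_t=-a\}$, the upper bound of your sandwich does \emph{not} hold ``regardless of whether $I(X^n)$ is continuous'' --- if $I(X^n)$ jumped strictly past $-a$ without equaling it, the set would be empty and $\tau^-(X^n)_a=\infty$. So you are implicitly using continuity of the $X^n$ (equivalently of $I(X^n)$), or reading $\tau^-$ as the first passage to $(-\infty,-a]$. This is harmless in context, since the processes to which the lemma is applied ($\cL^{1,n}$, $\cR^{0,n}$, etc.) are continuous linear interpolations, and the same continuity is already needed for $\sigma^n$ to be an attained minimizer; but the caveat as you phrased it overstates the generality slightly.
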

	
	The second lemma is a 2D extension of Lemma~\ref{lem:inf_conv}. 
	
	\begin{lemma}\label{lem:time} 
		Define $\widetilde{\tau}^n_1$ and $\tau_1^n$ to be the random times
		for which $(\widetilde{\tau}^n_1, \tau_1^n)$ is the largest open
		interval containing $t=1$ over which $t\mapsto \mathfrak{l}^{n}_t$ is
		constant. 
		Then in any coupling such that $(\cZ^{1,n},\cZ^{0,n})_{n=1}^\infty$
		converges to $(\scZ^1,\scZ^0)$ a.s., we have that 
		$(\tilde{\tau}_1^n, \tau_1^n, \mathfrak{l}_1^{\,n}, J^n_1-K^n_1)$
		converges to $ (\tilde{\tau}_1^p,\tau_1^p,\ell^p_1,\1_{\{\delta(1)=1\}})$
		a.s., where $p =\tfrac{\sqrt{2}}{1+\sqrt{2}}$ and $\ell^p$ is as in
		Lemma~\ref{lem:constructW}.
	\end{lemma}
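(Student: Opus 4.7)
My plan is to use Lemma~\ref{lem:inf_conv} to transfer the a.s.\ coupling hypothesis to hitting-time convergence, extract a subsequential limit, and identify it via the continuum characterization of $\ell^p$ in Lemma~\ref{lem:constructW}. Concretely, from $(\cZ^{1,n},\cZ^{0,n}) \to (\scZ^1,\scZ^0)$ a.s.\ in local uniform topology, Lemma~\ref{lem:inf_conv} applied to $\cL^{1,n}$ and to $\cR^{0,n}$ yields $\tau^-(\cL^{1,n})_a \to \tilde{\tau}^1_a$ and $\tau^-(\cR^{0,n})_a \to \tilde{\tau}^0_a$ almost surely for every rational $a > 0$, which upgrades (by continuity of Brownian hitting times in the level) to local uniform convergence in $a$.

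Next, I would show that the tuple $(\tilde{\tau}_1^n, \tau_1^n, \mathfrak{l}_1^n, J^n_1 - K^n_1, T^{1,n}(J^n_1), T^{0,n}(K^n_1))$ is tight (the two infima in the definition of $\mathfrak{l}_n$ are bounded on compact time intervals by quantities converging a.s., $J^n_1 - K^n_1 \in \{0,1\}$, and $\tau_1^n$ is bounded in probability by comparing with the continuum excursion length) and extract an a.s.-convergent subsequence with limit $(\tilde{\tau}_*, \tau_*, \mathfrak{l}_*, \delta_*, T^1_*, T^0_*)$. Continuity of the infimum map together with the uniform hitting-time convergence gives $\mathfrak{l}_* = -\inf_{s \leq T^1_*}\scL^1_s - \inf_{s \leq T^0_*}\scR^0_s =: a_L + a_R$.

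The crux of the proof, and my main obstacle, is to identify $a_L = p\mathfrak{l}_*$ and $a_R = (1-p)\mathfrak{l}_*$ with $p = \sqrt{2}/(1+\sqrt{2})$, from which $T^1_* = \tilde{\tau}^1_{p\mathfrak{l}_*}$, $T^0_* = \tilde{\tau}^0_{(1-p)\mathfrak{l}_*}$, and Lemma~\ref{lem:constructW} forces $(\tilde{\tau}_*, \tau_*, \mathfrak{l}_*) = (\tilde{\tau}^p_1, \tau^p_1, \ell^p_1)$. The value of $p$ will emerge from comparing the discrete rates at which the two rescaled infima accumulate: each completed right excursion contributes its $\cL$-overshoot $\Delta \sim \mathrm{Geom}(1/2)$ (of mean $2$) to the $\cL$-side, while each completed left excursion contributes the deterministic drop $1$ to the $\cR$-side. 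After the different scalings $\sqrt{4n}$ for the $\cL$-coordinate and $\sqrt{2n}$ for the $\cR$-coordinate, the resulting asymptotic ratio of the rescaled infima is $\sqrt{2} : 1$, matching $p : (1-p)$. The concentration provided by Lemma~\ref{lem:mc_concentration}, applied to the i.i.d.\ overshoots and to the excursion lengths, will let me pass these discrete statistics to the limit and absorb the mild look-ahead in $\cT^1(\cJ_n)$ present when $t = 1$ falls inside a left excursion.

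The last step is then routine: $\delta_*$ is identified by tracking whether $t = 1$ lies in a right excursion ($\delta_* = 0$) or a left excursion ($\delta_* = 1$), matching $\mathbf{1}_{\delta(1)=1}$ via Definition~\ref{defn:delta}, and uniqueness of the subsequential limit promotes the convergence to hold along the full sequence. The hardest technical point is the accounting in the previous paragraph, since the infima in the definition of $\mathfrak{l}_n$ are taken over $\cL^1$ and $\cR^0$ rather than over the "reflected" coordinates $\cL^0$ and $\cR^1$ that carry the explicit discrete drops; threading the overshoot bookkeeping through the coupling between $\cZ^1$, $\cZ^0$, and the ambient walk $\cZ$ is what keeps this from being a one-line deduction.
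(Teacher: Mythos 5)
Your plan is correct and follows essentially the same route as the paper: the heart of both arguments is the almost-sure ratio $\sqrt{2}:1$ between the two rescaled running infima, obtained from the $\Geo$ overshoots (mean $2$, scaled by $1/\sqrt{4n}$) versus the deterministic unit drops (scaled by $1/\sqrt{2n}$) via Lemma~\ref{lem:mc_concentration}, combined with Lemma~\ref{lem:inf_conv} to pass the hitting times to the limit. The paper packages the final identification slightly differently---decomposing into events $E^i_q$ on which the relevant global minimum is attained before a rational time $q$ and exhibiting the explicit sandwich $T^{1,n}(j^n)+T^{0,n}(k^n)<1<T^{1,n}(j^n+1)+T^{0,n}(k^n)$, rather than your tightness-plus-uniqueness wrapper---but the mathematical content is the same.
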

	\begin{figure}
		\centering
		\includegraphics[width=0.4\textwidth]{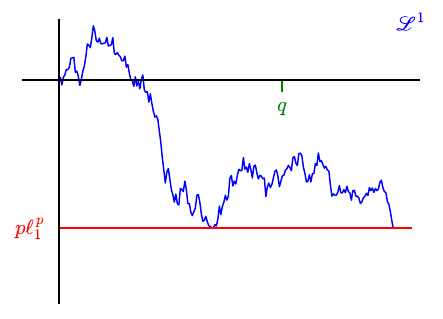} \hspace{1cm}
		\includegraphics[width=0.4\textwidth]{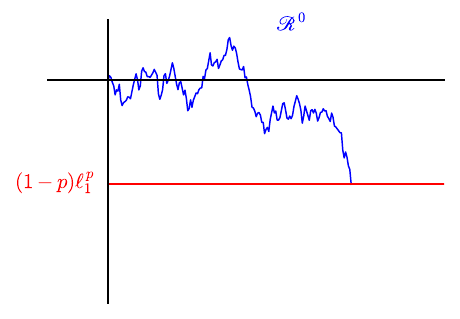}
		\caption{An illustration of the event $E=E^{i}_q$ when
			$i=1$. We lower the horizontal red lines in the two figures
			at rates $p$ and $1-p$, respectively, until the sum of the
			first hitting times exceeds $1$.  The event $E$ occurs if
			this jump happens during an excursion of $\scL^1$ including
			the time $q$ as shown.  }
		\label{fig:timesconverge}
	\end{figure}
	\begin{proof} 
		Recall that $\cZ^n$ may be constructed from $(\cZ^{1,n},\cZ^{0,n})$
		by alternately concatenating excursions, by the definition of
		$(\cZ^{1,n},\cZ^{0,n})$. After $2k$ such excursions have been
		concatenated ($k$ from $\cZ^{1,n}$ and $k$ from $\cZ^{0,n})$, the
		running infimum of the $\cR^{0,n}$ is equal to $-k/\sqrt{2n}$, and
		the running infimum of $\cL^{1,n}$ is
		$-(\Delta_1 + \cdots + \Delta_k)/\sqrt{4n}$, where $\{\Delta_i\}$
		are the i.i.d. overshoots defined in Remark~\ref{rmk:overshoot}.
		Therefore, by Lemma~\ref{lem:mc_concentration},
		\begin{equation}\label{eq:ratio}
		\lim_{n\to\infty} \frac{\cL_{T^{1,n}(\lfloor a \sqrt{n} \rfloor)}^{1,n}}{\cR_{T^{0,n}(\lfloor a \sqrt{n} \rfloor)}^{0,n}}
		=
		\frac{\E[\Delta_1]}{\sqrt{2}} =\nocolor{\sqrt{2}=\frac{p}{1-p}}\;\; \textrm{a.s. for all $a>0$ simultaneously.}
		\end{equation}
		
		When $\delta(1)=1$ (resp. $\delta(1)=-1$),  we have that $p\ell^p_1$  (resp. $(1-p)\ell^p_1$) is the global minimum of $\scL^1$  (resp. $\scR^0$)  in $(0,T)$ for some $T>0$. 
		For $q\in \Q_{>0}$, let $E^1_q$ the event that  $\delta(1)=1$ and $p\ell^p_1$ is the global minimum of $\scL^1|_{[0,q]}$.
		Let $E^0_q$ the event that  $\delta(1)=-1$ and $(1-p)\ell^p_1$ is the global minimum of $\scR^0|_{[0,q]}$. Then  
		$\P[\bigcup_{i=0,1;\; q\in \Q_{>0}}  E^i_q]=1$.
		
		Let $E=E^i_q$ for some $i=0$ or 1 and $q\in \Q_{>0}$ such that $\P[E]>0$. For concreteness we  may assume $i=1$ but the same argument works for $i=0$. (See Figure~\ref{fig:timesconverge} for an illustration.) From now on  we assume $E$ occurs.
		Let $a^n$ be such that $-pa^n\colonequals \min_{t\in[0,q]} \cL^{1,n}_t$. Then 
		since  $(\cZ^{1,n},\cZ^{0,n})_{n=1}^\infty$ converge to $(\scZ^1,\scZ^0)$ a.s., Lemma~\ref{lem:inf_conv} yields that  $a^n\to \ell^p_1$ and $\tau^-(\cL^{1,n})_{pa^n} \to \tau^{-}(\scL^1)_{p\ell^p_1}$ a.s.
		Let $j^n$ be the number of left excursions of $\cL^1$ over the
		interal $[0,2qn]$. Then by Lemma~\ref{lem:inf_conv},
		$$T^{1,n}(j^n)=\tau^{-}(\cL^{1,n})_{pa^n} \to \tau^{-}(\scL^1)_{p\ell^p_1}\quad \textrm{and}\quad T^{1,n}(j^n+1)\to \inf\{t:\scL^1_t<-p\ell^p_1\}  \;\;
		\text{a.s.}$$ Let $k^n=j^n$. By \eqref{eq:ratio} \nocolor{and the fact that $(1-p) \ell^p_1$ is a.s. a continuous point of $a\mapsto \tau^{-}(\scR^0)_{a}$},
		we have that	$\nocolor{T^{0,n}(k^n)} \to \tau^{-}(\scR^0)_{(1-p) \ell^p_1}$.  
		
		Therefore, on
		the event $E$, for sufficiently large $n$ we have
		\[
		T^{1,n}(j^n)+T^{0,n}(k^n)<1 \quad \textrm{and} \quad
		T^{1,n}(j^n+1)+T^{0,n}(k^n)>1\quad \text{ a.s.}
		\]
		Recall the
		definition of $\mathfrak l^n,J^n$ and $K^n$. We have $K^n_1=k_n$ and
		$J^n_1=j_n+1$. Therefore on $E$ we have that
		$(\tilde{\tau}_1^n, \tau_1^n, \mathfrak{l}_1^{\,n}, J^n_1-K^n_1)$
		converges a.s.\ to $ (\tilde{\tau}_1^p,\tau_1^p,\ell^p_1,\1_{\delta(1)=1})$
		as desired. Letting $E$ range over all the events $E^i_q$, we conclude the proof.
	\end{proof}
	
	\begin{proof}[Proof of Proposition~\ref{prop:triple}]
		We begin by showing joint convergence of
		$(\cZ^{1,n},\cZ^{0,n},\cZ^n)$. This triple is tight since its
		marginals are tight. By possibly extracting a subsequence if
		necessary, we consider a coupling of $(\cZ^{1,n},\cZ^{0,n},\cZ^n)$
		where $(\cZ^{1,n},\cZ^{0,n})$ converges to $(\scZ^1,\scZ^0)$ and
		$\cZ^n$ converges to some Brownian motion $\wt\scZ$ a.s. Let $\scZ^p$
		to be coupled with $(\scZ^0,\scZ^1)$ as in
		Section~\ref{subsec:excursion}. Our goal is to show that $\wt\scZ = \scZ^p$.
		
		To provide intuition for our proof that $\wt \scZ = \scZ^p$, let us
		articulate the reason that $\wt \scZ$ and $\scZ^p$ might, \textit{a
			priori}, be different. The process $\scZ^p$ is obtained by
		concatenating its \xin{constituent} It\^{o} excursions strictly in their
		order of their local-time appearance. By contrast $\wt \scZ$ is obtained
		as a limit of processes obtained by concatenating excursions in an
		{alternating} manner. So the concern is that this distinction
		between concatenation procedures introduces some discrepancy in the
		limit, even though Lemma~\ref{lem:time} ensures that macroscopic
		excursions coincide. We will use Lemma~\ref{lem:atomic} to resolve
		this concern. 
		
		By Lemma~\ref{lem:constructW} and~\ref{lem:time}, we have  
		$\left.(\wt \scZ -
		\wt \scZ_{\tilde{\tau}^p_1})\right|_{[\tilde{\tau}^p_1,{\tau}^p_1]} =
		\left.(\scZ^p -
		\scZ^p_{\tilde{\tau}^p_1})\right|_{[\tilde{\tau}^p_1,{\tau}^p_1]}$ a.s.  The same holds if we replace $(\tilde{\tau}^p_1,\tau^p_1)$ by $(\tilde{\tau}^p_t,\tau^p_t)$ for any $t>0$. This verifies the first condition in Lemma~\ref{lem:atomic}.
		
		To verify the second condition, define $(\mathcal{F}_t)_{t\ge 0}$ to be the filtration generated by $\wt\scZ$ and
		$\scZ^p$. We claim that $\wt\scZ$ and $\scZ^p$ are both
		$\mathcal{F}$-Brownian motions. They are adapted to $\mathcal{F}$ by
		definition, so it suffices to check that $\left.(\wt\scZ -
		\wt\scZ_t)\right|_{[t,\infty)}$ and
		$\left.(\scZ^p-\scZ^p_t)\right|_{[t,\infty)}$ are independent of
		$\mathcal{F}_t$ for all $t \geq 0$. Without loss of generality, we take
		$t=1$. 
		
		We claim that $\cF_1$ is a
		subset of the $\sigma$-algebra generated by the collection of all the
		random walk steps which occur prior to time $1$. That is, 
		\begin{equation} \label{eq:sigmacontain} 
		\cF_1 \subset \Sigma_1 \colonequals \sigma(\{ (\cZ^{n}_t)_{t \leq 1}:n\in \N\}).
		\end{equation}
		Since $\cZ^n$ converges to $\wt\scZ$ a.s., $(\wt\scZ_t)_{t
			\leq 1}$ is measurable with respect to $\Sigma_1$. Now we
		show that $(\scZ^p_t)_{t \leq 1}$ is measurable with respect
		to $\Sigma_1$ too. Define $\overline{\tau}^{1,n}_1$ and
		$\overline{\tau}_1^{0,n}$ to be the times with respect to
		$\cZ^{1,n}$'s and $\cZ^{0,n}$'s clocks (respectively) when
		$\cZ^n$ reaches time 1. Similarly, define
		$\overline{\tau}^{1}_1$ and $\overline{\tau}_1^{0}$ to be the
		times with respect to $\scZ^{1}$'s and $\scZ^{0}$'s clocks
		(respectively) when $\scZ^p$ reaches time 1. Note that
		$(\scZ^{1}_t)_{t \leq \overline{\tau}^1}$ and
		$(\scZ^{0}_t)_{t \leq \overline{\tau}^0}$ are the a.s. limit
		of $(\cZ^{1,n}_t)_{t \leq \overline{\tau}^{1,n}}$ and
		$\cZ^{0,n}_t)_{t \leq \overline{\tau}^{0,n}}$ respectively,
		thus measurable with respect to $\Sigma_1$.  By
		Lemma~\ref{lem:constructW} and Item 2 of
		Proposition~\ref{prop:W}, $(\scZ^p_t)_{t \leq 1}$ is
		determined by $(\scZ^{1}_t)_{t \leq \overline{\tau}^1}$ and
		$(\scZ^{0}_t)_{t \leq \overline{\tau}^0}$, thus measurable
		with respect to $ \Sigma_1$.
		
		Since $\left.(\scZ-\scZ_1)\right|_{[1,\infty)}$ is independent
		of $\Sigma_1$, \eqref{eq:sigmacontain} yields that $\wt \scZ$
		is an $\cF$-Brownian motion.
		So it remains to show that $\scZ^p$ is an $\cF$-Brownian
		motion. Similarly as \eqref{eq:sigmacontain}, we have
		\[
		\cF_{\tau_1} \subset \sigma(\{ (\cZ^{n}_t)_{t \leq \tau^n_1}:n\in \N\}).
		\]
		Therefore is $\cF_{\tau_1}$ independent of
		$(\wt\scZ-\wt \scZ_{\tau_1})|_{[\tau_1,\infty)}$. Moreover,
		the process $(\scZ^p-\scZ^p_{\tau_1})|_{[\tau_1,\infty)}$ is
		measurable with respect to to the random walk steps appearing
		after time $\tau^n_1$ (since these determine $\cZ^{n,1}$ and
		$\cZ^{n,0}$ after $\tau^n_1$, which in turn determine
		$(\scZ^p-\scZ^p_{\tau_1})|_{[\tau_1,\infty)}$). Thus
		$\cF_{\tau_1}$ is independent of
		$(\scZ^p-\scZ^p_{\tau_1})|_{[\tau_1,\infty)}$. Note that
		$\scZ|_{[1,\infty)}-\scZ_1$ and
		$\scZ^p|_{[1,\infty)}-\scZ^p_1$ agree up to time $\tau_1$.
		Furthermore, the conditional law of
		$\scZ^p|_{[\tau_1,\infty)}-\scZ^p_{\tau_1}$ given
		$\cF_{\tau_1}$ is the same as that of
		$\wt\scZ|_{[\tau_1,\infty)}-\wt\scZ_{\tau_1}$. Therefore, the
		conditional law of $\scZ^p|_{[1,\infty)}-\scZ^p_1$ given
		$\cF_1$ is the same as that of
		$\wt \scZ|_{[1,\infty)}-\wt\scZ_1$. Thus $\scZ^p$ is an
		$\cF$-Brownian motion, and Lemma~\ref{lem:atomic} tells us
		that $\wt\scZ = \scZ^p$.
		
		The joint convergence of
		$(\cZ^{1,n},\cZ^{n,0},\mathfrak{l}^n)$ follows from
		Lemma~\ref{lem:time} and the monotonicity of
		$\mathfrak{l}^n$. Since $\ell^p$ is determined by
		$(\scZ^0,\scZ^1)$ by Lemma~\ref{lem:constructW}, this
		concludes the proof for the forward case.
		
		The same proof applies to
		$(\ol\cZ,\ol\cZ^1,\ol\cZ^0,\ol{\mathfrak l})$, except here we
		don't have the geometric overshoot $\Delta$ in
		Remark~\ref{rmk:overshoot}. Therefore the ratio of
		$\ol\cL^{1,n}$ to $\ol\cR^{0,n}$ as in Lemma~\ref{lem:time} is
		determined by the scaling of the abscissa and ordinate, which
		is $\tfrac{1}{\sqrt{2n}}:\tfrac{1}{\sqrt{4n}}=\sqrt{2}$ as in
		the forward case.
	\end{proof}
	
	We now provide a geometric interpretation of
	Proposition~\ref{prop:triple}. Recall $\mu_\fh,\eta^\b$ and
	$\scZ^\b$ in Theorem~\ref{thm:main1}. Let
	$\theta:= \theta(\tfrac{\sqrt{2}}{1+\sqrt{2}})$ as in
	Lemma~\ref{lem:p}. We will eventually show that
	$\theta=\tfrac{\pi}{6}$, but at this moment $\theta$ is an
	unknown constant. Consider the flow \nocolor{lines}
	$\eta^{\theta},\eta^{\theta+\pi}$ and the time reversal
	$\bb\eta$ of $\eta^\b$.  Proposition~\ref{prop:joint}
	and~\ref{prop:triple} ensure the existence of the following
	coupling:
	\begin{definition}\label{def:coupling} 
		We call a coupling of \nocolor{$\mu_\fh,\eta^\b,\scZ^\b$} and a sequence of UIWTs a {\bf usual coupling} if \nocolor{the following holds. Set $\scZ^p_t:=\scZ^\b_t $ and $ \ol\scZ^p_t:= (\scR^\b_{-t},\scL^\b_{-t})$ for $t\ge 0$. Then} 
		\begin{align*}
		&(\cZ^n,\cZ^{1,n},\cZ^{0,n},\mathfrak l^n)   \;\;\textrm{converge to}\;\;  (\scZ^p,\scZ^1,\scZ^0,\ell^p) \;\; \textrm{a.s.};\\
		& (\ol\cZ^n,\ol\cZ^{1,n},\ol\cZ^{0,n},\ol{\mathfrak l}^n) \;\;\textrm{converge to}\;\;  (\ol\scZ^p,\ol\scZ^1,\ol\scZ^0,\ol\ell^p) \;\; \textrm{a.s.};
		\end{align*}and  moreover,  the same convergence holds with $\cZ$ (resp. $\ol\cZ$)
		replaced by the forward (resp., reverse) grouped-step walk
		viewed at $\min\{i\ge 3nt: w_i=b\}$ (resp. $\max\{i\le
		3nt:w_i=b  \}$)  and $\scZ^\b$ recentered at $t$,
		simultaneously for all $t\in \Q$. 
	\end{definition}
 \nocolor{By definition,} in a usual coupling, $(\scZ^p,\scZ^1,\scZ^0)$ (resp. $(\ol\scZ^p,\ol\scZ^1,\ol\scZ^0)$)  is  the Brownian motion decomposition  corresponding to $(\eta^\b,\eta^\theta)$ (resp. $(\bb\eta,\eta^{\theta+\pi})$) in Section~\ref{subsec:singleflow}.

	Consider  the setting of Proposition~\ref{prop:W} with $\eta'=\eta^\b$. 
	\nocolor{For $t>0$, let $\ell_t$} be the quantum length of 
	\nocolor{$\eta^\theta\cap \eta^{\b}([0,t])$}. 
	The curve $\eta^\theta$  divides $\eta^{\b}([0,1])$ into a right region and a left region whose
	$\mu_{\fh}$-area is denoted by $A_1$ and $1-A_1$, respectively.  \nocolor{(Recall from Lemma~\ref{lem:p} that the right  side of $\eta^\theta$ is the region bounded by $\eta^\theta$ and the right  frontier of $\eta^\b(-\infty,0]$.)}
	
	We define discrete analogues of the same quantities. Consider a UIWT
	$S$, its encoding walk $Z^{\b}$ and its blue tree exploration path
	$\fraketa^{\b}$.  We define
	$\fraketa^{\b,n} = \fraketa^{\b}(\lfloor 3nt \rfloor)$.  We define the
	\textbf{discrete quantum length} of a portion of a flow line
	(resp. dual flow line) to be $\frac{1}{\sqrt{2n}}$
	(resp. $\frac{1}{\sqrt{4n}}$) times its number of edges. In light of
	Proposition~\ref{prop:dual}, this scaling allows $\cZ^n$ and
	$\ol\cZ^n$ to measure the net change of the quantum length of flow
	lines and dual flow lines.  For any edge subset of the $n$th UIWT in a
	usual coupling, we define its \textbf{discrete quantum area} to be
	$\tfrac{1}{3n}$ times its cardinality.
	
	\nocolor{For $t>0$, let $\ell^n_t$}  denote the discrete quantum length of the
	intersection of \nocolor{$\fraketa^{\b,n}([0,t])$}  and the red flow line   from the tail of $\fraketa^{\b,n}(0)$.  \nocolor{By Lemma~\ref{lem:leftrightexcursions} $\sqrt{2n}\ell^n_t$ equals $ J^n_t$ in Proposition~\ref{prop:triple}. }
	\nocolor{Now  without loss of generality we focus on $t=1$.} Let $A_1^n$ denote the set of edges in $\fraketa^{\b,n}([0,1])$ right of the red flow line from the tail of $\fraketa^{\b,n}(0)$. We also use $A_1^n$ to denote its discrete quantum area.
	We similarly define $\bar\ell^n$ and $\ol A_1^n$ with red dual flow line in place of red flow line and  $\fraketa^{\b,n}([-1,0])$  in place of \nocolor{$\fraketa^{\b,n}([0,1])$}.

	\begin{proposition} \label{prop:onepocket}
		In a usual coupling,  we have that $A_1^n$ converges to $ A_1$ in probability.
		The corresponding result holds for $\ol A^n_1$ as well.
	\end{proposition}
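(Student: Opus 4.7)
The plan is to express $A_1^n$ in terms of the excursion decomposition of $\cZ^n$, pass to the continuum limit using Proposition~\ref{prop:triple}, and identify the limit with $A_1$ via Proposition~\ref{prop:W}.

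First I would rewrite $A_1^n$ combinatorially. By Lemma~\ref{lem:leftrightexcursions}, the edge $\fraketa^{\b}(k)$ lies right of the red flow line $P_T$ exactly when $k\in\bigcup_{l\ge 0}[\ta(\sigma_{2l}),\ta(\sigma_{2l+1}-1)]$, so
\begin{equation*}
3n\cdot A_1^n \;=\; \sum_{l\ge 0}\bigl(\min(\ta(\sigma_{2l+1}-1),3n) - \ta(\sigma_{2l}) + 1\bigr)_+.
\end{equation*}
Applying Lemma~\ref{lem:mc_concentration} to the i.i.d.\ gaps $\ta(k)-\ta(k-1)$ (of mean $3/2$) together with a union bound over $k\le 3n$, I would control each $\ta$-value up to an $O(\sqrt{n}\log n)$ fluctuation with probability $1-o_n(1)$. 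Since the number of completed right excursions by grouped-walk time $2n$ is $O(\sqrt{n})$ with high probability (it matches the order of the running infimum of $\cL$), the aggregate LLN error is $o(n)$ after division by $3n$. This yields
\begin{equation*}
A_1^n \;=\; \tfrac{1}{2n}\cT^0(\cK_{2n}) + \tfrac{1}{2n}R_n + o_n(1)
\end{equation*}
with probability $1-o_n(1)$, where $R_n$ is the elapsed $\cZ$-time in the current right excursion at $\cZ$-time $2n$, and $0$ if $\cZ$ is in a left excursion there.

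Next I would work inside a usual coupling. By Proposition~\ref{prop:triple}, $(\cZ^n,\cZ^{1,n},\cZ^{0,n},\mathfrak{l}^n)\to(\scZ^p,\scZ^1,\scZ^0,\ell^p)$ a.s., and $J^n_1-K^n_1\to\mathbf{1}_{\{\delta(1)=1\}}$ a.s. By definition, $\tfrac{1}{2n}\cT^0(\cK_{2n})$ is the rescaled clock time of $\cZ^{0,n}$ at the instant $\cZ^n$ reaches time $1$. The argument in the proof of Lemma~\ref{lem:time} (which constructs this hitting time via $T^{0,n}(\cdot)$ and applies Lemma~\ref{lem:inf_conv}) shows that this quantity converges a.s.\ to the analogous continuum clock time, namely the $\scZ^0$-clock time at which $\scZ^p$ reaches time $1$. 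The same lemma shows $\tfrac{1}{2n}R_n\to (1-\wt\tau^p_1)\mathbf{1}_{\{\delta(1)=1\}}$.

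Finally, I identify the limit with $A_1$. By Proposition~\ref{prop:W} and the peanosphere parametrization of $\eta^{\b}$ by $\mu_\fh$-area, the Lebesgue measure of $\{s\in[0,1]:\eta^{\b}(s)\text{ lies right of }\eta^{\theta}\}$ equals $A_1$. This set is precisely the union of the right-type excursions of $\scZ^p$ intersected with $[0,1]$, and its total length decomposes into the completed-excursion contribution (the continuum analogue of $\tfrac{1}{2n}\cT^0(\cK_{2n})$) plus the partial current-excursion contribution $(1-\wt\tau^p_1)\mathbf{1}_{\{\delta(1)=1\}}$. Hence $A_1^n\to A_1$ in probability. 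The corresponding statement for $\ol A_1^n$ follows by the same argument, using Lemma~\ref{lem:dualleftrightexcursions} in place of Lemma~\ref{lem:leftrightexcursions} and the reverse decomposition $(\ol\cZ^n,\ol\cZ^{1,n},\ol\cZ^{0,n},\ol{\mathfrak{l}}^n)$. The main obstacle will be the simultaneous uniform control of (a) the $O(\sqrt{n}\log n)$ fluctuation in each $\ta(\sigma_{2l+1}-1)-\ta(\sigma_{2l})$ summed over $O(\sqrt{n})$ completed excursions, and (b) the boundary excursion straddling $\fraketa^{\b}$-time $3n$; both are handled by combining Lemma~\ref{lem:mc_concentration} with the almost sure convergence of $(\wt\tau^n_1,\tau^n_1)$ furnished by Lemma~\ref{lem:time}.
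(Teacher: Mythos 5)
Your proposal is correct and follows essentially the same route as the paper's proof: decompose $A_1^n$ via the left/right excursion structure of Lemma~\ref{lem:leftrightexcursions}, pass to the limit using Proposition~\ref{prop:triple} together with the hitting-time convergence in Lemma~\ref{lem:time}, and absorb the clock change between word time and grouped-step time via Lemma~\ref{lem:mc_concentration} (the paper compresses exactly this into citations of Lemma~\ref{lem:time}, Remark~\ref{rmk:wheretostart}, and Lemma~\ref{lem:mc_concentration}). One small slip: under the paper's conventions $\delta(1)=1$ corresponds to time $1$ falling in a \emph{left} excursion, so the partial current-excursion contribution to the right region should carry the indicator $\1_{\{\delta(1)=-1\}}$ rather than $\1_{\{\delta(1)=1\}}$.
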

	\begin{proof} 
		Let 
		\begin{align*}
		&\cA_1^n=\nocolor{\frac{1}{2n}}|\{T\le i\le 3n: w_i\neq \r \;\textrm{and}\; \eta^\b(i)\in A_1^n \}|,\\
		&\wt \cA_1^n=\nocolor{\frac{1}{2n}}|\{0\le i\le 3n: w_i\neq \r \;\textrm{and}\; \eta^\b(i)\in A_1^n   \}|.
		\end{align*}where 	$T=\min\{i\ge 0: w_i=b \}$. 
		Then by Lemma~\ref{lem:time}, we have $\mathcal{A}_1^n\to A^1$ a.s.
		By Remark~\ref{rmk:wheretostart} and Lemma~\ref{lem:mc_concentration}, 	$\P[\wt\cA_1^n-\cA_1^n\ge n^{-0.1}]$ and  $\P[\wt\cA_1^n-\nocolor{A_1^n}\ge n^{-0.1}]$ decay superpolynomially. This concludes the proof for $A^n_1$. The reverse case follows from the same argument.
	\end{proof}
	
	\begin{remark} \label{rmk:wheretostart} As in the proof of
		Proposition~\ref{prop:onepocket}, when we transfer the convergence
		in Proposition~\ref{prop:triple} to the convergence of a natural
		geometric quantity, there is an error coming from the discrepancy
		introduced by starting $\cZ^{\b,n}$ at the first $\b$ following time
		0. This error can be handled by Remark~\ref{rmk:argument}. There is
		another error coming from skipping the red edges, which can be
		handled by Lemma~\ref{lem:mc_concentration}.  The same issues will
		appear often in the argument below and can always be handled this
		way. So for simplicity we will elide this issue henceforth.
	\end{remark}
	\begin{proposition} \label{lem:DQL} In a usual coupling, we have that
		$\ell^n$ converges to $ \tfrac{c}{1+\sqrt2}\ell$ in probability.
		Here $c$ is the constant in Proposition~\ref{prop:W} with
		$\theta=\theta\left(\tfrac{\sqrt2}{1+\sqrt2}\right)$ as above.		
		The corresponding result holds if we consider
		$(\ol\cZ^n,\ol\cZ^{1,n},\ol\cZ^{0,n},\ol{\mathfrak l}^n)$ and
		$\ol\ell^n$.
	\end{proposition}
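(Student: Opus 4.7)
The plan is to show $\mathfrak{l}^n_1=(1+\sqrt 2)\,\ell^n+o_\P(1)$, and then combine this with Lemma~\ref{lem:time} (which gives $\mathfrak{l}^n_1\to\ell^p_1$ in probability in the usual coupling) and with Proposition~\ref{prop:W}(3) (which identifies $\ell^p_1$ with $c\,\ell$).

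First I would express $\ell^n$ as a discrete excursion count. By the parenthesis-matching construction in the proof of Lemma~\ref{lem:leftrightexcursions}, each completed right excursion of the forward grouped-step walk $\cZ$ contains exactly one red flow line edge, namely the $\b\r$ match $w_{j_i'}$ inside the excursion $[\ta(\sigma_{2i}),\ta(\sigma_{2i+1})]$. The at-most-one partial excursion straddling $w$-time $3n$, together with the negligible offset between index $0$ and the first blue edge $T=\min\{i\ge 0:w_i=\b\}$, contribute at most $O(1/\sqrt{2n})$ to $\ell^n$ in probability; these are handled as in the proof of Proposition~\ref{prop:onepocket} via Remark~\ref{rmk:wheretostart} and Lemma~\ref{lem:mc_concentration}. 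Thus $\ell^n=\cJ_{2n}/\sqrt{2n}+o_\P(1)$.

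Next I would decompose $\mathfrak{l}^n_1$. Setting $k=\cJ_{2n}$ (which differs from $\cK_{2n}$ by at most one), the calculation in the proof of Lemma~\ref{lem:time} gives
\[
-\inf_s \cR^{0,n}_s=\frac{k}{\sqrt{2n}}\quad\text{and}\quad -\inf_s \cL^{1,n}_s=\frac{\Delta_1+\cdots+\Delta_k}{\sqrt{4n}},
\]
where the $\Delta_i$ are i.i.d.\ $\Geo$ overshoots from Remark~\ref{rmk:overshoot}. Since $k=O(\sqrt n)$, Lemma~\ref{lem:mc_concentration} gives $\sum_{i=1}^k\Delta_i=2k+O(n^{1/4}\log n)$ with probability $1-o(1)$, so $(\Delta_1+\cdots+\Delta_k)/\sqrt{4n}=k/\sqrt n+o_\P(1)=\sqrt 2\,\ell^n+o_\P(1)$ while $k/\sqrt{2n}=\ell^n+o_\P(1)$. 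Summing yields $\mathfrak{l}^n_1=(1+\sqrt 2)\,\ell^n+o_\P(1)$.

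Finally, Lemma~\ref{lem:time} gives $\mathfrak{l}^n_1\to\ell^p_1$ in probability, whence $\ell^n\to\ell^p_1/(1+\sqrt 2)$. In the usual coupling $\scW^p=\scW$ from Proposition~\ref{prop:W} with $\theta=\theta(\sqrt 2/(1+\sqrt 2))$, so the Brownian local time $\ell^p$ of $\scX^p$ at $0$ coincides with the Brownian local time $\ell^\theta$ of $\cA^\theta$; Proposition~\ref{prop:W}(3) then yields $\ell^p_1=c\,\ell$, giving $\ell^n\to c\ell/(1+\sqrt 2)$ as claimed. For $\overline\ell^n$ the same scheme applies with $\overline{\mathfrak l}^n$ and Lemma~\ref{lem:dualleftrightexcursions}: Remark~\ref{rmk:overshoot} tells us that in the reverse setup both excursion-end drops are deterministic $-1$'s (no overshoot), so no concentration on $\Delta_i$ is needed; the factor $1+\sqrt 2$ nevertheless reappears from the asymmetric $1/\sqrt{2n}$ vs.\ $1/\sqrt{4n}$ coordinate scalings in $\overline\cZ^n$. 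The main technical point is the concentration step with $k$ itself random, but the superpolynomial tails in Lemma~\ref{lem:mc_concentration} handle this uniformly over the $O(\sqrt n)$-range of $k$.
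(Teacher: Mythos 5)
Your proposal is correct and follows essentially the same route as the paper's proof: identify $\ell^n$ with $\tfrac{1}{\sqrt{2n}}$ times the number of excursion pairs via Lemma~\ref{lem:leftrightexcursions}, show each pair contributes $\tfrac{1}{\sqrt{2n}}+\tfrac{1}{\sqrt{4n}}\Geo$ to $\mathfrak l^n$ so that concentration of the geometric overshoots gives $\mathfrak l^n_1=(1+\sqrt2)\ell^n+o_\P(1)$, and then invoke $\mathfrak l^n\to\ell^p$ and Proposition~\ref{prop:W}(3). The treatment of the reverse (dual flow line) case, where the absence of overshoots is compensated by the asymmetric coordinate scalings, also matches the paper.
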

	\begin{proof} 
		Pursuant to Remark~\ref{rmk:wheretostart}, we may consider the red
		flow line starting from the tail of $\eta^\b(T)$ where
		$T=\min\{i\ge 0: w_i=\b\}$. (In other words, the discrete quantum
		length of the red flow line from the tail of $\eta^\b(0)$ differs from this flow
		line by $o_n(n^{-0.1})$ with superpolynomial probability, and hence
		negligibly). By Lemma~\ref{lem:leftrightexcursions}, the discrete
		quantum length of this flow line is $\tfrac{1}{\sqrt{2n}}$ times the
		number of left and right excursion pairs encountered over the time
		interval $[0,t]$. Recall \eqref{eq:local} and
		Remark~\ref{rmk:overshoot}. Each left excursion increases the local
		time $\mathfrak l^n$ by $\tfrac{1}{\sqrt{2n}}$, and each right
		excursion increases $\mathfrak l^n$ by
		$\tfrac{1}{\sqrt{4n}} \Geo$. By
		Lemma~\ref{lem:mc_concentration} and the fact that
		$\mathfrak l^n\to \ell^p$, we have $\ell^n$ converges to
		$(1 + \sqrt{2})^{-1}\ell^p$ in probability.  Now the result follows
		from the definition of $c$.
		
		The reverse direction is similar but simpler since the length of the
		dual flow line is just $\tfrac{1}{\sqrt{4n}}$ times the number of
		right/left excursion pairs (as there is no overshoot). So each edge
		results in an $\ol{\mathfrak l}^n$ increment of
		$\tfrac{1}{\sqrt{2n}}+\tfrac{1}{\sqrt{4n}}=\tfrac{1}{\sqrt{4n}}\cdot(1+\sqrt2)$. Now the dual flow line  result follows from the same argument as in the flow line case before.
	\end{proof}

	\subsection{Scaling limit in the infinite-volume case}\label{sec:3tree}
	
	\begin{figure}
		\centering
		\subfigure[]{\includegraphics[width=0.23\textwidth]{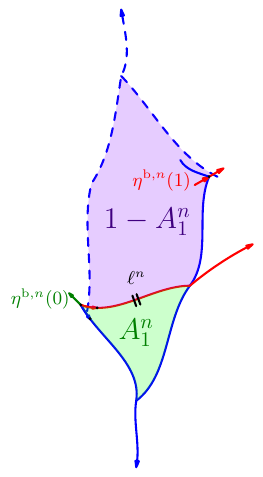}} \:
		\subfigure[]{\includegraphics[width=0.45\textwidth]{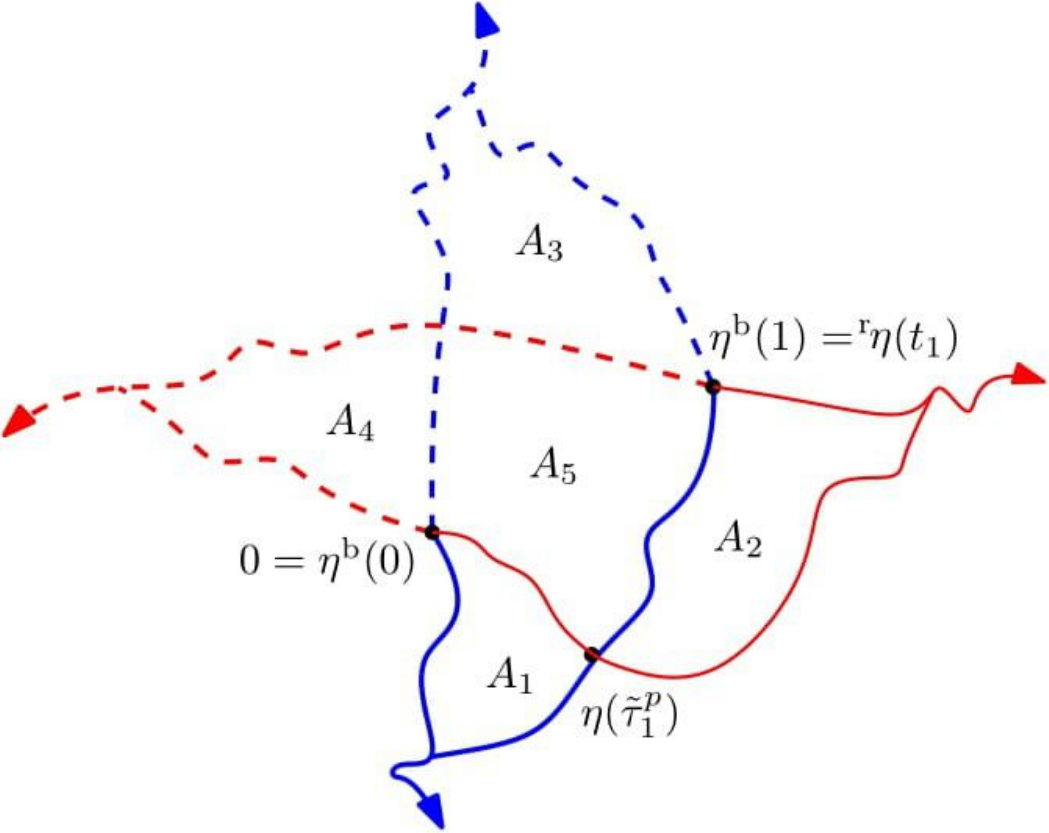}} \:
		\caption{(a) The blue flow lines and dual flow lines from $\eta^{\b,n}(0)$ (which happens to be a green edge here) and $\eta^{\b,n}(1)$ (which happens to be a red edge), together with the red flow line from the tail of $\eta^{\b}(0)$. (b) The five-region picture in Proposition~\ref{prop:tqconverges} in the case $t_1<0$.  \nocolor{[Yiting: change $\eta^{\mathrm r}$ to $^{\mathrm r}\eta$ ]}
			\label{fig:fourflowlines}  
		}
	\end{figure}
	
	Throughout this section we work in the setting of  a usual coupling, where $\eta^\b$ is the zero-angle Peano curve in an imaginary geometry.  Let $\eta^\r$ be the Peano curve of angle $\theta+\tfrac{\pi}{2}$ so that $\eta^\theta$ is a right boundary of  $\eta^\r$.  Let $\prescript{\r}{}{\eta}$ be the Peano curve of angle $\theta-\tfrac{\pi}{2}$, which is also the time reversal of $\eta^\r$. For $q\in \Q$, we define $t_q$ to be the  a.s.\ unique time  such that $\eta^\b(q) = \prescript{\r}{}{\eta}(t_q)$.   
	The time $t_1$ is negative if and only if  $\eta^\b(1)$ is left of $\eta^\theta$, which is further equivalent to $\{\delta=1\}$ in Proposition~\ref{prop:triple}. See Figure~\ref{fig:fourflowlines}(b) for an illustration of this case.  For concreteness we focus on this case in the rest of this section as the case $t_1>0$ can be treated in the same way.  The left and right boundary of $\eta^\b$ at $t=0$ and $t=1$ and the left and right boundary of $^r\eta$ at $t=1$ and $t=t_1$ divide $\eta^\b([0,1]) \cup \: \prescript{\r}{}{\eta}([t_1,0])$ into five regions.  We use $\{A_i\}_{1\le i\le 5}$ to denote the five  regions as well as their  $\mu_{\fh}$-area, as indicated in Figure~\ref{fig:fourflowlines}(b).  Note that $t_1 =-A_2-A_5-A_4$ in this case. 
	
	We have a similar discrete picture for the sequence of UIWTs in a usual coupling.  More precisely, let $\prescript{\r}{}{\fraketa}$ be the counterclockwise exploration path of a UIWT  analogous to $\fraketa^\b$, and let $\rZr=(\rr R,\rr L)$ be the walk associated with  $\rr\fraketa$ (Section~\ref{subsec:dual}).  For the $n$th UIWT in a usual coupling, let $\prescript{\r}{}{\fraketa}^n(t)=\prescript{\r}{}{\fraketa}(\lfloor3nt\rfloor)$ and $\prescript{\r}{}{Z^{n}}=(\tfrac{1}{4n}\rr R_{\lfloor3nt\rfloor}, \tfrac{1}{2n}\rr L_{\lfloor3nt\rfloor})$. (Note that  the notation $\prescript{\r}{}{Z^{n}}$ has a different meaning from Section~\ref{sec:bijection} and~\ref{sec:pair}.) In light of Proposition~\ref{prop:dual} and the scaling in the definition of discrete quantum length,  the abscissa and ordinate of $\prescript{\r}{}{Z^{n}}$ measure the net change of the discrete quantum length of the right and left ``boundary'' of $\rr\fraketa^n$ respectively, which are dual red flow lines and red flow lines.  
	
	Let us define discrete analogues to $t_q$ via  
	\[
	t_q^n \colonequals \inf \{t\in \R \, : \, \fraketa^{\b,n}(q) = \rr\fraketa^{n}(t)\} \quad \forall\; q\in \Q.
	\]
	In analogue with  $(A_i)_{1\le i\le 5}$, the region $\fraketa^{\b,n}([0,1]) \cup \prescript{\r}{}{\fraketa}^n([t^n_1,0])$ can be divided into five subregions $(A^n_i)_{1\le i\le 5}$ bounded by the  red and blue flow lines and dual flow lines from  $\fraketa^{\b,n}(0)$ and $\fraketa^{\b,n}(1)$. We also use the same notation $(A_i^n)_{1\le i\le 5}$ to represent the discrete quantum areas (\nocolor{Figure~\ref{fig:overlap} (a)}) of the corresponding subregion.
	\begin{remark}\label{rmk:boundary}
		When defining these five subregions and their discrete quantum areas, we are not careful about whether to include their boundaries or not. The reason is that these boundaries consist of portions of flow line and dual flow lines. Once scaled by $(3n)^{-1}$, they have $o(n^{-0.1})$ quantum area with superpolynomial probability. Therefore they are negligible in the scaling limit.  This remark applies to all edge subsets whose discrete quantum areas are relevant. So we ignore the boundary issue from now on.
		
		When we talk about the boundaries of these subregions, which are flow lines and dual flow lines, we simply say that they start from $\fraketa^{\b,n}(0)$ and $\fraketa^{\b,n}(1)$ without further specifying the starting vertex or face. The reason is that due to Remark~\ref{rmk:wheretostart}, the precise starting point is immaterial in the scaling limit.
	\end{remark} 
	Since we assume $\delta=1$ in the continuum, in a usual coupling, $J^n_1-K^n_1$ in Proposition~\ref{prop:triple} converges to $1$ a.s. Therefore $\eta^{\b,n}(1)$ is left of the red flow line from the tail of $\eta^{\b,n}(0)$ for large enough $n$. See \nocolor{Figure~\ref{fig:overlap} (a)} for an illustration of the discrete picture in this case. Note that in this case $A^n_1$ in Proposition~\ref{prop:onepocket} is the same as $A^n_1$ defined here. 
    \nocolor{Moreover, 	$A^n_1+A^n_3+A^n_5=1$.}
	
	\begin{proposition} \label{prop:tqconverges} 
		In a usual coupling, $(A_i^n)_{i=1}^5$ converges  to $(A_i)_{i=1}^5$ in probability. 
		
		Moreover, for any fixed $q\in \Q$, $t^n_q$ converges to $t_q$ in probability.
	\end{proposition}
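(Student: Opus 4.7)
The plan is to combine several applications of Proposition~\ref{prop:onepocket} with the joint convergence of the multiple lattice-walk encodings (Proposition~\ref{prop:triple_joint_converge_in_law}) to recover each $A_i^n$ via a linear system, and then deduce $t_q^n \to t_q$ from area accounting.

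First I would obtain two ``free'' linear relations directly from the paragraph preceding the proposition statement: Proposition~\ref{prop:onepocket} applied in the forward direction at time $0$ gives $A_1^n \to A_1$ in probability, and the reverse version, recentered at $t=1$ via the simultaneity over rationals built into Definition~\ref{def:coupling}, gives $A_3^n + A_5^n \to A_3 + A_5$ in probability. Each of these corresponds to a specific union of the five continuum regions.

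To separate the remaining three unknowns, I would run the same machinery with the roles of the colors permuted. By Proposition~\ref{prop:triple_joint_converge_in_law}, the reverse red exploration $\rr\fraketa^n$ and its walk $\rr Z^n$ jointly converge with $\fraketa^{\b,n}$ and $Z^{\b,n}$, so an analog of the usual coupling and of Proposition~\ref{prop:onepocket} also holds with $\rr\fraketa^n$ in place of $\fraketa^{\b,n}$ and with blue (dual) flow lines in place of red. Applying this red-side version at times $0$ and $t_1^n$ (respectively in the forward and reverse directions) produces two more linear combinations of the $A_i^n$'s, cut by the blue flow line and dual blue flow line emanating from $\fraketa^{\b,n}(0)$ and $\fraketa^{\b,n}(1)$. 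Together with the $\mu_\fh$-area identity $A_1^n + \cdots + A_5^n = 1 + |t_1^n| + o_n(1)$ (the error absorbing the negligible shared boundary by Remark~\ref{rmk:boundary}), this yields a full-rank linear system whose inversion gives $A_i^n \to A_i$ in probability for each $i$ individually.

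Convergence $t_1^n \to t_1$ is then immediate from area accounting: in the case $t_1 < 0$ one has $|t_1^n| = A_2^n + A_4^n + A_5^n + o_n(1)$ by the quantum-area parametrization of $\rr\fraketa^n$ (and the symmetric identity for $t_1 > 0$), so convergence of the individual $A_i^n$'s forces $t_1^n \to t_1$. The general case $t_q^n \to t_q$ for rational $q$ follows from the same argument carried out on $[0,q]$ (or $[q,0]$), using the recentering and scaling compatibility built into Definition~\ref{def:coupling}.

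The main obstacle I anticipate is ensuring that the discrete five-region decomposition is asymptotically faithful to its continuum counterpart. Concretely, the discrete flow lines and dual flow lines from $\fraketa^{\b,n}(0)$ and $\fraketa^{\b,n}(1)$ must, in the limit, carve out exactly the same five regions as the continuum flow lines from $\eta^\b(0)$ and $\eta^\b(1)$, with no macroscopic leakage across boundaries. I would control this by combining (i) Remark~\ref{rmk:boundary}, which bounds the discrete quantum area of flow-line boundaries by $o_n(n^{-0.1})$ with superpolynomial probability; (ii) Lemma~\ref{lem:mc_concentration} together with Remark~\ref{rmk:wheretostart}, which absorb the small errors arising from geometric overshoots (Remark~\ref{rmk:overshoot}) and skipped red edges; and (iii) the a.s.\ uniqueness of imaginary geometry flow lines at the fixed deterministic points $\eta^\b(0)$ and $\eta^\b(q)$, which guarantees that the continuum five-region partition is itself unambiguous.
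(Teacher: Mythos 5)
Your opening steps match the paper: Proposition~\ref{prop:onepocket} gives $A_1^n\to A_1$ and $A_3^n+A_5^n\to A_3+A_5$, and (as the paper does) one more application after recentering the UIWT at $\fraketa^{\b,n}(1)$ separates $A_3$ from $A_5$. The gap is in how you propose to recover $A_2^n$ and $A_4^n$. Every linear relation you can actually extract from fixed-time-window applications of Proposition~\ref{prop:onepocket} --- in either color, forward or reverse --- concerns only regions contained in $\fraketa^{\b,n}([0,1])$, i.e.\ only $A_1$, $A_3$, $A_5$. The regions $A_2$ and $A_4$ lie in $\prescript{\r}{}{\fraketa}^n([t_1^n,0])\setminus\fraketa^{\b,n}([0,1])$, so to reach them by a red-side application of the machinery you must control the random red time window $[t_1^n,0]$; but the convergence of $t_1^n$ is precisely (part of) what is being proved, so this is circular. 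Your total-area identity does not rescue the rank: it should read $\sum_i A_i^n = 1+|t_1^n|-A_5^n+o_n(1)$ (the overlap $A_5^n$ is double-counted in your version), and since $|t_1^n|=A_2^n+A_4^n+A_5^n+o_n(1)$ it collapses to $A_1^n+A_3^n+A_5^n=1+o_n(1)$, which is already implied by the quantum-area parametrization and gives no information about $A_2^n$ or $A_4^n$. So the proposed linear system is not full rank in the two remaining unknowns.

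The paper closes exactly this hole with a separate mechanism that your proposal lacks. First, tightness of $(A_2^n)$ is established by viewing $A_2^n$ as the area swept by the red exploration to the left of the blue flow line from $\fraketa^{\b,n}(1)$ up to a local time controlled via Propositions~\ref{prop:triple} and~\ref{lem:DQL}; the same applies to $A_4^n$ after recentering at $\fraketa^{\b,n}(1)$, and these two tightness statements give tightness of $t_1^n$ and hence a truncation time $T_\eps$ such that with probability $1-\eps$ all of $A_1^n,A_2^n,A_3^n,A_5^n$ lie inside $\fraketa^{\b,n}([0,T_\eps])$. Second, an inclusion--exclusion identity expresses $A_2^n$ in terms of $T_\eps$, $A_1^n$, and two auxiliary areas $B_1^n(T_\eps)$, $B_2^n(T_\eps)$, each of which is cut out of the \emph{fixed} blue time windows $[1,T_\eps]$ and $[0,T_\eps]$ by a single flow line and hence does converge by Proposition~\ref{prop:onepocket}. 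Passing the same identity to the continuum yields $A_2^n\to A_2$, then $A_4^n\to A_4$ by recentering, and only then $t_1^n\to t_1$ from $t_1^n=-(A_2^n+A_5^n+A_4^n)+o_n(1)$. Without the tightness-plus-truncation-plus-inclusion-exclusion step, your argument does not determine $A_2$ and $A_4$.
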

	\begin{figure} 
		\centering
		\subfigure[]{\includegraphics[width=0.58\textwidth]{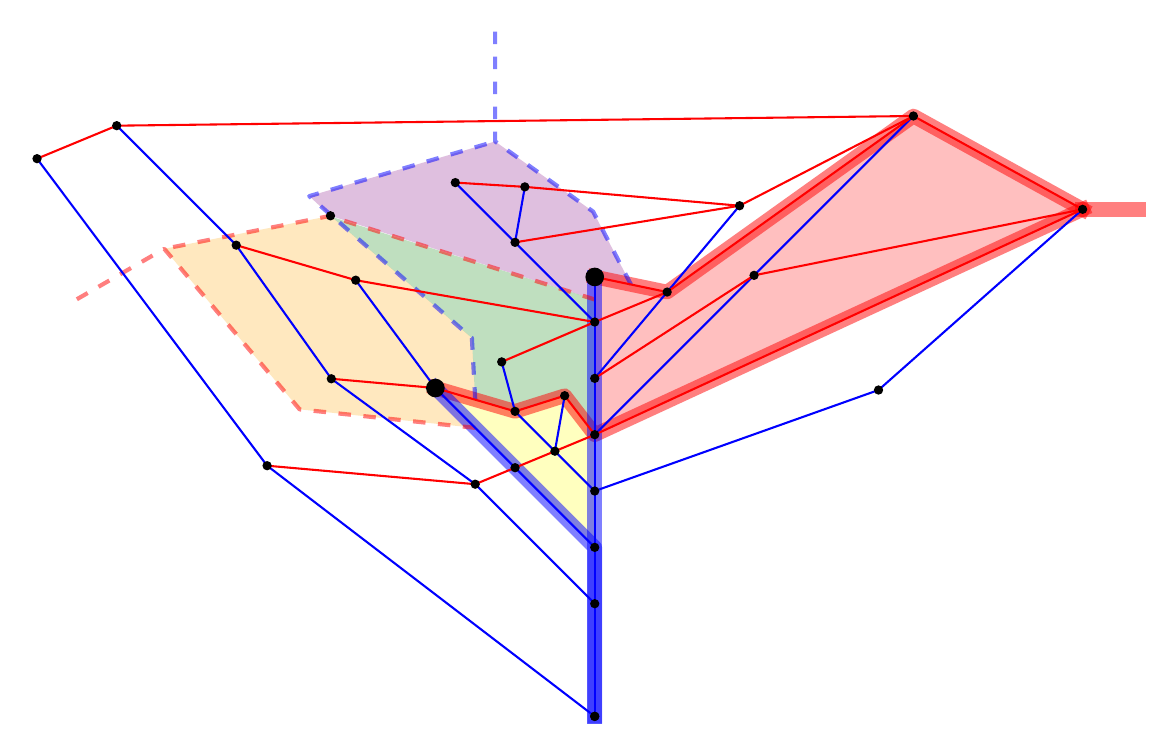}}
		\subfigure[]{\includegraphics[width=6cm]{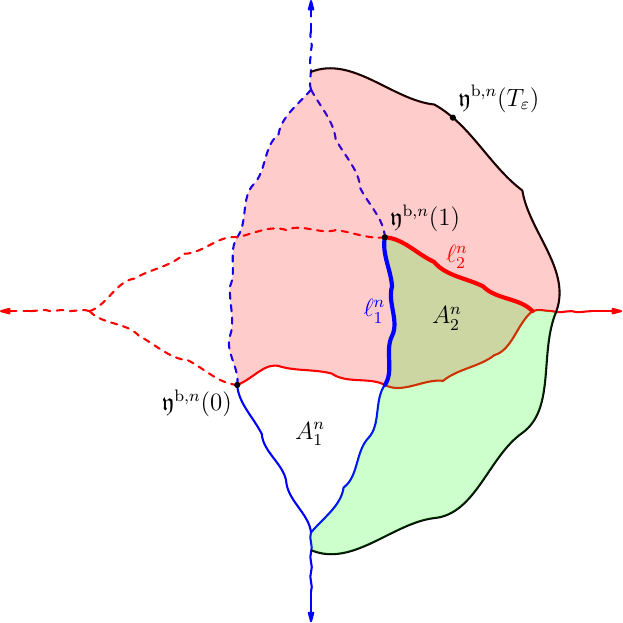}}
		\caption{(a) The five regions in Figure~\ref{fig:fourflowlines} shown for a small portion of an infinite wooded triangulation. The tails of $\fraketa^{\b,n}(0)$ and $\fraketa^{\b,n}(1)$ are indicated with large dots, and flow lines from these vertices are shown in bold red and blue. Dual flow lines from $\fraketa^{\b,n}(0)$  and $\fraketa^{\b,n}(1)$ are shown as dashed. (b) To show that $A_2^n \to A_2$, we fix  a large enough $T_\eps>0$ and consider the region $\fraketa^{\b,n}[0,T_\eps]$. We define the green shaded area $B_1^n$ and the pink shaded area $B_2^n$ (the pink and green regions both include the region marked $A_2^n$) and note that the area of $\fraketa^{\b,n}[0,T_\eps]$ is equal to $A_1^n + B_1^n +B_2^n-A_2^n$. Thus $A_2^n = B_1^n + B_2^n - A_1^n - T_\eps$.  
		}
		\label{fig:overlap} 
	\end{figure}
	\begin{proof}
		Suppose we are in the setting of a usual coupling; recall Proposition~\ref{prop:onepocket} and~\ref{lem:DQL}. We further assume that $t_1<0$ and $\delta=1$ as the other case is the same.
		Then Proposition~\ref{prop:onepocket} implies that $A_1^{n}\to A_1$ and $A^n_3+A^n_5\to A_3+A_5$ in probability. Regarding $\fraketa^{\b,n}(1)$ as the root of the UIWT and considering the time reversal of $\fraketa^{\b,n}$, we can treat $A^3_n$ as the right region of the dual red flow line. Since the bi-infinite word of  a UIWT is stationary,  the  UIWT recentered at $\fraketa^{\b,n}(1)$ is still a UIWT. Now \nocolor{the second  statement (on  $\overline A^n_1$) in} Proposition~\ref{prop:onepocket} gives  $A_3^n \to A_3$ in probability, hence $A_5^n\to A_5$ in probability.
		
		Most of the remainder of the proof is dedicated to the convergence of $A_2^n$. We refer to Figure~\ref{fig:overlap}(b) for various quantities involved in this argument.
		
		We first show that $(A_2^n)_{n \in \N}$ are tight. By
		Proposition~\ref{prop:triple}, the discrete quantum
		length $\ell^n_1$ of the blue flow line starting from
		$\fraketa^{\b,n}(1)$ inside $^r\fraketa^n([t^n_1,0])$
		converges a.s.\ to a finite number.   By the symmetry
		between blue and red in the UIWT, we can apply Proposition~\ref{prop:triple}, \ref{prop:onepocket} and~\ref{lem:DQL} to the exploration $^r\fraketa^n$ for the UIWT recentered at $\fraketa^{\b,n}(1)$. Then $A^n_2$ can be seen as the discrete quantum area of $^r\fraketa^n$ on the left of the blue flow line from $\fraketa^{\b,n}(1)$, when the local time of this blue flow line reaches $(1+\sqrt2+o_n(1))\ell^n_1$ (recall Proposition~\ref{lem:DQL}).  This gives the tightness of $A^n_2$.
		
		Now we claim that for all $\eps>0$, there exists $T_\eps<\infty$ such that with probability $1-\eps$, the regions $A^n_1,A^n_2,A^n_3,A^n_5$ are all contained in $\fraketa^{\b,n}[0,T_\eps]$. To show this, note that we can recenter the UIWT at $\fraketa^{\b,n}(1)$ and treat $\fraketa^{\b,n}(0)$ as $\fraketa^{\b,n}(-1)$. Therefore $A^n_4$ can be treated in the same way as $A^n_2$. In particular, $A^n_4$ is tight. Since we assume $\delta=1$,  under a usual coupling, $J^n_1-K^n_1$ in Proposition~\ref{prop:triple} converges to $1$ a.s. Therefore 
		\begin{equation}\label{eq:t1}
		t_1^n = -(A_2^n + A_5^n + A_4^n)+o_n(1),
		\end{equation}where $o_n(1)$ comes from Remark~\ref{rmk:boundary}. This means $(t_1^n)_{n \in \N}$ is tight. The same argument showing the tightness of $(A^n_2)_{n \in \N}$ gives the existence of $T_\eps$.
		
		Now we work on the event $T_\eps<\infty$. we define $B_1^n(T_\eps)$ to be the area of the intersection of $\fraketa^{\b,n}([1,T_\eps])$ and the region right of the red flow line from $\fraketa^{\b,n}(1)$. We define $B_2^n(T_\eps)$ to be the area of the intersection of $\fraketa^{\b,n}([0,T_\eps])$ and the region left of the red flow line from $\fraketa^{\b,n}(0)$. Then by the inclusion-exclusion principle applied to the discrete quantum area, we have 
		\begin{equation} \label{eq:PIE}
		A_2^n = B_1^n(T_\eps) + B_2^n(T_\eps) + A_1^n - T_\eps.
		\end{equation}
		
		The inclusion-exclusion principle also applies in the continuum, yielding a continuum analogue of \eqref{eq:PIE} with the LHS being  $A_2$ and the RHS being the continuum limit of the RHS of \eqref{eq:PIE}. This yields that $A_2^n \to A_2$ in probability. 
		
		As explained in the argument for the existence of $T_\eps$, the region $A^n_4$ can be treated similarly as $A^n_2$ by recentering the UIWT at $\fraketa^{\b,n}(1)$. Therefore $A_4^n \to A_4$ in probability.
		By \eqref{eq:t1}, we have $t^n_1\to t_1$ in probability. The argument for general $t^n_q$ is the same.
	\end{proof}
	

	\begin{proof}[Proof of Theorem~\ref{thm:main1}]
		Suppose  we are in a usual coupling. Since $\prescript{\r}{}{Z}^n$ converges in law, by possibly extracting a subsequence,  we may further assume that  in this coupling $\rr Z^n$ converges to a Brownian motion $\scZ=(\scR,\scL)$ a.s. in the locally uniform topology. We claim that 
		\begin{equation}\label{eq:Zr}
		\scL=\wt\scL \quad \textrm{and}\quad \scR=\wt\scR \quad \textrm{a.s.}
		\end{equation}
		where $\wt \scZ=\nocolor{(\wt\scR,\wt\scL)}$ is the Brownian motion associated with $(\mu_{\fh},$$^r\eta)$ in Theorem~\ref{thm:mating2}. We postpone the proof of this claim and proceed to prove Theorem~\ref{thm:main1}.
		
		Let  $\scZ^\r$ denote the mating of tree Brownian motion associated with $(\mu_{\fh},$$\eta^\r)$ and let $\scZ^\g$ be the Brownian motion such that   
		$(\scZ^\b, \scZ^\r)\overset{d}{=}(\scZ^\r, \scZ^\g)$.  Note that in Theorem~\ref{thm:main1}, the Brownian motions $\scZ^\b, \scZ^r,\scZ^\g$ correspond to the Peano curves of angle $0,\tfrac{2\pi}{3},\tfrac{4\pi}{3}$ respectively.  But at the moment we only know that $\eta^\b$ and $\eta^\r$ are of angle 0 and that $\theta+\tfrac{\pi}{2}$, respectively. We abuse notation with $\scZ^\r$ and $\scZ^\g$ here since we will see $\theta+\tfrac{\pi}{2}=\tfrac{2\pi}{3}$ momentarily. Note that $\scZ^\b,\scZ^\r,\scZ^\g$ determine each other a.s.
		
		\nocolor{Recall from Remark~\ref{rmk:notation} that }to simplify notation, we use  $Z^{\b,n}, Z^{\r,n},Z^{ \g,n}$ to denote the rescaled walks in Theorem~\ref{thm:main1}.  Proposition~\ref{prop:reverse} and~\eqref{eq:Zr} imply that $(Z^{\b,n}, Z^{\r,n})$ converges in law to $(\scZ^\b, \scZ^\r)$.   As a result of the symmetry between colors, $(Z^{\r,n},Z^{g,n})$  converges in law to  $(\scZ^\r, \scZ^\g)$.  Since $\scZ^\b,\scZ^\r,\scZ^\g$ determine each other a.s.,  $(Z^{\b,n}, Z^{\r,n},Z^{g,n})$ must jointly converge to $(\scZ^b,\scZ^r,\scZ^g)$.   By symmetry $(\scZ^\b, \scZ^\r)\overset{d}{=}(\scZ^\r, \scZ^\g)\overset{d}{=}(\scZ^\g, \scZ^\b)$. This yields that the angle difference between $\eta^\b$ and $\eta^\r$ is $\tfrac{2\pi}{3}$ and $\theta=\tfrac{\pi}{6}$.
		This will conclude the proof of Theorem~\ref{thm:main1}.
		
		We are left to prove~\eqref{eq:Zr}. We focus on proving $\scR= \wt \scR$ as $\scL= \wt \scL$ follows the same argument. By Proposition~\ref{prop:tqconverges}  and possibly extracting a subsequence, we can assume that in our coupling, $t_q^n \to t_q$ a.s. for all $q\in \Q$. By Proposition~\ref{prop:dual2},~\ref{prop:W} and~\ref{lem:DQL}, we have $\scR_{t_q}=\tfrac{c}{1+\sqrt 2}$ $  \wt\scR_{t_q}$ for all $q\in \Q$, 
		where $c$ is the constant in Proposition~\ref{lem:DQL}. \nocolor{More precisely, for a fixed $q$, $\scR_{t_q}$ and $\tfrac{c}{1+\sqrt 2}$ $  \wt\scR_{t_q}$ are two expressions of the scaling limit of the relative change of discrete quantum length of the dual red flow line from time $0$ to time $t^n_q$. The $\scR_{t_q}$ limit comes from the fact that $\rr Z^n$ converges to $\scZ=(\scR,\scL)$ a.s.  locally uniformly. The $\tfrac{c}{1+\sqrt 2}$ $  \wt\scR_{t_q}$ limit comes from the perspective where we view the lengths of the dual red flow lines as counting the local time of the left/right excursions  for the counterclockwise exploration of the red tree. In particular, we apply Proposition~\ref{lem:DQL} with $\cZ$ and $\overline Z$ being the forward and reversed grouped-step walks of $\rr Z^n$.  Here we need to make sure that the merging time (w.r.t. the counterclockwise exploration of the red tree) of the dual red flow lines from   $\fraketa^{\b,n}(0)$ and  $\fraketa^{\b,n}(1)$ is tight as $n\to \infty$. To see this, let $e^n$ be the first common  edge  of these two dual flow  red lines. Then the discrete quantum lengths from $e^n$  to $\fraketa^{\b,n}(0)$ is tight due to the local uniform convergence of $\rr Z^n$ and the tightness of $t_q^n$. This length can also be viewed as the local time accumulated for the left/right excursions  for the counterclockwise exploration of the red tree until the merging time. Since the local time process  converge to $(\ell^p_t)_{t\ge 0}$ which goes to $\infty$ as $t\to\infty$, the merging time must be tight as desired.}

		Note that the set $\{\eta^\b(q)\}_{q\in \Q}$ is dense in $\C$. Therefore $\forall \; a<b$, the open set $\rr\eta(a,b)$ contains a point in that set. This means that $\{t_q\,:\,q \in \Q\}$ is dense in $\R$.  Since $\scR_{t_q}=\tfrac{c}{1+\sqrt 2}$ $  \wt\scR_{t_q}$ for all $q\in \Q$, we have that $\scR_t=\tfrac{c}{1+\sqrt 2}\wt\scR_t$ a.s. for all $t\in \R$.   Since $\scR\overset{d}{=}\wt \scR$ hence $c=1+\sqrt2$. This concludes the proof.
	\end{proof} 
	
	\begin{remark} \label{rmk:specific_p_value}
		Our proof of Theorem~\ref{thm:main1}  implies that when $\theta=\tfrac{\pi}{6}$ and $\kappa=16$, the constants $p$ and $c$  defined in Lemma~\ref{lem:p} and Proposition~\ref{prop:W} satisfy $p= \frac{\sqrt{2}}{1+\sqrt{2}}$ and $c=1+\sqrt2$. In \cite{bipolarII}, it is shown that $c=2$  when $\theta=\tfrac{\pi}{2}$ and $\kappa=12$.  So far we don't have a derivation for these values independent of discrete models.  And we  do not know the general dependence of $p$ and $c$ on $\theta$ and $\kappa$ except that $p(\tfrac{\pi}{2})=\frac12$ holds for all $\kappa$ by symmetry.
	\end{remark}

	\subsection{From infinite- to finite-volume}\label{sec:finite}
	We will prove Theorem~\ref{thm:3pair} from Theorem~\ref{thm:main1} using a general approach for transferring convergence results of unconditioned random walks to the corresponding result for conditioned random walks. Roughly speaking, the idea is that a random walk conditioned on starting and ending at the origin and staying in the first quadrant is, away from the origin, sufficiently similar to the corresponding infinite-volume walk (see Proposition~\ref{prop:transfer}). A similar approach could be applied to the joint convergence result proved in \cite{bipolarII}.
	
	Consider a  Brownian motion $\scZ$ distributed as $\scZ^\b$ and denote by $\P$ the law of  $\scZ|_{[0,1]}$. For $\eps\ge 0$, let $E^\eps$ be the event that $\scZ|_{[0,1]}\in [-\eps,\infty)^2$ and $|\scZ_1|\le \eps $, and define $\P^\eps = \P[\cdot \,|\, E^\eps]$. It is well known that $\P^\eps$ converges weakly to  $\P^0$ which is supported on the space of continuous curves in the first quadrant which start and end at the origin. 
	
	Let $\cZ^n$ be the normalized lattice walk as defined at the beginning of Section~\ref{sec:triple}, \nocolor{which has i.i.d.\ increments.} 
	Let $\P^n$ be the law of $\cZ^n|_{[0,1]}$. For \nocolor{$\eps> 0$}, let $E^{\eps,n}$ be the event that $\cZ^n|_{[0,1]}\in [-\eps,\infty)^2$ and $|\cZ^n_1|\le \eps $, and let $\P^{n,\eps}=\P^n[\cdot \,|\, E^{\eps,n}]$.
	\nocolor{Let $\P^{n,0}$  be the conditional law of $\cZ^n|_{[0,1]}$ given that $\cZ^n|_{[0,1]}$ remains in  $\{(x,y)\in \R^2: x\ge 0; y\ge -\frac{1}{\sqrt{2n}}  \}$ and starts and ends at the origin. Then $\P^{n,0}$ is the law of $\left(\tfrac{1}{\sqrt{4n}}\cL_{\lfloor2nt\rfloor}^{\mathrm b,n},
		\tfrac{1}{\sqrt{2n}}\cR_{\lfloor2nt\rfloor}^{\mathrm b,n}\right)$ in the proof of Proposition~\ref{prop:dual_finite}. In particular,} $\P^{n,0}$ weakly converges to $\P^{0}$.  The following proposition allows us to transfer weak convergence under $\P^{n}$ to the same result under $\P^{n,0}$.
	\begin{proposition}\label{prop:transfer}
		Fix $0<\xi<u<1$. Let $\mathcal F_{\xi,u}$ be the sub-$\sigma$-algebra of the Borel $\sigma$-algebra of $\cC([0,1], \R^2)$ generated by the evaluation functionals at $x$, for all $x\in [\xi,u]$. Suppose $\{Y_n\}_{n\ge 1}$ and $Y$ are $\cF_{\xi,u}$-measurable random variables. 
		If the $\P^n$-law of $(\cZ^n,Y_n)$ weakly converges to the $\P$-law of $(\scZ,Y)$, then the $\P^{n,0}$-law of $(\cZ^n,Y_n)$  weakly converges to the $\P^0$-law of $(\scZ,Y)$.
	\end{proposition}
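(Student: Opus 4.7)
The plan is to interpolate through the $\eps$-relaxed conditioned laws $\P^{n,\eps}$ for $\eps>0$ and to use the Markov property of $\cZ^n$ at times $\xi$ and $u$ to reduce the singular conditioning on $E^{0,n}$ to explicit survival-probability factors depending on the path only via $\cZ^n_\xi$ and $\cZ^n_u$. We will first upgrade the hypothesized convergence from $\P^n$ to $\P^{n,\eps}$ (a soft step, since $E^\eps$ is a $\P$-continuity set), then send $\eps\downarrow 0$ with uniform-in-$n$ control supplied by local limit theorems for random walks in cones. Since $Y_n$ and $Y$ are $\cF_{\xi,u}$-measurable, it will suffice to prove convergence of $(\cZ^n|_{[\xi,u]}, Y_n)$ under the conditioned laws; the joint convergence with the full path $\cZ^n$ then follows from the already-noted fact that $\P^{n,0}\to\P^0$ weakly.

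For the first step, the event $E^\eps$ is the intersection of $\{\inf_{t\in[0,1]}\scL_t\ge-\eps\}$, $\{\inf_{t\in[0,1]}\scR_t\ge-\eps\}$, and $\{|\scZ_1|\le\eps\}$, each defined by a quantity with a continuous distribution under $\P$. Hence $E^\eps$ is a $\P$-continuity set, and the Portmanteau theorem upgrades the hypothesized joint convergence $(\cZ^n,Y_n)\Rightarrow(\scZ,Y)$ under $\P^n$ to the corresponding joint convergence under the conditioned laws $\P^{n,\eps}\to\P^\eps$.

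For the second step, the Markov property at times $\xi$ and $u$ gives, for any bounded $\cF_{\xi,u}$-measurable functional $F$,
\[
\E^{\P^{n,\eps}}[F] \;=\; \frac{\E^{\P^n}\!\bigl[F\cdot\one_{B^{n,\eps}_{\xi,u}}\,h^n_\xi(\cZ^n_\xi;\eps)\,k^n_{1-u}(\cZ^n_u;\eps)\bigr]}{\E^{\P^n}\!\bigl[\one_{B^{n,\eps}_{\xi,u}}\,h^n_\xi(\cZ^n_\xi;\eps)\,k^n_{1-u}(\cZ^n_u;\eps)\bigr]},
\]
where $B^{n,\eps}_{\xi,u}=\{\cZ^n|_{[\xi,u]}\in[-\eps,\infty)^2\}$ and $h^n_\xi(a;\eps),k^n_{1-u}(b;\eps)$ are the survival probabilities for the initial and final legs (with the final one also enforcing $|\cZ^n_1|\le\eps$). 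The identical formula holds in the continuum with $\P^\eps,\scZ$ in place of $\P^{n,\eps},\cZ^n$. By the random-walk-in-cones local CLT and invariance principle~\cite{duraj2015invariance} (already used in Proposition~\ref{prop:1pair}), the ratios $h^n_\xi(a;\eps)/h^n_\xi(a;0)$ and $k^n_{1-u}(b;\eps)/k^n_{1-u}(b;0)$ tend to $1$ as $\eps\downarrow 0$, uniformly in $n$ and uniformly in $(a,b)$ over compact subsets of the open first quadrant; the corresponding continuum statement is classical.

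Combining the two steps, for any bounded continuous $\phi$ depending on the path only through its $[\xi,u]$-restriction and on $Y$, the first step gives $\E^{\P^{n,\eps}}[\phi]\to\E^{\P^\eps}[\phi]$ for each fixed $\eps>0$, while the second, together with tightness of $(\cZ^n_\xi,\cZ^n_u)$ away from the boundary of the first quadrant under the conditioned laws, yields $\sup_n|\E^{\P^{n,\eps}}[\phi]-\E^{\P^{n,0}}[\phi]|\to 0$ and $|\E^{\P^\eps}[\phi]-\E^{\P^0}[\phi]|\to 0$ as $\eps\downarrow 0$. A standard $\eps$-$\delta$ argument then concludes $\E^{\P^{n,0}}[\phi]\to\E^{\P^0}[\phi]$, which, combined with the weak convergence $\P^{n,0}\to\P^0$ and the $\cF_{\xi,u}$-measurability of $Y_n$, gives the desired joint convergence of $(\cZ^n,Y_n)$. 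The main obstacle is the uniform-in-$n$ asymptotic equivalence of the survival kernels in the second step; this is precisely the output of the Duraj--Wachtel type local CLT for walks in cones, and requires no new probabilistic input beyond what has already entered Section~\ref{sec:pair}.
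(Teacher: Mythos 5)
Your overall architecture matches the paper's: interpolate through the relaxed laws $\P^{n,\eps}$, use the hypothesis to get convergence at fixed $\eps>0$ (your Portmanteau/continuity-set step is fine and is essentially the paper's opening observation), and then show that $\P^{n,\eps}$ and $\P^{n,0}$ are close on $\cF_{\xi,u}$ uniformly in $n$ as $\eps\downarrow0$. The gap is in the last step, which is the real content of the proposition. Your claim that $h^n_\xi(a;\eps)/h^n_\xi(a;0)\to 1$ as $\eps\downarrow0$ uniformly in $n$ is false. In unrescaled lattice units the relaxation $[-\eps,\infty)^2$ shifts the cone by $\Theta(\eps\sqrt n)$ steps, so for the leg started at the origin (the tip of the cone) the survival probability is inflated by a factor comparable to $V(\eps\sqrt n)/V(O(1))\asymp(\eps\sqrt n)^{p}$, where $V$ is the harmonic function of the cone; hence $\sup_n h^n_\xi(a;\eps)/h^n_\xi(a;0)=\infty$ for every fixed $\eps>0$. (A 1D sanity check: a simple walk bridge from $0$ to $a\sqrt n$ stays above $0$ with probability $\Theta(n^{-1/2})$ but above $-\eps\sqrt n$ with probability $\Theta(1)$.) The continuum version is worse: a Brownian path started at the corner of the quadrant exits it instantly, so $h_\xi(a;0)=0$ and the ratio is not even defined; $\P^0$ exists only as a weak limit, not as a naive conditioning, so no pointwise kernel-ratio argument can work. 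What is true, and what your argument actually needs, is that the two kernels are asymptotically \emph{proportional}, with an $a$-independent constant that cancels after normalization; establishing this uniformly in $n$, including control of the mass that the conditioned laws put near the boundary at times $\xi$ and $u$, requires the local limit theorem for walks in cones with uniform asymptotics in the starting point near the tip --- genuinely more than the invariance principle cited in Section~\ref{sec:pair}. You also leave unaddressed the discrepancy between the middle-segment constraints $B^{n,\eps}_{\xi,u}$ and $B^{n,0}_{\xi,u}$ (the path may dip into $[-\eps,0)$ on $[\xi,u]$ under $\P^{n,\eps}$ but not under $\P^{n,0}$).

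For comparison, the paper avoids kernel asymptotics entirely: it builds an explicit coupling of the $\eps$- and $0$-conditioned processes by (i) coupling the endpoint pairs $(\cZ_{\xi_0},\cZ_{u_0})$ at intermediate times $\xi_0=\xi/2$, $u_0=(u+1)/2$ via Skorokhod embedding, (ii) resampling the middle segment as a random walk bridge and matching the bridges exactly with probability $1-o_\eps(1)$ using the local CLT of \cite[Theorem 2.3.11]{Lawler-Walk}, and (iii) handling the residual constraint on $[\xi_0,u_0]$ (your $B^{n,\eps}$ versus $B^{n,0}$ issue) by rejection sampling. If you want to keep your Radon--Nikodym route, you must replace the false ratio claim with the normalized-kernel (or conditional-law) comparison and supply the uniform local limit theorem that backs it up.
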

	
	We will prove Proposition~\ref{prop:transfer} in Appendix~\ref{sec:tran}. Now we proceed to the proof of Theorem~\ref{thm:3pair} and~\ref{thm:embedding}, which relies on Proposition~\ref{prop:transfer} and several arguments similar to the ones in Sections~\ref{subsec:1D}--\ref{sec:3tree}. So we will only highlight the new technical ingredients.
	
	We still use the same notations as in the infinite volume setting. Namely, let $S$ be a uniform wooded triangulation of size $n$ and $Z^\b,Z^\r,Z^\g$ be its encoding walks for counterclockwise  explorations. Let $\cZ=(\cL,\cR)$ be the grouped-step walk of $\cZ^\b$ \nocolor{as defined in Lemma~\ref{lem:fin-walk}.}  Let $Z^{\b,n},Z^{\r,n},Z^{\g,n}$ be the rescaled walks in Theorem~\ref{thm:3pair} and $\cZ^{n}=(\tfrac{1}{\sqrt{4n}}\cL^n_{2nt},  \tfrac{1}{\sqrt{2n}}\cR^n_{2nt})$.  
	\nocolor{
	\begin{remark}[Abuse of notations in the finite-volume setting] \label{rmk:notation-finite}
	In Section~\ref{sec:pair}  we used $S^n$ to denote a uniform wooded triangulation of size $n$, $Z^{\b,n},Z^{\r,n},Z^{\g,n}$ to denote the corresponding unscaled random walks, and $\cZ^{\mathrm b,n}$ as the grouped-step walk of $Z^{\b,n}$. In this section we denote them by $S$, $Z^\b$, $Z^\r$, $Z^\g$, and $\cZ$ for simplicity. On the other hand, we use  $Z^{\b,n},Z^{\r,n},Z^{\g,n}$ and $\cZ^{n}$ to denote the rescaled walks as in the infinite volume case; see Remark~\ref{rmk:notation}. Similarly, most of our notations below are in parallel to the infinite volume case. This is convenient because  our finite-volume result is derived from the infinite volume case. 
	\end{remark}}
	
	By Lemma~\ref{lem:fin-walk}, the  law of $\cZ^n$ is  $\P^{n,0}$ above.  
	\begin{figure}
		\centering
		\includegraphics[width=6cm]{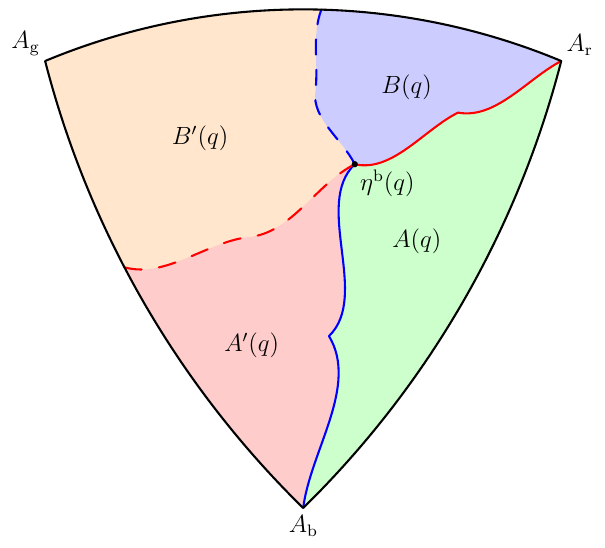}
		\caption{The red and blue flow lines and dual flow lines from $\eta^{\b}(q)$ divide the unit area $1$-LQG sphere into four regions whose areas are $A(q)$, $B(q)$, $A'(q)$, $B'(q)$ as indicated. In the continuum, the outer face $\Delta \Ab\Ar\Ag$ should be viewed as $\infty$.}
		\label{fig:finite_lines}
	\end{figure}
	For $q\in (0,1)\cap \Q$, we can define left and right excursion for $\cZ^n|_{[q,1]}-\cZ^n_q$ as in Section~\ref{subsec:1D} and~\ref{sec:triple}. Let $\cA^n(q)$ and $\cB^n(q)$ be the total time in the right and left excursions during $[q,1]$ respectively.  In the continuum, let $\mu_{\fh}, \eta^\b,\eta^\r,\eta^\g$ and $\scZ^\b,\scZ^\r,\scZ^\g$ be defined as in Theorem~\ref{thm:3pair}.  Note that $\mu_{\fh}$  has unit mass. Let $\eta^{\theta}_q$ be the flow line from $\eta^\b(q)$ of angle $\theta\colonequals \tfrac{\pi}{6}$. Let $A(q)$ and $B(q)$ be the $\mu_{\fh}$-area in $\eta^\b[q,1]$ on the right and left  of $\eta^{\theta}_q$ (see Figure~\ref{fig:finite_lines}). We claim that
	\begin{equation}\label{eq:AB}
	(\cZ^n, \cA^n(q), \cB^n(q) ) \to (\scZ^\b, A(q), B(q) ) \quad \textrm{in law}.
	\end{equation}
	For $\eps>0$ small, let $\cA^{n,\eps}(q)$ and $\cB^{n,\eps}(q)$ be the total time in the right and left excursions during $[q,1-\eps]$ respectively. And let $A^\eps(q)$ and $B^\eps(q)$ be the $\mu_{\fh}$-area in $\eta^\b[q,1-\eps]$ on the right and left  of $\eta^{\theta}_q$.  The by Proposition~\ref{prop:triple} and~\ref{prop:transfer}, $(\cZ^n, \cA^\eps(q), \cB^\eps(q) ) \to (\scZ^\b, A^\eps(q), B^\eps(q) )$  in law. By the tightness of $(\cZ^n, \cA^{n}(q), \cB^{n}(q))$, by possibly extracting a subsequence, we may find a coupling so that a.s. $(\cA^{n}(q), \cB^{n}(q))$ converges to some $(A,B)$ and $(\cA^{n,\eps}(q), \cB^{n,\eps}(q)) \to (A^\eps(q),B^\eps(q))$ for all  rational $\eps$.
	Since $\cA^{n,\eps}(q)\le \cA^n(q)$ and $\cB^{n,\eps}(q)\le \cB^n(q)$, we have $A^\eps(q)\le A$ and $B^\eps\le B$ a.s. Letting $\eps\to 0$, we have $A(q)\le A$ and $B(q)\le B$ a.s. On the other hand, $A+B=A(q)+B(q)=1-q$. Therefore $A(q)=A$ and $B(q)=B$ a.s. This proves \eqref{eq:AB}.
	
	Let $\fraketa^{\b,n}$ be the rescaled clockwise blue tree exploration path of the Schnyder wood as in the infinite volume setting.  Let $A^n(q)$ and $B^n(q)$ be the discrete quantum area of $\eta^\b[q,1]$ on the right and left  of the red flow line from $\fraketa^{\b,n}(q)$. We claim that due to \eqref{eq:AB} we have $(Z^{\b,n}, A^n(q), B^n(q) ) \to (\scZ^\b, A(q), B(q))$ in law. There are two caveats for this claim: (1) the time $q$ for $\eta^{\b,n}$ (i.e., for $Z^{\b,n}$) is not exactly the same as the time $q$ for $\cZ$; (2) the boundary contributions of the two regions $A^n(q)$ and $B^q(n)$ are nonzero. These two concerns are handled by Remark~\ref{rmk:wheretostart} and~\ref{rmk:boundary}, respectively. Note that the $o_n(1)$ errors there have rapidly decaying tails, in the contrast to the polynomial decay of the conditioning event in Proposition~\ref{prop:sampler}. Therefore the two remarks also apply to the finite volume setting, and the claim holds. 
	
	For $q\in (0,1)$, we can reverse $\fraketa^{\b,n}$ to define the left and right excursions of the dual red flow line from time $q$ to time $0$. Let $A'^n(q), B'^n(q)$ be the discrete quantum area of on the left and right of that dual flow line and $A'(q), B'(q)$ be their continuum analog (see Figure~\ref{fig:finite_lines}). Then similar to the forward case, we have $(Z^{\b,n}, A'^n(q), B'^n(q) ) \to (\scZ^\b, A'(q), B'(q))$ in law.  Let $\fraketa^{\r,n}$ be the clockwise red tree exploration and $t^n_q=:\inf\{t:\fraketa^{\r,n}(t)=\fraketa^{\b,n}(q) \}$. Then $t^n_q=A^n(q)+A'^n(q)+o_n(1)$ where $o_n(1)$ comes from  Remark~\ref{rmk:boundary}. Let $t_q$ be the a.s.\ unique time such that $\eta^\b(q)=\eta^\r(t_q)$. Then  
	\begin{equation}\label{eq:tq}
	(Z^{\b,n}, t^n_q)\to (\scZ^\b,t_q)\quad \textrm{in law}\quad \forall \; q\in (0,1) \cap \Q.
	\end{equation}

	Consider a coupling such that $Z^{\b,n} \to \scZ^\b$ and $ t^n_q\to t_q$ a.s. for all $q\in (0,1)\cap \Q$.  Let $\mathfrak l^n(q)$ be the local time accumulated by $\cZ^n|_{[q,1]}-\cZ^n_q$ during $[q,1]$ defined as in \eqref{eq:local}.   By possibly extracting a subsequence, we may further assume $Z^{\r,n}$ converges a.s.\ to some $\scZ=(\scL,\scR)$ and $\lim_{n\to \infty}\mathfrak l^n(q)$ exists a.s.  Write $Z^{\r,n}=(L^{\r,n},R^{\r,n})$. By Proposition~\ref{prop:paren_matching}, $R^{\r,n}_{t^n_q}$ is  the total discrete quantum length of the red flow from $\fraketa^{\b,n}(q)$.  By a truncation argument similar to the one used above to show $A(q)\le A$, combined with Proposition~\ref{prop:W}, we have 
	\begin{equation}\label{eq:localbound}
	\lim_{n\to\infty}\mathfrak l^n(q)\le (1+\sqrt 2) \scR_{t_q}.
	\end{equation} 
	Write $\scZ^\r$ in Theorem~\ref{thm:3pair} as $(\scL^\r,\scR^\r)$. Then \xin{the} quantum length of $\eta^\theta\cap \eta^\b[q,1]$ equals $\scR^{r}_{t_q}$.  Remark~\ref{rmk:concen} and \eqref{eq:concen} in Lemma~\ref{lem:mc_concentration} ensures  that Lemma~\ref{lem:DQL} can be applied to the finite volume setting. Combined with~\eqref{eq:localbound}, we have  $\scR^\r_{t_q}\le \scR_{t_q}$.
	
	Since $\{t_q\}_{q\in (0,1)\cap\Q}$ is dense in $[0,1]$ as in the proof of Theorem~\ref{thm:main1}, $\scR^\r_{t}\le \scR_{t}$ a.s. for all $t\in [0,1]$. Since $\scR\overset{d}{=}\scR^r$, we must have $\scR=\scR^\r$ a.s. The same argument gives $\scL=\scL^\r$.  Thus  $(Z^{\b,n},Z^{\r,n})$ converge to $(\scZ^\b,\scZ^r)$ in law.  By symmetry in the role of the colors, we conclude the proof of Theorem~\ref{thm:3pair}. 
	
	To prove Theorem~\ref{thm:embedding}, note that $(Z^{\b,n},A^n(t))\to (\scZ^\b, A(t))$ in law for all $t\in[0,1]$. Therefore the discrete quantum area of the region bounded by red and blue flow lines from a uniformly chosen vertex converges to the corresponding continuum quantum area.   The ratio between the number of faces and edges in this region  is $2:3$,  modulo the negligible boundary contribution (Remark~\ref{rmk:boundary}). This gives the  convergence of the $z$-coordinate (Figure~\ref{fig:schnyderembedding}.) in Theorem~\ref{thm:embedding}. Since    $(Z^{\b,n}, Z^{\r,n}, Z^{\g,n})$ jointly converge as in Theorem~\ref{thm:3pair}, the three coordinates jointly converge as stated in Theorem~\ref{thm:embedding}.

	\appendix 
	\section{Proof of Proposition~\ref{prop:transfer}}
	\label{sec:tran}
	Suppose we are in the setting of Proposition~\ref{prop:transfer}. For a probability measure $\Q$ on $\cC([0,1], \R^2)$ under the uniform topology, let $\Q_{\xi,u}$ be $\Q$ restricted to $\cF_{\xi,u}$. Since $\cZ^n_1$ under $\P^n$ converges to $\scZ_1$ under $\P$, we have that  $Y_n$ under $\P^{n,\eps}$ weakly converges to $Y$ under $\P^{\eps}$. Therefore,  Proposition~\ref{prop:transfer} follows from Lemma~\ref{lem:dtv1} and~\ref{lem:dtv2} below.
	\begin{lemma}\label{lem:dtv1}
		\begin{equation}\label{eq:total1}
		d_{\mathrm{tv}} (\P^{\eps}_{\xi,u}, \P^{0}_{\xi,u} )=o_\eps(1), 
		\end{equation}where $d_{\mathrm{tv}}$ denotes the total variational distance.
	\end{lemma}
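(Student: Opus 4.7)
The plan is to exploit the Markov property of $\scZ$ at the two interior times $\xi$ and $u$ to decouple the behavior on $[\xi,u]$ from that on $[0,\xi]$ and $[u,1]$. Concretely, for any $A\in\cF_{\xi,u}$ one has
\begin{equation*}
\P^\eps[A]\;=\;\frac{1}{\P[E^\eps]}\int\!\!\int f^\eps_\xi(x)\,h^\eps_u(y)\,\mu^\eps_{x,y}(A)\,dx\,dy,
\end{equation*}
where $f^\eps_\xi(x)\,dx=\P\!\left[\scZ|_{[0,\xi]}\subset[-\eps,\infty)^2,\;\scZ_\xi\in dx\right]$; $h^\eps_u(y)\,dy$ is the analogous quantity for the reversed motion on $[u,1]$ that also includes the constraint $|\scZ_1|\le \eps$; and $\mu^\eps_{x,y}$ is the law on $\cF_{\xi,u}$ of the Brownian bridge from $x$ at time $\xi$ to $y$ at time $u$ conditioned on staying in $[-\eps,\infty)^2$. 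An identical decomposition (with $\eps=0$ factors obtained as limits) holds for $\P^0$. The bound \eqref{eq:total1} therefore reduces to two statements: (a) the joint law $\nu^\eps$ of $(\scZ_\xi,\scZ_u)$ under $\P^\eps$ converges in total variation to $\nu^0$ as $\eps\to 0$, and (b) for every $(x,y)\in(0,\infty)^2\times(0,\infty)^2$, $\mu^\eps_{x,y}\to\mu^0_{x,y}$ in total variation.

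Claim (b) is soft. The events $B^\eps=\{\omega|_{[\xi,u]}\subset[-\eps,\infty)^2\}$ decrease as $\eps\downarrow 0$ to $B^0=\{\omega|_{[\xi,u]}\subset[0,\infty)^2\}$, which has positive Brownian-bridge probability when $x,y$ lie in the open orthant. The elementary identity
\begin{equation*}
\bigl|\mathbb{P}[A\mid B^\eps]-\mathbb{P}[A\mid B^0]\bigr|\;\le\;2\,\frac{\mathbb{P}[B^\eps\setminus B^0]}{\mathbb{P}[B^\eps]}
\end{equation*}
holds uniformly in $A$ and tends to $0$ by monotone convergence.

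Claim (a) is the substance of the argument. Both $\nu^\eps$ and $\nu^0$ have densities with respect to Lebesgue measure of the form $\P[E^\eps]^{-1}f^\eps_\xi(x)\,p^\Sigma_{u-\xi}(y-x)\,h^\eps_u(y)$, where $p^\Sigma_t$ is the Gaussian transition density of $\scZ$. By Scheff\'e's lemma, TV convergence of these densities follows from pointwise convergence on $(0,\infty)^2\times(0,\infty)^2$ of the densities renormalized by an appropriate power of $\eps$, together with $L^1$ control. The required asymptotics are those for correlated two-dimensional Brownian motion in the positive quadrant started near the origin: after diagonalizing $\Sigma$, the orthant becomes a wedge, and the classical cone estimates (Spitzer, DeBlassie, Ba\~nuelos--Smits) yield $\P_x[\scZ|_{[0,t]}\subset[0,\infty)^2]\sim c\,|x|^{\alpha}\,\phi(x/|x|)\,t^{-\alpha/2}$ as $|x|\to 0$, for a harmonic index $\alpha$ and angular eigenfunction $\phi$ determined by the cone. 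This gives $f^\eps_\xi(x)/\eps^\alpha$ converging, uniformly on compact subsets of $(0,\infty)^2$, to a positive continuous limit $f^0_\xi(x)$; the same argument handles $h^\eps_u$ (the extra factor $\mathbf 1_{|\scZ_1|\le \eps}$ contributes the expected $\eps^2$ from the two-dimensional Gaussian density), and Gaussian tail bounds on $p^\Sigma_{u-\xi}$ supply the uniform integrability needed to pass from pointwise to $L^1$ convergence. The normalizer $\P[E^\eps]$ is controlled by the same asymptotics, giving a nontrivial limit of the renormalized density and establishing (a).

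The main obstacle is the sharp cone asymptotic used in (a): one needs it uniformly on compact subsets of the open orthant and with enough control near the axes to produce the uniform integrability bound. Everything else, including the bridge step (b) and the passage from pointwise to total-variation convergence via Scheff\'e, is routine.
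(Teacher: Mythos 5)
Your argument takes a genuinely different route from the paper's. The paper proves Lemma~\ref{lem:dtv1} by a soft coupling: it drops the quadrant constraint on an intermediate window $[\xi_0,u_0]\supset[\xi,u]$, uses weak convergence of the endpoint pair $(\wt\scZ^\eps_{\xi_0},\wt\scZ^\eps_{u_0})$ together with Skorokhod embedding and the explicit Gaussian bridge kernel to couple the $\eps$- and $0$-processes so that these two endpoints coincide exactly with probability $1-o_\eps(1)$, resamples the middle segment from the common bridge law, and reinstates the constraint on $[\xi_0,u_0]$ by rejection sampling. This never requires quantitative asymptotics for Brownian motion in a cone. Your route computes densities directly and reduces the lemma to total-variation convergence of the law of $(\scZ_\xi,\scZ_u)$ plus convergence of the conditioned bridges; it is more self-contained (it would in fact reprove the weak convergence $\P^\eps\to\P^0$ that the paper takes as known), but it pays for this with the sharp cone estimates, which are precisely what the coupling argument is designed to avoid. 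Your step (b) is correct as stated.

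Two points in step (a) need repair, though neither is fatal. First, the displayed Markov decomposition is wrong as written: with $\mu^\eps_{x,y}$ the \emph{normalized} conditional bridge law, the integrand must also carry the bridge survival factor $\P_{x,y}\bigl[\omega|_{[\xi,u]}\subset[-\eps,\infty)^2\bigr]$, without which the right-hand side does not integrate to $1$; this factor converges pointwise by monotone convergence and should be absorbed into the density whose $L^1$ convergence you prove. Second, the terminal constraint $|\scZ_1|\le\eps$ does not contribute a factor $\eps^2$ coming from the free Gaussian density: the killed transition density in the cone vanishes like $|z|^{\alpha}$ at the tip, so integrating over the ball of radius $\eps$ contributes $\asymp\eps^{2+\alpha}$ (consistent with $\P[E^\eps]\asymp\eps^{2+2\alpha}$, i.e.\ the exponent $\alpha+1=5$ of Proposition~\ref{prop:sampler} with $\eps\leftrightarrow n^{-1/2}$). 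The erroneous exponent cancels between $h^\eps_u$ and the normalizer $\P[E^\eps]$, so the conclusion survives, but it signals that the asymptotics you are leaning on are the uniform \emph{local} limit theorem in cones, not merely the survival-probability estimate, and that input should be cited or proved with the uniformity you yourself identify as the main obstacle.
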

	\begin{lemma}\label{lem:dtv2}
		\begin{equation}\label{eq:total2}
		d_{\mathrm{tv}} (\P^{n,\eps}_{\xi,u}, \P^{n,0}_{\xi,u} )=o_\eps(1), 
		\end{equation}
		where $o_\eps(1)$ is uniform in $n$.
	\end{lemma}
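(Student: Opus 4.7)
The plan is to compute the Radon–Nikodym derivative of $\P^{n,\eps}|_{\cF_{\xi,u}}$ with respect to $\P^n|_{\cF_{\xi,u}}$ explicitly via the Markov property at times $\xi$ and $u$, then reduce the desired TV bound to uniform-in-$n$ versions of the classical Brownian estimates that give Lemma~\ref{lem:dtv1}. Writing $B^{\eps,n}_{\xi,u}$ for the event that $\cZ^n$ lies in $[-\eps,\infty)^2$ throughout $[\xi,u]$, the Markov property yields
\begin{equation*}
\frac{d\P^{n,\eps}|_{\cF_{\xi,u}}}{d\P^n|_{\cF_{\xi,u}}}(w) \;=\; \frac{1_{B^{\eps,n}_{\xi,u}}(w)\,q^{\eps,n}(w_\xi)\,r^{\eps,n}(w_u)}{\P^n[E^{\eps,n}]},
\end{equation*}
where $q^{\eps,n}(x)=\P^n[\cZ^n_t\in[-\eps,\infty)^2 \text{ for all } t\in[0,\xi] \mid \cZ^n_\xi=x]$ and $r^{\eps,n}(y)=\P^n[\cZ^n_t\in[-\eps,\infty)^2 \text{ for all } t\in[u,1] \text{ and }|\cZ^n_1|\le\eps \mid \cZ^n_u=y]$; the analogous identity holds for $\eps=0$.

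Next I would use the general bound $d_{\mathrm{tv}}(\mu,\nu)\le d_{\mathrm{tv}}(\mu_X,\nu_X)+\E_{\mu_X}\bigl[d_{\mathrm{tv}}(\mu_{Y|X},\nu_{Y|X})\bigr]$ with $X=(\cZ^n_\xi,\cZ^n_u)$ and $Y$ the path of $\cZ^n$ on $[\xi,u]$. Under $\P^{n,\eps}$ the conditional law of $Y$ given $X=(x,y)$ is a lattice bridge from $x$ to $y$ conditioned to stay in $[-\eps,\infty)^2$; the $\eps=0$ case is analogous. On the typical set $\{x,y\in[\delta,\infty)^2\}$, a reflection-type estimate for the bridge shows that each coordinate dips below $0$ with probability $O(\eps/\delta)$ uniformly in $n$, so the $\eps$- and $0$-conditioned bridges have $o_\eps(1)$ total variation there; outside this set, a Denisov--Wachtel style bound on the density of a walk in a cone gives mass $O(\delta^\alpha)$ for some $\alpha>0$, and letting $\delta\to 0$ slowly with $\eps$ gives the desired $o_\eps(1)$ for the conditional piece. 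For the marginal piece, the ratio of the joint densities of $(\cZ^n_\xi,\cZ^n_u)$ under $\P^{n,\eps}$ and $\P^{n,0}$ factors into $q^{\eps,n}(x)/q^{0,n}(x)$, $r^{\eps,n}(y)/r^{0,n}(y)$, the bridge survival ratio $p^{\eps,n}(x,y)/p^{0,n}(x,y)$, and the global normalization $\P^n[E^{0,n}]/\P^n[E^{\eps,n}]$; each of these is $1+o_\eps(1)$ on a set of large $\pi^{0,n}$-mass by the same reflection-principle arguments applied to the walk over $[0,\xi]$, $[u,1]$, and $[\xi,u]$ respectively.

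The hard part will be securing uniform-in-$n$ constants in all of these estimates. For Brownian motion the corresponding facts (Bessel-bridge asymptotics, reflection in a half-plane, joint density asymptotics in the quadrant) are classical and directly yield Lemma~\ref{lem:dtv1}. The transfer to $\cZ^n$ requires a local CLT for the walk (with step law from Lemma~\ref{lem:fin-walk}) uniform on the scale $n^{-1/2}$, together with Denisov--Wachtel type polynomial asymptotics for the probability that a two-dimensional walk with our step distribution stays in the first quadrant; both are available thanks to the finite second moments recorded in Remark~\ref{rmk:step}, although the heavy geometric tail of one coordinate requires some care. Once these tools are in hand, the steps above combine to give $o_\eps(1)$ uniform in $n$, proving Lemma~\ref{lem:dtv2}.
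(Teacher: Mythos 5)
Your overall strategy---comparing $\P^{n,\eps}_{\xi,u}$ and $\P^{n,0}_{\xi,u}$ through an explicit Radon--Nikodym derivative and splitting the total variation into a marginal piece for $(\cZ^n_\xi,\cZ^n_u)$ and a conditional bridge piece---is genuinely different from the paper's. The paper instead builds an explicit coupling: it conditions only on the behaviour outside a slightly larger window $[\xi_0,u_0]\supset[\xi,u]$, uses the Duraj--Wachtel invariance principle to bring the endpoint laws close, uses the local CLT of Lawler--Limic to couple the positions at times $\xi_0$ and $u_0$ \emph{exactly} with probability $1-o_\eps(1)$ (modulo a parity remark), resamples the middle segment from the same bridge law, and finally rejection-samples to restore the constraint on $[\xi_0,u_0]$. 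That route deliberately avoids any boundary density estimates for walks conditioned to stay in a cone, which is precisely where your plan runs into trouble.

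The concrete problem is in your marginal piece. The individual factors $q^{\eps,n}(x)/q^{0,n}(x)$, $r^{\eps,n}(y)/r^{0,n}(y)$ and $\P^n[E^{0,n}]/\P^n[E^{\eps,n}]$ are \emph{not} $1+o_\eps(1)$ uniformly in $n$: the walk starts at the origin, which lies on the boundary of the quadrant, so for fixed $\eps>0$ the probability of staying in $[-\eps,\infty)^2$ on $[0,\xi]$ is bounded below in $n$, while the probability of staying in the closed quadrant decays polynomially in $n$. Hence $q^{\eps,n}(x)/q^{0,n}(x)\to\infty$ and the normalization ratio tends to $0$ as $n\to\infty$ for fixed $\eps$; only the full product of the four factors is close to $1$, and proving that requires uniform-in-$n$ local limit theorems for walks conditioned to stay in a cone, with matching harmonic-function prefactors---far more than a "reflection-principle argument." Combined with the uniform-in-$n$ reflection and boundary-density estimates you defer for the conditional bridge piece (which you yourself flag as "the hard part"), this means the essential analytic content of the lemma is absent: the architecture could in principle be repaired, but as written one key step is false and the remaining key estimates are only named, not proved.
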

	\begin{proof}[Proof of Lemma~\ref{lem:dtv1}]
		Denote by $\scZ^\eps$ a sample from $\P^\eps$, for all $\eps\ge 0$.   We want to find a coupling $\Q^{\eps}$ of $\scZ^\eps$ and $\scZ^0$  such that
		\begin{equation}\label{eq:coupling1}
		\Q^\eps[\scZ^\eps_t=\scZ^0_t\;\;\textrm{for all}\; \xi\le t\le u ]=1-o_\eps(1).
		\end{equation}
		Let $\xi_0=\xi/2$ and $u_0=(u+1)/2$.  Let $G^\eps= \{\scZ^\eps|_{[\xi_0,u_0]}\in [-\eps,\infty)^2\}$ for all $\eps\ge 0$.
		Let $\wt \scZ^\eps$ and $\wt \scZ^0$ be $\scZ$ conditioned on $E^\eps\setminus G^\eps$ and $E^0\setminus G^0$ respectively.  
		We start by showing  that there exists a coupling $\wt \Q^{\eps}$ of $\wt \scZ^\eps$ and $\wt \scZ^0$  where
		\begin{equation}\label{eq:coupling2}
		\wt \Q^\eps[\wt \scZ^\eps_t=\wt \scZ^0_t\;\;\textrm{for all}\; \xi\le t\le u ]=1-o_\eps(1).
		\end{equation}
		We first notice that 
		\begin{equation}\label{eq:end}
		\lim_{\eps\to 0}(\wt \scZ^{\eps}_{\xi_0}, \wt \scZ^{\eps}_{u_0})=(\wt \scZ^{0}_{\xi_0}, \wt \scZ^{0}_{u_0}) \quad\textrm{in law}.
		\end{equation}
		By Skorohod's embedding theorem, we can find a coupling $\wt \Q$ of $\{\wt \scZ^{\eps}\}_{\eps\ge0}$ 
		such that for each fixed $\delta_0>0$,
		\[
		\wt \Q[|\wt \scZ^{\eps}_{\xi_0}-\wt \scZ^{0}_{\xi_0}|>\delta_0]=o_\eps(1)\quad \textrm{and}\quad \wt \Q[|\wt \scZ^{\eps}_{u_0}-\wt \scZ^{0}_{u_0}|>\delta_0]=o_\eps(1)
		\] where $o_\eps(1)$ depends only on $\delta_0,\xi,$ and $u$. For all $\eps>0$, given a sample of $(\wt \scZ^\eps,\wt \scZ^0)$ under $\wt \Q$, we claim that by resampling  $\wt \scZ^{\eps}|_{[\xi_0,u_0]}$ and $\wt \scZ^0|_{[\xi_0,u_0]}$ conditioned on $\wt \scZ^{\eps}_{[0,\xi_0]},$ $\wt \scZ^{\eps}_{[u_0,1]},$ $\wt \scZ^{0}_{[0,\xi_0]},$ $\wt \scZ^0_{[u_0,1]}$, we can obtain a coupling satisfying \eqref{eq:coupling2}.
		
		In fact, the $[\xi_0,u_0]$ segment of $\wt \scZ^\eps$ and $\wt \scZ^0$ are both planar Brownian bridges conditioning on its end points. Therefore, by the explicit Brownian heat kernel,  we can couple  $(\wt \scZ^\eps, \wt \scZ^0)$  so that 
		\begin{equation}\label{eq:coupling3}
		\wt \Q^\eps\left[\wt \scZ^\eps_{\xi_0}=\wt \scZ^0_{\xi_0}\;\;\textrm{and}\; \wt \scZ^\eps_{u_0}=\wt \scZ^0_{u_0}\right]=1-o_\eps(1).
		\end{equation}
		Now we can resample the $[\xi_0,u_0]$ segment of $\wt \scZ^\eps$ and $\wt \scZ^0$ using the same Brownian bridge to achieve \eqref{eq:coupling2}.
		
		For all $\eps\ge 0$, the process $\scZ^\eps$ is $\wt \scZ^\eps$ conditioning on $G^\eps$. Since $\wt \Q^\eps[G^\eps\Delta G^0]=o_\eps(1)$ and $\wt \Q^\eps[G^0] \asymp 1$, by a further rejection sampling, we can find a coupling $\Q^\eps$ satisfying \eqref{eq:coupling1}.
	\end{proof}

	\begin{proof}[Proof of Lemma~\ref{lem:dtv2}]
		The proof goes exactly in the same way as Lemma~\ref{lem:dtv1}. So we only point out the necessary modification. Recall the setting of Lemma~\ref{lem:dtv1}.  We use the notations \nocolor{$\cZ^{n,\eps}$, $\cZ^{n,0}$, $G^{n,\eps}$, $G^{n,0}$, $\Q^{n,\eps}$, $\wt \Q^{n,\eps}$, $\wt \Q^{n}$, $\wt \cZ^{n,\eps}$, $\wt  \cZ^{n,0}$} to denote the discrete analogues of  $\scZ^\eps,\scZ^0,G^{\eps}, G^{0},\Q^{\eps}, \wt \Q^{\eps}, \wt \Q, \wt \scZ^{\eps}, \wt \scZ^{0}$.
		
		We can first find a coupling $\wt \Q^n$ of $\wt \cZ^{n,\eps}$ and  $\wt \cZ^{n,0}$ such that for each fixed $\delta_0>0$
		\[
		\Q^n\left[|\cZ^{n,\eps}_{\xi_0}-\cZ^{n,0}_{\xi_0}|>\delta_0\right]=o_\eps(1)\quad \textrm{and}\quad \Q^n\left[|\cZ^{n,\eps}_{u_0}-\cZ^{n,0}_{u_0}|>\delta_0\right]=o_\eps(1)
		\] where the quantity denoted $o_\eps(1)$ may depend on $\delta_0,\xi,u$, but not on $n$.
		In fact, by the invariance principle of random walk in cones  \cite{duraj2015invariance},
		\begin{align}\label{eq:end2}
		\lim_{n\to\infty}(\wt \cZ^{n,\eps}_{\xi_0},\wt \cZ^{n,\eps}_{u_0})=(\wt Z^{\eps}_{\xi_0},\wt Z^{\eps}_{u_0})\;\;\textrm{and}\;\;
		\lim_{n\to\infty}(\wt \cZ^{n,0}_{\xi_0},\wt \cZ^{n,0}_{u_0})=(\wt Z^{0}_{\xi_0}, \wt Z^{0}_{u_0}) \;\; \textrm{in law}.
		\end{align}
		Now the existence of $\wt \Q^n$ is equivalent to the definition of weak convergence in terms of L\'evy-Prokhorov metric.
		
		We can perform a resampling procedure analogous to the one in Lemma~\ref{lem:dtv1} to get $\wt \Q^{n,\eps}$ such that 
		\begin{equation}\label{eq:coupling5}
		\wt \Q^{n,\eps}\left[\wt \cZ^{n,\eps}_t=\wt \cZ^{n,0}_t\;\;\textrm{for all}\; \xi\le t\le u \right]=1-o_\eps(1).
		\end{equation}
		The only difference is that the $[\xi_0,u_0]$ segment of $\wt \cZ^\eps$ and $\wt \cZ^0$ are planar random walk bridges rather than Brownian bridges. Therefore we need to use a random walk heat kernel estimate (in place of the Brownian motion heat kernel) to couple  $(\wt \cZ^{n,\eps}, \wt \cZ^{n,0})$  so that 
		\begin{equation}\label{eq:coupling4}
		\wt \Q^{n,\eps}[\wt \cZ^{n,\eps}_{\xi_0}=\wt \cZ^{n,0}_{\xi_0}\;\;\textrm{and}\; \wt \cZ^{n,\eps}_{u_0}=\wt \cZ^{n,0}_{u_0}]=1-o_\eps(1).
		\end{equation}
		Since the increment $\xi$ of our walk in Lemma~\ref{lem:fin-walk}  satisfies $E[e^{b|\xi|}]<\infty$ for some $b>0$ and has zero mean, the following local central limit theorem (as an immediate     corollary of \cite[Theorem 2.3.11]{Lawler-Walk}) can be  applied to find the coupling in \eqref{eq:coupling4}:
		
		Let $p_n$ be the heat kernel of the random walk $\P^\infty$ in Lemma~\ref{lem:fin-walk} and $\bar p_n$ be the heat kernel of the Brownian motion. When we estimate $p_n$ using CLT, then 
		\begin{equation}\label{eq:LCLT}
		(p_n(x)+p_{n+1}(x)) /2= \bar p_n(x)\exp\left\{O\left(\frac{1}{\sqrt n}\right)+ O\left(\frac{|x|}{n}\right )\right\},\;\; \quad \forall \;|x| \le \sqrt n\log n. 
		\end{equation}
		The appearance of $(p_n(x)+p_{n+1}(x)) /2$ is due to the parity issue, namely that $p_n(0,0)=0$ for odd $n$. However, \eqref{eq:LCLT} implies the desired result in our case, since 
		$\wt \cZ^{n,\eps}_{\xi_0}$ and $\wt \cZ^{n,0}_{\xi_0}$ (as well as $\wt \cZ^{n,\eps}_{u_0}$ and $\wt \cZ^{n,0}_{u_0}$) are always in the same parity class, as they are two possible locations of a random walk at the same time. 
		
		The rest of the argument is the same  as in the proof of Lemma~\ref{lem:dtv1}.
	\end{proof}
	
	\begin{remark}\label{rmk:general}
		Our proof of Proposition~\ref{prop:transfer} holds for all lattice walks with zero mean and some finite exponential moment. This is useful when one studies the scaling limit of the mating of trees encoding of other decorated random planar maps, such as the spanning-tree-decorated maps and bipolar oriented maps \cite{burger,bipolar}.
		Moreover, in light of \cite[Lemma 2.4.3]{Lawler-Walk}, we expect that the finite exponential moment assumption can be significantly weakened. However, we won't pursue this here.
	\end{remark}

	\textbf{Typesetting note}. The figures in this article were
	produced using \href{http://asymptote.sourceforge.net}{Asymptote},
	\href{http://sagemath.org}{SageMath}, \href{https://sourceforge.net/projects/pgf/}{Ti\textit{k}Z}, and \href{http://ipe.otfried.org}{Ipe}. 
	
	\medbreak {\noindent\bf Acknowledgment:} We thank Olivier Bernardi, Ewain Gwynne, Nina Holden, Richard Kenyon and Scott Sheffield	for  helpful discussions.  We are also grateful \xin{to} the anonymous referees for their thorough and helpful feedback on an early version of the paper.


\bibliographystyle{amsplain}

\begin{thebibliography}{99}
	
	\bibitem{AHS}
	Juhan Aru, Yichao Huang and Xin Sun,
		\textit{Two perspectives of the 2D unit area quantum sphere and their
	equivalence}, 
Comm. Math. Phys. \textbf{356} (2017), no. 1, 261--283, DOI 10.1007/s00220-017-2979-6. \MR{3694028}
	
	\bibitem{baxter1969}
	Rodney  Baxter,
	\textit{F model on a triangular lattice},
	J. Math. Phys. \textbf{10} (1969), no. 7, 1211--1216, DOI 10.1063/1.1664960.
	 
	\bibitem{benjamini2011recurrence}
Itai Benjamini  and 	Oded Schramm,
\textit{Recurrence of distributional limits of finite planar graphs},
Electron. J. Probab. \textbf{6} (2001), no. 23, 1--13, DOI 10.1214/EJP.v6-96. 
\MR{2883381}
	
	\bibitem{berestycki-powell-notes}
	NathanaÎl Berestycki and Ellen Powell,
{\it {G}aussian free field, {L}iouville quantum gravity and {G}aussian
	multiplicative chaos},
	 Lecture notes, 2021. available at \href{https://homepage.univie.ac.at/nathanael.berestycki/Articles/master.pdf}{this website}.
		
	

	
	\bibitem{Bernardi}
	Olivier Bernardi,
	\textit{Bijective counting of tree-rooted maps and shuffles of parenthesis
	systems},
	 Electron. J. Combin. \textbf{14} (2007), no. 9, 1--36, DOI 10.37236/928. \MR{2285813}
	 
	\bibitem{bernardi2007catalan}
Olivier Bernardi and Nicolas Bonichon,
\textit{Intervals in Catalan lattices and realizers of triangulations},
J. Combin. Theory Ser. A \textbf{116} (2009), no. 1, 55--75, DOI 10.1016/j.jcta.2008.05.005.  \MR{2469248}
	 
	 \bibitem{Bernardi-Fusy}
	 Olivier Bernardi and \'Eric Fusy,
	 \textit{Schnyder decompositions for regular plane graphs and application to drawing},
	 Algorithmica,  \textbf{62} (2012), no. 3--4  1159–1197, DOI 10.1007/s00453-011-9514-5. \MR{2871142}
	 
	 	 \bibitem{BHS19}
	 	 Olivier Bernardi, Nina Holden and Xin Sun,
	 	 \textit{Percolation on triangulations: A bijective path to Liouville quantum gravity},
	 	Mem. Amer. Math. Soc.,  to appear.
	

	
	\bibitem{chelkak2014convergence}
	Dmitry Chelkak, Hugo Duminil-Copin, Cl\'ement Hongler, Antti Kemppainen and Stanislav Smirnov,
	\textit{	Convergence of {I}sing interfaces to {S}chramm's {SLE} curves},
 C. R. Acad. Sci. Paris, Ser. I \textbf{352} (2014), no. 2, 157--161, DOI 10.1016/j.crma.2013.12.002. \MR{3151886}
	


	
	\bibitem{DKRV}
	Fran\c{c}ois David, Antti Kupiainen, R\'emi Rhodes and Vincent Vargas,
	\textit{Liouville quantum gravity on the {R}iemann sphere},
	 Comm. Math. Phys. \textbf{342} (2016), no. 3, 869--907, DOI 10.1007/s00220-016-2572-4. \MR{3465434}
	
	\bibitem{di2013drawing}
Giuseppe Di Battista and Fabrizio Frati,
\textit{Drawing trees, outerplanar graphs, series-parallel graphs, and planar
	graphs in a small area}, Thirty essays on geometric graph theory,
Springer, New York, 2013, pp. 121--165. \MR{3205153}
	
		\bibitem{DDDF-tightness}
		Jian Ding, Julien Dub\'{e}dat, Alexander Dunlap and Hugo Falconet,
	\textit{Tightness of {L}iouville first passage percolation for $\gamma \in
			(0,2)$},
Publ. Math. Inst. Hautes \'{E}tudes Sci. \textbf{132} (2020), no. 1, 353--403, DOI 10.1007/s10240-020-00121-1.  \MR{4179836}
		
	\bibitem{donsker1975asymptotic}
Monroe D. Donsker and	S.~R.~S. Varadhan,
\textit{Asymptotic evaluation of certain {M}arkov process expectations for
	large time, I},
Comm. Pure Appl. Math. \textbf{28} (1975), no 1, 1--47, DOI 10.1002/cpa.3160280102. \MR{0386024}

\bibitem{donsker1975asymptotic2}
Monroe  D. Donsker and	S.~R.~S. Varadhan,
\textit{Asymptotic evaluation of certain {M}arkov process expectations for
	large time, II},
Comm. Pure Appl. Math. \textbf{28} (1975), no. 2,  279--301, DOI 10.1002/cpa.3160280206. \MR{0386024}		
	
	\bibitem{mating}
	Bertrand Duplantier, Jason Miller and Scott Sheffield,
	\textit{Liouville quantum gravity as a mating of trees},  Asterisque, to appear.
	
	\bibitem{KPZ}
	Bertrand Duplantier and Scott Sheffield,
	\textit{Liouville quantum gravity and {KPZ}},
	 Invent. Math. \textbf{185} (2011), no. 2, 333--393, DOI 10.1007/s00222-010-0308-1. \MR{2819163}
	

	
	\bibitem{duraj2015invariance}
	Jetlir Duraj and Vitali Wachtel,
	\textit{Invariance principles for random walks in cones}, Stochastic
	Process. Appl. \textbf{130} (2020), no.7, 3920–-3942, DOI 10.1016/j.spa.2019.11.004. \MR{4102254}
		
	\bibitem{Fary1948}
	Istv\'an F\'ary,
	\textit{	On straight line representation of planar graphs},
 Acta Univ. Szeged. Sect. Sci. Math. \textbf{11} (1948), no. 4, 229--233. \MR{0026311}
	
	\bibitem{baxter-counting}
	Stefan Felsner, \'Eric Fusy, Marc Noy and David Orden,
	\textit{Bijections for {B}axter families and related objects},
	 J. Combin. Theory Ser. A \textbf{118} (2011), no. 3, 993--1020, DOI 10.1016/j.jcta.2010.03.017. \MR{2763051}
	
	\bibitem{felsner2008schnyder}
	Stefan Felsner and Florian Zickfeld,
		\textit{Schnyder woods and orthogonal surfaces},
 Discrete Comput. Geom. \textbf{40} (2008), no. 1, 103--126, DOI 10.1007/s00454-007-9027-9. \MR{2429650}
	
	
\bibitem{de1990draw}
Hubert de~Fraysseix, J\'anos Pach and Richard Pollack,
\textit{How to draw a planar graph on a grid},
Combinatorica \textbf{10} (1990), no. 1, 41--51, DOI 10.1007/BF02122694. \MR{1075065}	
	
	\bibitem{gwynne2015brownian}
	Ewain Gwynne, Nina Holden, Jason Miller and Xin Sun,
		\textit{Brownian motion correlation in the peanosphere for $\kappa>8$},
Ann. Inst. Henri Poincar\'{e} Probab. Stat. \textbf{53} (2017), no. 4, 1866--1889, DOI 10.1214/16-AIHP774. \MR{3729638}
	
	\bibitem{bipolarII}
	Ewain Gwynne, Nina Holden and Xin Sun,
		\textit{Joint scaling limit of a bipolar-oriented triangulation and its dual
	in the peanosphere sense},  \href{http://arxiv.org/abs/1603.01194}{arXiv:1603.01194}.

	\bibitem{ghs-survey}
Ewain Gwynne, Nina Holden and Xin Sun,
\textit{Mating of trees for random planar maps and Liouville quantum gravity: a survey
}, Panoramas et Synth\`eses, to appear.


	\bibitem{ghs-exp}
	Ewain Gwynne, Nina Holden and Xin Sun,
	\textit{A mating-of-trees approach for graph distances in random planar maps},
	Probab. Theory Related Fields \textbf{177} (2020), no. 3--4, 1043--1102, DOI 10.1007/s00440-020-00969-8.  \MR{4126936}
	
	\bibitem{active}
	Ewain Gwynne, Adrien Kassel, Jason Miller and  David B. Wilson,
		\textit{Active spanning trees with bending energy on planar maps and
	SLE-decorated Liouville quantum gravity for $\kappa > 8$},
Comm. Math. Phys. \textbf{358} (2018), no. 3, 1065–-1115, DOI 10.1007/s00220-018-3104-1. \MR{3778352}
	
	
	\bibitem{burger1}
	Ewain Gwynne, Cheng Mao and Xin Sun,
		\textit{Scaling limits for the critical Fortuin-Kasteleyn model on a random
	planar map I: cone times},
Ann. Inst. Henri Poincar\'e Probab. Stat. \textbf{55} (2019), no. 1, 1--60, DOI 10.1214/17-AIHP874. \MR{3901640}


\bibitem{gm-uniqueness}
Ewain Gwynne and Jason Miller,
\textit{Existence and uniqueness of the {L}iouville quantum gravity metric
	for {$\gamma\in(0,2)$}},
 Invent. Math. \textbf{223} (2021), no. 1, 213--333, DOI 10.1007/s00222-020-00991-6. \MR{4199443}

\bibitem{gms-poisson-voronoi}
Ewain Gwynne, Jason Miller and Scott Sheffield.
\textit{The Tutte embedding of the Poisson-Voronoi tessellation of the
	Brownian disk converges to $\sqrt{8/3}$-Liouville quantum gravity},
Comm. Math. Phys. \textbf{374} (2020), no. 2, 735--784, DOI 10.1007/s00220-019-03610-5. \MR{4072229}


\bibitem{gms-crt}
Ewain Gwynne, Jason Miller, and Scott Sheffield,
\textit{The Tutte embedding of the  the mated-{CRT} map converges Liouville quantum gravity},
 Ann. Probab. \textbf{49} (2021), no. 4,   1677--1717 DOI 10.1214/20-AOP1487. \MR{4260465}

\bibitem{gms-invariance}
Ewain Gwynne, Jason Miller, and Scott Sheffield,
\textit{An invariance principle for ergodic scale-free random
	environments}, Acta Math. \textbf{228} (2022), no. 2, 303--384, DOI: 10.4310/ACTA.2022.v228.n2.a2. \MR{4448682}

	
	\bibitem{burger2}
	Ewain Gwynne and Xin Sun,
	\textit{Scaling limits for the critical Fortuin-Kasteleyn model on a random
	planar map II: local estimates and empty reduced word exponent},
	Electron. J. Probab. \textbf{22} (2017), no. 45, 1--56, DOI 10.1214/17-EJP64. \MR{3661659}
	
	\bibitem{burger3}
		Ewain Gwynne and Xin Sun,
	\textit{Scaling limits for the critical Fortuin-Kasteleyn model on a random
	planar map III: finite volume case}, \href{http://arxiv.org/abs/1510.06346}{arXiv:1510.06346}.

\bibitem{hs-cardy-embedding}
Nina {Holden} and Xin {Sun},
\textit{Convergence of uniform triangulations under the Cardy embedding},
 Acta Mathematica, to appear.
	
	\bibitem{Euclidean}
	Nina Holden and Xin Sun,
	\textit{SLE as a mating of trees in Euclidean geometry},
	Comm. Math. Phys. \textbf{364} (2018), no. 1, 171–-201, DOI 10.1007/s00220-018-3149-1. \MR{3861296}
	
	\bibitem{janson-tail}
	Svante Janson,
		\textit{Tail bounds for sums of geometric and exponential variables},
 Statist. Probab. Lett.  \textbf{135} (2018), 1--6, DOI 10.1016/j.spl.2017.11.017. \MR{3758253}
	
	\bibitem{bipolar}
	Richard Kenyon, Jason Miller, Scott Sheffield and  David B. Wilson,
		\textit{Bipolar orientations on planar maps and SLE$\_{12}$}, Ann. Probab. \textbf{47} (2019), no. 3, 1240--1269,  DOI 10.1214/18-AOP1282.  \MR{3945746}
	
	\bibitem{bipolarIII}
	Richard Kenyon, Jason Miller, Scott Sheffield and  David B. Wilson,
		\textit{The six-vertex model and Schramm-Loewner evolution}, Phys. Rev. E \textbf{95} (2017), no. 5, 052146, DOI 10.1103/PhysRevE.95.052146.
	

	
	\bibitem{lando2013graphs}
Sergei K. Lando and   Alexander K. Zvonkin,
\textit{Graphs on surfaces and their applications},
Encyclopaedia of Mathematical Sciences, vol 141,
Springer-Verlag, Berlin, 2004. \MR{2036721}
	
	
	\bibitem{Lawler-Walk}
	Gregory F. Lawler and Vlada Limic,
	\textit{ Random walk: a modern introduction},  
		Cambridge Studies in Advanced Mathematics, vol 123,
	Cambridge University Press, Cambridge, 2010. \MR{2677157}
	
	\bibitem{SAW}
		Gregory F. Lawler, Oded Schramm and Wendelin  Werner,
	\textit{On the scaling limit of planar self-avoiding walk},  Fractal Geometry and Applications: A Jubilee of Benoît Mandelbrot,
		Proc. Sympos. Pure Math., vol 72.2, Amer. Math. Soc., Providence, RI, 2004, pp.  339--364. \MR{2112127}
	
	
	\bibitem{lawler2011conformal}
	Gregory F. Lawler, Oded Schramm and Wendelin Werner,
	\textit{Conformal invariance of planar loop-erased random walks and uniform
	spanning trees},
	Ann. Probab. \textbf{32} (2004), no. 1B, 939--995, DOI 10.1214/aop/1079021469. \MR{2044671}
	
	\bibitem{LeGall-Survey}
Jean Fran\c{c}ois Le~Gall,
\textit{Random trees and applications},
Probab. Surv. \textbf{2} (2005), 245--311, DOI 10.1214/154957805100000140. \MR{2203728}

\bibitem{le2013uniqueness}
Jean Fran\c{c}ois Le~Gall,
\textit{Uniqueness and universality of the {B}rownian map},
Ann. Probab. \textbf{41} (2013), no. 4, 2880--2960, DOI 10.1214/12-AOP792. \MR{3112934}
	
	\bibitem{LS-Schnyder}
	Martin Loebl and Jean-S\'ebastien  Sereni,
		\textit{Graph counting}, Lecture 7, Spring 2009. available at
		\href{https://lbgi.fr/~sereni/GC_Spring09.html}{this website}.
 
	 
	
	\bibitem{miermont2013brownian}
	Gr\'egory Miermont,
	\textit{The {B}rownian map is the scaling limit of uniform random plane
	quadrangulations},
	 Acta Math. \textbf{210} (2013), no. 2, 319--401, DOI 10.1007/s11511-013-0096-8. \MR{3070569}

	
	\bibitem{IGI}
	Jason Miller and Scott Sheffield,
	\textit{Imaginary geometry I: interacting SLEs},
	 Probab. Theory Related Fields \textbf{164} (2016), no. 3--4, 553--705, DOI 10.1007/s00440-016-0698-0. \MR{3477777}
	
	\bibitem{miller2013imaginary}
Jason Miller and Scott Sheffield,
	\textit{Imaginary geometry IV: interior rays, whole-plane reversibility, and
	space-filling trees},
	 Probab. Theory Related Fields \textbf{169} (2017), no. 3--4, 729--869, DOI 10.1007/s00440-017-0780-2. \MR{3719057}
	
	\bibitem{BM1}
Jason Miller and Scott Sheffield,
	\textit{Liouville quantum gravity and the Brownian map I: The QLE(8/3,0)
	metric}, Invent. math. \textbf{219} (2020), no. 1, 75--152, DOI 10.1007/s00222-019-00905-1. \MR{4050102}
	
	\bibitem{mating2}
Jason Miller and Scott Sheffield,
	\textit{Liouville quantum gravity spheres as matings of finite-diameter
	trees}, Ann. Inst. Henri Poincar\'{e} Probab. Stat. \textbf{55} (2019), no. 3, 1712--1750, DOI 10.1214/18-AIHP932. \MR{4010949}
	
	\bibitem{BM2}
Jason Miller and Scott Sheffield,
	\textit{Liouville quantum gravity and the Brownian map II: geodesics and
	continuity of the embedding}, Ann. Probab. \textbf{ 49} (2021), no. 6, 2732–-2829, DOI 10.1214/21-AOP1506. \MR{4348679}
	
	\bibitem{BM3}
Jason Miller and Scott Sheffield,
	\textit{Liouville quantum gravity and the Brownian map III: the conformal
	structure is determined}, Probab. Theory Related Fields \textbf{179} (2021), no. 3--4, 1183--1211, DOI 10.1007/s00440-021-01026-8. \MR{4242633 }
	
		
	\bibitem{miracle2016sampling}
	Sarah Miracle, Dana  Randall,  Amanda Pascoe Streib and Prasad Tetali, 
	\textit{Sampling and counting 3-orientations of planar triangulations},
	SIAM J. Discrete Math. \textbf{30} (2016), no. 2, 801--831, DOI 10.1137/140965752. \MR{3490888}
	
	\bibitem{mullin1967enumeration}
Ronald  C. Mullin,
\textit{On the enumeration of tree-rooted maps},
Canad. J. Math. \textbf{19} (1967), 174--183, DOI 10.4153/CJM-1967-010-x. \MR{0205882}
	

\bibitem{Polyakov}
Alexander  M. Polyakov,
\textit{Quantum geometry of bosonic strings},
Phys. Lett. B \textbf{103} (1981), no. 3, 207--210, DOI 10.1016/0370-2693(81)90743-7. \MR{0623209}	
	
	\bibitem{Revuz-Yor}
	Daniel Revuz  and Marc Yor,
	\textit{Continuous martingales and Brownian motion}, Grundlehren der mathematischen Wissenschaften, vol. 293. Springer Science \& Business Media, 2013.
	
	

	
	\bibitem{schnyder1989planar}
	Walter Schnyder,
	\textit{Planar graphs and poset dimension},
	 Order \textbf{5} (1989), no. 4, 323--343, DOI 10.1007/BF00353652. \MR{1010382}
	
	\bibitem{Schnyder}
Walter Schnyder, 
		\textit{Embedding planar graphs on the grid},
	 SODA '90: Proceedings of the first annual ACM-SIAM symposium on Discrete algorithms, Society for Industrial and Applied Mathematics, Philadelphia, PA, 1990, pp. 138--148.
	
	\bibitem{Schramm}
	Oded Schramm,
	\textit{Scaling limits of loop-erased random walks and uniform spanning
	trees},
	Israel J. Math. \textbf{118} (2000), no. 1, 221--288, DOI 10.1007/BF02803524. \MR{1776084}
	
	\bibitem{schramm2009contour}
Oded Schramm and Scott Sheffield, 
\textit{Contour lines of the two-dimensional discrete {G}aussian free field},
Acta Math. \textbf{202} (2009), no. 1, 21--137, DOI 10.1007/s11511-009-0034-y. \MR{2486487}	
	
	\bibitem{Zipper}
	Scott Sheffield, 
	\textit{Conformal weldings of random surfaces: {SLE} and the quantum gravity
	zipper},
	 Ann. Probab. \textbf{44} (2016), no. 5, 3474--3545, DOI 10.1214/15-AOP1055. \MR{3551203}
	
	\bibitem{burger}
	Scott Sheffield,
	\textit{Quantum gravity and inventory accumulation},
	 Ann. Probab. \textbf{44} (2016), no. 6, 3804--3848, DOI 10.1214/15-AOP1061. \MR{3572324}
	
	\bibitem{percolation}
	Stanislav  Smirnov,
	\textit{Critical percolation in the plane: conformal invariance, Cardy's
	formula, scaling limits},
	 C. R. Acad. Sci. Paris S\'er.
		I Math. \textbf{333}  (2001), no 3, 239--244, DOI 10.1016/S0764-4442(01)01991-7. \MR{1851632}
	
	\bibitem{FK}
	Stanislav Smirnov,
	\textit{Conformal invariance in random cluster models. {I}. holmorphic
	fermions in the {I}sing model},
	 Ann. of Math. (2) \textbf{172} (2010), no. 2, 1435--1467, DOI 10.4007/annals.2010.172.1435. \MR{2680496}
	

	
	\bibitem{sun2017scaling}
	Xin Sun and Samuel S. Watson,
	\textit{The scaling limit of the {S}chnyder embedding and {L}iouville quantum
	gravity with $\gamma=1$},
	 In preparation (2017).
	
	\bibitem{tamassia2013handbook}
	Roberto  Tamassia (Ed.),
	\textit{ Handbook of graph drawing and visualization},
	Discrete Mathematics and its Applications, CRC Press, Boca Raton, FL, 2004. \MR{3156770}
	
	
	
 
			
		 
	
\end{thebibliography}

\end{document}